\newtheorem{thm}{Theorem}[section]
\newtheorem{cor}[thm]{Corollary}
\newtheorem{lem}[thm]{Lemma}
\newtheorem*{mthm}{Main Theorem}
\newtheorem{prop}[thm]{Proposition}
\theoremstyle{definition}
\newtheorem{exa}[thm]{Example}
\theoremstyle{definition}
\newtheorem{defn}[thm]{Definition}
\theoremstyle{remark}
\newtheorem{rem}[thm]{Remark}
\DeclareMathOperator{\spec}{spec}
\newcommand{\E}{\mathcal{E}}
\newcommand{\eqnum}{\refstepcounter{equation}\textup{\tagform@{\theequation}}}
\newcounter{ale}
\title{Half-centered Operators}
\author{Olof Giselsson}
\begin{document}
\maketitle
\begin{abstract}
	An operator $T$ on a Hilbert space is called half-centered if the sequence $T^{*}T,(T^{*})^{2}T^{2},...$ consists of mutually commuting operators. It is a subclass of the well-studied centered operators. In this paper we give a condition for when a half-centered operator is centered and prove a structure theorem for those half-centered operators that satisfies a criteria of a technical nature.
\end{abstract}

\section{Introduction}
	A bounded operator $T$ on Hilbert space $\mathcal{H}$ is called \textit{centered} if the operators in the sequence 
	\begin{equation}\label{cente}
		\left\{ ...T^{3}T^{*3},T^{2}T^{*2},TT^{*},T^{*}T,T^{*2}T^{2},T^{*3}T^{3}... \right\}
	\end{equation}
	are mutually commuting. Examples includes weighted shifts and obviously isometries and self-adjoint operators. The structure of these operators is well understood; it has been shown in~\cite{ARS} that, a bit simplified, a general centered operator is a direct sum of weighted shifts (unilateral, bilateral or truncated). Another interesting article on the subject is~\cite{vp}, here some particular centered operators are investigated in relation to more general problems in operator theory.
	\\
	
	The purpose of this paper is to investigate operators satisfying the more general condition that the sequence 
	
	\begin{equation}\label{half}
		\left\{ T^{*}T,T^{*2}T^{2},T^{*3}T^{3}... \right\}
	\end{equation}
	consists of mutually commuting operators. As~\eqref{half} is half of~\eqref{cente}, we call such an operator \textit{half-centered}.
	\\
	
	We will mainly consider half-centered operators satisfying 
	\\
	$\dim(T\mathcal{H})^{\bot} = 1$ and a certain technical density criteria, which is not too restrictive. It turns out, under these assumptions, that either the structure of $T$ is very simple and can be explicitly described, or else the operators in the sequence $\left\lbrace T^{*k}T^{k}, k\in \mathbb{N}\right\rbrace $ are not linearly independent. Specifically, there exists $a,b,c,d\in\mathbb{R}$ not all zero, and $n,m\in\mathbb{N}^{+}$ such that 
	\begin{equation}\label{mad}
		a I +b T^{*n}T^{n}+ c T^{*m}T^{m}+ d T^{*m+n}T^{m+n}=0
	\end{equation}
	
	This is the main result here and most of the text is concerned with proving this.
	\\
	
	In section $2,$ we first prove a result that gives a necessary and sufficient criteria for when a half-centered operator is centered; for example, any half-centered operator with dense range is centered.
	We will then give several examples of classes of half-centered operators that are not necessarily centered, some of which has been extensively studied in the litterature.
	It will also be shown that some very natural operators are half-centered.
	For instance, any operator $T\in\mathcal{B}(L^{2}(X,\mu)),$ that acts by $f(x)\mapsto a(x)f(\phi(x))$ where $a\in L^{\infty}(X,\mu)$ and $\phi:X\rightarrow X$ is a measurable function, is half-centered by Proposition~\ref{exempl}.
	We will here also state the main theorem and discuss the conditions under which it holds. 
	\\
	
	This text is written in a decreasing level of generality.
	In section $3$ we will develop a theory for general injective operators   
	that is needed in the later sections and which provides a useful framework to analyze the half-centered operators. Here we will also prove some more general results about half-centered operators which do not necessarily fall under the hypothesis of the main theorem.
	\\
	
	Section $4$ concerns injective half-centered operators $T$ with $\dim (\ker T^{*})=1.$ It will be shown that in this case, the spectrum of $T^{*k}T^{k}$ restricted to certain subspaces can be quite effectively analyzed. 
	\\
	
	In the last sections, $5$ and $6,$ we will include then density condition as an assumption and prove the main result. 
	\\
	
	Acknowledgement: The author likes to thank \r{A}se Fahlander, Alexandru Aleman and Lyudmila Turowska.

	\section{Half-centered Operators: examples and notations}
		
		Clearly every centered operator is half-centered.
		As a first basic result, we give a characterization of the half-centered operators that are also centered. Here we use the notation $\E:=\ker T^{*}= (T\mathcal{H})^{\bot}.$ For a closed subspace $V\subseteq \mathcal{H},$ denote by $P_{V}$ the orthogonal projection onto $V.$
		\begin{prop}\label{surject}
			Let $T\in \mathcal{B(H)}$ be a half-centered operator. The following are equivalent:
			\begin{enumerate}{\Roman{enumi}}
				\item $T$ is centered.
				\item $T^{k*}T^{k}\E\subseteq \E$ for all $k\in\mathbb{N}.$
			\end{enumerate}
			
		\end{prop}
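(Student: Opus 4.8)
The plan is to treat the two implications separately, the forward one being routine and the converse carrying all the weight. Throughout I write $A_k=T^{*k}T^k$ and $B_k=T^kT^{*k}$, so that half-centeredness says the $A_k$ mutually commute and centeredness says the whole family $\{A_k\}\cup\{B_k\}$ mutually commutes. Two elementary identities, both immediate from $T^kT^m=T^{k+m}$, will do most of the algebra: $T^{*m}A_kT^m=A_{k+m}$, and $T^{*n}T^{n'}=A_nT^{\,n'-n}$ for $n\le n'$.

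For (I)$\Rightarrow$(II): if $T$ is centered then $A_k$ commutes with the self-adjoint operator $B_1=TT^*$, hence with every spectral projection of $B_1$, in particular with the projection onto $\ker B_1=\ker T^*=\E$. Thus $A_k$ commutes with $P_{\E}$ and leaves $\E$ invariant, which is (II).

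For the converse I would first reduce the goal. Using $T^{*n}T^{n'}=A_nT^{\,n'-n}$ one computes, for $n\le n'$,
\[
[B_n,B_{n'}]=T^{n}[A_n,B_{n'-n}]T^{*n},
\]
so once every $A_k$ commutes with every $B_m$ the $B$'s automatically commute among themselves and $T$ is centered. Hence it suffices to prove $[A_k,B_m]=0$ for all $k,m$. For this I would split $\mathcal{H}=\overline{\mathrm{ran}\,T^m}\oplus\ker T^{*m}$. Assuming that $A_k$ preserves $\ker T^{*m}=\ker B_m$ (and therefore, being self-adjoint, also preserves $\overline{\mathrm{ran}\,T^m}$), both $A_k$ and $B_m$ respect this splitting; on $\ker T^{*m}$ one has $B_m=0$ and $A_k\ker T^{*m}\subseteq\ker B_m$, so the commutator vanishes there, while on $\overline{\mathrm{ran}\,T^m}$ I use the polar decomposition $T^m=U_mA_m^{1/2}$ to write $B_m=U_mA_mU_m^*$ and $U_m^*A_kU_m=A_m^{-1/2}A_{k+m}A_m^{-1/2}$, this last equality being exactly $T^{*m}A_kT^m=A_{k+m}$. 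Since all the $A$'s commute, compressing $A_kB_m$ and $B_mA_k$ by $U_m$ both give $A_{k+m}$, so $[A_k,B_m]=0$.

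The one genuinely non-formal input—and the main obstacle—is the claim that (II) forces $A_k$ to leave every $\ker T^{*m}$ invariant, not only $\E=\ker T^*$. I would prove this by induction on $m$, peeling off one layer at a time. Writing operators as $2\times2$ blocks with respect to $\mathcal{H}=\E\oplus\E^{\perp}$, so that $T=\left(\begin{smallmatrix}0&0\\ R&S\end{smallmatrix}\right)$ with $R=T|_{\E}$ and $S=T|_{\E^{\perp}}$, condition (II) at level $1$ gives $S^*R=0$, and combining this with $\ker T^*=\E$ forces $\overline{\mathrm{ran}\,R}=\ker S^*$. A short computation then shows that $S$ is again half-centered on $\E^{\perp}$, that it again satisfies (II) there, and that $\ker T^{*m}=\E\oplus\ker S^{*(m-1)}$; since each $A_k$ is block-diagonal with lower block $S^{*k}S^k$, the inductive hypothesis applied to $S$ yields the invariance of $\ker T^{*m}$. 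The delicate points to get right are the identification $\overline{\mathrm{ran}\,R}=\ker S^*$ and the verification that the hypotheses genuinely descend to $S$; once these are secured the induction, and with it the whole converse, goes through.
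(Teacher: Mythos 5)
Your proposal is correct in substance but organized genuinely differently from the paper's proof. The two arguments share their formal bookends: the forward implication (commutation of $T^{*k}T^{k}$ with $TT^{*}$, hence invariance of $\ker TT^{*}=\E$) and the reduction $[B_{n},B_{n'}]=T^{n}[A_{n},B_{n'-n}]T^{*n}$, which the paper uses verbatim as its final step. The heart diverges. The paper proves $[A_{j},B_{k}]=0$ by induction on $k$ entirely on the full space: it sandwiches commutators by $T^{*k}(\cdot)T^{k}$ to get the compressed relation~\eqref{lemmel}, and uses that commutation with $B_{k-1}$ forces commutation with the range projection $P_{\overline{T^{k-1}\mathcal{H}}}$; the invariance of the kernels $\ker T^{*m}$ under the $A_{k}$'s is never isolated as a lemma, appearing only implicitly through these projections. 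You instead promote exactly that invariance to the key lemma and prove it by hereditary descent: the $2\times 2$ block decomposition over $\E\oplus\E^{\perp}$ (note $\E^{\perp}=\overline{T\mathcal{H}}=\mathcal{H}_{1}$, so your $S$ is literally the paper's restriction $T|_{\mathcal{H}_{1}}$), block-diagonality of each $A_{k}$ under (II), and the identities $\ker S^{*}=\overline{\mathrm{ran}\,R}$ and $\ker T^{*m}=\E\oplus\ker S^{*(m-1)}$ — all of which I have checked and which do hold, including the verification that (II) for $T$ at level $k+1$ is precisely $S^{*(k+1)}S^{k}R=0$, which is what makes (II) descend to $S$. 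A pleasant byproduct is an elementary proof, under (II) and without injectivity, that $T|_{\mathcal{H}_{1}}$ is again half-centered — a fact the paper only obtains later (Proposition~\ref{basbas}) via von Neumann algebra machinery. Your route buys a reusable structural statement; the paper's buys brevity and stays free of compressions to subspaces.

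One step needs repair before the argument is airtight: the proposition allows non-injective $T$ and non-closed range, so $A_{m}^{-1/2}$ need not exist and the displayed formula $U_{m}^{*}A_{k}U_{m}=A_{m}^{-1/2}A_{k+m}A_{m}^{-1/2}$, together with the claim that compressing $A_{k}B_{m}$ and $B_{m}A_{k}$ by $U_{m}$ ``both give $A_{k+m}$,'' is only formal. The fix is the sandwich identity you yourself flag as the rigorous content: from $T^{*m}[A_{k},B_{m}]T^{m}=A_{k+m}A_{m}-A_{m}A_{k+m}=0$ (half-centeredness) and the density of $T^{m}\mathcal{H}$ in $\overline{T^{m}\mathcal{H}}$ one gets $Q[A_{k},B_{m}]Q=0$ for $Q=P_{\overline{T^{m}\mathcal{H}}}$; your kernel-invariance lemma makes $[A_{k},B_{m}]$ block-diagonal with respect to $Q$ and zero on $\ker T^{*m}$, so $[A_{k},B_{m}]=0$ follows with no polar decomposition and no inverse square roots at all. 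With that substitution your proof is complete.
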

		
		\begin{proof}
			(\textit{1}$\Rightarrow$\textit{2}). Since $(TT^{*})(T^{k*}T^{k})=(T^{k*}T^{k})(TT^{*})$ for all $k\in \mathbb{N},$ it is easy to see that the space $\E=\ker T^{*}=\ker T T^{*}$ is invariant under the operators $T^{*k}T^{k}.$
			\\
			
			(\textit{2}$\Rightarrow$\textit{1}). First, we notice that by \textit{2}, the projection $P_{\E}$ commutes with the operators $T^{*k}T^{k},$ this is then also true for the projection $P_{\overline{T\mathcal{H}}}=I-P_{\E}.$
			\\
			
			Now since $$T^{*}((T^{*j}T^{j})(T T^{*}))T=(T^{*(j+1)}T^{j+1})(T^{*}T)=$$
			$$(T^{*}T)(T^{*(j+1)}T^{j+1})=T^{*}((T T^{*})(T^{*j}T^{j}))T,$$  we have $(P_{\overline{T\mathcal{H}}}(T^{*j}T^{j})P_{\overline{T\mathcal{H}}})(T T^{*})=(T T^{*})(P_{\overline{T\mathcal{H}}}(T^{*j}T^{j})P_{\overline{T\mathcal{H}}})$ for all $j\in\mathbb{N}.$
			This gives 
			\begin{equation}\label{ecco}
				[T^{*j}T^{j},T T^{*}]=P_{\E}(T^{*j}T^{j})TT^{*}-TT^{*}(T^{*j}T^{j})P_{\E}=0
			\end{equation}
			as $TT^{*}(T^{*j}T^{j})P_{\E}=TT^{*}P_{\overline{T\mathcal{H}}}(T^{*j}T^{j})P_{\E}=0$ by \textit{2}, so $T^{*k}T^{k}$ commutes with $TT^{*}.$
			\\
			
			Hence for any $k\in\mathbb{N},$ we have
			
			\begin{equation}\label{millia}
				\begin{split}
					& T^{*k}((T^{*j}T^{j})(T^{k+1}T^{*(k+1)}))T^{k}=(T^{*(j+k)}T^{j+k})(T T^{*})(T^{*k}T^{k})=\\
					& (T^{*k}T^{k})(TT^{*})(T^{*(j+k)}T^{j+k})=T^{*k}((T^{k+1}T^{*(k+1)})(T^{*j}T^{j}))T^{k},
				\end{split}
			\end{equation}
			where the second equality follows from~\eqref{ecco}. As $$P_{\overline{T^{k}\mathcal{H}}}(T^{k+1}T^{*(k+1)})P_{\overline{T^{k}\mathcal{H}}}=T^{k+1}T^{*(k+1)},$$ we get from~\eqref{millia}
			\begin{equation}\label{lemmel}
				(P_{\overline{T^{k}\mathcal{H}}}(T^{*j}T^{j})P_{\overline{T^{k}\mathcal{H}}})(T^{k+1}T^{*(k+1)})=(T^{k+1}T^{*(k+1)})(P_{\overline{T^{k}\mathcal{H}}}(T^{*j}T^{j})P_{\overline{T^{k}\mathcal{H}}})
			\end{equation}
			for all $k,j\in \mathbb{N}.$ We claim that~\eqref{lemmel} actually implies
			\begin{equation}\label{vovve}
				(T^{k}T^{*k})(T^{*j}T^{j})=(T^{*j}T^{j})(T^{k}T^{*k})\text{ for all $j,k\in\mathbb{N}$}.
			\end{equation}  
			The proof is by induction on $k.$ We already know that it holds for $k=1,$ so assume it is true for $k-1\geq 1.$ Now $(T^{k-1}T^{*(k-1)})(T^{*j}T^{j})=(T^{*j}T^{j})(T^{k-1}T^{*(k-1)})$ gives 
			$(T^{*j}T^{j})P_{\overline{T^{k-1}\mathcal{H}}}=P_{\overline{T^{k-1}\mathcal{H}}}(T^{*j}T^{j})$ since $P_{\overline{T^{k-1}\mathcal{H}}}=I-P_{\ker (T^{k-1}T^{*(k-1)})} .$ As $P_{\overline{T^{k-1}\mathcal{H}}}(T^{k}T^{*k})=(T^{k}T^{*k})$ we see that $$T^{*j}T^{j}T^{k}T^{*k}=(P_{\overline{T^{k-1}\mathcal{H}}}T^{*j}T^{j}P_{\overline{T^{k-1}\mathcal{H}}})T^{k}T^{*k}$$
			$$T^{k}T^{*k}T^{*j}T^{j}=T^{k}T^{*k}(P_{\overline{T^{k-1}\mathcal{H}}}T^{*j}T^{j}P_{\overline{T^{k-1}\mathcal{H}}})$$ and by~\eqref{lemmel}, the right hand sides are equal. Hence~\eqref{lemmel} is true also for $k.$
			\\
			
			There is only the equality $(T^{k}T^{*k})(T^{m}T^{*m})=(T^{m}T^{*m})(T^{k}T^{*k})$ left to prove. But this follows from what has already been proven, since if, say $m\geq k,$ then $$(T^{k}T^{*k})(T^{m}T^{*m})=T^{k}((T^{*k}T^{k})(T^{m-k}T^{*(m-k)}))T^{*k}=$$
			$$T^{k}((T^{m-k}T^{*(m-k)})(T^{*k}T^{k}))T^{*k}=(T^{m}T^{*m})(T^{k}T^{*k}).$$ The proof is now complete.

		\end{proof}
		
		\begin{cor}
			If $T\in\mathcal{B(H)}$ is half-centered and $\overline{T\mathcal{H}}=\mathcal{H},$ then $T$ is centered.
		\end{cor}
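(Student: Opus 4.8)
The plan is to invoke Proposition~\ref{surject} directly, since that result already reduces ``centered'' to a single invariance condition. Recall that by definition $\E=\ker T^{*}=(T\mathcal{H})^{\bot}$. The hypothesis $\overline{T\mathcal{H}}=\mathcal{H}$ says precisely that the range of $T$ is dense, which means its orthogonal complement is trivial: $\E=(T\mathcal{H})^{\bot}=\{0\}$.

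With $\E=\{0\}$, condition \textit{2} of Proposition~\ref{surject}, namely $T^{*k}T^{k}\E\subseteq\E$ for all $k\in\mathbb{N}$, holds vacuously, since $T^{*k}T^{k}$ maps the zero subspace into itself. Hence the equivalence furnished by the proposition immediately yields that $T$ is centered.

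There is essentially no obstacle here: the corollary is a one-line consequence of the preceding characterization, with all the analytic work having been discharged in the proof of Proposition~\ref{surject}. The only point worth stating explicitly is the identification $\overline{T\mathcal{H}}=\mathcal{H}\iff\E=\{0\}$, which is just the standard fact that $(\overline{T\mathcal{H}})^{\bot}=(T\mathcal{H})^{\bot}=\ker T^{*}$.
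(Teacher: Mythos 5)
Your proof is correct and is exactly the deduction the paper intends: the corollary is stated without proof immediately after Proposition~2.1, as the case $\E=\ker T^{*}=(T\mathcal{H})^{\bot}=\{0\}$, in which condition \textit{2} holds vacuously. Nothing is missing.
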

		As it was already noted, any centered operator is half-centered. But we will see below that there are a lots of half-centered operators that are not centered.
		
		\begin{exa}($2$-isometries)
			An operator $T$ satisfying the equation
			\begin{equation}\label{2iso}
				I-2T^{*}T+T^{2*}T^{2}=0
			\end{equation}
			is called a $2$-isometry. The equation~\eqref{2iso} implies that $T^{*k}T^{k}$ is a linear combination of $I$ and $T^{*}T$ for every $k\in\mathbb{N}$ and this gives that $T$ is half-centered.
			$2$-isometries has been studied a lot due to their connection with the Dirichlet shift (see~\cite{CO1}~\cite{CO2}~\cite{CO3}). From their theory one can deduce that a centered $2$-isometry must be in the form $T=U\oplus S,$ with $U$ an isometry and $S$ a weighted shift. In general, $2$-isometries has a quite complicated structure, so in this case the centered $2$-isometries forms a strict (and quite boring) subclass.
		\end{exa}
		
		More generally, any operator $T$ satisfying
		
		$$a_{0}I+a_{1}T^{*}T+a_{2}T^{*2}T^{2}=0$$ for constants $a_{0},a_{1},a_{2}\in \mathbb{R}$ (where at least one $a_{i}\neq 0$) will be half-centered,
		since then again every $T^{*k}T^{k}$ will be a linear combination of $I$ and $T^{*}T.$

		\begin{exa}
			Let $P,Q$ be two orthogonal projections and consider $$T=PQ.$$
			Then $T$ is half-centered since $$T^{*k}T^{k}=\prod_{j=1}^{k}QP\prod_{j=1}^{k}PQ=Q\prod_{j=1}^{2k-1}PQ=QT^{2k-1}$$ and so $$\left(T^{*j}T^{j}\right)\left(T^{*k}T^{k}\right)=QT^{2j-1}QT^{2k-1}=QT^{2k+2j-2}=$$
			$$QT^{2k-1}QT^{2j-1}=\left(T^{*k}T^{k}\right)\left(T^{*j}T^{j}\right).$$ 
			Now, $TT^{*}=PQP$ and from this we calculate $$\left(TT^{*}\right)\left(T^{*}T\right)=PQPQPQ=T^{3}$$ $$\left(T^{*}T\right)\left(TT^{*}\right) =QPQPQP=T^{*3}.$$ So if $T^{*3}\neq T^{3}$ then $T$ is half-centered but not centered.
			The latter holds if we take, for example $$P=\left[ \begin{array}{ccc}
			\frac{1}{2} & -\frac{1}{2}  \\
			-\frac{1}{2} & \frac{1}{2}  
			\end{array} \right],Q=\left[ \begin{array}{ccc}
			1 & 0  \\
			0 & 0  
			\end{array} \right]$$
			then $T=\left[ \begin{array}{ccc}
			\frac{1}{2} & 0 \\
			-\frac{1}{2} & 0  
			\end{array} \right]$ and so $$T^{3}=\left[ \begin{array}{ccc}
			\frac{1}{2^{3}} & 0 \\
			-\frac{1}{2^{3}} & 0  
			\end{array} \right]\neq \left[ \begin{array}{ccc}
			\frac{1}{2^{3}} & -\frac{1}{2^{3}} \\
			0 & 0  
			\end{array} \right]=T^{*3}.$$
		\end{exa}
		A large class of half-centered operators are given by the following proposition:
		\begin{prop}\label{exempl}
			Let $(X,\mu)$ be a measure space with $\sigma$-finite measure $\mu$ and let $\psi:X\rightarrow X$ be a measurable function such that the linear map $$f(x) \mapsto f(\psi(x))$$ induces a bounded linear operator on $L^{2}(X,\mu)$ and let $\xi\in L^{\infty}(X,\mu).$ Then the operator 
			\begin{equation}\label{wqqw}	
				T:f\in L^{2}(X,\mu)\mapsto \xi(x)f(\psi(x)) 
			\end{equation}
			is half-centered.
		\end{prop}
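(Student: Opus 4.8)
The plan is to show that each operator $T^{*k}T^{k}$ is in fact a \emph{multiplication operator} on $L^{2}(X,\mu)$; since any two multiplication operators commute, half-centeredness follows at once. Throughout, write $C_{\psi}f=f\circ\psi$ for the composition operator and $M_{g}$ for multiplication by $g\in L^{\infty}(X,\mu)$.

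First I would compute the powers of $T$ explicitly. An easy induction gives
\begin{equation*}
	(T^{k}f)(x)=\xi_{k}(x)\,f(\psi^{k}(x)),\qquad \xi_{k}:=\prod_{j=0}^{k-1}\xi\circ\psi^{j},
\end{equation*}
where $\psi^{k}$ is the $k$-fold composite. Note $\xi_{k}\in L^{\infty}(X,\mu)$: because $C_{\psi}$ is bounded, $\psi^{j}_{*}\mu\ll\mu$, so composing the bounded function $\xi$ with $\psi^{j}$ again lands in $L^{\infty}$. Next I would identify the sesquilinear form of $T^{*k}T^{k}$. For $f,g\in L^{2}(X,\mu)$,
\begin{align*}
	\langle T^{*k}T^{k}f,g\rangle &=\langle T^{k}f,T^{k}g\rangle=\int_{X}|\xi_{k}(x)|^{2}\,f(\psi^{k}(x))\,\overline{g(\psi^{k}(x))}\,d\mu(x)\\
	&=\int_{X}f(y)\,\overline{g(y)}\,d\bigl(\psi^{k}_{*}\nu_{k}\bigr)(y),
\end{align*}
where $\nu_{k}:=|\xi_{k}|^{2}\mu$ and the last line is the change-of-variables formula for the pushforward $\psi^{k}_{*}\nu_{k}$.

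The decisive point is that $\psi^{k}_{*}\nu_{k}\ll\mu$. Since $C_{\psi}^{k}=C_{\psi^{k}}$ is bounded, $\psi^{k}_{*}\mu\ll\mu$; hence if $\mu(A)=0$ then $\mu\bigl((\psi^{k})^{-1}(A)\bigr)=\psi^{k}_{*}\mu(A)=0$, and therefore $\nu_{k}\bigl((\psi^{k})^{-1}(A)\bigr)=0$ because $\nu_{k}\ll\mu$. Using $\sigma$-finiteness of $\mu$, the Radon--Nikodym theorem supplies $h_{k}:=d(\psi^{k}_{*}\nu_{k})/d\mu$, and boundedness of $T^{*k}T^{k}$ forces $h_{k}\in L^{\infty}(X,\mu)$. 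The displayed identity then reads $\langle T^{*k}T^{k}f,g\rangle=\langle h_{k}f,g\rangle$ for all $f,g$, so $T^{*k}T^{k}=M_{h_{k}}$. Consequently
\begin{equation*}
	(T^{*j}T^{j})(T^{*k}T^{k})=M_{h_{j}h_{k}}=M_{h_{k}h_{j}}=(T^{*k}T^{k})(T^{*j}T^{j})
\end{equation*}
for all $j,k\in\mathbb{N}$, which is exactly half-centeredness.

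The routine parts — the induction for $T^{k}$ and the pushforward change of variables — are standard; the only place requiring genuine care is the \emph{measure-theoretic bookkeeping} that lets the form descend to a multiplication operator on $L^{2}(\mu)$, namely verifying the chain of absolute-continuity statements $\psi^{j}_{*}\mu\ll\mu$, $\nu_{k}\ll\mu$ and finally $\psi^{k}_{*}\nu_{k}\ll\mu$. This is precisely where the hypotheses that $C_{\psi}$ be a bounded (hence well-defined) operator and that $\mu$ be $\sigma$-finite (for Radon--Nikodym) are used.
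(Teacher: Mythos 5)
Your proof is correct, but it reaches the goal by a genuinely different route from the paper's. Both arguments pivot on the same key fact, namely that each $T^{*k}T^{k}$ is a multiplication operator, so that all of them lie in the commutative algebra $L^{\infty}(X,\mu)$ and hence commute; the difference is in how that fact is established. You do it by hard measure theory: an explicit formula for $T^{k}$, the pushforward identity $\left\langle T^{*k}T^{k}f,g\right\rangle=\int_{X} f\overline{g}\,d\bigl(\psi^{k}_{*}\nu_{k}\bigr)$ with $\nu_{k}=|\xi_{k}|^{2}\mu$, a chain of absolute-continuity checks, and Radon--Nikodym. The paper instead argues softly: a short computation with the sesquilinear form shows that $T^{*k}T^{k}$ commutes with every multiplication operator $M_{h}$, $h\in L^{\infty}(X,\mu)$, and since $L^{\infty}(X,\mu)$ is maximal abelian in $\mathcal{B}(L^{2}(X,\mu))$ (this is where the paper spends the $\sigma$-finiteness hypothesis, citing Murphy), it follows at once that $T^{*k}T^{k}\in L^{\infty}(X,\mu)$. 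The paper's commutant argument is shorter and never needs $T^{k}$ explicitly, nor $\psi_{*}\mu\ll\mu$, nor any pushforward; your argument avoids the von Neumann algebra input entirely and buys more, namely the explicit density $h_{k}=d\bigl(\psi^{k}_{*}(|\xi_{k}|^{2}\mu)\bigr)/d\mu$. One small gloss on your side: to invoke the finite-valued Radon--Nikodym theorem you should also know that $\psi^{k}_{*}\nu_{k}$ is $\sigma$-finite (pushforwards of $\sigma$-finite measures need not be), but this is immediate from your own boundedness remark: testing the form on $f=g=\chi_{A}$ gives $\psi^{k}_{*}\nu_{k}(A)=\|T^{k}\chi_{A}\|^{2}\le\|T^{k}\|^{2}\mu(A)$ for every $A$ of finite $\mu$-measure, which simultaneously yields $\sigma$-finiteness of the pushforward and the bound $h_{k}\le\|T^{k}\|^{2}$ a.e.
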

		
\begin{proof}
Since $T$ is the composition of two bounded linear operators, we have $T\in \mathcal{B}(L^{2}(X,\mu)).$ Take any $h\in L^{\infty}(X,\mu)$ and let $M_{h}$ denote multiplication by $h.$ For all $f,g \in L^{2}(X,\mu)$
$$\left\langle  g,T^{*} T M_{h}f \right\rangle=\left\langle T g, T M_{h}f \right\rangle= \int_{X}\overline{T(g)(x)}T(h f)(x)d\mu(x)=$$ $$\int_{X}\overline{\xi(x)g(\psi(x)) }\xi(x)h(\psi(x))f(\psi(x)) d\mu(x)=$$
$$\int_{X}\overline{\xi(x)\overline{h(\psi(x))}g(\psi(x)) }\xi(x)f(\psi(x)) d\mu(x)=\int_{X}\overline{T(\overline{h}g)(x)}T(f)(x)d\mu(x)=$$
$$\left\langle T  M_{\bar{h}}g, Tf \right\rangle=\left\langle T  M_{h}^{*}g, Tf \right\rangle=\left\langle  g,M_{h}T^{*} T f \right\rangle.$$
This gives $M_{h}T^{*} T=T^{*} TM_{h}$ and since $h$ was arbitrary $T^{*}T$ commutes with all of $L^{\infty}(X,\mu).$
The von Neumann algebra $L^{\infty}(X,\mu)\subseteq \mathcal{B}(L^{2}(X,\mu))$ is maximal abelian (see~\cite{muphy}) and so $T^{*} T\in L^{\infty}(X,\mu).$
The same argument gives $T^{*k}T^{k}\in L^{\infty}(X,\mu)$ for every $k\in\mathbb{N}$ and therefore the operators in the sequence~\eqref{half} commute with each other. 
\end{proof}
Notice that if, for example, the set $\left\lbrace \xi(x)f(\psi(x)); f\in L^{2}(X,\mu)\right\rbrace $ in is dense in $L^{2}(X,\mu)$ then Proposition~\ref{surject} gives that $T$ is actually centered.
However, in general the operators defined in Proposition~\ref{exempl} will not be centered.
\\
		
Before we proceed any further, let's first fix some notations.
		\\
		
The operators $T^{*k}T^{k}$ are referred to a lot, so in order to make things appear more concise, we write them as $T_{k}$. We again denote $\ker T^{*}=(T\mathcal H)^{\bot}$ by $\mathcal{E}$ and this notation will be used for the rest of the text.
		\\
		
Next, we define a subspace that will be of utmost importance here:
		\\
		
Let $\mathcal{M_{E}}$ be the smallest closed subspace containing $\E$ that is invariant with respect to all the operators $T_{k}.$
Proposition~\ref{surject} indicates that $\mathcal{M_{E}}$ is a natural starting point when investigating the strictly half-centered operators, since this result is saying that half-centered $T$ is centered iff $\mathcal{M_{E}}=\E.$
\\
		
If we have an operator $R\in \mathcal{B(H)}$ and a closed subspace $V\subseteq \mathcal{H}$ such that $R V\subseteq V$ and $R^{*}V\subseteq V,$ then $V$ is said to be \emph{reducing} for $R.$
In the case when $R$ has no reducing subspaces, $R$ is called \emph{irreducible}. If $T$ is centered, then $\E$ is a reducing subspace for both $T_{k}$ and $T^{k}T^{*k}.$ Assuming $\E\neq 0,$ then if $T$ is centered and irreducible, we must have $\dim\ker T^{*}=1.$ This is generally not true for half-centered operators.
\\
		
In this paper we will prove a structure theorem for a half-centered operator satisfying the following assumptions:
\renewcommand{\labelenumi}{\textbf{\Roman{enumi}}}
\begin{enumerate}{\Roman{enumi}}
	\item $T$ is injective and $\mathcal{E}$ has dimension $1$.
			
	\item  $\bigvee_{k=0}^{\infty}T^{k}\mathcal{M_{E}}=\mathcal{H}.$
			
\end{enumerate}
		
Theorem~\ref{space1} below shows that $\bigvee_{k=0}^{\infty}T^{k}\mathcal{M_{E}}$ can alternately be defined as the smallest closed subspace containing $\E$ that is invariant under $T$ and the operators $T_{k}.$ However, without any further conditions this subspace will in general not be reducing $T.$
Notice also that these conditions implies that the Hilbert space $\mathcal{H}$ is separable. 
\\
		
Spread throughout the rest of this section are some examples of half-centered operators that satisfies conditions \textbf{I} and \textbf{II}.
\\
		
Let us recall the notion of "\emph{wandering subspace property}" for an injective operator $R$ on a Hilbert space $\mathcal{H}$. Given $R\in \mathcal{B(H)},$ let as before $\E:=ker R^{*},$ then $R$ is said to satisfy the "\emph{wandering subspace property}" if
\begin{equation}\label{wsp}
	\bigvee_{k=0}^{\infty}R^{k} \E=\mathcal{H}
\end{equation}
This condition resembles \textbf{II}. The subspace $\E$ is often called the wandering subspace. 
\\
		
Closely related to~\eqref{wsp} is the condition
\begin{equation}\label{wsop}
\bigcap_{k=0}^{\infty}R^{k}\mathcal{H}=\left\{0\right\}.
\end{equation}

If for an injective operator $R$ with closed range we let $R'=R(R^{*}R)^{-1},$ then by results in~\cite{SS}~\eqref{wsp} holds for $R$ iff~\eqref{wsop} holds for $R.$ Observe that $\ker R^{*}=\ker R'^{*}$ and $(R')'=R.$ The operator $R'$ is called the Cauchy dual of $R.$ 
\\
		
An important fact about injective operators satisfying~\eqref{wsp} and have closed range is that they are unitary equivalent to the multiplication operator $f(z)\mapsto z f(z)$ on a Hilbert space $\mathcal{L}(\E)$ of $\E$-valued analytic functions (with $\E=\ker R^{*}$).
\\
		
The condition \textbf{II} is actually weaker than~\eqref{wsp} for both $T$ and $T'$ since the subspace $\bigvee_{j=0}^{k}T^{j}\mathcal{M_{E}}$ contains both $T^{k}\mathcal{E}$ and $T'^{k}\mathcal{E}$ for every $k\in \mathbb{N}.$ Indeed, this is trivial for $T.$ To prove it for $T',$ notice that $T^{*}$ is a left inverse for $T',$ so that $T^{*k+1}T'^{k}\mathcal{E}=T^{*}\mathcal{E}=0.$ This gives $T'^{k}\mathcal{E}\subseteq \ker T^{*k+1}.$ It is not hard to see that $\ker T^{*k+1}$ is spanned by the subspaces $T^{j}(T^{*j}T^{j})^{-1}\E$ for $0\leq j\leq k$ and these are all subspaces of $\bigvee_{j=0}^{k}T^{j}\mathcal{M_{E}}.$ It now follows: 
		
\begin{prop}
\textbf{II} holds for $R$ if~\eqref{wsp} or~\eqref{wsop} holds for $R$ or $R'$ (if the latter operator exists).
\end{prop}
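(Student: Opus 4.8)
The plan is to deduce \textbf{II} from each of the four hypotheses by reducing everything to two core implications, namely that \eqref{wsp} for $R$ or for $R'$ already forces \textbf{II}, and then converting the \eqref{wsop} hypotheses into \eqref{wsp} ones. The whole argument rests on a single containment that I would record first: for every $k\in\mathbb{N}$ the finite join $\bigvee_{j=0}^{k}R^{j}\mathcal{M_E}$ contains both $R^{k}\E$ and $R'^{k}\E$. Granting this, if \eqref{wsp} holds for $R$ then $\mathcal H=\bigvee_{k}R^{k}\E\subseteq\bigvee_{k}R^{k}\mathcal{M_E}$, which is \textbf{II}; and if \eqref{wsp} holds for $R'$ then $\mathcal H=\bigvee_{k}R'^{k}\E\subseteq\bigvee_{k}R^{k}\mathcal{M_E}$, again \textbf{II}. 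The two remaining cases follow because, by the Cauchy-dual equivalence of~\cite{SS}, \eqref{wsop} is equivalent to \eqref{wsp} for $R$, and applying the same equivalence to the dual (which has closed range and satisfies $(R')'=R$) also for $R'$; this equivalence is valid precisely in the closed-range setting in which $R'$ is available, so each \eqref{wsop} hypothesis is converted into a \eqref{wsp} hypothesis for the same operator.

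The first containment $R^{k}\E\subseteq R^{k}\mathcal{M_E}\subseteq\bigvee_{j=0}^{k}R^{j}\mathcal{M_E}$ is immediate from $\E\subseteq\mathcal{M_E}$. For the Cauchy dual I would follow the sketch already in the text: since $R^{*}$ is a left inverse of $R'$, one has $R^{*(k+1)}R'^{k}\E=R^{*}\E=0$, so $R'^{k}\E\subseteq\ker R^{*(k+1)}$, and it suffices to show $\ker R^{*(k+1)}\subseteq\bigvee_{j=0}^{k}R^{j}\mathcal{M_E}$.

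The crux, and the step I expect to cost the most, is the identity
\[
\ker R^{*(k+1)}=\Span\bigl\{\,R^{j}(R^{*j}R^{j})^{-1}\E \ :\ 0\le j\le k\,\bigr\},
\]
which I would prove by induction on $k$. Writing $R_j:=R^{*j}R^{j}$ and $W_j:=R^{j}R_j^{-1}\E$, one direction is the computation $R^{*(j+1)}R^{j}R_j^{-1}=R^{*}R_jR_j^{-1}=R^{*}$, which annihilates $\E$, giving $W_j\subseteq\ker R^{*(j+1)}\subseteq\ker R^{*(k+1)}$ for $j\le k$. For the reverse inclusion, given $v\in\ker R^{*(k+1)}$ one has $R^{*k}v\in\ker R^{*}=\E$; setting $w:=R^{k}R_k^{-1}R^{*k}v\in W_k$ one checks $R^{*k}(v-w)=R^{*k}v-R_kR_k^{-1}R^{*k}v=0$, so $v-w\in\ker R^{*k}$ and the inductive hypothesis expresses it in terms of the lower $W_j$. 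Finally each $W_j$ sits inside $\bigvee_{j=0}^{k}R^{j}\mathcal{M_E}$ because $R_j^{-1}\E\subseteq\mathcal{M_E}$: the subspace $\mathcal{M_E}$ is invariant under the self-adjoint operators $R_j$, hence reduces them, and a reducing subspace of an invertible operator is carried to itself by the inverse, so $R_j^{-1}\mathcal{M_E}=\mathcal{M_E}$. The main obstacle is thus purely this kernel computation; once it is in place, the four implications of the statement are assembled exactly as in the first paragraph.
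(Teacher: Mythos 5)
Your proposal is correct and follows essentially the same route as the paper: the paper's argument is precisely the containment $R^{k}\E,\,R'^{k}\E\subseteq\bigvee_{j=0}^{k}R^{j}\mathcal{M_{E}}$ (via $R'^{k}\E\subseteq\ker R^{*(k+1)}$ and the span of the spaces $R^{j}(R^{*j}R^{j})^{-1}\E$), combined with the \cite{SS} equivalence to handle the \eqref{wsop} hypotheses. Your inductive verification of the kernel identity and the observation that $R_{j}^{-1}\mathcal{M_{E}}=\mathcal{M_{E}}$ simply supply the details the paper dismisses as ``not hard to see,'' so there is nothing to fault.
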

For instance, this implies that if $S$ is the shift operator on $\ell^{2},$ then as any operator of the form $A S A^{-1}$ with $A\in \mathcal{B}(\ell^{2})$ satisfies~\eqref{wsop}, it has property \textbf{II}.
\\
		
As two of the most distinguished cases of half-centered operators satisfying \textbf{I} and \textbf{II} are the weighted shifts and the $2$-isometries (in the irreducible non-isometry case) and both of these classes of operators satisfies~\eqref{wsp} and~\eqref{wsop} (this claim is trivial for weigthed shift. For $2$-isometries, see~\cite{SS}). It is natural to ask if~\eqref{wsp} and~\eqref{wsop} are true in general for a half-centered operator satisfing \textbf{I} and \textbf{II}. However, as our next example shows, this is not the case.
		
\begin{exa}\label{exampel}
Let $S$ be the isometric shift on the Hardy space $\mathbb{H}^{2}$, i.e $$f(z)\in\mathbb{H}^{2}\mapsto zf(z).$$ Now consider $$T=a S+ \left(I-S S^{*}\right),$$ with an $a\in\mathbb{C}$ such that $0<\left|a\right|< 1.$ An easy way to see that both $T$ and $T'=T \left(T^{*}T\right)^{-1}$ are half-centered is to write them down as matrices in the standard basis $\left\lbrace z^{k};k\in\mathbb{N}\right\rbrace.$ 
\begin{equation}\label{ex1}
	T=\left[ \begin{matrix}
		1 & 0 & 0 & 0 & .. \\
		a & 0 & 0 & 0 & .. \\
		0 & a & 0 & 0 & .. \\
		0 & 0 & a & 0 & .. \\
		.. & .. & .. & .. & .. \\
       \end{matrix} \right]
\end{equation}\label{ex2}
\begin{equation}
	T^{*}T=\left[ \begin{matrix}
		1+|a|^{2} & 0 & 0 & 0 & .. \\
		0 & |a|^{2} & 0 & 0 & .. \\
		0 & 0 & |a|^{2} & 0 & .. \\
		0 & 0 & 0 & |a|^{2} & .. \\
		.. & .. & .. & .. & .. \\
		\end{matrix} \right]
\end{equation}
\begin{equation}\label{ex3}
	T'=\left[ \begin{matrix}
	\frac{1}{1+\left|a\right|^{2}} & 0 & 0 & 0 & .. \\
	\frac{a}{1+\left|a\right|^{2}} & 0 & 0 & 0 & .. \\
	0 & \frac{a}{\left|a\right|^{2}} & 0 & 0 & .. \\
	0 & 0 & \frac{a}{\left|a\right|^{2}} & 0 & .. \\
		.. & .. & .. & .. & .. \\
	\end{matrix} \right].
\end{equation}
It is not hard to see now that for all $k\in\mathbb{N},$ both matrices $T^{*k}T^{k}$ and $T'^{*k}T'^{k}$ are diagonal.
From~\eqref{ex1}, we see that $\ker T^{*}$ is spanned by $\bar{a}- z $ and from~\eqref{ex2} that $T^{*}T-\left|a\right|^{2}I $ is the operator $f(z)\mapsto (1+\left|a\right|^{2})f(0).$ Thus $1,z\in\mathcal{M_{E}}$ and since $T^{k}z=a^{k}z^{k},$ this gives $$\bigvee_{k=0}^{\infty}T^{k} \mathcal{M_{E}}=\mathbb{H}^{2}.$$ Hence both \textbf{I} and \textbf{II} are fulfilled by $T$.
However, as $$\sum_{k=0}^{\infty}a^{k}z^{k}=\frac{1}{a- z}\in \mathbb{H}^{2}$$ is an eigenvector for $T$ and thus in the range of $T^{k}$ for all $k\in \mathbb{N},$ $T$ does not satisfy~\eqref{wsop} and hence the Cauchy dual $T'$ does not possess the wandering subspace property.
\end{exa}
		
\begin{exa}\label{llll}
The operator in Example~\ref{exampel} is a special case of a more general type of half-centered operator.
Let $\mathcal{H}$ be a separable Hilbert space with an orthonormal basis $\left\{x_{k}:k\in\mathbb{N}\right\}$ and inner product $\left\langle .,.\right\rangle.$ Let $J$ be an injective shift operator with respect to this basis, so that $$J x_{k}=a_{k} x_{k+1}$$ for some nonzero constants $a_{k}\in\mathbb{C}.$
If $x_{0}\otimes x_{n}^{*}$ denotes the operator $x\mapsto \left\langle x,x_{n}\right\rangle x_{0},$ then for any $n\in\mathbb{N}$ and $a\in\mathbb{C},$ the operator 
\begin{equation}\label{ilil}
	T=J+a (x_{0}\otimes x_{n}^{*})
\end{equation}
is half-centered. 
\end{exa}
		
In fact, the operator~\eqref{ilil} can be seen to be of type~\eqref{wqqw} if we view $\mathcal{H}$ as $L^{2}(\mathbb{N},\mu),$ where $\mu$ is the counting measure. Define $\psi_{n}:\mathbb{N}\rightarrow\mathbb{N}$ by $\psi_{n}(k)=k-1$ if $k\geq 1$ and $\psi_{n}(0)=n$ and let $\xi(k)=a_{k-1}$ if $k\geq 1$ and $\xi(0)=a.$ It is not hard to see that the operator $$f(x)\mapsto \xi(x)f(\psi_{n}(x))$$ coincides with the operator~\eqref{ilil}. Hence, by Proposition~\ref{exempl} the latter is half-centered.

\subsection{The Main Theorem}
The main purpose of this paper is to prove the following result. 
\begin{mthm}\label{mthm1}
	Let $T$ be an injective half-centered operator on $\mathcal H$ such that $\bigvee_{k=0}^{\infty}T^{k}\mathcal{M_{E}}=\mathcal{H}$ and $\dim \left(T\mathcal{H}\right)^{\bot}=1.$
\\
				
Then there are two possibilities (though not mutually exclusive).
\renewcommand{\labelenumi}{\arabic{enumi}}
\begin{enumerate}
\item 
	There is an orthonormal basis $\left\{x_{k}:k\in\mathbb{N}\right\}$ of common eigenvectors for the operators $\left\{T_{k}\right\}_{k\in\mathbb{N}}$ such that with respect to this basis, $T$ is either a weighted shift or there is a weighted shift $J$ such that
	\begin{equation}\label{ebag}
	T=J+a (x_{0}\otimes x_{n}^{*})
	\end{equation}
	for a $n\in\mathbb{N}$ and $a\in\mathbb{C}.$

\item
	There are constants $a,b,c,d\in \mathbb{R},$ not all zero and $k,n\in \mathbb{N}^{+}$ such that 
	\begin{equation}\label{gabe}
	aI+bT^{*k}T^{k} +c T^{*n}T^{n}+ d T^{*k+n}T^{k+n}=0.
	\end{equation}
					
	\end{enumerate}
Moreover, if $\dim\mathcal{M_{E}}\geq 3$ then~\eqref{gabe} holds with $a\neq 0$ and the range of $T$ is closed.
\end{mthm}
			
\begin{rem}
Notice that if $\dim \mathcal{M_{E}}=1$ then $\mathcal{M_{E}}=\E$ and hence $T^{*k}T^{k}\E\subseteq \E$ for all $k\in\mathbb{N}.$ By Proposition~\ref{surject}, $T$ is centered and the condition $\bigvee_{k=0}^{\infty}T^{k}\E=\mathcal{H}$ gives that $T$ is a weighted shift.
\end{rem}
So far, we have not given any concrete example of a half-centered operator where $\dim \mathcal{M_{E}}\geq 3.$ In order to show that this class is not just void, we construct below a half-centered operator having the property that $\mathcal{M_{E}}$ is the whole space.
			
\begin{exa}\label{head}
Let $\mathcal{H}=\ell^{2}$ with standard basis $\left\{e_{k}:k\in\mathbb{N}\right\}$ and let $S$ be the shift operator.
For $ 0< q <1,$ let $A_{q}$ be the operator that in the basis $e_{k}$ can be written as the infinite matrix 
\begin{equation}\label{qq}
	A_{q}=\left[ \begin{matrix}
		0 & 1 & 0 & 0 & .. \\
		1 & 0 & q & 0 & .. \\
		0 & q & 0 & q^{2} & .. \\
		0 & 0 & q^{2} & 0 & .. \\
	.. & .. & .. & .. & .. \\
	\end{matrix} \right].
\end{equation}
Since $ 0< q <1,$ it is straightforward to deduce that $A_{q}$ is a compact self-adjoint operator. Moreover, it is easy to see that
\begin{equation}\label{qed}
	S^{*}A_{q}S=q A_{q} 
\end{equation}
and $\ker A_{q}=\left\lbrace 0\right\rbrace .$
Thus $\ell^{2}$ has an orthonormal basis $\left\{x_{k}:k\in\mathbb{N}\right\}$ consisting of eigenvectors for $A_{q}$ and we can easily deduce that $\left\langle x_{k}, e_{0}\right\rangle \neq 0$ for all $k$ which implies that every eigenspace of $A_{q}$ must be one-dimensional. 
				
Since $A_{q}$ is self-adjoint, there is $r>0$ such that $A_{q}+r I$ is invertible and positive. Now let 
\begin{equation}
T=(A_{q}+r I)^{\frac{1}{2}}S(A_{q}+r I)^{-\frac{1}{2}}.
\end{equation}
Then 
\begin{equation}\label{qe}
T^{n}=(A_{q}+r I)^{\frac{1}{2}}S^{n}(A_{q}+r I)^{-\frac{1}{2}}
\end{equation}
and so by~\eqref{qed}, we see that 
\begin{equation}\label{astttr}
T^{*n}T^{n}=(A_{q}+r I)^{-\frac{1}{2}}(q^{n}A_{q}+r I)(A_{q}+r I)^{-\frac{1}{2}}
\end{equation}
from which it follows that $(T^{*n}T^{n})(T^{*m}T^{m})=(T^{*m}T^{m})(T^{*n}T^{n})$ for $m,n\geq 0$ and hence $T$ is half-centered. Furthermore, if $\lambda_{k}$ is the eigenvalue of the eigenvector $x_{k}$ for $A_{q},$ then $x_{k}$ is clearly an eigenvector for $T^{*n}T^{n},$ with eigenvalue $\frac{q^{n}\lambda_{k}+r}{\lambda_{k}+r}.$ Since the function $\frac{q^{n}x+r}{x+r}$ is one to one on $(-r,\infty),$ we get that $T^{*n}T^{n}$ has only one-dimensional eigenspaces. From the formula~\eqref{qe}, we have $$\E=\ker T^{*}=(A_{q}+r I)^{-\frac{1}{2}}e_{0}$$ giving $\left\langle \E,x_{k}\right\rangle\neq 0$ for all $k.$ If $V$ were a nontrivial closed subspace, invariant under the $T_{k}$'s and orthogonal to $\E,$ then $V$ would have to contain a nonzero eigenvector $x_{m}$ of $A_{q},$ giving $\left\langle \E,x_{m} \right\rangle =0,$ a contradiction. Since the operators $T_{k}$ are all self adjoint, also $\mathcal{M_{E}}^{\bot}$ is invariant with respect to them and so by the last sentence, we must have $\mathcal{M_{E}}^{\bot}=\left\lbrace 0 \right\rbrace $ giving $\mathcal{M_{E}}=\ell^{2}.$
\end{exa}
It can be seen from~\eqref{qe} that the operator defined in Example~\ref{head} satisfies the equation
\begin{equation}
I-(1+q^{-1})T^{*}T+q^{-1} T^{*2}T^{2}=0.
\end{equation}
This is similar to the one that defines the $2$-isometries.
Indeed, the $2$-isometries are a natural occurring example where often $\dim\mathcal{M_{E}}\geq 3$, although the way they usually are constructed makes this a bit cumbersome to check.

		\section{Theory for general injective operators}
			Before we can tackle the main theorem we must first build up some machinery.
			\\
			
			While the theory presented in this section was developed specifically to deal with the half-centered operators, it turned out that it could, with minor extra work, be generalize it to a more general setting. Hence it is presented in this fashion.
			\\
			
			Let us fix some more notation:
			\\
			
			$\mathcal{H}$ is a separable complex Hilbert space with inner product $\left\langle \cdot , \cdot \right\rangle$ and $R$ is a fixed bounded injective linear operator on $\mathcal{H}.$
			Let $\mathcal{B} \left(\mathcal{H}\right)$ be the space of bounded operators on $\mathcal{H}$ and denote $\ker R^{*}$ by $\E.$  Throughout the rest of this text the letter $T$ will be reserved for injective half-centered operators.
			Given a closed subspace $V$ of the Hilbert space $H$ we write $P_{V}$ for the orthogonal projection onto $V.$
			Also, for an operator $B$ and a subspace $V$ of $\mathcal{H}$ we write the restriction of $B$ to $V$ as $B|_{V}$ (or sometimes, to avoid multiple index, we write $B|V$ instead). Notice that if $V$ is an invariant subspace for $B$ then $$\left(B|_{V}\right)^{k}=B^{k}|_{V}$$ for all $k\in\mathbb{N}.$ When we have an algebra of operators $\mathcal{A}\subseteq \mathcal{B}\left(\mathcal{H}\right)$ and a subspace $V$ which is invariant under all operators in $\mathcal{A},$ then $\mathcal{A}|V\subseteq \mathcal{B}\left(V\right)$ is the $C^{*}$-algebra of operators that consists of elements in $\mathcal{A}$ restricted to $V.$
			\\
			
			The main idea of this section is to decompose the subspace $\bigvee_{m=0}^{\infty}R^{m}\mathcal{M_{E}}$ into a direct product $\oplus_{m=0}^{\infty}V_{m}$ of orthogonal subspaces $V_{m}$ with $V_{0}=\mathcal{M_{E}},$ such that $R$ acts on each $V_{m}$ in a "reasonable" predictable way. Moreover, each $V_{m}$ will be an invariant subspace for all the operators $R^{*k}R^{k}.$ We will furthermore show that there is a strong relation between the restriction of $R^{*k}R^{k}$ to different $V_{m}$'s in the sense that there is a natural surjective homomorphism from a sub-algebra of the von Neumann algebra generated by the operators $\left\lbrace R^{*k}R^{k}|V_{m};k\in\mathbb{N} \right\rbrace $ onto the von Neumann algebra generated by $\left\lbrace R^{*k}R^{k}|V_{n};k\in\mathbb{N} \right\rbrace $ when $n\geq m.$
			This construction makes up the technical core of this text, but will take some time to complete.

			\subsection{The $C^{*}$-algebras $\mathbb{M}_{R,n}$ and $\mathbb{M}_{R}^{n}$}
				
			The purpose of this subsection is to introduce two sequences of $C^{*}$-algebras $\mathbb{M}_{R,n}$ and $\mathbb{M}_{R}^{n},$ both indexed over $\mathbb{N}.$ For some basic theory about $C^{*}$-algebras, we recommend~\cite{muphy}.
				\\ 
				
				We also remind the reader of the notation $$R_{k}=R^{*k}R^{k}$$ that will be used for the remainder of the text.
				Note that if $V$ is an invariant subspace for $R,$ then 
				\begin{equation}\label{millian}
					\left(R|_{V}\right)_{k}=\left(R|_{V}\right)^{*k}\left(R|_{V}\right)^{k}=P_{V}R^{*k}R^{k}P_{V}|_{V}=R_{k}|_{V}.
				\end{equation}
				We will for technical reasons often not differentiate between the restriction of an operator $A$ to a subspace $V$ and $P_{V}A P_{V},$ so for example, we write the equality~\eqref{millian} as $\left(R|_{V}\right)_{k}=P_{V}R_{k}P_{V}.$ This is hopefully never a source of confusion. 
				To further simplify notation, we write $$\mathcal{H}_{n}=\overline{R^{n}\mathcal{H}}.$$
				Notice that although we may have $\mathcal{H}_1\neq \mathcal{H},$ this does not in general imply $\mathcal{H}_{n+1}\neq \mathcal{H}_{n}$ for all $n\in\mathbb{N}$.
				\\
				
				Next, we are going to define some of the main objects studied in this section: 
				\\
				
				Let $\mathbb{M}_{R}$ be the von Neumann algebra generated by the operators $R_{k}$ for all $k\in\mathbb{N}$.
				\\
				
				If $\theta_{R}$ is the isometric part of the polar decomposition of $R$ i.e $R=\theta_{R} R_{1}^{\frac{1}{2}},$ let $\mathbb{M}_{R}^{1}$ be the von Neumann algebra generated by the operators

				\begin{center}
					$\theta_{R}^{*}R_{j}\theta_{R}$ for all $j\in\mathbb{N}.$
				\end{center}
				
				If $R$ has a closed range, then $R_{1}$ is invertible, so $\theta_{R}=R R_{1}^{-\frac{1}{2}}$ and thus in this case we have $$\theta_{R}^{*}R_{j}\theta_{R}=R_{1}^{-\frac{1}{2}}R_{j+1}R_{1}^{-\frac{1}{2}}\in\mathbb{M}_{R}.$$ So for closed range $R$ it is easy to see that $\mathbb{M}_{R}^{1}$ is a sub-algebra of $\mathbb{M}_{R}.$ This is also true in general:
				
				\begin{prop}\label{argg}\label{pust}
					The von Neumann algebra $\mathbb{M}_{R}^{1}$ is a sub-algebra of $\mathbb{M}_{R}.$
					Moreover, $\mathbb{M}_{R}^{1}$ is isomorphic to $\mathbb{M}_{R|\mathcal{H}_{1}}.$
					
				\end{prop}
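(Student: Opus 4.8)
The plan is to exploit that, since $R$ is injective, the partial isometry $\theta_R$ in the polar decomposition $R=\theta_R R_1^{1/2}$ is in fact an isometry. Indeed $R_1^{1/2}=|R|$ is injective and self-adjoint, so its range is dense and the initial space of $\theta_R$ is all of $\mathcal{H}$; thus $\theta_R^*\theta_R=I$ and $\theta_R\theta_R^*=P_{\mathcal{H}_1}$, where $\mathcal{H}_1=\overline{R\mathcal{H}}$ is the final space. Write $A_j:=\theta_R^*R_j\theta_R$ for the generators of $\mathbb{M}_R^1$.

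For the first assertion I would first record the identity $R_{j+1}=R^*R_jR=R_1^{1/2}\theta_R^*R_j\theta_R R_1^{1/2}=R_1^{1/2}A_jR_1^{1/2}$, obtained by substituting $R=\theta_R R_1^{1/2}$ and $R^*=R_1^{1/2}\theta_R^*$. In the closed-range case one multiplies by the bounded operator $R_1^{-1/2}\in\mathbb{M}_R$ on both sides to get $A_j\in\mathbb{M}_R$, as already noted in the text. The general case is the only real obstacle, since one must make sense of ``$R_1^{-1/2}$'' when $R$ does not have closed range. I would handle this by spectral truncation: let $Q_\varepsilon=\chi_{(\varepsilon,\infty)}(R_1)$ be the spectral projection of $R_1$, which lies in $\mathbb{M}_R$ and on whose range $R_1$ is bounded below, so that $R_1^{-1/2}Q_\varepsilon$ is a bounded element of $\mathbb{M}_R$ by Borel functional calculus. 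Since $Q_\varepsilon$ commutes with $R_1^{1/2}$ and $(R_1^{-1/2}Q_\varepsilon)R_1^{1/2}=Q_\varepsilon$, compressing the identity above yields $Q_\varepsilon A_j Q_\varepsilon=(R_1^{-1/2}Q_\varepsilon)R_{j+1}(R_1^{-1/2}Q_\varepsilon)\in\mathbb{M}_R$. As $R_1$ is injective, its spectral measure has no mass at $0$, so $Q_\varepsilon\to I$ strongly as $\varepsilon\to0$; hence $Q_\varepsilon A_jQ_\varepsilon\to A_j$ strongly, and because $\mathbb{M}_R$ is strongly closed we conclude $A_j\in\mathbb{M}_R$. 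As this holds for every $j$, $\mathbb{M}_R^1\subseteq\mathbb{M}_R$.

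For the isomorphism I would use that $\theta_R$, viewed as a map $\mathcal{H}\to\mathcal{H}_1$, is unitary, so conjugation $B\mapsto\theta_R^*B\theta_R$ is a normal $*$-isomorphism of $\mathcal{B}(\mathcal{H}_1)$ onto a von Neumann subalgebra of $\mathcal{B}(\mathcal{H})$ and therefore carries the von Neumann algebra generated by a family of operators onto the one generated by their images. It remains to identify the images of the generators: by~\eqref{millian} the generators of $\mathbb{M}_{R|\mathcal{H}_1}$ are $(R|_{\mathcal{H}_1})_j=P_{\mathcal{H}_1}R_jP_{\mathcal{H}_1}$, and using $P_{\mathcal{H}_1}\theta_R=\theta_R$ together with $\theta_R^*P_{\mathcal{H}_1}=\theta_R^*$ one gets $\theta_R^*(P_{\mathcal{H}_1}R_jP_{\mathcal{H}_1})\theta_R=\theta_R^*R_j\theta_R=A_j$. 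Hence the conjugation maps $\mathbb{M}_{R|\mathcal{H}_1}$ isomorphically onto $\mathbb{M}_R^1$.

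I expect the spectral-truncation step to be the crux: showing that the bounded operator $A_j$ is genuinely a strong limit of elements of $\mathbb{M}_R$, rather than merely formally equal to the unbounded expression $R_1^{-1/2}R_{j+1}R_1^{-1/2}$, is precisely where the injectivity of $R$ (hence of $R_1$) enters in an essential way.
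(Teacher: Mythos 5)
Your proof is correct, but for the containment $\mathbb{M}_{R}^{1}\subseteq\mathbb{M}_{R}$ you take a genuinely different route from the paper. Both arguments start from the same identity $R_{j+1}=R_{1}^{\frac{1}{2}}\left(\theta_{R}^{*}R_{j}\theta_{R}\right)R_{1}^{\frac{1}{2}}$, but the paper then invokes the double commutant theorem: for $m\in\mathbb{M}_{R}'$ it sandwiches the commutator between two copies of $R_{1}^{\frac{1}{2}}$, uses that $R_{j+1}$ and $R_{1}^{\frac{1}{2}}$ commute with $m$, and cancels the outer factors via the dense range of $R_{1}^{\frac{1}{2}}$ (injectivity of $R$), concluding $\theta_{R}^{*}R_{j}\theta_{R}\in\mathbb{M}_{R}''=\mathbb{M}_{R}$. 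You instead invert $R_{1}^{\frac{1}{2}}$ approximately: the spectral projections $Q_{\varepsilon}=\chi_{(\varepsilon,\infty)}(R_{1})$ lie in $\mathbb{M}_{R}$, the compression $Q_{\varepsilon}\left(\theta_{R}^{*}R_{j}\theta_{R}\right)Q_{\varepsilon}=\left(R_{1}^{-\frac{1}{2}}Q_{\varepsilon}\right)R_{j+1}\left(R_{1}^{-\frac{1}{2}}Q_{\varepsilon}\right)$ is manifestly in $\mathbb{M}_{R}$, and injectivity of $R_{1}$ gives $Q_{\varepsilon}\rightarrow I$ strongly, so strong closedness of $\mathbb{M}_{R}$ finishes. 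Each step checks out (the uniform bound $\left\|Q_{\varepsilon}\right\|\leq 1$ justifies the strong convergence of the triple product). The paper's commutant argument is shorter and avoids functional calculus; yours is more constructive, exhibiting each generator explicitly as a strong limit of elements of $\mathbb{M}_{R}$ — and in fact this truncation technique anticipates the one the paper itself uses later in Proposition~\ref{stön}, where a sequence $y_{k}$ with $y_{k}r_{n}r_{n}^{*}\rightarrow I$ strongly plays exactly this role. In both proofs injectivity of $R$ is the essential ingredient, entering for you through the absence of spectral mass of $R_{1}$ at $0$ and for the paper through the dense range of $R_{1}^{\frac{1}{2}}$. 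Your proof of the second assertion coincides with the paper's, up to conjugating in the opposite direction ($B\mapsto\theta_{R}^{*}B\theta_{R}$ rather than $B\mapsto\theta_{R}B\theta_{R}^{*}$); one cosmetic slip: conjugation by the unitary $\theta_{R}:\mathcal{H}\rightarrow\mathcal{H}_{1}$ carries $\mathcal{B}(\mathcal{H}_{1})$ onto all of $\mathcal{B}(\mathcal{H})$, not merely onto a proper subalgebra, though what you actually use — that it maps the von Neumann algebra generated by the $\left(R|_{\mathcal{H}_{1}}\right)_{j}$ onto the one generated by their images $\theta_{R}^{*}R_{j}\theta_{R}$ — is exactly right.
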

				
				\begin{proof}
					Since $\mathbb{M}_{R}$ is von Neumann algebra, we have by the double commutant theorem 
					$$(\mathbb{M}_{R}')'=\mathbb{M}_{R}$$ where $\mathcal{A}'$ denotes the commutant of the algebra $\mathcal{A}.$ Let $m$ be an element in $\mathbb{M}_{R}'.$
					Since $\theta_{R} R_{1}^{\frac{1}{2}}=R,$ we have $$R_{1}^{\frac{1}{2}}\theta_{R}^{*}R_{j}\theta_{R} R_{1}^{\frac{1}{2}}=R^{*}R_{j}R=R_{j+1}.$$
					thus 
					
					$$R_{1}^{\frac{1}{2}}\theta_{R}^{*}R_{j}\theta_{R} m R_{1}^{\frac{1}{2}}=R_{1}^{\frac{1}{2}}\theta_{R}^{*}R_{j}\theta_{R} R_{1}^{\frac{1}{2}}m=mR_{1}^{\frac{1}{2}}\theta_{R}^{*}R_{j}\theta_{R} R_{1}^{\frac{1}{2}}=R_{1}^{\frac{1}{2}}m\theta_{R}^{*}R_{j}\theta_{R} R_{1}^{\frac{1}{2}}.$$ 
					
					If $R$ is injective then the range of $R_{1}^{\frac{1}{2}}$ is dense in $\mathcal{H},$ this gives $$\theta_{R}^{*}R_{j}\theta_{R} m=m \theta_{R}^{*}R_{j}\theta_{R}$$ for all $m\in \mathbb{M}_{R}'$ so that $$\theta_{R}^{*}R_{j}\theta_{R}\in \mathbb{M}_{R}''=\mathbb{M}_{R}.$$
					For the second claim, note that the map $$B\in\mathcal{B(H)}\mapsto \theta_{R} B \theta_{R}^{*}$$ is a isomorphism $\mathcal{B}\left(\mathcal{H}\right)\rightarrow \mathcal{B}\left(\mathcal{H}_{1}\right)$ such that $$\theta_{R}^{*}R_{j}\theta_{R}\mapsto \theta_{R} \theta_{R}^{*}R_{j}\theta_{R} \theta_{R}^{*}=P_{\mathcal{H}_{1}}R_{j}P_{\mathcal{H}_{1}}=\left(R|_{\mathcal{H}_{1}}\right)_{j}$$ for all $j\in \mathbb{N}.$ Since $\mathbb{M}_{R|\mathcal{H}_{1}}$ is generated by these operators and the map is weakly continuous, the range must be equal to $\mathbb{M}_{R|\mathcal{H}_{1}}.$   
				\end{proof}
				
				By Propositions~\ref{argg}, there is an injective homomorphism 
				$$\mathbb{M}_{R}\leftarrow \mathbb{M}_{R|\mathcal{H}_{1}}$$
				$$\left(R|_{\mathcal{H}_{1}}\right)_{j}=P_{\mathcal{H}_{1}}R_{j}P_{\mathcal{H}_{1}}\mapsto \theta_{R}^{*}R_{j}\theta_{R}.$$
				If we now consider $R|_{\mathcal{H}_{1}}$ instead, we get by the same reasoning that there is an injective homomorphism
				$$\mathbb{M}_{R|\mathcal{H}_{1}}\leftarrow \mathbb{M}_{R|\mathcal{H}_{2}}.$$ So by induction, there is a sequence of injective homomorphisms
				
				\begin{equation}\label{cath}
					\mathbb{M}_{R}\leftarrow \mathbb{M}_{R|\mathcal{H}_{1}}\leftarrow \mathbb{M}_{R|\mathcal{H}_{2}}\leftarrow \mathbb{M}_{R|\mathcal{H}_{3}}\leftarrow \mathbb{M}_{R|\mathcal{H}_{4}}\leftarrow ..
				\end{equation} 
				where the $n$'th arrow is induced by $\theta_{R|\mathcal{H}_{n-1}}:\mathcal{H}_{n-1}\rightarrow \mathcal{H}_{n}.$ 
				Since the maps in~\eqref{cath} are all injective, we can deduce
				
				\begin{prop}\label{basbas}
					If $T$ is half-centered, then $T|_{\mathcal{H}_{n}}$ is also half-centered.
				\end{prop}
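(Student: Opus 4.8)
The plan is to reduce everything to the observation that being half-centered is the same as the von Neumann algebra $\mathbb{M}_T$ being commutative, and then to transport commutativity along the injective homomorphisms in~\eqref{cath}. By definition $T$ is half-centered precisely when the positive operators $T_k$ mutually commute, and since $\mathbb{M}_T$ is generated as a von Neumann algebra by the self-adjoint family $\{T_k\}_{k\in\mathbb{N}}$, this holds if and only if $\mathbb{M}_T$ is abelian. Applying the same remark to the injective operator $T|_{\mathcal{H}_n}$ shows that the statement to be proved is exactly that $\mathbb{M}_{T|\mathcal{H}_n}$ is abelian.

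First I would record that $\mathcal{H}_n=\overline{T^n\mathcal{H}}$ is invariant under $T$ and that $T|_{\mathcal{H}_n}$ is again injective, so that Proposition~\ref{argg} and the chain~\eqref{cath} apply with $R=T$. Composing the first $n$ arrows of~\eqref{cath} produces an injective homomorphism $\mathbb{M}_{T|\mathcal{H}_n}\hookrightarrow \mathbb{M}_T$. If $T$ is half-centered then $\mathbb{M}_T$ is abelian by the previous paragraph; a subalgebra of an abelian algebra is abelian, so the image of the homomorphism is abelian, and injectivity then forces $\mathbb{M}_{T|\mathcal{H}_n}$ itself to be abelian. Unwinding the equivalence once more, this says precisely that $T|_{\mathcal{H}_n}$ is half-centered.

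The one point that deserves care, and which is really why this proposition is not completely trivial, is that passing to $T|_{\mathcal{H}_n}$ replaces $T_k$ by the compression $P_{\mathcal{H}_n}T_k P_{\mathcal{H}_n}$ (see~\eqref{millian}), and compressions of commuting operators need not commute, since $\mathcal{H}_n$ is in general \emph{not} invariant under the $T_k$. This is exactly the reason one cannot argue by naive restriction and must instead route through the algebra isomorphism $\mathbb{M}_{T|\mathcal{H}_1}\cong\mathbb{M}_T^{1}\subseteq\mathbb{M}_T$ of Proposition~\ref{argg}, which does respect the multiplicative structure. The only routine verification left is that the space built from $T|_{\mathcal{H}_{n-1}}$ as $\overline{(T|_{\mathcal{H}_{n-1}})\mathcal{H}_{n-1}}$ coincides with $\mathcal{H}_n$, so that the inductive application of Proposition~\ref{argg} along~\eqref{cath} is legitimate; this follows from continuity of $T$ and the identity $\overline{T\,\overline{T^{n-1}\mathcal{H}}}=\overline{T^{n}\mathcal{H}}$.
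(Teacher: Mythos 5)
Your proposal is correct and takes essentially the same route as the paper: the paper deduces Proposition~\ref{basbas} exactly from the injectivity of the homomorphisms in the chain~\eqref{cath} built from Proposition~\ref{argg}, which is your argument of identifying half-centeredness with commutativity of the generated von Neumann algebra and embedding $\mathbb{M}_{T|\mathcal{H}_{n}}$ into the abelian algebra $\mathbb{M}_{T}$. Your supplementary checks (injectivity of $T|_{\mathcal{H}_{n}}$, the identity $\overline{T\,\overline{T^{n-1}\mathcal{H}}}=\overline{T^{n}\mathcal{H}}$, and the warning that naive compression $P_{\mathcal{H}_{n}}T_{k}P_{\mathcal{H}_{n}}$ would not work since $\mathcal{H}_{n}$ need not be $T_{k}$-invariant) merely make explicit what the paper leaves implicit.
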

				
				We see that the composition $\mathbb{M}_{R}\leftarrow\mathbb{M}_{R|\mathcal{H}_{n}}$ is induced by an isometry 
				$\theta_{R,n}:\mathcal{H}\rightarrow \mathcal{H}_{n}$
				given by the product 
				\begin{equation}\label{quake3}
					\theta_{R,n}=\theta_{R|\mathcal{H}_{n}}\cdot \theta_{R|\mathcal{H}_{n-1}}\cdot...\cdot \theta_{R|\mathcal{H}_{0}}.
				\end{equation}
				We set $\theta_{R,1}=\theta_{R}$
				and $\theta_{R,0}=I.$
				We will identify $\theta_{R,n}$ with the map on $\mathcal{H}$ given by 
				$$x\in\mathcal{H}\mapsto \left(0,\theta_{R,n}x\right)\in \mathcal{H}_{n}^{\bot}\oplus \mathcal{H}_{n}=\mathcal{H}$$
				So that $$\theta_{R,n}\theta_{R,n}^{*}=P_{\mathcal{H}_{n}}.$$
				More generally, $\theta_{R|\mathcal{H}_{n}}$ is interpreted as a partial isometry (that fails to be left-invertible if $\mathcal{H}_{n}\neq \mathcal{H}$) that is zero on $\mathcal{H}_{n}^{\bot}$ and maps $\mathcal{H}_{n}\rightarrow \mathcal{H}_{n+1}.$
				\\
				
				For a half-centered operator $T$ the isometries~\eqref{quake3} can be described as follows.
				
				\begin{prop}\label{key}
					If $T$ is injective and half-centered, and $T^{n}=\theta_{T^{n}}T_{n}^{\frac{1}{2}}$ is the polar decomposition of $T^{n},$ then
					$$\theta_{T,n}=\theta_{T^{n}}$$ 
					Therefor, if $T$ has closed range, then $$\theta_{T,n}=T^{n}T_{n}^{-\frac{1}{2}}.$$
				\end{prop}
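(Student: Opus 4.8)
The plan is to deduce everything from the uniqueness of the polar decomposition. Recall that a factorization $A=UP$ with $P\ge 0$ and $U$ a partial isometry whose initial space is $\overline{\operatorname{ran}P}$ determines $U$ uniquely. Since $T$, hence $T^{n}$, is injective, $T_{n}^{\frac{1}{2}}=|T^{n}|$ has dense range, so $\theta_{T^{n}}$ is an \emph{isometry} of $\mathcal H$ onto $\mathcal H_{n}$. Thus it suffices to show that $\theta_{T,n}$ is an isometry on all of $\mathcal H$ and that
\[
\theta_{T,n}\,T_{n}^{\frac{1}{2}}=T^{n}.
\]
The first point needs no half-centeredness: by injectivity of $T$ each factor $\theta_{T|\mathcal H_{j}}$ is an isometry of $\mathcal H_{j}$ onto $\mathcal H_{j+1}$, and $\theta_{T,n}$ is their composite $\mathcal H\to\mathcal H_{1}\to\cdots\to\mathcal H_{n}$, hence an isometry. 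So the whole content lies in the displayed identity, which I would prove by induction on $n$, the base case $n=1$ being the definition $\theta_{T}T_{1}^{\frac{1}{2}}=T$.

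For the inductive step, write $\theta_{T,n}=\theta_{T|\mathcal H_{n-1}}\,\theta_{T,n-1}$ and use the inductive hypothesis $\theta_{T,n-1}=\theta_{T^{n-1}}$ to set $W:=\theta_{T^{n-1}}$, an isometry of $\mathcal H$ onto $\mathcal H_{n-1}$ with $W^{*}W=I$, $WW^{*}=P_{\mathcal H_{n-1}}$, and $WT_{n-1}^{\frac{1}{2}}=T^{n-1}$. Putting $B:=|T|_{\mathcal H_{n-1}}|\ge 0$ so that $B^{2}=(T|_{\mathcal H_{n-1}})_{1}$, I would expand $T^{n}=T\,T^{n-1}=(T|_{\mathcal H_{n-1}})W T_{n-1}^{\frac{1}{2}}=\theta_{T|\mathcal H_{n-1}}BWT_{n-1}^{\frac{1}{2}}$ and compare with $\theta_{T,n}T_{n}^{\frac{1}{2}}=\theta_{T|\mathcal H_{n-1}}WT_{n}^{\frac{1}{2}}$. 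Since $\theta_{T|\mathcal H_{n-1}}$ is isometric (hence injective) on $\mathcal H_{n-1}$, which contains the ranges of both $BWT_{n-1}^{\frac{1}{2}}$ and $WT_{n}^{\frac{1}{2}}$, the identity reduces to $BWT_{n-1}^{\frac{1}{2}}=WT_{n}^{\frac{1}{2}}$. Applying $W^{*}$ and using \eqref{millian} to get $W^{*}B^{2}W=W^{*}T_{1}W=:C$, together with $(W^{*}BW)^{2}=W^{*}B^{2}W=C$ and positivity of $W^{*}BW$ to conclude $W^{*}BW=C^{\frac{1}{2}}$, this becomes the single scalar-type identity
\[
C^{\frac{1}{2}}\,T_{n-1}^{\frac{1}{2}}=T_{n}^{\frac{1}{2}}.
\]

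The main obstacle is exactly this last identity, and it is where half-centeredness is indispensable. One always has $T_{n-1}^{\frac{1}{2}}\,C\,T_{n-1}^{\frac{1}{2}}=T^{*(n-1)}T_{1}T^{n-1}=T_{n}$, so $C^{\frac{1}{2}}T_{n-1}^{\frac{1}{2}}$ is \emph{a} square root of $T_{n}$; but a product of two positive operators is positive (and so equals the unique positive square root) only when the factors commute. By Proposition~\ref{argg} and the chain of injective homomorphisms \eqref{cath}, the compression $C=\theta_{T,n-1}^{*}T_{1}\theta_{T,n-1}$ lands in the von Neumann algebra $\mathbb M_{T}$, which is \emph{abelian} precisely because $T$ is half-centered (the generators $T_{k}$ are commuting self-adjoint operators). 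As $T_{n-1}\in\mathbb M_{T}$ too, $C$ and $T_{n-1}$ commute, hence so do $C^{\frac{1}{2}}$ and $T_{n-1}^{\frac{1}{2}}$; then $C^{\frac{1}{2}}T_{n-1}^{\frac{1}{2}}\ge 0$ with square $CT_{n-1}=T_{n}$, forcing $C^{\frac{1}{2}}T_{n-1}^{\frac{1}{2}}=T_{n}^{\frac{1}{2}}$. This establishes $\theta_{T,n}T_{n}^{\frac{1}{2}}=T^{n}$ and hence, by uniqueness of the polar decomposition, $\theta_{T,n}=\theta_{T^{n}}$. Finally, when $T$ has closed range $T_{n}$ is invertible, so $\theta_{T^{n}}=T^{n}T_{n}^{-\frac{1}{2}}$, giving the stated formula.
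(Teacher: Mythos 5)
Your proof is correct and follows essentially the same route as the paper: your inductive step $C^{\frac{1}{2}}T_{n-1}^{\frac{1}{2}}=T_{n}^{\frac{1}{2}}$ with $C=\theta_{T,n-1}^{*}T_{1}\theta_{T,n-1}$ is exactly the paper's Lemma~\ref{labann} unrolled one factor at a time (there $\theta_{T,n}t_{n}=T^{n}$ with $t_{n}=\prod_{k=0}^{n-1}\left(\theta_{T,k}^{*}T_{1}\theta_{T,k}\right)^{\frac{1}{2}}$), after which the paper likewise uses $\theta_{T,k}^{*}T_{1}\theta_{T,k}\in\mathbb{M}_{T}$, commutativity of $\mathbb{M}_{T}$ for half-centered $T$, uniqueness of the positive square root to get $t_{n}=T_{n}^{\frac{1}{2}}$, and density of the range of $T_{n}^{\frac{1}{2}}$ to identify $\theta_{T,n}=\theta_{T^{n}}$. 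The only organizational difference is that the paper isolates the general-injective-operator content in Lemma~\ref{labann} and invokes it, whereas you re-derive it inside your induction.
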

				
				The proof will be given after we prove Lemma~\ref{labann}

				\begin{rem}
					An important result in the theory of centered operators is that $\theta_{T}^{n}=\theta_{T^{n}},$ the above proposition can be seen as a generalization of this.
				\end{rem}

				Next we define a class of sub-algebras of $\mathbb{M}_{R}.$
				
				\begin{defn}
					For every $n\in\mathbb{N},$ we define the von Neumann algebra $\mathbb{M}_{R}^{n}$ to be the weakly closed sub-algebra of $\mathbb{M}_{T}$ generated by the operators $\theta_{R,n}^{*}R_{j}\theta_{R,n}.$ By Lemma~\ref{ridcully} below, this algebra can alternatively be defined as the image of $\mathbb{M}_{R|\mathcal{H}_{n}}$ inside $\mathbb{M}_{R}$ using the composition of homomorphisms in~\eqref{cath}.
				\end{defn}
				
				We write down some direct consequences the preceding definitions:
				
				\begin{lem}\label{laban}
					For all $k,n\in\mathbb{N}$
					\begin{equation}
						\theta_{R|\mathcal{H}_{k},n}\theta_{R,k}=\theta_{R,n+k}
					\end{equation}

				\end{lem}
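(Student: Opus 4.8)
The plan is to unwind the defining product~\eqref{quake3} for the operator $R|_{\mathcal{H}_k}$ and match it, factor by factor, against the product defining $\theta_{R,n+k}$, the whole identity then following by a short induction on $n$. Recall that~\eqref{quake3} is built from the recursion $\theta_{R,m+1}=\theta_{R|\mathcal{H}_{m}}\theta_{R,m}$ (with $\theta_{R,0}=I$), where $\theta_{R|\mathcal{H}_{m}}$ is the isometric part of $R|_{\mathcal{H}_m}$, viewed as a partial isometry on $\mathcal{H}$ that is zero on $\mathcal{H}_m^{\bot}$ and carries $\mathcal{H}_m$ onto $\mathcal{H}_{m+1}$. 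The entire argument rests on two structural observations about the operator $S:=R|_{\mathcal{H}_k}$ acting on $\mathcal{H}_k$: first, that its associated subspaces are $\overline{S^{j}\mathcal{H}_k}=\mathcal{H}_{k+j}$; and second, that restricting $S$ further to $\overline{S^{j}\mathcal{H}_k}$ simply returns $R|_{\mathcal{H}_{k+j}}$, so that the isometric parts coincide.

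First I would verify these two facts. Since $\mathcal{H}_k=\overline{R^k\mathcal{H}}$ is invariant under $R$ (because $R\mathcal{H}_k\subseteq\mathcal{H}_{k+1}\subseteq\mathcal{H}_k$), the powers satisfy $S^{j}x=R^{j}x$ for $x\in\mathcal{H}_k$, and a density argument gives $\overline{S^{j}\mathcal{H}_k}=\overline{R^{j}\,\overline{R^{k}\mathcal{H}}}=\overline{R^{j+k}\mathcal{H}}=\mathcal{H}_{k+j}$; thus the ``$j$-th space'' of $S$ is exactly $\mathcal{H}_{k+j}$. Consequently $S|_{\mathcal{H}_{k+j}}$ is literally the map $x\mapsto Rx$ restricted to the invariant subspace $\mathcal{H}_{k+j}$, i.e.\ $S|_{\mathcal{H}_{k+j}}=R|_{\mathcal{H}_{k+j}}$, and since the polar decomposition of an operator is intrinsic to it, their isometric parts agree. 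Applying~\eqref{quake3} to $S$ therefore yields the recursion
\begin{equation*}
	\theta_{R|\mathcal{H}_{k},n+1}=\theta_{R|\mathcal{H}_{k+n}}\,\theta_{R|\mathcal{H}_{k},n}.
\end{equation*}

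With these in hand the lemma is a one-line induction on $n$. For $n=0$ both sides equal $\theta_{R,k}$, since $\theta_{R|\mathcal{H}_{k},0}=I$. Assuming $\theta_{R|\mathcal{H}_{k},n}\theta_{R,k}=\theta_{R,n+k}$, the two recursions give
\begin{equation*}
	\theta_{R|\mathcal{H}_{k},n+1}\,\theta_{R,k}=\theta_{R|\mathcal{H}_{k+n}}\,\theta_{R|\mathcal{H}_{k},n}\,\theta_{R,k}=\theta_{R|\mathcal{H}_{k+n}}\,\theta_{R,n+k}=\theta_{R,n+k+1},
\end{equation*}
completing the step. The only genuinely delicate point — the main obstacle — is keeping the ambient-space conventions straight: $\theta_{R|\mathcal{H}_{k},n}$ is a priori defined on $\mathcal{H}_k$, and one must check that its extension-by-zero to $\mathcal{H}$ composes correctly with $\theta_{R,k}$ (whose range is $\mathcal{H}_k$), and that the two ways of regarding $\theta_{R|\mathcal{H}_{k+j}}$ — as a partial isometry on $\mathcal{H}_k$ versus on $\mathcal{H}$ — agree on the relevant subspace $\mathcal{H}_{k+j}$ and vanish off it. Because all initial and final spaces in the telescoping product nest as $\mathcal{H}\to\mathcal{H}_k\to\mathcal{H}_{k+1}\to\cdots$, these identifications are harmless, and the induction goes through.
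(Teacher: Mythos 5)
Your proof is correct, and it is essentially the argument the paper intends: Lemma~\ref{laban} is stated there without proof as a direct consequence of the definition~\eqref{quake3}, and your induction via the recursion $\theta_{R,m+1}=\theta_{R|\mathcal{H}_{m}}\theta_{R,m}$, together with the verifications that $\overline{(R|_{\mathcal{H}_{k}})^{j}\mathcal{H}_{k}}=\mathcal{H}_{k+j}$, that $(R|_{\mathcal{H}_{k}})|_{\mathcal{H}_{k+j}}=R|_{\mathcal{H}_{k+j}}$ (so the polar factors agree), and that the extension-by-zero conventions compose harmlessly, is exactly the telescoping the paper leaves to the reader. One small point in your favor: the displayed product~\eqref{quake3} as printed has an off-by-one (its final factor should be $\theta_{R|\mathcal{H}_{n-1}}$, consistent with $\theta_{R,1}=\theta_{R}$ and with $\theta_{R,n}$ having range $\mathcal{H}_{n}$), and your recursion silently uses the corrected reading, which is the right one.
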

				
				\begin{lem}\label{ridcully}
					The image of $\left(R|_{\mathcal{H}_{n}}\right)_{k}\in \mathbb{M}_{R|\mathcal{H}_{n}}$ in $\mathbb{M}_{R}^{n}$ is $\theta_{R,n}^{*}R_{k}\theta_{R,n}$ and hence $\mathbb{M}_{R}^{n}$ is equal to the image of $\mathbb{M}_{R|\mathcal{H}_{n}}$ in $\mathbb{M}_{R}$ by composition of the homomorphisms in~\eqref{cath}.
				\end{lem}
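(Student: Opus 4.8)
The plan is to identify the full composite homomorphism
$$\Phi_n\colon \mathbb{M}_{R|\mathcal{H}_n}\longrightarrow \mathbb{M}_R$$
obtained from the first $n$ arrows of~\eqref{cath} with the single map $A\mapsto \theta_{R,n}^{*}A\,\theta_{R,n}$, and only at the very end to evaluate it on the generators $(R|_{\mathcal{H}_n})_k$. First I would unwind what each individual arrow is. Applying Proposition~\ref{argg} to the injective operator $R|_{\mathcal{H}_{k-1}}$ (whose range-closure is $\mathcal{H}_k$ and whose isometric polar part is $\theta_{R|\mathcal{H}_{k-1}}$), the $k$-th arrow $\mathbb{M}_{R|\mathcal{H}_{k-1}}\leftarrow \mathbb{M}_{R|\mathcal{H}_k}$ is precisely the injective normal $*$-homomorphism $A\mapsto \theta_{R|\mathcal{H}_{k-1}}^{*}A\,\theta_{R|\mathcal{H}_{k-1}}$, where $\theta_{R|\mathcal{H}_{k-1}}$ is viewed as a partial isometry on $\mathcal{H}$ that is unitary from $\mathcal{H}_{k-1}$ onto $\mathcal{H}_k$ and zero on $\mathcal{H}_{k-1}^{\bot}$.

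Next I would compose these $n$ maps. Since each has the form $\mathrm{Ad}(\theta)\colon A\mapsto \theta^{*}A\theta$ and these satisfy $\mathrm{Ad}(\theta)\circ\mathrm{Ad}(\phi)=\mathrm{Ad}(\phi\theta)$, the composite $\Phi_n$ equals $\mathrm{Ad}$ of the product $\theta_{R|\mathcal{H}_{n-1}}\cdots\theta_{R|\mathcal{H}_1}\theta_{R|\mathcal{H}_0}$. By Lemma~\ref{laban} with the index $1$ (equivalently the recursion $\theta_{R,k+1}=\theta_{R|\mathcal{H}_k}\theta_{R,k}$ underlying~\eqref{quake3}, with $\theta_{R,0}=I$), this product telescopes to $\theta_{R,n}$, so $\Phi_n(A)=\theta_{R,n}^{*}A\,\theta_{R,n}$ for every $A\in\mathbb{M}_{R|\mathcal{H}_n}$. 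At this step I would check the partial-isometry bookkeeping is consistent, using $\theta_{R,n}^{*}\theta_{R,n}=I$ and $\theta_{R,n}\theta_{R,n}^{*}=P_{\mathcal{H}_n}$.

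Evaluating $\Phi_n$ on a generator then yields the first assertion. By~\eqref{millian} the operator $(R|_{\mathcal{H}_n})_k$, regarded on $\mathcal{H}$, equals $P_{\mathcal{H}_n}R_k P_{\mathcal{H}_n}$, and since $\theta_{R,n}$ has range $\mathcal{H}_n$ we have $P_{\mathcal{H}_n}\theta_{R,n}=\theta_{R,n}$, whence
$$\Phi_n\big((R|_{\mathcal{H}_n})_k\big)=\theta_{R,n}^{*}P_{\mathcal{H}_n}R_kP_{\mathcal{H}_n}\theta_{R,n}=\theta_{R,n}^{*}R_k\theta_{R,n}.$$
For the ``hence'' part I would use that $\Phi_n$ is injective and weakly continuous (normal), so its image is a von Neumann algebra, namely the one generated by the elements $\theta_{R,n}^{*}R_k\theta_{R,n}$; as $\mathbb{M}_{R|\mathcal{H}_n}$ is generated by the $(R|_{\mathcal{H}_n})_k$ and $\mathbb{M}_R^{n}$ is by definition generated by the $\theta_{R,n}^{*}R_k\theta_{R,n}$, the image of $\mathbb{M}_{R|\mathcal{H}_n}$ under the composition in~\eqref{cath} is exactly $\mathbb{M}_R^{n}$.

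Most of this is definition-chasing, but the genuine care sits in the second paragraph, where the composite of the compressions $\mathrm{Ad}(\theta_{R|\mathcal{H}_{k-1}})$ must be collapsed to the single compression $\mathrm{Ad}(\theta_{R,n})$. The point is that each factor is multiplicative only because the relevant $\theta$ restricts to a genuine isometry on the correct subspace, so that $\theta^{*}\theta$ acts as the identity there and no stray projection survives in the middle of a product; and the telescoping of the partial isometries must be matched with the product defining $\theta_{R,n}$ via Lemma~\ref{laban}. Keeping the order of composition and the domains $\mathcal{H}_{k-1}\to\mathcal{H}_k$ straight is the crux of the argument.
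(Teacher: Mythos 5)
Your proof is correct and follows essentially the same route as the paper: the paper's own (terser) proof likewise evaluates the composite homomorphism on the generators via $(R|_{\mathcal{H}_n})_k = P_{\mathcal{H}_n}R_kP_{\mathcal{H}_n}$ and $P_{\mathcal{H}_n}\theta_{R,n}=\theta_{R,n}$, and then invokes weak continuity and generation; the telescoping identification of the composite with $A\mapsto\theta_{R,n}^{*}A\,\theta_{R,n}$, which you carefully re-derive, is established in the paper in the text surrounding \eqref{cath} and \eqref{quake3} rather than inside the lemma's proof. Your extra care with the $\mathrm{Ad}$-composition and the normality of the injective homomorphism (ensuring the image is weakly closed) only makes explicit what the paper leaves implicit.
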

				
				\begin{proof}
					We have $\left(R|_{\mathcal{H}_{n}}\right)_{k}=P_{\mathcal{H}_{n}}R_{k}P_{\mathcal{H}_{n}}$
					since $\mathcal{H}_{n}$ is an invariant subspace for $R$ and also $P_{\mathcal{H}_{n}}\theta_{R,n}=\theta_{R,n}.$ Therefore the image of $\left(R|_{\mathcal{H}_{n}}\right)_{k}$ in $\mathbb{M}_{R}$ is given by $\theta_{R,n}^{*}R_{k}\theta_{R,n}.$ The second part is obvious as the operators $\left(R|_{\mathcal{H}_{n}}\right)_{k}$ generates $\mathbb{M}_{R|\mathcal{H}_{n}}$ and the homomorphisms in~\eqref{cath} is weakly continuous.
				\end{proof}
				
				\begin{cor}
					If $T$ is half-centered and has closed range, then $\mathbb{M}_{R}^{n}$ is generated by the operators $T_{k+n}T_{n}^{-1}$ (in the weak operator topology).
				\end{cor}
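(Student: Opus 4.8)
The plan is to work directly from the definition of $\mathbb{M}_{T}^{n}$ as the von Neumann algebra generated by the operators $\theta_{T,n}^{*}T_{j}\theta_{T,n}$, $j\in\mathbb{N}$, and to identify these generators explicitly under the closed-range hypothesis. First I would invoke Proposition~\ref{key}: since $T$ is injective, half-centered and has closed range, $T^{n}$ is bounded below, so $T_{n}=T^{*n}T^{n}$ is invertible and the polar decomposition gives $\theta_{T,n}=T^{n}T_{n}^{-\frac{1}{2}}$, hence $\theta_{T,n}^{*}=T_{n}^{-\frac{1}{2}}T^{*n}$.

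Next I would substitute and simplify a single generator. Using $T^{*n}T_{j}T^{n}=T^{*n}T^{*j}T^{j}T^{n}=T^{*(n+j)}T^{n+j}=T_{n+j}$, we obtain
$$\theta_{T,n}^{*}T_{j}\theta_{T,n}=T_{n}^{-\frac{1}{2}}T_{n+j}T_{n}^{-\frac{1}{2}}.$$
The crux is then to collapse the two square-root factors into $T_{n}^{-1}$, and this is exactly where the half-centered hypothesis is used (beyond supplying the closed-range polar decomposition). The operators $\left\{T_{k}\right\}$ are mutually commuting self-adjoint elements, so $\mathbb{M}_{T}$ is abelian; as $T_{n}\in\mathbb{M}_{T}$ is invertible, the Borel functional calculus places $T_{n}^{-\frac{1}{2}}\in\mathbb{M}_{T}$, whence $T_{n}^{-\frac{1}{2}}$ commutes with $T_{n+j}$. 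Therefore $\theta_{T,n}^{*}T_{j}\theta_{T,n}=T_{n+j}T_{n}^{-1}$ for every $j\in\mathbb{N}$.

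Finally, since by definition $\mathbb{M}_{T}^{n}$ is the weakly closed algebra generated by the $\theta_{T,n}^{*}T_{j}\theta_{T,n}$, and these are precisely the operators $T_{k+n}T_{n}^{-1}$ ($k\in\mathbb{N}$), the claim follows at once. The only point requiring any care—the single potential obstacle—is the commutation step that turns $T_{n}^{-\frac{1}{2}}T_{n+j}T_{n}^{-\frac{1}{2}}$ into $T_{n+j}T_{n}^{-1}$; everything else is bookkeeping with the polar decomposition furnished by Proposition~\ref{key}.
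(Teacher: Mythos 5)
Your proposal is correct and follows essentially the same route as the paper: invoke Proposition~\ref{key} to write $\theta_{T,n}=T^{n}T_{n}^{-\frac{1}{2}}$ (valid since closed range plus the paper's standing injectivity assumption make $T_{n}$ invertible) and then compute $\theta_{T,n}^{*}T_{k}\theta_{T,n}=T_{k+n}T_{n}^{-1}$. The only difference is that you spell out the commutation step $T_{n}^{-\frac{1}{2}}T_{n+j}T_{n}^{-\frac{1}{2}}=T_{n+j}T_{n}^{-1}$ via functional calculus in the abelian algebra $\mathbb{M}_{T}$, which the paper leaves implicit.
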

				
				\begin{proof}
					By Proposition~\ref{key} 
					$$\theta_{T,n}=T T_{n}^{-\frac{1}{2}}.$$
					From this we get $$\theta_{T,n}^{*}T_{k}\theta_{T,n}=T_{k+n}T_{n}^{-1}.$$
				\end{proof}
				
				Next we introduce another class of $C^{*}$-algebras associated to $R$ called $\mathbb{M}_{R,n}.$ These will in general be non-unital weakly closed algebras of $\mathcal{B}(\mathcal{H})$ that has $\mathcal{H}_{n}$ as an invariant subspace and 
				$\mathbb{M}_{R,n} \mathcal{H}_{n}^{\bot}=0.$ Moreover, $\mathbb{M}_{R,n}| \mathcal{H}_{n}$ is a von Neumann algebra such that $\mathbb{M}_{R,n}|\mathcal{H}_{n}\cong\mathbb{M}_{R}$ by Proposition~\ref{stön}.
				\\
				
				For every $n\in\mathbb{N},$ take the set of operators $R^{n}\mathbb{M}_{R}R^{*n}=\left\lbrace R^{*n}aR^{n}: a\in\mathbb{M}_{R}\right\rbrace $ and let $\mathbb{M}_{R,n}$ to be the weak closure of this set.
				We let $\mathbb{M}_{R}=\mathbb{M}_{R,0}.$

				\begin{lem}
					$\mathbb{M}_{R,n}$ is a $C^{*}$-algebra.
				\end{lem}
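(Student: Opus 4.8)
The plan is to show that the generating set is already a $*$-algebra and then invoke the standard fact that the weak closure of a $*$-subalgebra of $\mathcal{B}(\mathcal{H})$ is again a $*$-algebra; being weakly closed it is in particular norm-closed, hence a $C^{*}$-algebra.

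First I would check that the set $S=\{R^{n}aR^{*n}:a\in\mathbb{M}_{R}\}$, whose weak closure is $\mathbb{M}_{R,n}$, is a (non-unital) $*$-subalgebra of $\mathcal{B}(\mathcal{H})$. Linearity is immediate since $\mathbb{M}_{R}$ is a linear space and $R^{n}(\lambda a+b)R^{*n}=\lambda R^{n}aR^{*n}+R^{n}bR^{*n}$, and closure under involution follows from $(R^{n}aR^{*n})^{*}=R^{n}a^{*}R^{*n}$ together with $a^{*}\in\mathbb{M}_{R}$. The point that really matters is closure under multiplication, where one computes
$$(R^{n}aR^{*n})(R^{n}bR^{*n})=R^{n}a(R^{*n}R^{n})bR^{*n}=R^{n}(aR_{n}b)R^{*n}.$$
Since $R_{n}=R^{*n}R^{n}$ is one of the generators of $\mathbb{M}_{R}$, the element $aR_{n}b$ again lies in the algebra $\mathbb{M}_{R}$, so the product is back in $S$. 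This is the step I expect to be the heart of the argument: everything hinges on $R_{n}\in\mathbb{M}_{R}$, which is exactly what makes conjugation by $R^{n}$ interact well with the algebra structure.

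Next I would pass to $\mathbb{M}_{R,n}=\overline{S}^{\,w}$ and argue it remains a $*$-algebra. Involution is weakly continuous and the weak topology is a vector-space topology, so $\overline{S}^{\,w}$ is a self-adjoint linear subspace. For multiplicative closure I would use that operator multiplication is separately weakly continuous: fixing $a\in S$, the map $x\mapsto ax$ is weakly continuous, so $\{x:ax\in\overline{S}^{\,w}\}$ is weakly closed and contains $S$, hence contains $\overline{S}^{\,w}$; then fixing $b\in\overline{S}^{\,w}$, the map $x\mapsto xb$ is weakly continuous and $\{x:xb\in\overline{S}^{\,w}\}$ is weakly closed and contains $S$, hence contains $\overline{S}^{\,w}$. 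Thus $\overline{S}^{\,w}\cdot\overline{S}^{\,w}\subseteq\overline{S}^{\,w}$.

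Finally, a weakly closed set is norm-closed (norm convergence implies weak convergence), so $\mathbb{M}_{R,n}$ is a norm-closed self-adjoint subalgebra of $\mathcal{B}(\mathcal{H})$, that is, a $C^{*}$-algebra (indeed a von Neumann algebra). The only genuinely non-formal ingredient is the identity $R_{n}\in\mathbb{M}_{R}$ used in the product computation; the passage to the closure is the routine separate-continuity trick and should present no real obstacle.
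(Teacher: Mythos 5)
Your proof is correct and takes essentially the same approach as the paper: both hinge on the computation $(R^{n}aR^{*n})(R^{n}bR^{*n})=R^{n}(aR_{n}b)R^{*n}$ with $aR_{n}b\in\mathbb{M}_{R}$, which is exactly the paper's product step. Your separate-weak-continuity argument for passing to the closure is merely a careful spelling-out of what the paper dismisses with ``the rest follows now from continuity.''
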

				
				\begin{proof} 
					Additive and adjoint closeness are obvious. 
					If $a,b\in \mathbb{M}_{R},$ then $$R^{n}a R^{*n}R^{n}b R^{n}=R^{n}c R^{*n}$$ with $c=a R^{*n}R^{n}b\in \mathbb{M}_{R}.$ The rest follows now from continuity.
				\end{proof}
				
				Next, we will see that $\theta_{R,n}$ induces a isomorphism between $\mathbb{M}_{R}$ and $\mathbb{M}_{R,n}$ given
				by the mapping $$m\mapsto \theta_{R,n} m \theta_{R,n}^{*}.$$
				To prove this, we first need a technical lemma.
				\begin{lem}\label{labann}
					For every $n\in\mathbb{N},$ there is an operator $r_{n}\in\mathbb{M}_{R}$ such that 
					\begin{equation}\label{thet}
						\theta_{R,n}r_{n}=R^{n}.
					\end{equation}
					and
					\begin{equation}\label{thet2}
						r_{n}^{*}r_{n}=R_{n}.
					\end{equation}
					Moreover, $r_{n}$ has dense range and is given by the formula
					\begin{equation}\label{rform}
						r_{n}=\left(\theta_{R,n-1}^{*}R_{1}\theta_{R,n-1}\right)^{\frac{1}{2}}\cdot \left(\theta_{R,n-2}^{*}R_{1}\theta_{R,n-2}\right)^{\frac{1}{2}}\cdot...\cdot \left(\theta_{R,0}^{*}R_{1}\theta_{R,0}\right)^{\frac{1}{2}}.
					\end{equation}
				\end{lem}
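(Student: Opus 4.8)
The plan is to take \eqref{rform} as the \emph{definition} of $r_{n}$ and verify the three assertions in turn, by induction on $n$. First I would settle membership: each factor $\theta_{R,j}^{*}R_{1}\theta_{R,j}$ appearing in \eqref{rform} is a positive element of $\mathbb{M}_{R}$, since it lies in $\mathbb{M}_{R}^{j}\subseteq\mathbb{M}_{R}$ by the definition of $\mathbb{M}_{R}^{j}$ together with Proposition~\ref{argg}, and a von Neumann algebra is closed under the continuous functional calculus, so its positive square root remains in $\mathbb{M}_{R}$. As a product of elements of $\mathbb{M}_{R}$, the operator $r_{n}$ then lies in $\mathbb{M}_{R}$.

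The computational heart is the identity \eqref{thet}. The key preliminary observation is that $\theta_{R,j}$ is a unitary from $\mathcal{H}$ onto $\mathcal{H}_{j}$, so $\theta_{R,j}^{*}\theta_{R,j}=I$, $\theta_{R,j}\theta_{R,j}^{*}=P_{\mathcal{H}_{j}}$ and $P_{\mathcal{H}_{j}}\theta_{R,j}=\theta_{R,j}$; inserting these and applying \eqref{millian} gives
\[
\theta_{R,j}^{*}R_{1}\theta_{R,j}=\theta_{R,j}^{*}P_{\mathcal{H}_{j}}R_{1}P_{\mathcal{H}_{j}}\theta_{R,j}=\theta_{R,j}^{*}\bigl(R|_{\mathcal{H}_{j}}\bigr)_{1}\theta_{R,j}.
\]
Writing $A_{j}$ for the $j$-th factor of \eqref{rform} and pulling the square root through the unitary conjugation, I get $A_{j}=\theta_{R,j}^{*}\bigl|R|_{\mathcal{H}_{j}}\bigr|\,\theta_{R,j}$, where $\bigl|R|_{\mathcal{H}_{j}}\bigr|$ is the modulus in the polar decomposition $R|_{\mathcal{H}_{j}}=\theta_{R|\mathcal{H}_{j}}\bigl|R|_{\mathcal{H}_{j}}\bigr|$. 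Now I would run the induction using $r_{n}=A_{n-1}r_{n-1}$ and the factorization $\theta_{R,n}=\theta_{R|\mathcal{H}_{n-1}}\theta_{R,n-1}$ supplied by Lemma~\ref{laban}: the middle pair telescopes via $\theta_{R,n-1}\theta_{R,n-1}^{*}=P_{\mathcal{H}_{n-1}}$, this projection is absorbed by $\bigl|R|_{\mathcal{H}_{n-1}}\bigr|$, the product $\theta_{R|\mathcal{H}_{n-1}}\bigl|R|_{\mathcal{H}_{n-1}}\bigr|$ reassembles into $R|_{\mathcal{H}_{n-1}}$, and the inductive hypothesis $\theta_{R,n-1}r_{n-1}=R^{n-1}$ together with $R^{n-1}\mathcal{H}\subseteq\mathcal{H}_{n-1}$ yields $\theta_{R,n}r_{n}=R|_{\mathcal{H}_{n-1}}R^{n-1}=R^{n}$. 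The base case $n=1$ is just $r_{1}=R_{1}^{1/2}$ with $R=\theta_{R}R_{1}^{1/2}$.

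With \eqref{thet} in hand, \eqref{thet2} is immediate: since $\theta_{R,n}$ is an isometry, $r_{n}^{*}r_{n}=r_{n}^{*}\theta_{R,n}^{*}\theta_{R,n}r_{n}=(R^{n})^{*}R^{n}=R_{n}$. For density of the range I would pass to the adjoint. Each $A_{j}$ is self-adjoint, so $r_{n}^{*}=A_{0}A_{1}\cdots A_{n-1}$, and each $A_{j}=\theta_{R,j}^{*}\bigl|R|_{\mathcal{H}_{j}}\bigr|\theta_{R,j}$ is injective because $R|_{\mathcal{H}_{j}}$ is injective (a restriction of the injective $R$) and $\theta_{R,j}$ is unitary. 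A product of injective operators is injective, so $\ker r_{n}^{*}=0$ and hence $\overline{r_{n}\mathcal{H}}=(\ker r_{n}^{*})^{\bot}=\mathcal{H}$.

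I expect the main obstacle to be the bookkeeping in the inductive step for \eqref{thet} — keeping straight which operators live on $\mathcal{H}$ and which on the shrinking subspaces $\mathcal{H}_{j}$, and verifying the two points where a modulus absorbs an adjacent projection — rather than any conceptual difficulty. Once \eqref{thet} and the isometry property of $\theta_{R,n}$ are established, the remaining assertions follow formally.
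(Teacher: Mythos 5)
Your proposal is correct and takes essentially the same route as the paper: an induction on $n$ with base case $r_{1}=R_{1}^{1/2}$, whose inductive step rests on the polar decomposition of $R|_{\mathcal{H}_{n-1}}$ and the fact that conjugation by the isometry $\theta_{R,n-1}$ intertwines square roots (the paper phrases this via the homomorphism $m\mapsto\theta_{R,n}m\theta_{R,n}^{*}$ applied to $\left(P_{\mathcal{H}_{n}}R_{1}P_{\mathcal{H}_{n}}\right)^{\frac{1}{2}}$, you via $A_{j}=\theta_{R,j}^{*}\bigl(\left(R|_{\mathcal{H}_{j}}\right)_{1}\bigr)^{\frac{1}{2}}\theta_{R,j}$, which is the same identity). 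Your dense-range argument through injectivity of $r_{n}^{*}=A_{0}\cdots A_{n-1}$ is equivalent to the paper's remark that each positive factor has dense range, since the factors are self-adjoint.
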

				
				\begin{proof}
					We use induction. For $n=1,$ then $\theta_{R,1}=\theta_{R}$ and so $r_{n}=R_{1}^{\frac{1}{2}}.$ Now assume~\eqref{rform} is true for $n \geq 1,$
					then $$R^{n+1}=R P_{\mathcal{H}_{n}}R^{n}=R P_{\mathcal{H}_{n}}\theta_{R,n}r_{n}.$$
					We have $$R P_{\mathcal{H}_{n}}=\theta_{R|\mathcal{H}_{n}}\left(P_{\mathcal{H}_{n}}R_{1} P_{\mathcal{H}_{n}}\right)^{\frac{1}{2}}.$$ Since $P_{\mathcal{H}_{n}}=\theta_{R,n}\theta_{R,n}^{*}$ and $m\mapsto \theta_{R,n}m\theta_{R,n}^{*}$ is a homomorphism of $C^{*}$-algebras (recall that $\theta_{R,n}$ is an isometry), we have $$\theta_{R|\mathcal{H}_{n}}\left(P_{\mathcal{H}_{n}}R P_{\mathcal{H}_{n}}\right)^{\frac{1}{2}}=\theta_{R|\mathcal{H}_{n}}\theta_{R,n}\left(\theta_{R,n}^{*}R_{1}\theta_{R,n}\right)^{\frac{1}{2}}\theta_{R,n}^{*}$$ $$=\theta_{R,n+1}\left(\theta_{R,n}^{*}R_{1}\theta_{R,n}\right)^{\frac{1}{2}}\theta_{R,n}^{*}.$$ Putting this together, we get $$R^{n+1}=\theta_{R,n+1}\left(\theta_{R,n}^{*}R_{1}\theta_{R,n}\right)^{\frac{1}{2}}r_{n}.$$
					From this~\eqref{thet},~\eqref{thet2} and~\eqref{rform} follow for $n+1.$ Since every operator $\left(\theta_{R,k}^{*}R_{1}\theta_{R,k}\right)^{\frac{1}{2}}$ has dense range, the same is true for their product $r_{n}.$
				\end{proof}
				
				We can now prove Proposition~\ref{key}. Let $t_{n}\in \mathbb{M}_{T}$ be the operator from Lemma~\ref{labann} such that $\theta_{T,n}t_{n}=T^{n}.$ As $$t_{n}=\prod_{k=0}^{n-1}\left(\theta_{T,k}^{*}T_{1}\theta_{T,k}\right)^{\frac{1}{2}}$$ and every $\left(\theta_{T,k}^{*}T_{1}\theta_{T,k}\right)^{\frac{1}{2}}\in\mathbb{M}_{T},$  $t_{n}$ is a product of positive operators that commute with each other, hence it is also positive.
				Now since $$\left(t_{n}\theta_{T,n}^{*}\right)\left(\theta_{T,n}t_{n}\right)=t_{n}^{2}=T_{n}$$ we must have $t_{n}=T_{n}^{\frac{1}{2}},$ by the uniqueness of the square root of a positive operator. 
				So $\theta_{T,n}T_{n}^{\frac{1}{2}}=\theta_{T,n}t_{n}=T^{n}$ and as $T_{n}^{\frac{1}{2}}$ has dense range, we have $\theta_{T,n}=\theta_{T^{n}}.$

				\begin{prop}\label{stön}
					For every $n\in\mathbb{N},$ the homeomorphism $m\mapsto \theta_{R,n}m \theta_{R,n}^{*}$ is an isomorphism $$\mathbb{M}_{R}\rightarrow \mathbb{M}_{R,n}.$$ 
				\end{prop}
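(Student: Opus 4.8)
The plan is to show that the map $\Phi\colon m\mapsto \theta_{R,n}\,m\,\theta_{R,n}^{*}$ restricts to a normal injective $*$-isomorphism of $\mathbb{M}_{R}$ onto $\mathbb{M}_{R,n}$. The formal properties come first and are routine. Since $\theta_{R,n}$ is an isometry we have $\theta_{R,n}^{*}\theta_{R,n}=I$, which gives $\Phi(a)\Phi(b)=\theta_{R,n}a\theta_{R,n}^{*}\theta_{R,n}b\theta_{R,n}^{*}=\theta_{R,n}ab\theta_{R,n}^{*}=\Phi(ab)$ and $\Phi(a)^{*}=\Phi(a^{*})$, so $\Phi$ is a $*$-homomorphism; it is injective because $m=\theta_{R,n}^{*}\Phi(m)\theta_{R,n}$ recovers $m$. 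Being implemented by the fixed bounded operators $\theta_{R,n},\theta_{R,n}^{*}$, the map $\Phi$ is also continuous for the weak operator topology, hence normal, and therefore $\Phi(\mathbb{M}_{R})$ is again a von Neumann algebra. It then remains only to identify this image with $\mathbb{M}_{R,n}$.

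For one inclusion I would use Lemma~\ref{labann}: writing $R^{n}=\theta_{R,n}r_{n}$ with $r_{n}\in\mathbb{M}_{R}$ yields, for every $a\in\mathbb{M}_{R}$,
$$R^{n}aR^{*n}=\theta_{R,n}\,(r_{n}ar_{n}^{*})\,\theta_{R,n}^{*}=\Phi(r_{n}ar_{n}^{*}),$$
and since $r_{n}ar_{n}^{*}\in\mathbb{M}_{R}$ this shows $R^{n}\mathbb{M}_{R}R^{*n}\subseteq\Phi(\mathbb{M}_{R})$. As $\Phi(\mathbb{M}_{R})$ is weakly closed, taking weak closures gives $\mathbb{M}_{R,n}\subseteq\Phi(\mathbb{M}_{R})$.

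The reverse inclusion is the heart of the matter and reduces to the claim that $r_{n}\mathbb{M}_{R}r_{n}^{*}$ is weakly dense in $\mathbb{M}_{R}$. Granting this, weak continuity of $\Phi$ gives, writing $\overline{\,\cdot\,}$ for weak closure,
$$\Phi(\mathbb{M}_{R})=\Phi\!\left(\overline{r_{n}\mathbb{M}_{R}r_{n}^{*}}\right)\subseteq\overline{\Phi(r_{n}\mathbb{M}_{R}r_{n}^{*})}=\overline{R^{n}\mathbb{M}_{R}R^{*n}}=\mathbb{M}_{R,n},$$
completing the proof. To establish the density claim I would exploit that $r_{n}$ is injective with dense range: injectivity follows from $\|r_{n}x\|^{2}=\langle R_{n}x,x\rangle=\|R^{n}x\|^{2}$ together with injectivity of $R$, while dense range is part of Lemma~\ref{labann}. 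Taking the polar decomposition $r_{n}=u\lvert r_{n}\rvert$ inside the von Neumann algebra $\mathbb{M}_{R}$, these two properties force $u$ to be unitary and $\lvert r_{n}\rvert$ to be injective and positive. Letting $p_{k}=\chi_{[1/k,\infty)}(\lvert r_{n}\rvert)$, one has $p_{k}\uparrow I$ strongly, and setting $s_{k}=g_{k}(\lvert r_{n}\rvert)u^{*}\in\mathbb{M}_{R}$ with $g_{k}(t)=t^{-1}\chi_{[1/k,\infty)}(t)$ gives $r_{n}s_{k}=u\,p_{k}\,u^{*}\to I$ strongly. Then for any $c\in\mathbb{M}_{R}$ the elements $r_{n}(s_{k}cs_{k}^{*})r_{n}^{*}=(r_{n}s_{k})\,c\,(r_{n}s_{k})^{*}$ lie in $r_{n}\mathbb{M}_{R}r_{n}^{*}$ and converge strongly, hence weakly, to $c$.

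I expect the functional-calculus approximation in the last step — manufacturing the $s_{k}$, checking $s_{k}\in\mathbb{M}_{R}$, and verifying that the sandwiched products converge in the strong operator topology while remaining inside $r_{n}\mathbb{M}_{R}r_{n}^{*}$ — to be the only genuinely technical point; everything else is homomorphism and weak-continuity bookkeeping. The essential input is exactly the density of range asserted in Lemma~\ref{labann}, which is what converts the surjectivity of $\Phi$ onto $\mathbb{M}_{R,n}$ into a statement about approximate invertibility of $r_{n}$.
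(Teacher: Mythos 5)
Your proof is correct and follows essentially the same route as the paper: both inclusions hinge on Lemma~\ref{labann} (writing $R^{n}=\theta_{R,n}r_{n}$ with $r_{n}\in\mathbb{M}_{R}$ of dense range), and the reverse inclusion is obtained via spectral-calculus approximate inverses of $r_{n}$. The paper implements that last step slightly differently --- it picks self-adjoint $y_{k}\in\mathbb{M}_{R}$ with $y_{k}r_{n}r_{n}^{*},\,r_{n}r_{n}^{*}y_{k}\rightarrow I$ strongly and conjugates $r_{n}^{*}y_{k}my_{k}r_{n}$ by $R^{n}$, thereby avoiding the polar decomposition $r_{n}=u\lvert r_{n}\rvert$ your version uses (worth noting since $r_{n}$ need not be positive for general $R$, though your appeal to it is legitimate) --- but this is a cosmetic, not substantive, difference.
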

				
				\begin{proof}
					
					For any $c\in\mathbb{M}_{R},$ we have $r_{n}c r_{n}^{*}\in\mathbb{M}_{R}$ and this operator is mapped to $R^{n}c R^{*n}$ by Lemma~\ref{labann}. The homomorphism preserves weak closure (since it is induced by an isometry) so $$\mathbb{M}_{R,n}\subseteq\theta_{R,n}\mathbb{M}_{R}\theta_{R,n}^{*}.$$ To prove the reverse inclusion, take any $m \in \mathbb{M}_{R}.$ Since $r_{n}r_{n}^{*}$ has dense range, there is a sequence of self-adjoint $y_{k}\in\mathbb{M}_{R}$ such that $$y_{k}r_{n}r_{n}^{*},r_{n}r_{n}^{*}y_{k}\rightarrow I$$ strongly in $\mathcal{H}$ as $k\rightarrow \infty$ (this follows from a basic application of the general spectral theorem). Now take the product $$r_{n}^{*} y_{k} m y_{k} r_{n}\in\mathbb{M}_{R}$$ for every $k\in\mathbb{N}.$
					Then we have $$R^{n}\left(r_{n}^{*} y_{k} m y_{k} r_{n}\right)R^{*n}\in\mathbb{M}_{R,n}$$ for all $k\in\mathbb{N}.$ But since $R^{*n}=r_{k}^{*}\theta_{R,n}^{*},$ we get
					$$R^{n}\left(r_{n}^{*} y_{k} m y_{k} r_{n}\right)R^{*n}=\theta_{R,n}\left(r_{n}r_{n}^{*}y_{k}\right)m\left(y_{k}r_{n}r_{n}^{*}\right)\theta_{R,n}^{*}\rightarrow \theta_{R,n} m \theta_{R,n}^{*}$$ strongly. So $ \theta_{R,n} m \theta_{R,n}^{*}\in\mathbb{M}_{R,n}$ and thus $$\theta_{R,n}\mathbb{M}_{R}\theta_{R,n}^{*}=\mathbb{M}_{R,n}.$$
				\end{proof}

				A consequence can be directly drawn from Proposition~\ref{stön}.
				
				\begin{cor}
					For every $n\in\mathbb{N},$ the $C^{*}$-algebra $\mathbb{M}_{R|\mathcal{H}_{n}}$ is a sub-algebra of $\mathbb{M}_{R,n}.$
				\end{cor}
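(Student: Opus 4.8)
The plan is to exhibit each generator of $\mathbb{M}_{R|\mathcal{H}_{n}}$ explicitly as an element of $\mathbb{M}_{R,n}$ and then to close up under the weak topology. Recall that, under the standing convention identifying the restriction to an invariant subspace with the corresponding compression, $\mathbb{M}_{R|\mathcal{H}_{n}}$ is the von Neumann algebra generated inside $\mathcal{B}(\mathcal{H})$ by the operators $(R|_{\mathcal{H}_{n}})_{k}=P_{\mathcal{H}_{n}}R_{k}P_{\mathcal{H}_{n}}$ for $k\in\mathbb{N}$ (this is exactly~\eqref{millian} for $V=\mathcal{H}_{n}$). Hence it suffices to show that each such generator already lies in the weakly closed algebra $\mathbb{M}_{R,n}$, and for this the work done in Proposition~\ref{stön} will do almost everything.

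First I would substitute $P_{\mathcal{H}_{n}}=\theta_{R,n}\theta_{R,n}^{*}$ into the generator to obtain
$$(R|_{\mathcal{H}_{n}})_{k}=\theta_{R,n}\theta_{R,n}^{*}R_{k}\theta_{R,n}\theta_{R,n}^{*}=\theta_{R,n}\left(\theta_{R,n}^{*}R_{k}\theta_{R,n}\right)\theta_{R,n}^{*}.$$
The inner operator $\theta_{R,n}^{*}R_{k}\theta_{R,n}$ is, by definition, one of the generators of $\mathbb{M}_{R}^{n}$, and $\mathbb{M}_{R}^{n}$ is a sub-algebra of $\mathbb{M}_{R}$ (this is the content of the inductive application of Proposition~\ref{argg} that produces the chain of embeddings~\eqref{cath}). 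Thus $a_{k}:=\theta_{R,n}^{*}R_{k}\theta_{R,n}\in\mathbb{M}_{R}$. Now I would invoke Proposition~\ref{stön}, which asserts that $m\mapsto\theta_{R,n}m\theta_{R,n}^{*}$ is an isomorphism of $\mathbb{M}_{R}$ onto $\mathbb{M}_{R,n}$; applying it to $a_{k}$ gives $(R|_{\mathcal{H}_{n}})_{k}=\theta_{R,n}a_{k}\theta_{R,n}^{*}\in\mathbb{M}_{R,n}$.

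Finally, since $\mathbb{M}_{R,n}$ is weakly closed and contains every generator $(R|_{\mathcal{H}_{n}})_{k}$, it contains the whole von Neumann algebra they generate; because the embedding $\mathcal{B}(\mathcal{H}_{n})\hookrightarrow\mathcal{B}(\mathcal{H})$ by extension by zero is the corner compression by the isometric inclusion and so preserves weak closure, the algebra so generated is exactly $\mathbb{M}_{R|\mathcal{H}_{n}}$. This yields $\mathbb{M}_{R|\mathcal{H}_{n}}\subseteq\mathbb{M}_{R,n}$. I do not anticipate a genuine obstacle here, as the essential analytic content was already dispatched in Proposition~\ref{stön}; the only points requiring care are bookkeeping, namely keeping the identification of restrictions with compressions straight so that $(R|_{\mathcal{H}_{n}})_{k}$ really equals $P_{\mathcal{H}_{n}}R_{k}P_{\mathcal{H}_{n}}$, and confirming that the operators $\theta_{R,n}^{*}R_{k}\theta_{R,n}$ lie in $\mathbb{M}_{R}$ rather than merely in $\mathcal{B}(\mathcal{H})$, which is precisely what~\eqref{cath} secures.
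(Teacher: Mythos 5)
Your proposal is correct and takes essentially the same route as the paper, which states this corollary as a direct consequence of Proposition~\ref{stön}: you write $(R|_{\mathcal{H}_{n}})_{k}=\theta_{R,n}\left(\theta_{R,n}^{*}R_{k}\theta_{R,n}\right)\theta_{R,n}^{*}$, use the chain~\eqref{cath} to place $\theta_{R,n}^{*}R_{k}\theta_{R,n}$ in $\mathbb{M}_{R}$, and apply the isomorphism $m\mapsto\theta_{R,n}m\theta_{R,n}^{*}$ onto $\mathbb{M}_{R,n}$ before closing weakly. The only detail worth noting is that the unit poses no extra issue, since $(R|_{\mathcal{H}_{n}})_{0}=P_{\mathcal{H}_{n}}$ is itself among the generators and equals $\theta_{R,n}I\theta_{R,n}^{*}\in\mathbb{M}_{R,n}$.
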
	
				\begin{rem}
					Similar to what was mentioned in the introduction to this subsection, we mostly view $\mathbb{M}_{R,n}$ and $\mathbb{M}_{R|\mathcal{H}_{n}}$ as non-unital weakly closed $C^{*}$-algebras in $\mathcal{B(H)}$ rather than unital $C^{*}$-algebras in $\mathcal{B(H}_{n})$ that perhaps would seem more natural. This is because in the upcoming sections, the main job of these algebras are to act on $\mathcal{H}$ and therefore it would be cumbersome if we first always have to project down  $\mathcal{H}_{n}$ before they can be applied. 
				\end{rem}
				
				\subsection{A Subspace Decomposition}
					
					Here we will first decompose the Hilbert space $\mathcal{H}$ into $\mathcal{H_{E}}\oplus \mathcal{H_{E}}^{\bot},$ where $\mathcal{H_{E}}$ is the smallest closed subspace containing $\E$ that is invariant with respect to both $R$ and $\mathbb{M}_{R}.$ We then show that there is a further decomposition of $\mathcal{H_{E}}$ into orthogonal subspaces $$\mathcal{H_{E}}=\oplus_{k=0}^{\infty}V_{k}$$
					with $V_{0}=\mathcal{M_{E}}$ such that all the $V_{k}$'s are invariant subspaces for the algebra $\mathbb{M}_{R}.$ The important point of this construction emerges in the next subsection where we show that $\mathbb{M}_{R}|V_{k}$ 
					and $\mathbb{M}_{R}|V_{0}$ are related in a certain way.
					\\
					
					From now on $R$ will, as well as being injective, also be subject to the condition
					$\E\neq 0$ (recall $\E=\mathcal{H}_{1}^{\bot}=\left(R\mathcal{H}\right)^{\bot}=\ker R^{*}$)
					
					Also recall from the introduction that $\mathcal{M_{E}}$ was defined as the linear closure of $$m y,m\in \mathbb{M}_{R},y\in \mathcal{E}.$$	
					
					For notational purposes, we sometimes abbreviate this as $\mathbb{M}_{R}\mathcal{E}$ and this notation will be used from now on in general, when we have a $C^{*}$-algebra or a set of operators acting on some subspace. All subspaces here will be considered as closed, unless explicitly stated. So, for example, given subspaces $V,X\subseteq \mathcal{H}$ the subspace $R X +V$ will denote the closure of $$\left\lbrace R x+v;x\in X,v\in V \right\rbrace$$
					\\
					
					We remark that as $\E\subseteq\mathcal{M_{E}}$ the subspace $\mathcal{M_{E}}$ is invariant for $P_{\mathcal{H}_{1}}$ and hence also invariant under the operators $$\left(R|_{\mathcal{H}_{1}}\right)_{k}=P_{\mathcal{H}_{1}}R_{k}P_{\mathcal{H}_{1}}.$$

					\begin{lem}\label{3}
						For all $m\in \mathbb{N}$ and $n\leq m$ $$\mathbb{M}_{R,m}R^{m}\mathcal{E}=R^{n}\mathbb{M}_{R,m-n}R^{m-n}\mathcal{E}=R^{m}\mathcal{M_{E}}.$$
					\end{lem}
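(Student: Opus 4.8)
The plan is to reduce the whole statement to its $n=0$ instance, $\mathbb{M}_{R,m}R^{m}\E=R^{m}\mathcal{M_{E}}$, and deduce the middle expression from the analogous identity one level down. Granting $\mathbb{M}_{R,m-n}R^{m-n}\E=R^{m-n}\mathcal{M_{E}}$, I would apply the bounded operator $R^{n}$ to both sides; since $\overline{R^{n}S}=\overline{R^{n}\overline{S}}$ for any subspace $S$, this yields $R^{n}\mathbb{M}_{R,m-n}R^{m-n}\E=\overline{R^{n}R^{m-n}\mathcal{M_{E}}}=R^{m}\mathcal{M_{E}}$. Thus all the content lies in the case $n=0$.

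To prove $\mathbb{M}_{R,m}R^{m}\E=R^{m}\mathcal{M_{E}}$ I would work with the generating set of $\mathbb{M}_{R,m}$, namely the operators $R^{m}aR^{*m}$ with $a\in\mathbb{M}_{R}$ (whose weak closure is $\mathbb{M}_{R,m}$ by definition), and use repeatedly that $R^{*m}R^{m}=R_{m}\in\mathbb{M}_{R}$, together with $\mathcal{M_{E}}=\overline{\mathbb{M}_{R}\E}$. For the inclusion $\subseteq$, I would check that the closed subspace $R^{m}\mathcal{M_{E}}$ is invariant under $\mathbb{M}_{R,m}$: on a generator, $R^{m}aR^{*m}(R^{m}cy)=R^{m}(aR_{m}c)y$ for $c\in\mathbb{M}_{R}$, $y\in\E$, which lies in $R^{m}\mathbb{M}_{R}\E\subseteq R^{m}\mathcal{M_{E}}$ since $aR_{m}c\in\mathbb{M}_{R}$. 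As $R^{m}\mathcal{M_{E}}$ is weakly closed and the generators are weakly dense in $\mathbb{M}_{R,m}$, the whole algebra preserves it, and in particular $\mathbb{M}_{R,m}R^{m}\E\subseteq R^{m}\mathcal{M_{E}}$.

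The reverse inclusion is the crux. Since $R$ is injective, $R_{m}=R^{*m}R^{m}$ is positive and injective, hence has dense range. Using Borel functional calculus inside the von Neumann algebra $\mathbb{M}_{R}$ — the same device as in the proof of Proposition~\ref{stön} — I would produce positive operators $p_{k}\in\mathbb{M}_{R}$ with $p_{k}R_{m}\to I$ strongly; concretely $p_{k}=f_{k}(R_{m})$ where $f_{k}(t)=1/t$ on $[1/k,\infty)$ and $f_{k}(t)=0$ below, so that $p_{k}R_{m}$ is the spectral projection of $R_{m}$ onto $[1/k,\infty)$ and converges strongly to $I$ because $R_{m}$ has trivial kernel. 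Then for $a\in\mathbb{M}_{R}$ and $y\in\E$,
\[
R^{m}(ap_{k})R^{*m}(R^{m}y)=R^{m}ap_{k}R_{m}y\longrightarrow R^{m}ay,
\]
and the left-hand side lies in $\mathbb{M}_{R,m}R^{m}\E$ because $ap_{k}\in\mathbb{M}_{R}$. Since $\mathbb{M}_{R,m}R^{m}\E$ is closed, $R^{m}ay$ lies in it; and as the vectors $R^{m}ay$ ($a\in\mathbb{M}_{R}$, $y\in\E$) have closed span exactly $R^{m}\mathcal{M_{E}}$, this gives $R^{m}\mathcal{M_{E}}\subseteq\mathbb{M}_{R,m}R^{m}\E$, completing the equality.

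The main obstacle is this last inclusion, and specifically the construction of the approximate identity $p_{k}R_{m}\to I$ together with the verification that each $p_{k}$ remains inside $\mathbb{M}_{R}$, so that $R^{m}(ap_{k})R^{*m}$ is a bona fide element of $\mathbb{M}_{R,m}$. Everything else is routine: that $R_{m}$ is injective with dense range (immediate from injectivity of $R$), that norm and weak closure agree on the subspaces in play, and the identity $\overline{R^{n}S}=\overline{R^{n}\overline{S}}$ for bounded $R^{n}$.
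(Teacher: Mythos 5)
Your proposal is correct, but the core case $n=0$ is handled by a genuinely different route than the paper's. The paper makes the same reduction to $\mathbb{M}_{R,m}R^{m}\E=R^{m}\mathcal{M_{E}}$, but then compresses everything by the isometry $\theta_{R,m}$: since $P_{\mathcal{H}_{m}}=\theta_{R,m}\theta_{R,m}^{*}$, it suffices to prove $\theta_{R,m}^{*}\mathbb{M}_{R,m}R^{m}\E=\mathcal{M_{E}}=\theta_{R,m}^{*}R^{m}\mathcal{M_{E}}$, and via Proposition~\ref{stön} (which gives $\mathbb{M}_{R,m}=\theta_{R,m}\mathbb{M}_{R}\theta_{R,m}^{*}$) and Lemma~\ref{labann} (which gives $\theta_{R,m}^{*}R^{m}=r_{m}\in\mathbb{M}_{R}$ with $r_{m}$ of dense range and $r_{m}^{*}r_{m}=R_{m}$) this becomes the statement that $\mathbb{M}_{R}r_{m}\E$ and $r_{m}\mathcal{M_{E}}$ both equal $\mathcal{M_{E}}$, settled there with a sequence $a_{k}\in\mathbb{M}_{R}$ satisfying $a_{k}R_{m}\rightarrow I$ strongly. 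You never invoke $\theta_{R,m}$, $r_{m}$ or Proposition~\ref{stön}: you work directly with the weakly dense generating set $R^{m}\mathbb{M}_{R}R^{*m}$, obtaining the inclusion $\subseteq$ from the identity $R^{m}aR^{*m}R^{m}c=R^{m}(aR_{m}c)$ together with the (correct) observation that a norm-closed subspace invariant under a set of operators is invariant under its weak-operator closure, and obtaining $\supseteq$ from your explicit spectral approximate inverse $p_{k}=f_{k}(R_{m})\in\mathbb{M}_{R}$ with $p_{k}R_{m}\rightarrow I$ strongly. The underlying analytic engine --- an approximate inverse of $R_{m}$ inside $\mathbb{M}_{R}$, legitimate since $R_{m}$ is positive and injective --- is the same in both proofs; the difference is where it is applied. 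Your version is more elementary and self-contained (and your construction of $p_{k}$ is more explicit than the paper's bare appeal to the spectral theorem), whereas the paper's detour through $\theta_{R,m}$ and $r_{m}$ buys economy later: the compressed identity \eqref{ska} and the objects $\theta_{R,m}$, $r_{m}$ are reused repeatedly in the sequel (e.g.\ in Lemma~\ref{galax2} and Proposition~\ref{fukth}), so the paper's formulation is the one its later arguments actually quote.
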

					
					\begin{proof}
						We prove the equality
						\begin{equation}\label{lyud}
							\mathbb{M}_{R,m}R^{m}\E=R^{m}\mathcal{M_{E}}.
						\end{equation} 
						The rest of the Lemma then follows from $$R^{n}\mathbb{M}_{R,m-n}R^{m-n}\mathcal{E}=R^{n}\mathbb{M}_{R,m-n}R^{m-n}\mathcal{E}=R^{n}(R^{m-n}\mathcal{M_{E}})=R^{m}\mathcal{M_{E}}.$$
						Since $\mathbb{M}_{R,m}\E$ and $R^{m}\mathcal{M_{E}}$ are both closed subspaces of $\mathcal{H}_{m}$ and $P_{\mathcal{H}_{m}}=\theta_{R,m}\theta_{R,m}^{*},$ we can prove~\eqref{lyud} by proving that 
						\begin{equation}\label{ska}
							\theta_{R,m}^{*}\mathbb{M}_{R,m}R^{m}\E=\mathcal{M_{E}}=\theta_{R,m}^{*}R^{m}\mathcal{M_{E}}.
						\end{equation}
						
						But since by Proposition~\ref{stön} $\mathbb{M}_{R,m}=\theta_{R,m}^{*}\mathbb{M}_{R}\theta_{R,m}^{*}$ and $\theta_{R,m}^{*}R^{m}=r_{m}$ where $r_{m}\in\mathbb{M}_{R}$ is as in Lemma~\ref{labann}, we have $\theta_{R,m}^{*}R^{m}\mathcal{M_{E}}=r_{m}\mathcal{M_{E}}$
						and $$\theta_{R,m}^{*}\mathbb{M}_{R,m}R^{m}\E=\mathbb{M}_{R}r_{m}\E.$$
						From this it is now obvious that they are both subspaces of $\mathcal{M_{E}}.$
						Since the range of $r_{m}$ is dense in $\mathcal{H}$ and $\mathcal{M_{E}}$ is an invariant subspace for both $r_{m}$ and $r_{m}^{*},$ we must have $r_{m}\mathcal{M_{E}}=\mathcal{M_{E}}.$ So the second equality in~\eqref{ska} is proven. To prove the first, recall $r_{m}^{*}r_{m}=R_{m}$ and take a sequence $a_{k}\in\mathbb{M}_{R}$ such that $a_{k}R_{m}\rightarrow I$ strongly. Then the sequence $a_{k}r_{m}^{*}\in\mathbb{M}_{R}$ is such that $(a_{k}r_{m}^{*})r_{m}\rightarrow I$ strongly. From this we see that $\E\subseteq \mathbb{M}_{R}r_{m}\E$ and since the space in question is also invariant under $\mathbb{M}_{R},$ it must be equal to $\mathcal{M_{E}}.$
						
					\end{proof}
					
					\begin{lem}\label{saknar}
						Let $\mathcal{E}_{n}=\mathcal{H}_{n}\ominus \mathcal{H}_{n+1}$ be the kernel of $R^{*}$ restricted to $\mathcal{H}_{n}.$ For all $n\in\mathbb{N},$ we have $\mathcal{E}_{n}\subseteq  R^{n}\mathcal{M_{E}}.$
					\end{lem}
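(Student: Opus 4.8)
The plan is to reduce everything to the case $n=1$ and then bootstrap by induction, passing from $R$ to the restriction $R|_{\mathcal{H}_1}$. Throughout I use the identity $\overline{R^{n}\mathcal{M_E}}=\theta_{R,n}\mathcal{M_E}$, which follows from the proof of Lemma~\ref{3} (there $r_{n}\mathcal{M_E}=\mathcal{M_E}$, and $R^{n}=\theta_{R,n}r_{n}$). Since $\theta_{R,n}$ is an isometry with $\theta_{R,n}\theta_{R,n}^{*}=P_{\mathcal{H}_{n}}$ and $\mathcal{E}_{n}\subseteq\mathcal{H}_{n}$, the goal $\mathcal{E}_{n}\subseteq R^{n}\mathcal{M_E}$ is equivalent to $\theta_{R,n}^{*}\mathcal{E}_{n}\subseteq\mathcal{M_E}$. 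It is also convenient to note $\mathcal{H}_{m}^{\bot}=\ker R^{*m}$, so that $\mathcal{E}_{n}=\ker R^{*(n+1)}\ominus\ker R^{*n}$.

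The engine is the case $n=1$. Writing the polar decomposition $R=\theta_{R}R_{1}^{\frac12}$ (with $\theta_{R}=\theta_{R,1}$) and using $\theta_{R}^{*}\theta_{R}=I$, one gets $\theta_{R}^{*}R=R_{1}^{\frac12}$, hence the ``reversed'' factorization $S:=\theta_{R}^{*}R\theta_{R}=R_{1}^{\frac12}\theta_{R}$ and $S^{*}=\theta_{R}^{*}R_{1}^{\frac12}$. Since $\theta_{R}^{*}\mathcal{E}_{1}=\ker S^{*}$ and $\ker\theta_{R}^{*}=\mathcal{H}_{1}^{\bot}=\E$, this identifies $\theta_{R}^{*}\mathcal{E}_{1}=\{y:\ R_{1}^{\frac12}y\in\E\}$. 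Now $\E\subseteq\mathcal{M_E}$, the operator $R_{1}^{\frac12}\in\mathbb{M}_{R}$ is injective, and $\mathcal{M_E}$ reduces every element of the $*$-algebra $\mathbb{M}_{R}$; splitting $y=y_{1}\oplus y_{2}$ along $\mathcal{M_E}\oplus\mathcal{M_E}^{\bot}$ and using that $R_{1}^{\frac12}y\in\E\subseteq\mathcal{M_E}$ forces $R_{1}^{\frac12}y_{2}=0$, whence $y_{2}=0$. Thus $\theta_{R}^{*}\mathcal{E}_{1}\subseteq\mathcal{M_E}$, i.e. $\mathcal{E}_{1}\subseteq\overline{R\mathcal{M_E}}$.

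I then strengthen this to a statement about the entire $\mathcal{M_E}$-space of $R|_{\mathcal{H}_{1}}$ and run the induction. With $\mathcal{M}^{(1)}:=\overline{\mathbb{M}_{R|\mathcal{H}_{1}}\mathcal{E}_{1}}$, transporting through $\theta_{R}$ and using Proposition~\ref{argg} (conjugation by $\theta_{R}$ carries $\mathbb{M}_{R|\mathcal{H}_{1}}$ onto $\mathbb{M}_{R}^{1}\subseteq\mathbb{M}_{R}$) gives $\theta_{R}^{*}\mathcal{M}^{(1)}=\overline{\mathbb{M}_{R}^{1}\,\theta_{R}^{*}\mathcal{E}_{1}}$; since $\theta_{R}^{*}\mathcal{E}_{1}\subseteq\mathcal{M_E}$ and $\mathbb{M}_{R}^{1}\subseteq\mathbb{M}_{R}$ leaves $\mathcal{M_E}$ invariant, this lies in $\mathcal{M_E}$, so $\mathcal{M}^{(1)}\subseteq\theta_{R}\mathcal{M_E}=\overline{R\mathcal{M_E}}$. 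Now induct on $n$ over all injective operators at once. The cases $n=0,1$ are done. For $n\ge2$ apply the hypothesis to $R':=R|_{\mathcal{H}_{1}}$ at level $n-1$: because $\mathcal{H}_{k}^{(R')}=\mathcal{H}_{k+1}$ one has $(\mathcal{E}^{(R')})_{n-1}=\mathcal{E}_{n}$ and $\mathcal{M}^{(R')}=\mathcal{M}^{(1)}$, so the hypothesis yields $\mathcal{E}_{n}\subseteq (R')^{\,n-1}\mathcal{M}^{(1)}=R^{\,n-1}\mathcal{M}^{(1)}$; combining with $\mathcal{M}^{(1)}\subseteq\overline{R\mathcal{M_E}}$ and boundedness of $R^{n-1}$ gives $\mathcal{E}_{n}\subseteq R^{\,n-1}\overline{R\mathcal{M_E}}\subseteq\overline{R^{n}\mathcal{M_E}}$. (If $\mathcal{E}_{1}=0$ then $\mathcal{E}_{n}=0$ and there is nothing to prove, so one may assume $R'$ has nonzero defect.)

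The main obstacle is the $n=1$ computation, where the geometry of $\mathcal{E}_{1}$ must be matched to the algebra $\mathbb{M}_{R}$. The clean identification $\theta_{R}^{*}\mathcal{E}_{1}=\{y:\ R_{1}^{\frac12}y\in\E\}$, obtained from the reversed factorization $S=R_{1}^{\frac12}\theta_{R}$, is precisely what makes the reduction argument close; once it is in hand, everything else is formal manipulation of the isometries $\theta_{R,n}$ together with the $\mathbb{M}_{R}$-invariance of $\mathcal{M_E}$.
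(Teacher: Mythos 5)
Your proof is correct, and it takes a genuinely different route from the paper's. The paper disposes of the lemma in one non-inductive stroke: since $\mathcal{E}_{n}\bot\mathcal{H}_{n+1}$, one has $0=\left\langle e,R^{n+1}x\right\rangle=\left\langle R^{*n}e,Rx\right\rangle$ for $e\in\mathcal{E}_{n}$, so $R^{*n}\mathcal{E}_{n}\subseteq\E$, hence $R^{n}\mathbb{M}_{R}R^{*n}\mathcal{E}_{n}\subseteq R^{n}\mathcal{M_{E}}$; taking the weak closure gives $\mathbb{M}_{R,n}\mathcal{E}_{n}\subseteq R^{n}\mathcal{M_{E}}$, and since $P_{\mathcal{H}_{n}}=\theta_{R,n}\theta_{R,n}^{*}\in\mathbb{M}_{R,n}$ by Proposition~\ref{stön} while $P_{\mathcal{H}_{n}}\mathcal{E}_{n}=\mathcal{E}_{n}$, the inclusion follows for all $n$ simultaneously, with no induction and no special role for $n=1$. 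You instead settle the base case $n=1$ by hand --- the identification $\theta_{R}^{*}\mathcal{E}_{1}=\left\lbrace y: R_{1}^{\frac{1}{2}}y\in\E\right\rbrace$ via $S^{*}=\theta_{R}^{*}R_{1}^{\frac{1}{2}}$ is correct, and the conclusion from injectivity of $R_{1}^{\frac{1}{2}}$ together with the fact that $\mathcal{M_{E}}$ reduces $\mathbb{M}_{R}$ is sound --- and then bootstrap by induction over all injective operators, restricting to $\mathcal{H}_{1}$ and transporting through the isomorphism of Proposition~\ref{argg}. The bookkeeping $\mathcal{H}_{k}^{(R')}=\mathcal{H}_{k+1}$, $(\mathcal{E}^{(R')})_{n-1}=\mathcal{E}_{n}$, $\mathcal{M}^{(R')}=\mathcal{M}^{(1)}$ all checks out, as does the degenerate case $\mathcal{E}_{1}=0$ (then $\mathcal{H}_{1}=\mathcal{H}_{2}$ forces $\mathcal{E}_{n}=0$ for all $n\geq 1$), and your appeal to the identity $\overline{R^{n}\mathcal{M_{E}}}=\theta_{R,n}\mathcal{M_{E}}$ from the proof of Lemma~\ref{3} is legitimate, since that lemma precedes this one in the paper (and in fact your final induction only needs its $n=1$ instance). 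What your route buys is a strictly stronger intermediate statement: not only the defect space but the entire $\mathcal{M_{E}}$-space of the restriction satisfies $\mathcal{M}^{(1)}=\overline{\mathbb{M}_{R|\mathcal{H}_{1}}\mathcal{E}_{1}}\subseteq\overline{R\mathcal{M_{E}}}$, a hereditary fact that makes the relation between $R$ and $R|_{\mathcal{H}_{1}}$ transparent and uses only one polar decomposition at a time. The cost is quantifying the induction over all injective operators and invoking more machinery than needed, whereas the paper's conjugation trick --- push $\mathcal{E}_{n}$ down to $\E$ with $R^{*n}$, back up with $R^{n}\mathbb{M}_{R}R^{*n}$, and catch it with $P_{\mathcal{H}_{n}}\in\mathbb{M}_{R,n}$ --- does the whole job in three lines.
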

					
					\begin{proof}
						Since $\mathcal{E}_{n}\bot \mathcal{H}_{n+1},$ we have for any $x\in\mathcal{H}$ and $e\in \mathcal{E}_{n}$ that $$0=\left\langle e,R^{n+1}x\right\rangle=\left\langle  R^{*n}e,R x\right\rangle.$$ From this, we see $R^{*n}\mathcal{E}_{n}\subseteq \left(R\mathcal{H}\right)^{\bot}= \mathcal{E}$ so that $R^{n}\mathbb{M}_{R}R^{*n}\mathcal{E}_{n}\subseteq R^{n}\mathcal{M_{E}}.$ Since $\mathbb{M}_{R,n}$ is the weak closure of the set $R^{n}\mathbb{M}_{R}R^{*n},$ we get $\mathbb{M}_{R,n}\mathcal{E}_{n}\subseteq R^{n}\mathcal{M_{E}}.$  Now, we have $P_{\mathcal{H}_{n}}\in\mathbb{M}_{R,n}$ and so $P_{\mathcal{H}_{n}}\mathcal{E}_{n}=\mathcal{E}_{n}\subseteq R^{n}\mathcal{M_{E}}.$
					\end{proof}
					
					\begin{defn}
						For $n\in \mathbb{N},$ let $X_{n}=\bigvee_{ j=0}^{n}R^{j}\mathcal{M_{E}}.$ When $n\geq 1,$ define $$V_{n}=X_{n}\ominus X_{n-1}$$
						and when $n=0,$ let $V_{0}=\mathcal{M_{E}}.$
					\end{defn}
					
					\begin{lem}\label{nolabel}
						Each $V_{n}$ and $X_{n}$ is $\mathbb{M}_{R}$ invariant.
					\end{lem}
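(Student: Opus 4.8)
The plan is to reduce everything to the single claim that each $X_n$ is $\mathbb{M}_R$-invariant; the statement for $V_n$ then comes for free. Since $\mathbb{M}_R$ is a von Neumann algebra, it is $*$-closed, so any $\mathbb{M}_R$-invariant subspace is automatically reducing. Granting that both $X_n$ and $X_{n-1}$ are invariant, $X_{n-1}^{\bot}$ is invariant as well, and hence $V_n = X_n \cap X_{n-1}^{\bot}$, being an intersection of invariant subspaces, is invariant. So I only need to handle the $X_n$.

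To prove $\mathbb{M}_R X_n \subseteq X_n$, I would first observe that the set $S=\{a\in\mathcal{B}(\mathcal{H}) : aX_n\subseteq X_n\}$ is a weakly closed unital subalgebra of $\mathcal{B}(\mathcal{H})$ (weak closedness uses only that $X_n$ is a weakly closed subspace). Because $\mathbb{M}_R$ is generated as a von Neumann algebra by the self-adjoint operators $R_k$, the bicommutant theorem reduces the problem to showing that each generator $R_k$ lies in $S$, i.e. that $R_k R^{j}\mathcal{M_{E}}\subseteq X_n$ for every $0\le j\le n$ and every $k$.

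The core of the argument is to split $R_k R^{j}y$ (for $y\in\mathcal{M_{E}}$) along $\mathcal{H}=\mathcal{H}_j\oplus \mathcal{H}_j^{\bot}$ via the isometries $\theta_{R,j}$. Two inputs drive this. From Lemma~\ref{labann} together with the identity $r_j\mathcal{M_{E}}=\mathcal{M_{E}}$ established inside the proof of Lemma~\ref{3}, one has $R^{j}\mathcal{M_{E}}=\theta_{R,j}\mathcal{M_{E}}$; and $\theta_{R,j}^{*}R_k\theta_{R,j}$ belongs to $\mathbb{M}_R$ (it is one of the generators of $\mathbb{M}_R^{j}\subseteq\mathbb{M}_R$), hence preserves $\mathcal{M_{E}}$. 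Writing $R^{j}y=\theta_{R,j}w$ with $w\in\mathcal{M_{E}}$ and using $\theta_{R,j}\theta_{R,j}^{*}=P_{\mathcal{H}_j}$, the $\mathcal{H}_j$-component becomes
\[
P_{\mathcal{H}_j}R_kR^{j}y=\theta_{R,j}\left(\theta_{R,j}^{*}R_k\theta_{R,j}\right)w\in \theta_{R,j}\mathcal{M_{E}}=R^{j}\mathcal{M_{E}}\subseteq X_n .
\]
For the complementary component I would invoke Lemma~\ref{saknar}: since $\mathcal{H}_j^{\bot}=\bigoplus_{i=0}^{j-1}\mathcal{E}_i$ and each $\mathcal{E}_i\subseteq R^{i}\mathcal{M_{E}}\subseteq X_{j-1}$, we get $\mathcal{H}_j^{\bot}\subseteq X_{j-1}\subseteq X_n$, so trivially $(I-P_{\mathcal{H}_j})R_kR^{j}y\in\mathcal{H}_j^{\bot}\subseteq X_n$. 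Adding the two pieces yields $R_kR^{j}y\in X_n$, which is what is needed.

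The main obstacle — and the reason one cannot simply conjugate an arbitrary $a\in\mathbb{M}_R$ by $\theta_{R,j}$ — is that the map $a\mapsto\theta_{R,j}^{*}a\theta_{R,j}$ is not multiplicative, because $\theta_{R,j}\theta_{R,j}^{*}=P_{\mathcal{H}_j}\neq I$; hence $\theta_{R,j}^{*}a\theta_{R,j}$ need not lie in $\mathbb{M}_R$ for general $a$. Reducing first to the generators $R_k$, where the membership $\theta_{R,j}^{*}R_k\theta_{R,j}\in\mathbb{M}_R$ is already known, is precisely what sidesteps this difficulty, and the containment $\mathcal{H}_j^{\bot}\subseteq X_{j-1}$ from Lemma~\ref{saknar} is exactly the ingredient that absorbs the off-diagonal part into $X_n$.
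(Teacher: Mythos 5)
Your proof is correct, and while its geometric core coincides with the paper's, the organization is genuinely different. Both arguments split a vector of $R_{k}R^{j}\mathcal{M_{E}}$ along $\mathcal{H}_{j}\oplus\mathcal{H}_{j}^{\bot}$, return the $\mathcal{H}_{j}$-component to $R^{j}\mathcal{M_{E}}$, and absorb the complementary component via Lemma~\ref{saknar} and the identity $\mathcal{H}_{j}^{\bot}=\bigvee_{i=0}^{j-1}\mathcal{E}_{i}$. But the paper proceeds by induction on $n$, writing a dense vector of $X_{n}$ as $m_{n}+x_{n-1}$ with $m_{n}\in R^{n}\mathcal{M_{E}}$ and invoking the induction hypothesis for $R_{j}x_{n-1}$, and it handles the diagonal component by citing Lemma~\ref{3} as a black box together with $P_{\mathcal{H}_{n}}R_{j}P_{\mathcal{H}_{n}}\in\mathbb{M}_{R,n}$. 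You dispense with induction entirely by treating each summand $R^{j}\mathcal{M_{E}}$, $0\leq j\leq n$, simultaneously, and you reprove the diagonal containment by hand: from $\theta_{R,j}r_{j}=R^{j}$ (Lemma~\ref{labann}) and $r_{j}\mathcal{M_{E}}=\mathcal{M_{E}}$ (from inside the proof of Lemma~\ref{3}) you get $R^{j}\mathcal{M_{E}}=\theta_{R,j}\mathcal{M_{E}}$, and then use $\theta_{R,j}^{*}R_{k}\theta_{R,j}\in\mathbb{M}_{R}$, which the paper secures before this point via Proposition~\ref{argg} and Lemma~\ref{ridcully}. Since $P_{\mathcal{H}_{j}}R_{k}P_{\mathcal{H}_{j}}=\theta_{R,j}\left(\theta_{R,j}^{*}R_{k}\theta_{R,j}\right)\theta_{R,j}^{*}$, your conjugation step is exactly the mechanism hidden inside the paper's appeal to $\mathbb{M}_{R,n}$, so what your route buys is an induction-free, more self-contained argument, while the paper's buys brevity by leaning on Lemma~\ref{3}. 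Your preliminary reductions are sound and in fact more explicit than the paper's: the weak closedness of $\left\lbrace a : aX_{n}\subseteq X_{n}\right\rbrace$ justifies passing from the self-adjoint generators $R_{k}$ to all of $\mathbb{M}_{R}$ (the paper makes this reduction silently when it says it suffices to check the $R_{j}$), and $V_{n}=X_{n}\cap X_{n-1}^{\bot}$ disposes of the $V_{n}$ claim. Everything you cite precedes Lemma~\ref{nolabel} in the paper, so there is no circularity — in particular you were right not to reach for the later Lemma~\ref{galax2}, whose proof depends on consequences of this lemma — and your closing observation that $a\mapsto\theta_{R,j}^{*}a\theta_{R,j}$ fails to be multiplicative for arbitrary $a\in\mathbb{M}_{R}$ correctly identifies why the reduction to generators is essential rather than cosmetic.
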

					
					\begin{proof} We use induction on $n$.
						The lemma is true by construction for $V_{0}=\mathcal{M_{E}}.$ Since the $R_{j}$'s are self-adjoint, we only have to show that $X_{n}$ is $R_{j}$-invariant for all $j\in \mathbb{N}.$ Assume now that the lemma is true for $0\leq m\leq n-1.$ The vectors of the form $$x_{n}=m_{n}+x_{n-1}$$ with $m_{n}\in R^{n}\mathcal{M_{E}}$ and $x_{n-1}\in X_{n-1}$ are dense in $X_{n}$ and for these: 
						
						\begin{equation}\label{mabel}
							R_{j}x_{n}=R_{j}m_{n}+R_{j}x_{n-1}=P_{\mathcal{H}_{n}}R_{j}P_{\mathcal{H}_{n}}m_{n}+\left( I-P_{\mathcal{H}_{n}}\right) R_{j}  m_{n}+R_{j}x_{n-1}.
						\end{equation}
						We have $P_{\mathcal{H}_{n}}R_{j}P_{\mathcal{H}_{n}}m_{n}\in R^{n}\mathcal{M_{E}}$ by Lemma~\ref{3}, since $P_{\mathcal{H}_{n}}R_{j}P_{\mathcal{H}_{n}}\in \mathbb{M}_{R,n}.$
						\\
						
						As $\mathcal{E}_{j}=\mathcal{H}_{j}\ominus \mathcal{H}_{j+1}$ we have $$\bigvee_{j=0}^{n-1}\mathcal{E}_{j}=\mathcal{H}_{n}^{\bot}$$
						and therefore $I-P_{\mathcal{H}_{n}}=P_{\mathcal{H}_{n}^{\bot}}$ projects down to the space generated by $\mathcal{E}_{j}$ for $0\leq j\leq n-1$ and as $\mathcal{E}_{j}\subset \mathcal{M_{E}}$ by Lemma~\ref{saknar},
						we have $\left( I-P_{\mathcal{H}_{n}}\right)R_{j}   m_{n}\in X_{n-1}.$
						\\
						
						By induction, we have $R_{j}x_{n-1} \in X_{n-1}$ and hence all the vectors on the right hand side of~\eqref{mabel} are in $X_{n}.$
					\end{proof}
					
					\begin{cor}\label{fckkth}\label{fuckkkk}
						For $x\in X_{n-1}^{\bot}$ we have $R_{j}x=\left(R|_{\mathcal{H}_{n}}\right)_{j}x$ and
						$X_{n-1}$ is an invariant subspace for $ \mathbb{M}_{R|\mathcal{H}_{n}}$
					\end{cor}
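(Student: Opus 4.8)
The plan is to reduce the whole statement to one structural observation, namely that $\mathcal{H}_{n}^{\bot}$ is already contained in $X_{n-1}$ (equivalently $X_{n-1}^{\bot}\subseteq\mathcal{H}_{n}$), and then read off both assertions. This inclusion follows directly from the two preceding lemmas: Lemma~\ref{saknar} gives $\mathcal{E}_{j}\subseteq R^{j}\mathcal{M_{E}}\subseteq X_{n-1}$ for every $0\leq j\leq n-1$, while the computation in the proof of Lemma~\ref{nolabel} identifies $\mathcal{H}_{n}^{\bot}=\bigvee_{j=0}^{n-1}\mathcal{E}_{j}$. Combining these yields $\mathcal{H}_{n}^{\bot}\subseteq X_{n-1}$, and I would state this as the workhorse at the outset.

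For the first assertion I would take $x\in X_{n-1}^{\bot}$. By the observation above $x\in\mathcal{H}_{n}$, so $P_{\mathcal{H}_{n}}x=x$. Since $R_{j}$ is self-adjoint and $X_{n-1}$ is $R_{j}$-invariant by Lemma~\ref{nolabel}, the orthogonal complement $X_{n-1}^{\bot}$ is $R_{j}$-invariant as well; hence $R_{j}x\in X_{n-1}^{\bot}\subseteq\mathcal{H}_{n}$ and therefore $P_{\mathcal{H}_{n}}R_{j}x=R_{j}x$. Chaining these gives $\left(R|_{\mathcal{H}_{n}}\right)_{j}x=P_{\mathcal{H}_{n}}R_{j}P_{\mathcal{H}_{n}}x=P_{\mathcal{H}_{n}}R_{j}x=R_{j}x$, which is exactly the claimed equality.

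For the second assertion I would first record that $X_{n-1}$ is itself invariant under $P_{\mathcal{H}_{n}}$: for $x\in X_{n-1}$ the component $P_{\mathcal{H}_{n}^{\bot}}x$ lies in $\mathcal{H}_{n}^{\bot}\subseteq X_{n-1}$, so $P_{\mathcal{H}_{n}}x=x-P_{\mathcal{H}_{n}^{\bot}}x$ lies in $X_{n-1}$ too. Together with the $R_{j}$-invariance from Lemma~\ref{nolabel}, this shows $X_{n-1}$ is invariant under each generator $\left(R|_{\mathcal{H}_{n}}\right)_{j}=P_{\mathcal{H}_{n}}R_{j}P_{\mathcal{H}_{n}}$ of $\mathbb{M}_{R|\mathcal{H}_{n}}$. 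Because these generators are self-adjoint, invariance upgrades to reducing, so $P_{X_{n-1}}$ commutes with every generator and hence with the entire von Neumann algebra they generate, giving the stated invariance of $X_{n-1}$ under $\mathbb{M}_{R|\mathcal{H}_{n}}$.

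The only delicate point — the ``main obstacle,'' though it is mild here — is the passage from invariance under the individual generators to invariance under the full weakly closed algebra $\mathbb{M}_{R|\mathcal{H}_{n}}$. I would handle this precisely through the self-adjointness of the generators, which turns mere invariance of $X_{n-1}$ into a reducing subspace, so that commutation with $P_{X_{n-1}}$ survives products, adjoints, and weak limits. One should also keep the paper's convention in mind that $\mathbb{M}_{R|\mathcal{H}_{n}}$ is regarded as a (non-unital) algebra acting on all of $\mathcal{H}$, annihilating $\mathcal{H}_{n}^{\bot}$, which is what makes the statement ``$X_{n-1}$ is invariant'' meaningful even though $X_{n-1}$ is not contained in $\mathcal{H}_{n}$.
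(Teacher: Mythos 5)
Your proof is correct and follows essentially the same route as the paper's: both rest on the inclusion $\mathcal{H}_{n}^{\bot}=\bigvee_{j=0}^{n-1}\mathcal{E}_{j}\subseteq X_{n-1}$ from Lemma~\ref{saknar}, use the $R_{j}$-invariance of $X_{n-1}^{\bot}$ from Lemma~\ref{nolabel} for the first assertion, and the $P_{\mathcal{H}_{n}}$-invariance of $X_{n-1}$ for the second. Your only departure is to spell out the passage from the self-adjoint generators $\left(R|_{\mathcal{H}_{n}}\right)_{j}$ to the full weakly closed algebra via commutation with $P_{X_{n-1}}$, a step the paper leaves implicit, which is a welcome clarification rather than a different argument.
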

					
					\begin{proof}
						As $\ker R^{*n}=\mathcal{H}_{n}^{\bot}=\bigvee_{k=0}^{n-1}\E_{k},$ we have by Lemma~\ref{saknar} that $\ker R^{*n} \subseteq \bigvee_{ j=0}^{n-1}R^{j}\mathcal{M_{E}} = X_{n-1}$ and hence $X_{n-1}^{\bot}\subseteq \mathcal{H}_{n}.$ By Lemma~\ref{nolabel}, $X_{n-1}^{\bot}$ is invariant with respect to $R_{j}.$ Hence for all $x\in X_{n-1}^{\bot},$ we have $P_{\mathcal{H}_{n}}x=x$ and so $$R_{j}x=P_{\mathcal{H}_{n}}R_{j}P_{\mathcal{H}_{n}}x=\left(R|_{\mathcal{H}_{n}}\right)_{j}x.$$
						
						To see the second claim, we observe that since $X_{n-1}$ is an invariant subspace for $P_{\mathcal{H}_{n}},$ it is an invariant subspace for all 
						$$P_{\mathcal{H}_{n}}R_{k}P_{\mathcal{H}_{n}}=\left(R|_{\mathcal{H}_{n}}\right)_{k}$$
						and hence for $\mathbb{M}_{R|\mathcal{H}_{n}}.$
					\end{proof}
					
					\begin{prop}\label{isisis}
						For all $m\geq n$ we have $P_{V_{m}} R^{m-n} V_{n}=V_{m}.$
					\end{prop}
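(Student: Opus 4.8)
The plan is to prove the stated equality of closed subspaces, keeping in mind the standing convention that $R^{m-n}V_{n}$ and $P_{V_{m}}R^{m-n}V_{n}$ denote the closures of the corresponding images. Write $k=m-n$. Everything will be reduced to the single subspace identity
\begin{equation*}
	X_{m}=X_{m-1}\vee R^{k}V_{n},
\end{equation*}
from which the proposition follows by projecting onto $X_{m-1}^{\bot}$. First I would record the easy inclusion, which in fact needs no closure at all: since $R$ is continuous and $X_{j}=\bigvee_{i=0}^{j}R^{i}\mathcal{M_{E}}$, one has $RX_{j}\subseteq X_{j+1}$, hence $R^{k}V_{n}\subseteq R^{k}X_{n}\subseteq X_{m}$; as $X_{m}=X_{m-1}\oplus V_{m}$ and $P_{V_{m}}$ annihilates $X_{m-1}$, this already gives $P_{V_{m}}R^{k}V_{n}\subseteq V_{m}$ on the nose.

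The core step is the reverse direction, for which I would establish the displayed identity. Using continuity of $R$ and the definition of $X_{n}$, one computes $R^{k}X_{n}=\bigvee_{i=k}^{m}R^{i}\mathcal{M_{E}}$ and likewise $R^{k}X_{n-1}=\bigvee_{i=k}^{m-1}R^{i}\mathcal{M_{E}}\subseteq X_{m-1}$. Since $X_{m}=\bigvee_{i=0}^{m}R^{i}\mathcal{M_{E}}=X_{m-1}\vee R^{k}X_{n}$ (for $n\geq 1$ the index ranges $\{0,\dots,m-1\}$ and $\{k,\dots,m\}$ overlap and cover $\{0,\dots,m\}$), and since $X_{n}=X_{n-1}\oplus V_{n}$ yields $R^{k}X_{n}\subseteq R^{k}X_{n-1}\vee R^{k}V_{n}\subseteq X_{m-1}\vee R^{k}V_{n}$, the term $R^{k}X_{n-1}$ is absorbed into $X_{m-1}$ and we obtain $X_{m}=X_{m-1}\vee R^{k}V_{n}$. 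The degenerate cases $m=n$ (where $R^{k}V_{n}=V_{n}=V_{m}$) and $n=0$ (where $X_{n}=V_{0}$, so no absorption is needed) are immediate from the same formula.

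Projecting onto $X_{m-1}^{\bot}$ then finishes the argument. We have $V_{m}=P_{X_{m-1}^{\bot}}X_{m}$, and applying the continuous projection to $X_{m}=\overline{X_{m-1}+R^{k}V_{n}}$ gives $V_{m}=\overline{P_{X_{m-1}^{\bot}}R^{k}V_{n}}$, the containment $\subseteq$ holding by continuity and the containment $\supseteq$ holding because $R^{k}V_{n}\subseteq X_{m}$ forces $\overline{P_{X_{m-1}^{\bot}}R^{k}V_{n}}\subseteq\overline{P_{X_{m-1}^{\bot}}X_{m}}=V_{m}$. Finally, since $R^{k}V_{n}\subseteq X_{m}$ and $P_{X_{m-1}^{\bot}}$ restricted to $X_{m}=X_{m-1}\oplus V_{m}$ coincides with $P_{V_{m}}$, the right-hand side equals $\overline{P_{V_{m}}R^{k}V_{n}}$, which under the convention is exactly $P_{V_{m}}R^{k}V_{n}$. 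Hence $P_{V_{m}}R^{k}V_{n}=V_{m}$.

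The main obstacle I anticipate is precisely this last passage from the dense image to the full closed subspace $V_{m}$. The literal set $\{P_{V_{m}}R^{k}v:v\in V_{n}\}$ need \emph{not} be closed when $R$ fails to have closed range: for an operator-weighted shift whose fibre weights have singular values tending to $0$, one has $V_{n}$ equal to the $n$-th fibre and $P_{V_{n+1}}RV_{n}$ a dense proper subspace of $V_{n+1}$, so the assertion is genuinely one about closed subspaces. It is the identity $X_{m}=X_{m-1}\vee R^{k}V_{n}$, together with the standing convention that subspaces denote their closures, that supplies the surjectivity up to closure; I would therefore take care to flag exactly where the convention is used and to emphasize that the inclusion $P_{V_{m}}R^{k}V_{n}\subseteq V_{m}$ requires no closure.
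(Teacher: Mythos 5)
Your proof is correct and is in essence the paper's own argument recast at the level of closed subspaces: the paper proves density of $P_{V_{m}}R^{m-n}V_{n}$ in $V_{m}$ by an $\epsilon$-approximation on vectors, starting from $V_{m}=P_{V_{m}}R^{m}\mathcal{M_{E}}$ and writing $R^{n}v_{0}=P_{V_{n}}R^{n}v_{0}+x_{n-1}$ with $P_{V_{m}}R^{m-n}x_{n-1}=0$ since $R^{m-n}X_{n-1}\subseteq X_{m-1}\bot V_{m}$, which are precisely the decomposition $X_{n}=X_{n-1}\oplus V_{n}$ and the absorption $R^{m-n}X_{n-1}\subseteq X_{m-1}$ underlying your identity $X_{m}=X_{m-1}\vee R^{m-n}V_{n}$. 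Your handling of the closure convention (the equality holds for closed spans, not literal images) matches the paper's stated convention and its proof, so nothing is missing.
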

					
					\begin{proof}
						
						Since  $V_{m}=X_{m}\ominus X_{m-1}$ and $X_{m}=X_{m-1}+R^{m}\mathcal{M_{E}}$ we have $V_{m}=P_{V_{m}}R^{m}\mathcal{M_{E}}.$ So for every $v_{m}\in V_{m}$ and $\epsilon > 0$ there is a $v_{0}\in \mathcal{M_{E}}$ such that $\left\|v_{m}-P_{V_{m}}R^{m}v_{0}\right\|< \epsilon.$ Then $P_{V_{m}}R^{m}v_{0} \neq 0$ will imply $P_{V_{n}}R^{n}v_{0}\neq 0$ since $P_{V_{n}}R^{n}v_{0}=0$ would imply $R^{n}v_{0}\in X_{n-1}$ and so $R^{m}v_{0}\in X_{m-1}\bot V_{m}.$ Since $P_{V_{k}}R^{k}v_{0}=R^{k}v_{0}-x_{k-1}$ with $x_{k-1}\in X_{k-1}$ we then have $$P_{V_{m}}R^{m-n}P_{V_{n}}R^{n}v_{0}=P_{V_{m}}R^{m-n}\left(R^{n}v_{0}-x_{n-1}\right)=P_{V_{m}}R^{m}v_{0}$$
						and hence $$\left\|v_{m}-P_{V_{m}}R^{m-n}P_{V_{n}}R^{n}v_{0}\right\|<\epsilon.$$
						
					\end{proof}
					
					Proposition~\ref{isisis} directly implies:
					
					\begin{cor}\label{e11}\label{ockey}
						For all $m\geq n,$ we have $\dim V_{m}\leq \dim V_{n}.$ Especially, if
						$V_{N}=\left\{0\right\}$ for some $N\in \mathbb{N}$ then $V_{n}=\left\{0\right\}$ for all $n\geq N.$
					\end{cor}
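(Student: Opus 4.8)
The plan is to read Proposition~\ref{isisis} as the assertion that the bounded operator
$$\phi := P_{V_{m}} R^{m-n}|_{V_{n}} : V_{n} \to V_{m}$$
has dense range, and then to extract the dimension inequality from density alone. First I would record that, by the closure convention in force for expressions of this form and by Proposition~\ref{isisis}, one has $\overline{\phi(V_{n})} = P_{V_{m}} R^{m-n} V_{n} = V_{m}$, so that $\phi$ is a bounded linear map between Hilbert spaces with dense image.

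Next I would split into two cases according to whether $V_{n}$ is finite- or infinite-dimensional. If $\dim V_{n} = \infty$, then since $\mathcal{H}$ is separable (as noted just after the statement of conditions \textbf{I} and \textbf{II}), every closed subspace of $\mathcal{H}$ is separable, so $\dim V_{m} \leq \aleph_{0} = \dim V_{n}$ and the inequality holds automatically. If instead $\dim V_{n} < \infty$, then $\phi(V_{n})$ is the linear image of a finite-dimensional space, hence itself finite-dimensional and therefore closed; density then forces $\phi(V_{n}) = V_{m}$, so $V_{m}$ is finite-dimensional with $\dim V_{m} \leq \dim V_{n}$. In either case $\dim V_{m} \leq \dim V_{n}$, which is the first assertion.

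The ``especially'' clause then follows by specialization. If $V_{N} = \left\{0\right\}$ and $n \geq N$, I would apply Proposition~\ref{isisis} with the pair $(N,n)$ in place of $(n,m)$ to obtain $V_{n} = P_{V_{n}} R^{n-N} V_{N} = \overline{P_{V_{n}} R^{n-N}\left\{0\right\}} = \left\{0\right\}$; equivalently, the inequality just proved gives $\dim V_{n} \leq \dim V_{N} = 0$, hence $V_{n} = \left\{0\right\}$.

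I expect the only genuinely delicate point to be the infinite-dimensional case, where a bounded operator with merely dense range need not be surjective and so does not a priori control dimensions. The separability of $\mathcal{H}$ is exactly what removes this obstacle, since it collapses every infinite orthogonal dimension to $\aleph_{0}$; the remaining finite-dimensional case is just the elementary observation that a finite-dimensional image is closed, reducing density to surjectivity and hence to the rank bound.
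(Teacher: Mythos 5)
Your proposal is correct and is essentially the paper's argument: the paper states the corollary as a direct consequence of Proposition~\ref{isisis}, and your write-up simply makes explicit the routine step that a bounded map with dense range cannot raise Hilbert-space dimension (finite case by closedness of finite-dimensional images, infinite case by separability, which the paper in fact assumes for $\mathcal{H}$ at the start of Section~3). No gaps; the case split and the specialization for the ``especially'' clause are exactly what the paper leaves implicit.
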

					
					So if $\dim\mathcal{M_{E}}=V_{0}$ is finite, then $\dim V_{n}$ must be finite for all $n\in\mathbb{N}.$ The injectivity of $R$ now gives:
					
					\begin{prop}\label{e1}
						If $V_{0}=\mathcal{M_{E}}$ is finite dimensional, then $V_{k}\neq\left\{0\right\}$ for all $k\in\mathbb{N}$
					\end{prop}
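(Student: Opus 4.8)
The plan is to argue by contradiction: assuming that some $V_N=\{0\}$, I would show this forces $\E=\{0\}$, contradicting the standing assumption $\E\neq 0$. Since $\E\subseteq\mathcal{M_{E}}=V_0$, we already know $V_0\neq\{0\}$, so any vanishing index must satisfy $N\geq 1$.

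Suppose then that $V_N=\{0\}$ for some $N\geq 1$. Because $V_N=X_N\ominus X_{N-1}$, this is the same as saying $X_N=X_{N-1}$. The first key step is to observe that $X_{N-1}$ is then a finite-dimensional subspace that is invariant under $R$. Finite-dimensionality is immediate: $X_{N-1}=\bigvee_{j=0}^{N-1}R^{j}\mathcal{M_{E}}$ is a finite join of images of the finite-dimensional space $\mathcal{M_{E}}$ under bounded operators (and, via Corollary~\ref{ockey}, each summand $V_j$ with $0\le j\le N-1$ is finite-dimensional). Invariance comes from $R X_{N-1}\subseteq \bigvee_{j=1}^{N}R^{j}\mathcal{M_{E}}\subseteq X_N=X_{N-1}$, where the first inclusion uses continuity of $R$ to pass from the algebraic span to its closure.

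The second key step exploits injectivity. The restriction $R|_{X_{N-1}}$ is an injective linear endomorphism of the finite-dimensional space $X_{N-1}$, hence surjective, so $R X_{N-1}=X_{N-1}$. Consequently $\E\subseteq V_0\subseteq X_{N-1}=R X_{N-1}\subseteq \overline{R\mathcal{H}}$. But $\E=\ker R^{*}=(\overline{R\mathcal{H}})^{\bot}$, so $\E\subseteq \overline{R\mathcal{H}}\cap(\overline{R\mathcal{H}})^{\bot}=\{0\}$, which is the desired contradiction.

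I expect the conceptual crux—and the part easiest to overlook—to be the realization that a single vanishing $V_N$ collapses the entire ascending chain $X_0\subseteq X_1\subseteq\cdots$ into one finite-dimensional $R$-invariant subspace, on which the elementary fact that an injective endomorphism of a finite-dimensional space is surjective becomes available. The remaining verifications, namely that $X_N=X_{N-1}$ yields $R$-invariance of $X_{N-1}$ and that $\E\subseteq\overline{R\mathcal{H}}$ is incompatible with $\E=\ker R^{*}\neq 0$, are routine.
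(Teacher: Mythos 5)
Your proof is correct, and while the contradiction mechanism is the same one the paper uses---a vanishing $V_{N}$ produces a finite-dimensional $R$-invariant subspace containing $\E$, on which injectivity forces surjectivity, incompatible with $\E=\ker R^{*}\neq 0$ being orthogonal to the range of $R$---your route to that subspace is different and more self-contained. The paper first invokes Corollary~\ref{ockey} to propagate $V_{K}=\left\{0\right\}$ to all $V_{k}$ with $k\geq K,$ then applies Theorem~\ref{space1} to conclude that the whole space $\mathcal{H}_{\E}=\oplus_{k}V_{k}$ is finite dimensional and $R$-invariant, and finishes by a rank--nullity count: the nontrivial cokernel (coming from $\E\bot R\mathcal{H}$) forces a nontrivial kernel, contradicting injectivity. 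You instead observe that a single vanishing $V_{N}$ already gives $X_{N}=X_{N-1},$ so the truncated space $X_{N-1}$---finite dimensional simply as a finite join of images of the finite-dimensional $\mathcal{M_{E}}$ under bounded operators---is itself $R$-invariant, and you never need the propagation corollary, the direct-sum decomposition of Theorem~\ref{space1}, or $\mathcal{H}_{\E}$ at all; you then phrase the finite-dimensional linear algebra as surjectivity of $R|_{X_{N-1}}$ giving $\E\subseteq \overline{R\mathcal{H}},$ which is the same count stated dually. What the paper's version buys is brevity given machinery already established; what yours buys is logical independence from that machinery and a cleaner localization of where the contradiction lives. One cosmetic point: your parenthetical appeal to Corollary~\ref{ockey} for finite-dimensionality of the $V_{j}$'s is redundant, since your direct argument already covers it.
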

					
					\begin{proof}
						If there would be a $K\in\mathbb{N}$ such that $V_{K}=\left\{0\right\}$ then Corollary~\ref{ockey} would give $V_{k}=\left\{0\right\}$ for all $k\geq K.$ By Theorem~\ref{space1}, $\mathcal{H}_{\E}$ must be finite dimensional and mapped by $R$ again into $\mathcal{H}_{\E}$ with a nontrivial cokernel, but then there must be a nonzero vector $v\in\mathcal{H}_{1}$ such that $R v=0,$ a contradiction.
					\end{proof}

					\begin{thm}\label{space1} 
						For an injective operator $R$ on a Hilbert space $\mathcal{H},$ the subspace $\mathcal{H}_{\E}=\bigvee_{ j=0}^{\infty}R^{j}\mathcal{M_{E}}$ is $R$ and $\mathbb{M}_{R}$ invariant. Moreover $$\mathcal{H}_{\E}=\oplus_{k=0}^{\infty}V_{k}.$$
						
						and $$R V_{k}\subseteq V_{k+1}\oplus(X_{k}\ominus R X_{k-1}).$$
					\end{thm}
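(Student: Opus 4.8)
The plan is to treat the three assertions in turn; the first two are formal and the last carries the real content.

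First I would establish the two invariance statements. Since $R$ is bounded, hence continuous, and $R\bigl(\bigvee_{j=0}^{\infty}R^{j}\mathcal{M_{E}}\bigr)\subseteq\bigvee_{j=1}^{\infty}R^{j}\mathcal{M_{E}}\subseteq\mathcal{H}_{\E}$, the closed subspace $\mathcal{H}_{\E}$ is carried into itself, giving $R$-invariance. For $\mathbb{M}_{R}$-invariance I would invoke Lemma~\ref{nolabel}, by which every $X_{n}$ is $\mathbb{M}_{R}$-invariant; as $\mathcal{H}_{\E}=\overline{\bigcup_{n}X_{n}}$ and each element of $\mathbb{M}_{R}$ is bounded, continuity propagates invariance to the closure.

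For the decomposition, the definition $V_{n}=X_{n}\ominus X_{n-1}$ gives $X_{n}=X_{n-1}\oplus V_{n}$, and an immediate induction yields $X_{n}=\bigoplus_{k=0}^{n}V_{k}$. Because $\mathcal{H}_{\E}$ is the closure of the increasing union $\bigcup_{n}X_{n}$, passing to the limit gives $\mathcal{H}_{\E}=\bigoplus_{k=0}^{\infty}V_{k}$.

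The hard part will be the inclusion $RV_{k}\subseteq V_{k+1}\oplus(X_{k}\ominus RX_{k-1})$. The coarse containment $RV_{k}\subseteq RX_{k}\subseteq X_{k+1}=X_{k}\oplus V_{k+1}$ is immediate from continuity, so for $v\in V_{k}$ I would split $Rv=P_{V_{k+1}}Rv+P_{X_{k}}Rv$; since $V_{k+1}\perp X_{k}\supseteq\overline{RX_{k-1}}$, it suffices to show $P_{X_{k}}Rv\perp RX_{k-1}$, equivalently $Rv\perp RX_{k-1}$. The crux is the computation, for $v\in V_{k}$ and $w\in X_{k-1}$,
$$\langle Rv,Rw\rangle=\langle v,R_{1}w\rangle=0,$$
valid because $R_{1}\in\mathbb{M}_{R}$ preserves $X_{k-1}$ by Lemma~\ref{nolabel} while $v\in V_{k}\perp X_{k-1}$. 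Thus $Rv\perp RX_{k-1}$, so $P_{X_{k}}Rv\in X_{k}\ominus RX_{k-1}$ and the inclusion follows. The only delicate point is that $RX_{k-1}$ need not be closed when $R$ lacks closed range; but $\ominus$ denotes orthogonal complement inside $X_{k}$, so $X_{k}\ominus RX_{k-1}=X_{k}\ominus\overline{RX_{k-1}}$ and nothing changes, while the case $k=0$ is read with $X_{-1}=\{0\}$.
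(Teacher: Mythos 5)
Your proposal is correct and takes essentially the same approach as the paper: the crux in both is the computation $\left\langle Rv,Rw\right\rangle=\left\langle v,R_{1}w\right\rangle=0$ for $v\in V_{k}$, $w\in X_{k-1}$, using the $\mathbb{M}_{R}$-invariance of $X_{k-1}$ from Lemma~\ref{nolabel}, with your splitting $Rv=P_{V_{k+1}}Rv+P_{X_{k}}Rv$ matching the paper's $Rv=P_{X_{k}^{\bot}}Rv+P_{X_{k}}Rv$. Your closing remark about $RX_{k-1}$ versus $\overline{RX_{k-1}}$ is consistent with the paper's standing convention that such subspaces denote their closures.
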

					\begin{proof}
						The decomposition $\mathcal{H}_{\E}=\oplus_{k=0}^{\infty}V_{k}$ follows by definition.
						Since  $$\mathcal{H}_{\E}=\oplus_{k=0}^{\infty}V_{k} =\bigvee_{k=0}^{\infty}R^{k}\mathcal{M_{E}}$$ and $\bigvee_{k=0}^{\infty}R^{k}\mathcal{M_{E}}$ invariant under $R,$ the first claim is proved. 
						
						To prove the last claim, note that since $R X_{k}=\bigvee_{j=0}^{k}R^{j+1}\mathcal{M_{E}},$
						we have $R V_{k}\bot V_{k+2+m},$ for all $m\geq 0.$
						Also, since $V_{k}=X_{k}\ominus X_{k-1}$ and by Lemma~\ref{nolabel} $X_{k-1}$ is invariant under $R_{1}$ $$\left\langle R V_{k},R X_{k-1}\right\rangle=\left\langle V_{k}, R_{1}X_{k-1}\right\rangle=0.$$
						Thus $R V_{k}\bot R^{j}\mathcal{M_{E}}$ for $1\leq j \leq k$ and $R V_{k}\bot\oplus_{m=k+2}^{\infty}V_{m}.$ For $v_{k} \in V_{k},$ we have $R v_{k}=P_{X_{k}^{\bot}}R v_{k}+P_{X_{k}}R v_{k}.$
						Now $P_{X_{k}^{\bot}}Rv_{k}\in V_{k+1}$ and we have $$\left\langle P_{X_{k}}R v_{k}, R^{j}V_{0}\right\rangle=\left\langle R v_{k}, R^{j}V_{0}\right\rangle=0$$ for $1 \leq j\leq k$ so that $$P_{X_{k}}R v_{k}\in X_{k} \ominus R X_{k-1}.$$
					\end{proof}
					
					\begin{cor}\label{fuhåkan}
						If $T$ is injective and half-centered, then $T|\mathcal{H}_{\E}$ is also injective and half-centered.
					\end{cor}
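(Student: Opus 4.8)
The plan is to verify the two assertions separately, both of which reduce to facts already in place. Injectivity is immediate: $T|\mathcal{H}_{\E}$ is simply the restriction of the injective operator $T$ to the subspace $\mathcal{H}_{\E}$, and the restriction of an injective operator to any subspace is again injective, since a vector killed by $T|\mathcal{H}_{\E}$ is in particular killed by $T$.

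For the half-centered property I would start from Theorem~\ref{space1}, which tells us that $\mathcal{H}_{\E}$ is invariant under both $T$ and the whole von Neumann algebra $\mathbb{M}_{T}$; in particular $\mathcal{H}_{\E}$ is invariant under each generator $T_{k}=T^{*k}T^{k}\in\mathbb{M}_{T}$. Because $\mathcal{H}_{\E}$ is $T$-invariant, equation~\eqref{millian} applies and yields $\left(T|_{\mathcal{H}_{\E}}\right)_{k}=P_{\mathcal{H}_{\E}}T_{k}P_{\mathcal{H}_{\E}}|_{\mathcal{H}_{\E}}=T_{k}|_{\mathcal{H}_{\E}}$. Thus the operators whose mutual commutativity must be checked are exactly the restrictions $T_{k}|_{\mathcal{H}_{\E}}$.

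The key observation is that each $T_{k}$ is self-adjoint, so a subspace invariant under it is automatically reducing: if $T_{k}\mathcal{H}_{\E}\subseteq\mathcal{H}_{\E}$, then for $w\in\mathcal{H}_{\E}^{\bot}$ and $v\in\mathcal{H}_{\E}$ we have $\langle T_{k}w,v\rangle=\langle w,T_{k}v\rangle=0$, whence $T_{k}\mathcal{H}_{\E}^{\bot}\subseteq\mathcal{H}_{\E}^{\bot}$ as well. Therefore $P_{\mathcal{H}_{\E}}$ commutes with every $T_{k}$, and the restrictions satisfy $\left(T_{j}|_{\mathcal{H}_{\E}}\right)\left(T_{k}|_{\mathcal{H}_{\E}}\right)=\left(T_{j}T_{k}\right)|_{\mathcal{H}_{\E}}$. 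Since $T$ is half-centered, the operators $T_{j}$ and $T_{k}$ commute on $\mathcal{H}$, so their restrictions commute on the reducing subspace $\mathcal{H}_{\E}$, which by the identification above says precisely that $\left(T|_{\mathcal{H}_{\E}}\right)_{j}$ and $\left(T|_{\mathcal{H}_{\E}}\right)_{k}$ commute. Hence $T|_{\mathcal{H}_{\E}}$ is half-centered.

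There is no genuine obstacle here; the statement is essentially a repackaging of Theorem~\ref{space1} together with the elementary fact that a subspace invariant under a self-adjoint operator reduces it. The one point deserving a moment's care is to use the full $\mathbb{M}_{T}$-invariance of $\mathcal{H}_{\E}$ rather than mere $T$-invariance, since it is precisely invariance under the $T_{k}$ that lets equation~\eqref{millian} be combined with the commutativity of the $T_{k}$ on the ambient space.
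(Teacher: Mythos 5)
Your proof is correct and follows exactly the route the paper intends: the corollary is stated without proof as an immediate consequence of Theorem~\ref{space1} (the $T$- and $\mathbb{M}_{T}$-invariance of $\mathcal{H}_{\E}$) combined with the identification $\left(T|_{\mathcal{H}_{\E}}\right)_{k}=T_{k}|_{\mathcal{H}_{\E}}$ from~\eqref{millian}, after which commutativity of the restrictions is inherited from that of the $T_{k}$. Your additional remark that invariance under the self-adjoint $T_{k}$ forces $\mathcal{H}_{\E}$ to be reducing is a harmless (indeed slightly more explicit) justification of the same step.
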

					
					By to Theorem~\ref{1hemma}, if $v\in V_{m}$ and $R v\bot X_{m}\ominus R X_{m-1},$ then we get $Rv\in V_{m+1}.$ In the context here, this is not a particular useful characterization of those $v\in V_{m}$ that end up in $V_{m+1}$ when applying $R.$ As we will see in the next proposition, it turns out that while we may not have $X_{m+1}\ominus R X_{m}\subseteq\mathcal{M_{E}},$ none of the vectors in $R V_{k}\ominus V_{k+1}$ can be orthogonal to $\mathcal{M_{E}}$. 
					
					\begin{prop}\label{jups}
						If $v_{m}\in V_{m}$ and $R v_{m}\bot \mathcal{M_{E}},$ then $Rv_{m}\in V_{m+1}.$
					\end{prop}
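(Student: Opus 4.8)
The plan is to read off the refined containment from Theorem~\ref{space1}, namely $R V_{m}\subseteq V_{m+1}\oplus\left(X_{m}\ominus R X_{m-1}\right)$, and to show that the hypothesis $R v_{m}\bot\mathcal{M_{E}}$ forces the component of $R v_{m}$ lying in $X_{m}\ominus R X_{m-1}$ to vanish. Concretely, I would split $R v_{m}=u+w$ according to this decomposition, with $u\in V_{m+1}$ and $w\in X_{m}\ominus R X_{m-1}$. This splitting is a genuine orthogonal decomposition, since $V_{m+1}=X_{m+1}\ominus X_{m}$ is orthogonal to $X_{m}$ while $X_{m}\ominus R X_{m-1}\subseteq X_{m}$, so projecting onto the two pieces is well defined.

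Next I would observe that $u\bot\mathcal{M_{E}}$: indeed $\mathcal{M_{E}}=X_{0}\subseteq X_{m}$ and $V_{m+1}\bot X_{m}$, whence $V_{m+1}\bot\mathcal{M_{E}}$. Combining this with the hypothesis $R v_{m}\bot\mathcal{M_{E}}$ yields $w=R v_{m}-u\bot\mathcal{M_{E}}$. The key structural point is the identity $X_{m}=\overline{\mathcal{M_{E}}+R X_{m-1}}$, which is immediate from $X_{m}=\bigvee_{j=0}^{m}R^{j}\mathcal{M_{E}}$ together with $R X_{m-1}=\bigvee_{j=1}^{m}R^{j}\mathcal{M_{E}}$. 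Granting this, the vector $w$ is orthogonal to $R X_{m-1}$ (since $w\in X_{m}\ominus R X_{m-1}$) and orthogonal to $\mathcal{M_{E}}$ (by the previous step), hence orthogonal to the closed span $\overline{\mathcal{M_{E}}+R X_{m-1}}=X_{m}$. As $w$ also lies in $X_{m}$, we conclude $w=0$, and therefore $R v_{m}=u\in V_{m+1}$, as claimed.

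I do not anticipate a real obstacle: the whole argument is a short orthogonality computation once Theorem~\ref{space1} is available, and its force lies entirely in the fact that adjoining $\mathcal{M_{E}}$ to $R X_{m-1}$ recovers all of $X_{m}$. The only point requiring minor care is the degenerate case $m=0$, where $R X_{m-1}$ must be read as $\{0\}$ (the empty join) and $X_{0}\ominus R X_{-1}=\mathcal{M_{E}}$; there the same three lines apply verbatim, giving $w\in\mathcal{M_{E}}$ with $w\bot\mathcal{M_{E}}$, hence $w=0$.
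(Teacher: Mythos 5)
Your proof is correct and is essentially the paper's argument in lightly repackaged form: the paper extracts $Rv_{m}\bot\bigvee_{j=1}^{m}R^{j}\mathcal{M_{E}}=R X_{m-1}$ directly from the proof of Theorem~\ref{space1} and combines it with the hypothesis to get $Rv_{m}\bot X_{m}$ while $Rv_{m}\in X_{m+1}$, whereas you invoke the stated containment $R V_{m}\subseteq V_{m+1}\oplus\left(X_{m}\ominus R X_{m-1}\right)$ and annihilate the second component --- both hinge on the same two facts, namely $X_{m}=\overline{\mathcal{M_{E}}+R X_{m-1}}$ and the orthogonality of $R V_{m}$ to $R X_{m-1}$. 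If anything, citing the theorem's statement rather than its proof is marginally tidier, but the mathematical content is identical.
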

					
					\begin{proof}
						From the proof of Theorem~\ref{space1} we know $Rv_{m}\bot\bigvee_{j=1}^{m}R^{j}\mathcal{M_{E}},$ so also $Rv_{m}\bot\mathcal{M_{E}}$ would imply that $Rv_{m}\in X_{m+1}$ and $Rv_{m}\bot \bigvee_{j=0}^{m}R^{j}\mathcal{M_{E}}=X_{m}$ i.e $Rv_{m}\in V_{m+1}.$
					\end{proof}
					
					\begin{cor}\label{jugs}
						If $v_{m}\in V_{m}$ and $\theta_{R} v_{m}\bot \mathcal{M_{E}},$ then $\theta_{R} v_{m}\in V_{m+1}.$
					\end{cor}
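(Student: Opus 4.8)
The plan is to mirror the proof of Proposition~\ref{jups}, replacing $R$ by its isometric part $\theta_{R}$. The one new ingredient is that the inclusion $R V_{m}\subseteq V_{m+1}\oplus(X_{m}\ominus R X_{m-1})$ furnished by Theorem~\ref{space1} persists when $R$ is replaced by $\theta_{R}$; once this is in hand, the conclusion follows from a short orthogonality computation identical to the one in Proposition~\ref{jups}.

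First I would record that $\theta_{R}$ is a genuine isometry: since $R$ is injective, $R^{*}$ has dense range, so the initial space of $\theta_{R}$ is all of $\mathcal{H}.$ Next, since $R_{1}=R^{*}R$ is a positive element of the von Neumann algebra $\mathbb{M}_{R},$ so is $R_{1}^{\frac{1}{2}},$ and by Lemma~\ref{nolabel} the subspace $V_{m}$ is invariant under it. Being self-adjoint and injective, $R_{1}^{\frac{1}{2}}$ restricts to an injective self-adjoint operator on the (reducing) subspace $V_{m},$ so $R_{1}^{\frac{1}{2}}V_{m}$ is dense in $V_{m}.$ Writing $R=\theta_{R}R_{1}^{\frac{1}{2}}$ and using continuity of $\theta_{R},$ this yields $\theta_{R}V_{m}=\overline{\theta_{R}R_{1}^{\frac{1}{2}}V_{m}}=\overline{R V_{m}}.$ As $V_{m+1}\oplus(X_{m}\ominus R X_{m-1})$ is closed and contains $R V_{m}$ by Theorem~\ref{space1}, I conclude
\[
\theta_{R}V_{m}\subseteq V_{m+1}\oplus(X_{m}\ominus R X_{m-1}).
\]

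With this inclusion, take $v_{m}\in V_{m}$ with $\theta_{R}v_{m}\bot\mathcal{M_{E}}$ and write $\theta_{R}v_{m}=a+b$ with $a\in V_{m+1}$ and $b\in X_{m}\ominus R X_{m-1}.$ Since $V_{m+1}=X_{m+1}\ominus X_{m}$ is orthogonal to $X_{m}\supseteq\mathcal{M_{E}},$ the component $a$ is orthogonal to $\mathcal{M_{E}}$; combined with $\theta_{R}v_{m}\bot\mathcal{M_{E}}$ this forces $b\bot\mathcal{M_{E}}.$ On the other hand $b\in X_{m}\ominus R X_{m-1}$ means $b\bot R X_{m-1},$ and because $X_{m}=\mathcal{M_{E}}\vee R X_{m-1}$ these two orthogonalities give $b\bot X_{m}$; as $b\in X_{m}$ this means $b=0,$ so $\theta_{R}v_{m}=a\in V_{m+1}.$

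The only step requiring care is the passage from $R V_{m}$ to $\theta_{R}V_{m},$ i.e. the identity $\theta_{R}V_{m}=\overline{R V_{m}}$; everything after that is the same bookkeeping as in Proposition~\ref{jups}. I expect the density of $R_{1}^{\frac{1}{2}}V_{m}$ in $V_{m}$ to be the crux, and it rests precisely on $R_{1}^{\frac{1}{2}}$ being an injective self-adjoint element of $\mathbb{M}_{R}$ that leaves the reducing subspace $V_{m}$ invariant.
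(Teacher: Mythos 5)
Your proposal is correct and is essentially the paper's own argument: the paper likewise exploits the density of the range of $R_{1}$ (really $R_{1}^{\frac{1}{2}}$) inside the $\mathbb{M}_{R}$-invariant subspace $V_{m}$ to realize $\theta_{R}v_{m}$ as a limit of vectors $Rx_{k}$ with $x_{k}\in V_{m},$ and then applies the orthogonality bookkeeping of Proposition~\ref{jups} to the limit. Your identity $\theta_{R}V_{m}=\overline{RV_{m}}$ and the subsequent use of the closed decomposition $V_{m+1}\oplus\left(X_{m}\ominus RX_{m-1}\right)$ from Theorem~\ref{space1} are just a tidier packaging of that same density-plus-orthogonality step, so the two proofs coincide in substance.
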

					
					\begin{proof}
						Since $R_{1}$ has dense range, there is a sequence $R_{1}x_{k}\in V_{m}$ such that $R_{1}x_{k}\rightarrow v_{m}.$ Then $\theta_{R} R_{1}x_{k}=R x_{k} \rightarrow \theta_{R} v_{m}\bot V_{0}$ and the same arguments as in Proposition~\ref{jups} shows that $\theta_{R} v_{m}\in V_{m+1}.$ 
					\end{proof}

					\begin{lem}\label{fuio}
						For $m\geq n$ the projection $P_{V_{m}}$ commutes with the operators in $\mathbb{M}_{R|\mathcal{H}_{n}}.$
					\end{lem}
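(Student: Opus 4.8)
The plan is to reduce the commutation statement to an invariance statement and then exploit the fact that, high enough in the filtration $X_0\subseteq X_1\subseteq\cdots$, the ambient operators $R_{j}$ and the compressions $(R|_{\mathcal{H}_{n}})_{j}$ literally agree. First I would note that $\mathbb{M}_{R|\mathcal{H}_{n}}$ is a self-adjoint algebra, being the von Neumann algebra generated by the self-adjoint operators $(R|_{\mathcal{H}_{n}})_{k}=P_{\mathcal{H}_{n}}R_{k}P_{\mathcal{H}_{n}}$. For a self-adjoint family of operators, a projection $P_{V}$ commutes with every member precisely when $V$ is invariant under the family (equivalently, when $V$ and $V^{\bot}$ are both invariant, and self-adjointness makes these two conditions equivalent). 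Hence it suffices to prove that $V_{m}$ is an invariant subspace for $\mathbb{M}_{R|\mathcal{H}_{n}}$ whenever $m\geq n$.

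The geometric point that uses $m\geq n$ comes next. Since the subspaces $X_{k}=\bigvee_{j=0}^{k}R^{j}\mathcal{M_{E}}$ are increasing, $X_{n-1}\subseteq X_{m-1}$, and because $V_{m}=X_{m}\ominus X_{m-1}\subseteq X_{m-1}^{\bot}$ we obtain $V_{m}\subseteq X_{n-1}^{\bot}$. Now Corollary~\ref{fuckkkk} applies on $X_{n-1}^{\bot}$: there the operators $R_{j}$ and $(R|_{\mathcal{H}_{n}})_{j}$ coincide. In particular, for every $x\in V_{m}$ and every $j\in\mathbb{N}$ we have $(R|_{\mathcal{H}_{n}})_{j}x=R_{j}x$.

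Finally I would combine this with Lemma~\ref{nolabel}, which gives that $V_{m}$ is $\mathbb{M}_{R}$-invariant, so in particular $R_{j}V_{m}\subseteq V_{m}$. Then for $x\in V_{m}$ we have $(R|_{\mathcal{H}_{n}})_{j}x=R_{j}x\in V_{m}$, so $V_{m}$ is invariant under each generator $(R|_{\mathcal{H}_{n}})_{j}$ of $\mathbb{M}_{R|\mathcal{H}_{n}}$. Since the collection of operators leaving $V_{m}$ invariant is a weakly closed algebra, $V_{m}$ is invariant under the whole von Neumann algebra $\mathbb{M}_{R|\mathcal{H}_{n}}$, and by the reduction in the first paragraph $P_{V_{m}}$ commutes with every element of it. The argument is short rather than deep; the only two points that require care are the inclusion $V_{m}\subseteq X_{n-1}^{\bot}$ (which is exactly where the hypothesis $m\geq n$ is used) and the routine passage from invariance under the generators to invariance under their weak closure.
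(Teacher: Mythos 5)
Your proof is correct and uses the same two ingredients as the paper's own one-line argument: Lemma~\ref{nolabel} (each $V_{m}$ is $\mathbb{M}_{R}$-invariant, so $P_{V_{m}}$ commutes with the $R_{j}$) together with Corollary~\ref{fckkth} and the inclusion $V_{m}\subseteq X_{m-1}^{\bot}\subseteq X_{n-1}^{\bot}$, on which $R_{j}$ and $\left(R|_{\mathcal{H}_{n}}\right)_{j}$ agree. The only cosmetic difference is that the paper expresses this directly as the operator identity $\left(R|_{\mathcal{H}_{n}}\right)_{j}P_{V_{m}}=R_{j}P_{V_{m}}=P_{V_{m}}R_{j}=P_{V_{m}}\left(R|_{\mathcal{H}_{n}}\right)_{j}$, while you reformulate commutation as invariance of $V_{m}$ and make explicit the (routine) passage from the self-adjoint generators to their weak closure.
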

					
					\begin{proof}
						Since every $V_{m}$ is invariant under $\mathbb{M}_{R},$ we have that $P_{V_{m}}$ must commute with all $R_{j}.$  By Corollary~\ref{fckkth}
						we have $$\left(R|_{\mathcal{H}_{n}}\right)_{j}P_{V_{m}}=R_{j}P_{V_{m}}=P_{V_{m}}R_{j}=P_{V_{m}}\left(R|_{\mathcal{H}_{n}}\right)_{j}.$$
					\end{proof}

					A consequence of Corollary~\ref{fckkth} is that
					
					\begin{prop}\label{fukth}
						We have
						$$R_{j}|V_{m}=\left(R|_{\mathcal{H}_{n}}\right)_{j}|V_{m}$$ when $m\geq n,$ so that $$\mathbb{M}_{R|\mathcal{H}_{n}}|V_{m}=\mathbb{M}_{R}|V_{m}.$$
						
						Moreover, for $m\geq n$ and all $j\in\mathbb{N}$ we have
						\begin{equation}\label{backto}
							\theta_{R|\mathcal{H}_{n},j}^{*}\left(R|_{\mathcal{H}_{n}}\right)_{i}\theta_{R|\mathcal{H}_{n},j}|V_{m}=
							\theta_{R,j}^{*}R_{i}\theta_{R,j}|V_{m}
						\end{equation}
						so that
						\begin{equation}\label{future}
							\mathbb{M}_{R|\mathcal{H}_{n}}^{j}|V_{m}=\mathbb{M}_{R}^{j}|V_{m}.
						\end{equation}
					\end{prop}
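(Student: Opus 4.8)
The statement splits into a generator-level equality and the resulting equality of generated von Neumann algebras, and I would treat the two displayed blocks in that order.

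For the first block the key observation is that $V_m\subseteq X_{n-1}^{\bot}$ whenever $m\ge n$: indeed $V_m=X_m\ominus X_{m-1}$ is orthogonal to $X_{m-1}$, and $X_{n-1}\subseteq X_{m-1}$ since $n-1\le m-1$. Corollary~\ref{fckkth} then applies verbatim to every $v\in V_m$ and gives $R_jv=(R|_{\mathcal{H}_n})_jv$, i.e. $R_j|V_m=(R|_{\mathcal{H}_n})_j|V_m$. To pass from generators to algebras I would note that $V_m$ is reducing for $\mathbb{M}_R$ (invariant by Lemma~\ref{nolabel}, and $\mathbb{M}_R$ is self-adjoint) and reducing for $\mathbb{M}_{R|\mathcal{H}_n}$ (by Lemma~\ref{fuio}, as $m\ge n$). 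Hence compression to $V_m$ is a normal $*$-homomorphism on each algebra, carrying it onto the von Neumann algebra generated by the compressed generators; since those generators now coincide, $\mathbb{M}_R|V_m=\mathbb{M}_{R|\mathcal{H}_n}|V_m$.

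For the second block I would first reduce~\eqref{future} to~\eqref{backto} by exactly this normal-homomorphism argument, using that $V_m$ is reducing for $\mathbb{M}_R^{j}\subseteq\mathbb{M}_R$ and for $\mathbb{M}_{R|\mathcal{H}_n}^{j}\subseteq\mathbb{M}_{R|\mathcal{H}_n}$. To prove~\eqref{backto} I would invoke Lemma~\ref{laban} in the form $\theta_{R|\mathcal{H}_n,j}=\theta_{R,n+j}\theta_{R,n}^{*}$, together with $(R|_{\mathcal{H}_n})_i=P_{\mathcal{H}_n}R_iP_{\mathcal{H}_n}$ and the fact that $\theta_{R,n+j}$ has range inside $\mathcal{H}_{n+j}\subseteq\mathcal{H}_n$, to rewrite the left-hand side of~\eqref{backto} as $\theta_{R,n}\bigl(\theta_{R,n+j}^{*}R_i\theta_{R,n+j}\bigr)\theta_{R,n}^{*}$. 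Since the right-hand operator $\theta_{R,j}^{*}R_i\theta_{R,j}$ lies in $\mathbb{M}_R$ and hence preserves $V_m\subseteq\mathcal{H}_n$, applying $\theta_{R,n}\theta_{R,n}^{*}=P_{\mathcal{H}_n}$ reduces~\eqref{backto} to the intertwining relation $\theta_{R,n+j}^{*}R_i\theta_{R,n+j}\,\theta_{R,n}^{*}=\theta_{R,n}^{*}\,\theta_{R,j}^{*}R_i\theta_{R,j}$, to be verified on $V_m$.

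The heart of the argument, and the step I expect to be the main obstacle, is establishing this intertwining, i.e. controlling how the partial isometries $\theta_{R,n}$ act on the individual pieces $V_m$. My plan is to exploit the factorization $R^{n}=\theta_{R,n}r_n$ with $r_n\in\mathbb{M}_R$ of dense range from Lemma~\ref{labann}: since $V_m$ reduces $\mathbb{M}_R$, the operator $r_n$ restricts to a dense-range map of $V_m$, while Proposition~\ref{isisis} shows $P_{V_m}R^{n}$ carries $V_{m-n}$ onto $V_m$ for $m\ge n$; combined with Corollary~\ref{jugs}, these let one track $\theta_{R,n}^{*}$ as a map sending $V_m$ back into $V_{m-n}$ and commuting with the $R_i$'s up to elements of $\mathbb{M}_R$, which are themselves already pinned down on $V_m$ by the first block. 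Once the intertwining is in hand, \eqref{backto} follows and the normal-homomorphism reduction yields~\eqref{future}. I expect the delicate point to be the bookkeeping with domains and ranges of the various $\theta$'s, specifically verifying that $\theta_{R,n}^{*}V_m\subseteq V_{m-n}$ rather than merely that its $P_{V_{m-n}}$-projection is onto, rather than any single hard estimate.
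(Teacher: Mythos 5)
Your first block is correct and coincides with the paper's treatment of the base case: $V_m\subseteq X_{n-1}^{\bot}$ for $m\geq n$, Corollary~\ref{fckkth} gives the generator identity $R_j|V_m=\left(R|_{\mathcal{H}_n}\right)_j|V_m$, and compression to $V_m$ (which reduces both algebras by Lemma~\ref{nolabel} and Lemma~\ref{fuio}) passes this to the generated von Neumann algebras. Your reduction of~\eqref{future} to~\eqref{backto} is likewise fine, and your rewriting of the left-hand side of~\eqref{backto} as $\theta_{R,n}\bigl(\theta_{R,n+j}^{*}R_i\theta_{R,n+j}\bigr)\theta_{R,n}^{*}$ via Lemma~\ref{laban} is a correct algebraic identity.

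The genuine gap is exactly where you flag it: your route to the intertwining relation rests on the claim $\theta_{R,n}^{*}V_m\subseteq V_{m-n}$, which you do not prove and which should be expected to fail for a general injective $R$. The decomposition $\mathcal{H}_{\E}=\oplus_k V_k$ is not respected spatially by $R$: Theorem~\ref{space1} gives only $RV_k\subseteq V_{k+1}\oplus\left(X_k\ominus RX_{k-1}\right)$, and the defect summand lies inside $X_k=V_0\oplus\cdots\oplus V_k$. Dualizing, for $v\in V_m$ and $u\in V_j$ one can have $\left\langle \theta_R^{*}v,u\right\rangle=\left\langle v,\theta_R u\right\rangle\neq 0$ for every $j\geq m$, not only $j=m-1$ (and $\theta_R^{*}v$ can even leak out of $\mathcal{H}_{\E}$, since $\mathcal{H}_{\E}^{\bot}$ need not be $R$-invariant). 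The tools you invoke cannot close this: Proposition~\ref{isisis} controls only the compression $P_{V_m}R^{m-n}|V_n$ (the projection is essential there), and Corollary~\ref{jugs} applies only under the hypothesis $\theta_R v\bot\mathcal{M_{E}}$, which is precisely what fails when the defect term is present (Proposition~\ref{jups} exists because it can fail). Indeed, the fact that the later maps $\Phi_n,\Theta_{m,n},\Gamma_{m,n}$ of Theorem~\ref{normspos} are merely surjective homomorphisms with possibly nontrivial kernels, rather than spatial equivalences, reflects exactly that the $\theta$'s do not carry the $V_k$'s onto each other. The paper avoids any such spatial claim and proves~\eqref{backto} by induction on $j$: taking $r_j\in\mathbb{M}_R$ and $r_{n,j}\in\mathbb{M}_{R|\mathcal{H}_n}$ from Lemma~\ref{labann} with $\theta_{R,j}r_j=R^j$ and $\theta_{R|\mathcal{H}_n,j}r_{n,j}=R^j|\mathcal{H}_n$, the explicit product formula~\eqref{rform} writes $r_{n,j}$ in terms of the generators covered by the induction hypothesis, so $r_{n,j}|V_m=r_j|V_m$; sandwiching both sides of~\eqref{backto} between $r_j^{*}$ and $r_j$ reduces it to $R_{i+j}|V_m=\left(R|_{\mathcal{H}_n}\right)_{i+j}|V_m$, which is the base case, and since $r_j$ is positive with dense range and commutes with $P_{V_m}$, its restriction to $V_m$ is injective with dense range and can be cancelled. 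To repair your argument, replace the claim $\theta_{R,n}^{*}V_m\subseteq V_{m-n}$ by this cancellation scheme.
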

					Where the slighly complicated expression
					$$\theta_{R|\mathcal{H}_{n},j}^{*}\left(R|_{\mathcal{H}_{n}}\right)_{i}\theta_{R|\mathcal{H}_{n},j}\in\mathbb{M}_{R|\mathcal{H}_{n}}^{j}$$ is the image of $\left(R|_{\mathcal{H}_{n+j}}\right)_{i}$ under the homomorphism $\mathbb{M}_{R|\mathcal{H}_{n+j}}\rightarrow \mathbb{M}_{R|\mathcal{H}_{n}}.$

					\begin{proof}
						We will prove~\eqref{backto} by induction on $j.$ The other claim~\eqref{future} then follows from the fact that the operators in~\eqref{backto} generate $\mathbb{M}_{R|\mathcal{H}_{n}}^{j}|V_{m}.$ From Lemma~\ref{nolabel} and Corollary~\ref{fckkth} it follows that $$R_{i}|V_{m}=\left(R|_{\mathcal{H}_{n}}\right)_{i}|V_{m}$$ so the claim is true for $j=0.$
						\\
						
						Now, assume it is true for $j-1\geq k\geq 0.$ By Lemma~\ref{labann} there is a $r_{n,j}\in \mathbb{M}_{R|\mathcal{H}_{n}}$ such that $\theta_{R|\mathcal{H}_{n},j}r_{n,j}=R^{j}|\mathcal{H}_{n}$ and hence
						
						$$r_{n,j}^{*}\left(\theta_{R|\mathcal{H}_{n},j}^{*}\left(R|_{\mathcal{H}_{n}}\right)_{i}\theta_{R|\mathcal{H}_{n},j}\right)r_{n,j}=$$
						$$\left(r_{n,j}^{*}\theta_{R|\mathcal{H}_{n},j}^{*}\right)\left(R|_{\mathcal{H}_{n}}\right)_{i}\left(\theta_{R|\mathcal{H}_{n},j}r_{n,j}\right)=\left(R|_{\mathcal{H}_{n}}\right)_{i+j}.$$
						
						If we also take $r_{j}\in\mathbb{M}_{R}$ such that $\theta_{R,j}r_{j}=R^{j}$ then $$r_{j}^{*}\left(\theta_{R,j}^{*}R_{i}\theta_{R,j}\right)r_{j}=R_{i+j}.$$ We want to prove that $$r_{n,j}|V_{m}=r_{j}|V_{m}.$$ But this follows from the induction hypothesis, as the formula for
						$r_{n,j}$ is given by $$r_{n,j}=\left(\theta_{R|\mathcal{H}_{n},j-1}^{*}\left(R|_{\mathcal{H}_{n}}\right)_{1}\theta_{R|\mathcal{H}_{n},j-1}\right)^{\frac{1}{2}}\cdot ... \cdot \left(\theta_{R|\mathcal{H}_{n},0}^{*}\left(R|_{\mathcal{H}_{n}}\right)_{1}\theta_{R|\mathcal{H}_{n},0}\right)^{\frac{1}{2}}$$ and since we assumed that~\eqref{backto} was true for $j-1\geq k\geq 0,$ we get $r_{n,j}|V_{m}=r_{j}|V_{m}.$
						\\
						
						We can now calculate
						$$r_{j}^{*}\left(\theta_{R,j}^{*}R_{i}\theta_{R,j}\right)r_{j}|V_{m}=R_{i+j}|V_{m}=\left(R|_{\mathcal{H}_{n}}\right)_{i+j}|V_{m}=$$
						$$=r_{n,j}^{*}\left(\theta_{R|\mathcal{H}_{n},j}^{*}\left(R|_{\mathcal{H}_{n}}\right)_{i}\theta_{R|\mathcal{H}_{n},j}\right)r_{n,j}|V_{m}=r_{j}^{*}\left(\theta_{R|\mathcal{H}_{n},j}^{*}\left(R|_{\mathcal{H}_{n}}\right)_{i}\theta_{R|\mathcal{H}_{n},j}\right)r_{j}|V_{m}$$ and since $r_{j}$ has dense range, we must have $$\theta_{R,j}^{*}R_{i}\theta_{R,j}|V_{m}=\theta_{R|\mathcal{H}_{n},j}^{*}\left(R|_{\mathcal{H}_{n}}\right)_{i}\theta_{R|\mathcal{H}_{n},j}|V_{m}.$$
						
						hence~\eqref{backto} is also true for $j.$ 
						
					\end{proof}
					
					\subsection{A connection between $\mathbb{M}_{R}|V_{n}$ and $\mathbb{M}_{R}|V_{m}$}
						In the previous subsection we found a decomposition of $\mathcal{H_{E}}=\bigvee_{k=0}^{\infty}R^{k}\mathcal{M_{E}}$ into subspaces $V_{n}$ which are invariant with respect to the algebra $\mathbb{M}_{R}.$
						Here we show that there is a natural way to connect the different restrictions $\mathbb{M}_{R}|V_{n}$ and $\mathbb{M}_{R}|V_{m}.$ This will be essential in the proof of the main theorem.
						To explain what this connection is, we need some results that are proven below. Theorem~\ref{normspos} shows that for all $n,m\in\mathbb{N}$
						such that $m\geq n,$ there is a surjective homomorphism $$\Gamma_{m,n}:\mathbb{M}_{R}^{m-n}|V_{n}\rightarrow \mathbb{M}_{R}|V_{m}$$ which, in particular, maps
						\begin{equation}\label{omgds1}
							\theta_{R,m-n}^{*}R_{k}\theta_{R,m-n}|V_{n}\mapsto R_{k}|V_{m}
						\end{equation}
						and more generally
						\begin{equation}\label{omgds}
							\theta_{R,m-n+j}^{*}R_{k}\theta_{R,m-n+j}|V_{n}\mapsto \theta_{R,j}R_{k}\theta_{R,j}|V_{m}
						\end{equation}
						for all $j\geq 0.$ There is also the inclusion homomorphism $\mathbb{M}_{R}^{m-n}|V_{n}\hookrightarrow \mathbb{M}_{R}|V_{n},$ so we have the following diagram:
						\begin{equation}\label{hurray}
							\begin{tikzcd}
								\mathbb{M}_{R}|V_{m}\\
								\mathbb{M}_{R}^{m-n}|V_{n} \arrow[hookrightarrow]{r}{}[swap]{}
								\arrow[rightarrow]{u}{}[swap]{\Gamma_{m,n}}
								&\mathbb{M}_{R}|V_{n}
							\end{tikzcd}.
						\end{equation}
						From~\eqref{omgds}, the homomorphisms $\Gamma_{m,n}$ also "preserves" the sub-algebras $\mathbb{M}_{R}^{j}$ in the sense that the restriction of $\Gamma_{m,n}$ to $\mathbb{M}_{R}^{m-n+j}|V_{n}$ is a surjective homomorphism
						$$\mathbb{M}_{R}^{m-n+j}|V_{n}\rightarrow \mathbb{M}_{R}^{j}|V_{m}.$$

						Another feature of the homomorphisms $\Gamma_{m,n}$ is that they factors through $m\geq i\geq n$ so that the following diagram commutes
						
						\begin{equation}\label{hurray2}
							\begin{tikzcd}
								&\mathbb{M}_{R}|V_{m} \arrow[leftarrow]{d}{\Gamma_{m,i}}[swap]{}\\
								\mathbb{M}_{R}^{m-n}|V_{n} \arrow[rightarrow]{r}{}[swap]{\Gamma_{i,n}}
								\arrow[rightarrow]{ur}{}[swap]{\Gamma_{m,n}}
								&\mathbb{M}_{R}^{m-i}|V_{i}
							\end{tikzcd}.
						\end{equation}

						We start with a particular example.
						\begin{exa}\label{exor}
							Let $T$ be a left invertible weighted shift on $\ell^{2}$ (thus $T$ is centered) and let $\left\{x_{k}:k\in\mathbb{N}\right\}$ denote the standard basis of $\ell^{2},$ so that $Tx_{k}=a_{k} x_{k+1},$ with $a_{k}\in\mathbb{C}$ and $a_{k}\neq 0.$ Then the kernel of $T^{*}$ is $\left\langle x_{0}\right\rangle ,$ the subspace generated by $x_{0}.$ Since there is $\lambda_{k}\in\mathbb{R}$ such that $$T_{k}x_{0}=\lambda_{k}x_{0}$$ for all $k\in\mathbb{N},$ we have $\mathcal{M}_{\E}=\left\langle x_{0}\right\rangle .$ From this we can deduce $$V_{k}=\left\langle x_{k}\right\rangle .$$ Moreover, it is also easy to see that $\theta_{T}=T T_{1}^{-\frac{1}{2}}$ is an isometric shift on the basis $\left\{x_{k}:k\in\mathbb{N}\right\}$ and $$\theta_{T,k}=\theta_{T^{k}}=T^{k}T_{k}^{-\frac{1}{2}}$$ (for a proof of this, use Proposition~\ref{key}).  Then~\eqref{hurray} and~\eqref{hurray2} can be seen as a generalization of the fact that for any $m,n,j\in\mathbb{N}$ with $m\geq n,$ we have $$T_{j}x_{m}=\frac{\lambda_{m+j}}{\lambda_{m}}x_{m}$$ and $$\theta_{T^{m-n}}^{*}T_{j}\theta_{T^{m-n}}x_{n}=T_{m-n}^{-1}T_{j+ m-n}x_{n}=\frac{\lambda_{n +(m-n)+j}}{\lambda_{n}}\frac{\lambda_{n}}{\lambda_{m}}x_{n}=\frac{\lambda_{m+j}}{\lambda_{m}}x_{n}.$$
						\end{exa}
						
						It is good to keep Example~\ref{exor} in mind, since there all the components defined in this section ($\mathbb{M}_{R},V_{k},\Gamma_{m,n}$ etc) become very simple.
						
						\begin{lem}\label{galax2}
							For $m\geq n,$ the operator $\theta_{R|\mathcal{H}_{n},m-n}$ is a bijective isometry $$R^{n}\mathcal{M_{E}}\rightarrow R^{m}\mathcal{M_{E}}$$
							that induces an isomorphism
							$$\Theta_{m,n}:\mathbb{M}_{R,n}|R^{n}\mathcal{M_{E}}\rightarrow \mathbb{M}_{R,m}|R^{m}\mathcal{M_{E}}$$
							given by 
							\begin{equation}\label{tabs}
								\Theta_{m,n}:b \mapsto \theta_{R|\mathcal{H}_{n},m-n}b \theta_{R|\mathcal{H}_{n},m-n}^{*}
							\end{equation} 
							for  $b\in \mathbb{M}_{R,n}.$
							Moreover, if $m\geq i\geq n,$ then 
							\begin{equation}\label{tabs84}
								\Theta_{m,i}\Theta_{i,n}=\Theta_{m,n}
							\end{equation}
						\end{lem}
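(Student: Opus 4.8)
The plan is to separate the geometric statement (that $\theta_{R|\mathcal{H}_{n},m-n}$ restricts to a bijective isometry $R^{n}\mathcal{M_{E}}\to R^{m}\mathcal{M_{E}}$) from the algebraic one, and to reduce both to statements about the ambient isomorphism $m\mapsto\theta_{R,n}m\theta_{R,n}^{*}$ of Proposition~\ref{stön}. The two facts driving everything are the identity $R^{n}\mathcal{M_{E}}=\theta_{R,n}\mathcal{M_{E}}$ and the composition law $\theta_{R|\mathcal{H}_{n},m-n}\theta_{R,n}=\theta_{R,m}$.

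First I would establish $R^{n}\mathcal{M_{E}}=\theta_{R,n}\mathcal{M_{E}}$. By Lemma~\ref{labann} we have $R^{n}=\theta_{R,n}r_{n}$ with $r_{n}\in\mathbb{M}_{R}$ of dense range, and since $\mathcal{M_{E}}$ is invariant under both $r_{n}$ and $r_{n}^{*}$, the proof of Lemma~\ref{3} already gives $\overline{r_{n}\mathcal{M_{E}}}=\mathcal{M_{E}}$; applying the isometry $\theta_{R,n}$ yields $R^{n}\mathcal{M_{E}}=\theta_{R,n}\overline{r_{n}\mathcal{M_{E}}}=\theta_{R,n}\mathcal{M_{E}}$. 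The composition law is Lemma~\ref{laban} with $k=n$. Combining the two, $\theta_{R|\mathcal{H}_{n},m-n}R^{n}\mathcal{M_{E}}=\theta_{R|\mathcal{H}_{n},m-n}\theta_{R,n}\mathcal{M_{E}}=\theta_{R,m}\mathcal{M_{E}}=R^{m}\mathcal{M_{E}}$. As $R^{n}\mathcal{M_{E}}\subseteq\mathcal{H}_{n}$, the partial isometry $\theta_{R|\mathcal{H}_{n},m-n}$ is isometric, hence injective, there; this settles the first assertion.

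For the isomorphism $\Theta_{m,n}$, I would first note that $R^{n}\mathcal{M_{E}}=\theta_{R,n}\mathcal{M_{E}}$ is invariant under $\mathbb{M}_{R,n}=\theta_{R,n}\mathbb{M}_{R}\theta_{R,n}^{*}$ (Proposition~\ref{stön}), because $\mathcal{M_{E}}$ is $\mathbb{M}_{R}$-invariant and $\theta_{R,n}^{*}\theta_{R,n}=I$; so the restriction $\mathbb{M}_{R,n}|R^{n}\mathcal{M_{E}}$ makes sense. Writing $u_{n}=\theta_{R,n}|\mathcal{M_{E}}$ for the resulting unitary $\mathcal{M_{E}}\to R^{n}\mathcal{M_{E}}$, a direct computation shows that for $a\in\mathbb{M}_{R}$ the restriction of $\theta_{R,n}a\theta_{R,n}^{*}$ to $R^{n}\mathcal{M_{E}}$ equals $u_{n}(a|\mathcal{M_{E}})u_{n}^{*}$. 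Hence conjugation by $u_{n}$ is an isomorphism $\beta_{n}:\mathbb{M}_{R}|\mathcal{M_{E}}\to\mathbb{M}_{R,n}|R^{n}\mathcal{M_{E}}$, and I would set $\Theta_{m,n}=\beta_{m}\beta_{n}^{-1}$. Since $u_{m}u_{n}^{*}$ agrees with $\theta_{R|\mathcal{H}_{n},m-n}$ on $R^{n}\mathcal{M_{E}}$ (this is exactly the composition law applied to $w=\theta_{R,n}v$), $\Theta_{m,n}$ is given by the stated formula $b\mapsto\theta_{R|\mathcal{H}_{n},m-n}b\,\theta_{R|\mathcal{H}_{n},m-n}^{*}$, and it is an isomorphism as a composite of two. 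The cocycle relation $\Theta_{m,i}\Theta_{i,n}=\Theta_{m,n}$ is then immediate from $(\beta_{m}\beta_{i}^{-1})(\beta_{i}\beta_{n}^{-1})=\beta_{m}\beta_{n}^{-1}$; alternatively it follows by applying Lemma~\ref{laban} to $R|\mathcal{H}_{n}$ to obtain $\theta_{R|\mathcal{H}_{i},m-i}\theta_{R|\mathcal{H}_{n},i-n}=\theta_{R|\mathcal{H}_{n},m-n}$.

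The genuinely load-bearing step is the identification $R^{n}\mathcal{M_{E}}=\theta_{R,n}\mathcal{M_{E}}$ together with the composition law for the $\theta$'s; once these are in place the isomorphism is formal, being conjugation by a unitary transported through Proposition~\ref{stön}. The only point requiring care is bookkeeping the identifications $(R|\mathcal{H}_{n})|\mathcal{H}_{i}=R|\mathcal{H}_{i}$ and $(\mathcal{H}_{n})_{i-n}=\mathcal{H}_{i}$ when invoking Lemma~\ref{laban} for the restricted operator, so that the indices on the $\theta$'s line up correctly.
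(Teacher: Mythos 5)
Your proof is correct, and it rests on the same three pillars as the paper's own argument --- Lemma~\ref{labann} (the factorization $R^{n}=\theta_{R,n}r_{n}$ with $r_{n}\in\mathbb{M}_{R}$ of dense range), the composition law of Lemma~\ref{laban}, and Proposition~\ref{stön} --- but it organizes them around a different pivot. The paper works directly at level $n$: it writes $R^{m}=\theta_{R|\mathcal{H}_{n},m-n}r_{n,m-n}R^{n}$ with $r_{n,m-n}\in\mathbb{M}_{R|\mathcal{H}_{n}}\subseteq\mathbb{M}_{R,n}$, uses the dense-range/reducing-subspace argument at level $n$ (via Lemma~\ref{3}) to obtain $R^{m}\mathcal{M_{E}}=\theta_{R|\mathcal{H}_{n},m-n}R^{n}\mathcal{M_{E}}$ in one step, gets the algebra statement from $\theta_{R|\mathcal{H}_{n},m-n}\mathbb{M}_{R,n}\theta_{R|\mathcal{H}_{n},m-n}^{*}=\mathbb{M}_{R,m}$ (a consequence of $\theta_{R|\mathcal{H}_{n},m-n}\theta_{R,n}=\theta_{R,m}$), and verifies~\eqref{tabs84} by the computation $\theta_{R|\mathcal{H}_{n},m-n}=\theta_{R|\mathcal{H}_{i},m-i}\theta_{R|\mathcal{H}_{n},i-n}$. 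You instead anchor everything at level $0$: the identity $R^{n}\mathcal{M_{E}}=\theta_{R,n}\mathcal{M_{E}}$ (extracted, correctly, from the proof of Lemma~\ref{3}, where $r_{m}\mathcal{M_{E}}=\mathcal{M_{E}}$ is established), the unitaries $u_{n}=\theta_{R,n}|\mathcal{M_{E}}$, and the definition $\Theta_{m,n}=\beta_{m}\beta_{n}^{-1}$. What your packaging buys: the passage from the ambient conjugation identity to an isomorphism of the \emph{restricted} algebras --- which the paper compresses into ``it is not hard to see'' --- becomes an explicit computation (the restriction of $\theta_{R,n}a\theta_{R,n}^{*}$ to $R^{n}\mathcal{M_{E}}$ equals $u_{n}(a|\mathcal{M_{E}})u_{n}^{*}$, so well-definedness on restrictions is automatic), and the cocycle relation~\eqref{tabs84} becomes formal rather than computed. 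What it costs is the index bookkeeping you flag at the end ($(R|\mathcal{H}_{n})|\mathcal{H}_{i}=R|\mathcal{H}_{i}$ and $\overline{R^{i-n}\mathcal{H}_{n}}=\mathcal{H}_{i}$ when invoking Lemma~\ref{laban} for the restricted operator) --- though the paper's cocycle computation quietly relies on the same identifications --- plus the check that $u_{m}u_{n}^{*}$ agrees with $\theta_{R|\mathcal{H}_{n},m-n}$ on $R^{n}\mathcal{M_{E}}$, which you carry out correctly via the composition law. Both arguments are sound; yours is marginally more careful at exactly the step the paper glosses over.
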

						
						\begin{proof}
							We have $$R^{m}=\theta_{R|\mathcal{H}_{n},k}r_{n,m-n}R^{n}$$ were $r_{n,m-n}$ is the same as in Proposition~\ref{fukth}. Since $r_{n,m-n}\in\mathbb{M}_{R,n}$ has dense range in $\mathcal{H}_{n},$ we get $$R^{m}\mathcal{M_{E}}=\theta_{R|\mathcal{H}_{n},m-n}r_{n,m-n}R^{n}\mathcal{M_{E}}=\theta_{R|\mathcal{H}_{n},m-n}R^{n}\mathcal{M_{E}}.$$
							Now, since $$\theta_{R|\mathcal{H}_{n},m-n}\mathbb{M}_{R,n}\theta_{R|\mathcal{H}_{n},m-n}^{*}=\mathbb{M}_{R,m}$$ as $\theta_{R|\mathcal{H}_{n},m-n}\theta_{R,n}=\theta_{R,m},$ it is not hard to see that~\eqref{tabs} defines an isomorphism $$\Theta_{m,n}:\mathbb{M}_{R,n}|R^{n}\mathcal{M_{E}}\rightarrow \mathbb{M}_{R,m}|R^{m}\mathcal{M_{E}}.$$ The property~\eqref{tabs84} follows from $$\theta_{R|\mathcal{H}_{n},m-n}=\theta_{R|\mathcal{H}_{i},m-i}\theta_{R|\mathcal{H}_{n},i-n}$$ so that $$\theta_{R|\mathcal{H}_{n},m-n}\mathbb{M}_{R,n}\theta_{R|\mathcal{H}_{n},m-n}^{*}=\theta_{R|\mathcal{H}_{i},m-i}\theta_{R|\mathcal{H}_{n},i-n}\mathbb{M}_{R,n}\theta_{R|\mathcal{H}_{n},i-n}^{*}\theta_{R|\mathcal{H}_{i},m-i}^{*}=$$  $$\theta_{R|\mathcal{H}_{n},m-i}\mathbb{M}_{R,i}\theta_{R|\mathcal{H}_{n},m-i}^{*}=\mathbb{M}_{R,m}.$$
						\end{proof}

						\begin{lem}~\label{galax}
							For every $n\in\mathbb{N}$ there is a surjective homomorphism $$\Phi_{n}:\mathbb{M}_{R|\mathcal{H}_{n}}|R^{n}\mathcal{M_{E}}\rightarrow \mathbb{M}_{R}|V_{n}$$
							given by $$\left(R|_{\mathcal{H}_{n}}\right)_{j}|R^{n}\mathcal{M_{E}}\mapsto R_{j}|V_{n}.$$
							Furthermore, $\Phi_{n}$ restricts to a surjective homomorphism $$\mathbb{M}_{R|\mathcal{H}_{n}}^{k}|R^{n}\mathcal{M_{E}}\rightarrow \mathbb{M}_{R}^{k}|V_{n}$$ that maps $$\theta_{R|\mathcal{H}_{n},k}\left(R|_{\mathcal{H}_{n}}\right)_{j}\theta_{R|\mathcal{H}_{n},k}|R^{n}\mathcal{M_{E}}\mapsto \theta_{R,k}^{*}R_{j}\theta_{R,k}|V_{n}$$
							for all $k\geq 0.$
						\end{lem}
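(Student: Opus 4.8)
The plan is to realise $\Phi_n$ as a comparison of two compression maps out of the single von Neumann algebra $\mathcal{A}:=\mathbb{M}_{R|\mathcal{H}_{n}}$, exploiting that both $W:=R^{n}\mathcal{M_{E}}$ and $V_n$ are \emph{reducing} subspaces for $\mathcal{A}$. First I would check reducibility of $W$: by Lemma~\ref{3} we have $\mathbb{M}_{R,n}R^{n}\E=R^{n}\mathcal{M_{E}}$, and since $\mathbb{M}_{R,n}$ is an algebra this gives $\mathbb{M}_{R,n}W\subseteq W$; as $\mathbb{M}_{R,n}$ is self-adjoint, $W$ is reducing for $\mathbb{M}_{R,n}$ and hence for its subalgebra $\mathcal{A}$. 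Thus restriction $\pi_W\colon a\mapsto a|_{W}$ is a normal surjective $*$-homomorphism $\mathcal{A}\to\mathbb{M}_{R|\mathcal{H}_{n}}|R^{n}\mathcal{M_{E}}$. Next, Lemma~\ref{fuio} with $m=n$ says $P_{V_n}$ commutes with $\mathcal{A}$, so $V_n$ is reducing too and $\pi_{V_n}\colon a\mapsto a|_{V_n}$ is a normal surjective $*$-homomorphism onto $\mathcal{A}|V_n$, which by Proposition~\ref{fukth} equals $\mathbb{M}_{R}|V_n$.

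The crux is to show the kernel inclusion $\ker\pi_W\subseteq\ker\pi_{V_n}$; granting this, there is a unique (automatically surjective, normal) $*$-homomorphism $\Phi_n$ with $\Phi_n\circ\pi_W=\pi_{V_n}$, obtained by factoring $\pi_{V_n}$ through the quotient $\mathcal{A}/\ker\pi_W\cong\pi_W(\mathcal{A})$. To verify the inclusion, suppose $a\in\mathcal{A}$ with $a|_{W}=0$, i.e. $aw=0$ for all $w\in W$. Since $P_{V_n}$ commutes with $a$, for each $w\in W$ we get $a(P_{V_n}w)=P_{V_n}(aw)=0$. The proof of Proposition~\ref{isisis} (case $m=n$) gives $V_n=\overline{P_{V_n}R^{n}\mathcal{M_{E}}}$, so $a$ annihilates a dense subset of $V_n$ and hence $a|_{V_n}=0$ by boundedness. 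This establishes well-definedness, and on generators $\Phi_n$ sends $(R|_{\mathcal{H}_{n}})_{j}|W=\pi_W\big((R|_{\mathcal{H}_{n}})_{j}\big)$ to $\pi_{V_n}\big((R|_{\mathcal{H}_{n}})_{j}\big)=(R|_{\mathcal{H}_{n}})_{j}|V_n=R_{j}|V_n$, the last equality being Corollary~\ref{fckkth}.

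Finally, the ``Furthermore'' assertion is bookkeeping once the defining identity $\Phi_n(a|_{W})=a|_{V_n}$ (for all $a\in\mathcal{A}$) is in hand. Applying it to the generators $\theta_{R|\mathcal{H}_{n},k}^{*}(R|_{\mathcal{H}_{n}})_{j}\theta_{R|\mathcal{H}_{n},k}$ of $\mathbb{M}_{R|\mathcal{H}_{n}}^{k}$ and invoking equation~\eqref{backto} of Proposition~\ref{fukth} with $m=n$ yields
$$\Phi_n\big(\theta_{R|\mathcal{H}_{n},k}^{*}(R|_{\mathcal{H}_{n}})_{j}\theta_{R|\mathcal{H}_{n},k}|W\big)=\theta_{R,k}^{*}R_{j}\theta_{R,k}|V_n .$$
Because $\Phi_n$ is normal, it carries the weakly closed algebra $\mathbb{M}_{R|\mathcal{H}_{n}}^{k}|W$ onto the weakly closed algebra generated by these images, which by~\eqref{future} (again $m=n$) is precisely $\mathbb{M}_{R}^{k}|V_n$; this gives the claimed restricted surjection.

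I expect the only genuine obstacle to be the well-definedness step, i.e. the kernel inclusion $\ker\pi_W\subseteq\ker\pi_{V_n}$, which is exactly where the commutation Lemma~\ref{fuio} and the density $V_n=\overline{P_{V_n}R^{n}\mathcal{M_{E}}}$ from Proposition~\ref{isisis} are needed in tandem. Everything else is a matter of assembling already-established facts (Lemmas~\ref{3},~\ref{fuio}, Corollary~\ref{fckkth}, and Proposition~\ref{fukth}) and the standard factorisation of normal $*$-homomorphisms through quotients of von Neumann algebras.
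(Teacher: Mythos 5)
Your proof is correct and follows essentially the same route as the paper's: the paper likewise introduces the two restriction homomorphisms $\eta_{n}:b\mapsto b|R^{n}\mathcal{M_{E}}$ (justified by Lemma~\ref{3}) and $\xi_{n}:b\mapsto b|V_{n}$ (justified by Proposition~\ref{fukth}), and establishes well-definedness of $\Phi_{n}$ via exactly your kernel inclusion, using Lemma~\ref{fuio} to commute $P_{V_{n}}$ past $b$ together with the identity $P_{V_{n}}R^{n}\mathcal{M_{E}}=V_{n}$ from the proof of Proposition~\ref{isisis}, with the ``Furthermore'' part likewise reduced to Proposition~\ref{fukth}. Your write-up is if anything slightly more careful than the paper's (which contains a typo reversing the direction, writing $\Phi_{n}:\xi_{n}(b)\mapsto\eta_{n}(b)$), since you make explicit the normality of the restriction maps and the factorisation through the quotient.
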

						
						\begin{proof}
							Since $\mathbb{M}_{R|\mathcal{H}_{n}}$ is a sub-algebra of $\mathbb{M}_{R,n}$ and by Lemma~\ref{3} $\mathbb{M}_{R,n}R^{n}\mathcal{M_{E}}\subseteq R^{n}\mathcal{M_{E}}$ the restriction map $$\eta_{n}:\mathbb{M}_{R|\mathcal{H}_{n}}\rightarrow \mathbb{M}_{R|\mathcal{H}_{n}}|R^{n}\mathcal{M_{E}}$$
							$$b\mapsto b|R^{n}\mathcal{M_{E}}$$
							is a homomorphism.
							By Proposition~\ref{fukth}, the map $$\xi_{n}:\mathbb{M}_{R|\mathcal{H}_{n}}\rightarrow \mathbb{M}_{R|\mathcal{H}_{n}}|V_{n}=\mathbb{M}_{R}|V_{n}$$ that sends $\left(R|_{\mathcal{H}_{n}}\right)_{j}$ to $R_{j}|V_{n}$ is a homomorphism. Now $P_{V_{n}}R^{n}\mathcal{M_{E}}=V_{n},$ so if we take any $m_{n}\in \ker \eta_{n},$ then $$m_{n}P_{V_{n}}R^{n}\mathcal{M_{E}}=P_{V_{n}}m_{n}R^{n}\mathcal{M_{E}}=0$$ by Lemma~\ref{fuio}. Hence the map $\Phi_{n}:\xi_{n}(b)\mapsto \eta_{n}(b), b\in \mathbb{M}_{R|\mathcal{H}_{n}}$ is a well-defined surjective homomorphism from $\mathbb{M}_{R|\mathcal{H}_{n}}|R^{n}\mathcal{M_{E}}$ to $\mathbb{M}_{R}|V_{n}.$
							The second claim follows from Proposition~\ref{fukth}. 
							
						\end{proof}
						
						\begin{thm}\label{normspos}
							There are surjective homomorphisms $$\Gamma_{m,n}:\mathbb{M}_{R}^{m-n}|V_{n}\rightarrow\mathbb{M}_{R}|V_{m}$$ that maps 
							\begin{equation}\label{omgds2}
								\theta_{R,m-n+j}^{*}R_{k}\theta_{R,m-n+j}|V_{n}\mapsto \theta_{R,j}^{*}R_{k}\theta_{R,j}|V_{m}
							\end{equation}
							for all $j\geq 0.$
							Furthermore, for every $n\leq i\leq m$ then $\Gamma_{i,n}$ restricts to a homomorphism $$\Gamma_{i,n}:\mathbb{M}_{R}^{m-n}|V_{n}\rightarrow\mathbb{M}_{R}^{m-i}|V_{i}$$ such that $$\Gamma_{m,i}\Gamma_{i,n}=\Gamma_{m,n}.$$
						\end{thm}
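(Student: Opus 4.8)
The plan is to construct $\Gamma_{m,n}$ as a quotient of the composition of the isomorphism $\Theta_{m,n}$ from Lemma~\ref{galax2} with the surjections $\Phi_{n},\Phi_{m}$ from Lemma~\ref{galax}. Concretely, I would assemble the zig-zag
$$\mathbb{M}_{R}^{m-n}|V_{n}\xleftarrow{\;\Phi_{n}\;}\mathbb{M}_{R|\mathcal{H}_{n}}^{m-n}|R^{n}\mathcal{M_{E}}\xrightarrow{\;\Theta_{m,n}\;}\mathbb{M}_{R|\mathcal{H}_{m}}|R^{m}\mathcal{M_{E}}\xrightarrow{\;\Phi_{m}\;}\mathbb{M}_{R}|V_{m},$$
where $\Phi_{n}$ (restricted as in Lemma~\ref{galax}) is surjective and $\Theta_{m,n}$ is an isomorphism. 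First I must check that $\Theta_{m,n}$ actually carries $\mathbb{M}_{R|\mathcal{H}_{n}}^{m-n}|R^{n}\mathcal{M_{E}}$ onto $\mathbb{M}_{R|\mathcal{H}_{m}}|R^{m}\mathcal{M_{E}}$. Writing $\theta=\theta_{R|\mathcal{H}_{n},m-n}$, so that $\theta\theta^{*}=P_{\mathcal{H}_{m}}$ and $\theta^{*}\theta=P_{\mathcal{H}_{n}}$, a direct computation using $\mathcal{H}_{m}\subseteq\mathcal{H}_{n}$ gives $\theta(\theta^{*}(R|_{\mathcal{H}_{n}})_{k}\theta)\theta^{*}=P_{\mathcal{H}_{m}}(R|_{\mathcal{H}_{n}})_{k}P_{\mathcal{H}_{m}}=(R|_{\mathcal{H}_{m}})_{k}$; together with Lemma~\ref{laban} in the form $\theta_{R|\mathcal{H}_{n},m-n+j}=\theta_{R|\mathcal{H}_{m},j}\theta$, the same computation shows that $\Theta_{m,n}$ sends the generators $\theta_{R|\mathcal{H}_{n},m-n+j}^{*}(R|_{\mathcal{H}_{n}})_{k}\theta_{R|\mathcal{H}_{n},m-n+j}$ of $\mathbb{M}_{R|\mathcal{H}_{n}}^{m-n+j}$ to the generators $\theta_{R|\mathcal{H}_{m},j}^{*}(R|_{\mathcal{H}_{m}})_{k}\theta_{R|\mathcal{H}_{m},j}$ of $\mathbb{M}_{R|\mathcal{H}_{m}}^{j}$. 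Hence $\Theta_{m,n}$ restricts to an isomorphism $\mathbb{M}_{R|\mathcal{H}_{n}}^{m-n+j}|R^{n}\mathcal{M_{E}}\to\mathbb{M}_{R|\mathcal{H}_{m}}^{j}|R^{m}\mathcal{M_{E}}$ for every $j\ge 0$.

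The main obstacle is to verify that the composite $\Phi_{m}\circ\Theta_{m,n}$ descends through $\Phi_{n}$, i.e. that $\ker\Phi_{n}\subseteq\ker(\Phi_{m}\circ\Theta_{m,n})$; granting this, the universal property of the quotient produces a surjective $*$-homomorphism $\Gamma_{m,n}$ with $\Gamma_{m,n}\circ\Phi_{n}=\Phi_{m}\circ\Theta_{m,n}$. To prove the kernel inclusion take $b\in\mathbb{M}_{R|\mathcal{H}_{n}}^{m-n}$ with $\Phi_{n}(b|R^{n}\mathcal{M_{E}})=b|V_{n}=0$, so $bV_{n}=0$, and I must show $(\theta b\theta^{*})|V_{m}=0$. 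Here I would use Proposition~\ref{fukth} together with Lemma~\ref{labann} applied to $R|_{\mathcal{H}_{n}}$ to factor $R^{m-n}|\mathcal{H}_{n}=\theta\,r_{n,m-n}$ with $r_{n,m-n}\in\mathbb{M}_{R|\mathcal{H}_{n}}$ of dense range, so that for $v_{n}\in V_{n}$ the vector $w:=r_{n,m-n}v_{n}$ again lies in $V_{n}$ and $P_{V_{m}}R^{m-n}v_{n}=P_{V_{m}}\theta w$. Since $\theta b\theta^{*}\in\mathbb{M}_{R|\mathcal{H}_{m}}$ commutes with $P_{V_{m}}$ by Lemma~\ref{fuio}, and $\theta^{*}\theta w=w$, the computation
$$(\theta b\theta^{*})P_{V_{m}}\theta w=P_{V_{m}}\theta\,b\,\theta^{*}\theta w=P_{V_{m}}\theta(bw)=0$$
shows that $\theta b\theta^{*}$ annihilates every vector $P_{V_{m}}R^{m-n}v_{n}$. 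By Proposition~\ref{isisis} these vectors are dense in $V_{m}$, so boundedness of $(\theta b\theta^{*})|V_{m}$ forces it to be zero, as required. Surjectivity of $\Gamma_{m,n}$ is then immediate, since $\Phi_{m}\circ\Theta_{m,n}$ is surjective and factors as $\Gamma_{m,n}\circ\Phi_{n}$.

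It remains to read off the stated properties. Chasing the generator $\theta_{R|\mathcal{H}_{n},m-n+j}^{*}(R|_{\mathcal{H}_{n}})_{k}\theta_{R|\mathcal{H}_{n},m-n+j}$ through the three maps — via $\Phi_{n}$ to $\theta_{R,m-n+j}^{*}R_{k}\theta_{R,m-n+j}|V_{n}$ (Lemma~\ref{galax}), via $\Theta_{m,n}$ to the generator $\theta_{R|\mathcal{H}_{m},j}^{*}(R|_{\mathcal{H}_{m}})_{k}\theta_{R|\mathcal{H}_{m},j}$ of $\mathbb{M}_{R|\mathcal{H}_{m}}^{j}$, and via $\Phi_{m}$ to $\theta_{R,j}^{*}R_{k}\theta_{R,j}|V_{m}$ — yields the formula~\eqref{omgds2} and, more generally, shows that for each $j\ge 0$ the map $\Gamma_{m,n}$ sends $\mathbb{M}_{R}^{m-n+j}|V_{n}$ onto $\mathbb{M}_{R}^{j}|V_{m}$. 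Applying this last statement to $\Gamma_{i,n}$ with the choice $j=m-i$ gives the asserted restriction $\Gamma_{i,n}:\mathbb{M}_{R}^{m-n}|V_{n}\to\mathbb{M}_{R}^{m-i}|V_{i}$.

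Finally, the factorization $\Gamma_{m,i}\Gamma_{i,n}=\Gamma_{m,n}$ follows formally from the defining relations $\Gamma_{i,n}\circ\Phi_{n}=\Phi_{i}\circ\Theta_{i,n}$ and $\Gamma_{m,i}\circ\Phi_{i}=\Phi_{m}\circ\Theta_{m,i}$ together with the cocycle identity $\Theta_{m,i}\Theta_{i,n}=\Theta_{m,n}$ of Lemma~\ref{galax2}: one computes
$$\Gamma_{m,i}\Gamma_{i,n}\circ\Phi_{n}=\Gamma_{m,i}\circ\Phi_{i}\circ\Theta_{i,n}=\Phi_{m}\circ\Theta_{m,i}\Theta_{i,n}=\Phi_{m}\circ\Theta_{m,n}=\Gamma_{m,n}\circ\Phi_{n}$$
and cancels the surjection $\Phi_{n}$. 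I expect the kernel inclusion of the second paragraph to be the only genuinely nontrivial point; everything else is diagram-chasing against the lemmas already established, with Example~\ref{exor} serving as a guide to what each arrow does concretely.
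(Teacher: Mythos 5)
Your proposal is correct and follows essentially the same route as the paper: both construct $\Gamma_{m,n}$ by descending the composite $\Phi_{m}\circ\Theta_{m,n}$ through the surjection $\Phi_{n}$ via the kernel inclusion $\ker\Phi_{n}\subseteq\ker\left(\Phi_{m}\circ\Theta_{m,n}\right),$ using the factorization $\theta_{R|\mathcal{H}_{n},m-n}\,r_{n,m-n}=R^{m-n}|_{\mathcal{H}_{n}}$ from Lemma~\ref{labann}. Your only deviation is cosmetic: you check the kernel inclusion at the vector level, showing $\theta b\theta^{*}$ annihilates the dense subset $P_{V_{m}}R^{m-n}V_{n}$ of $V_{m}$ (Proposition~\ref{isisis} together with Lemma~\ref{fuio}), whereas the paper proves the equivalent subspace inclusion $\theta_{R|\mathcal{H}_{n},m-n}X_{n-1}\subseteq X_{m-1}.$
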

						
						\begin{proof}
							Combining Lemma~\ref{galax} and Lemma~\ref{galax2}, we get a diagram
							
							\begin{equation}\label{hurray3}
								\begin{tikzcd}
									\mathbb{M}_{R|\mathcal{H}_{n}}^{m-n}|R^{n}\mathcal{M_{E}} \arrow[rightarrow]{r}{}[swap]{\Theta_{m,n}}\arrow[rightarrow]{d}{}[swap]{\Phi_{n}}&\mathbb{M}_{R|\mathcal{H}_{m}}|R^{m}\mathcal{M_{E}}\arrow[rightarrow]{d}{}[swap]{\Phi_{m}}\\
									\mathbb{M}_{R}^{m-n}|V_{n} &\mathbb{M}_{R}|V_{m}
								\end{tikzcd}.
							\end{equation}
							and we want to prove that there is an unique $$\Gamma_{m,n}:\mathbb{M}_{R}^{m-n}|V_{n}\rightarrow \mathbb{M}_{R}|V_{m}$$ that makes this diagram commutative.
							Making the composition $$\Phi_{m}\Theta_{m,n}:\mathbb{M}_{R|\mathcal{H}_{n}}^{m-n}|R^{n}\mathcal{M_{E}} \rightarrow \mathbb{M}_{R}|V_{m}$$ we need to prove $\ker \Phi_{n}\subseteq\ker \Phi_{m}\Theta_{m,n} ,$ because then we can define $\Gamma_{m,n}$ as the map sending $\Phi_{n}(b)$ to $\Phi_{m}\Theta_{m,n}(b)$ for $b\in \mathbb{M}^{m-n}_{R|\mathcal{H}_{n}}|R^{n}\mathcal{M_{E}}.$ 
							\\
							
							If we take $b\in\mathbb{M}_{R|\mathcal{H}_{n}}^{m-n}|R^{n}\mathcal{M_{E}}$ such that $\Phi_{n}\left(b\right)=0,$ then as $P_{V_{n}}R^{n}\mathcal{M_{E}}=V_{n} $ and $b$ commutes with $P_{V_{n}}$ by Lemma~\ref{fuio}, this implies $$\Phi_{n}\left(b\right)V_{n}=b P_{V_{n}}R^{n}\mathcal{M_{E}}=P_{V_{n}}b R^{n}\mathcal{M_{E}}=0,$$
							so 
							\begin{equation}\label{miin}
								b R^{n}\mathcal{M_{E}}\subseteq X_{n-1}.
							\end{equation}
							We have also that $\Phi_{m}\Theta_{m,n}\left(b\right)=0$ implies 
							\begin{equation}\label{miin2}
								\theta_{R|\mathcal{H}_{n},m-n}bR^{n}\mathcal{M_{E}}\subseteq X_{m-1}.
							\end{equation}
							We want to prove that~\eqref{miin} implies~\eqref{miin2}.
							To show this, we prove the more general statement that $$\theta_{R|\mathcal{H}_{n},m-n}X_{n-1}\subseteq X_{m-1}.$$ The partial isometry $\theta_{R|\mathcal{H}_{n},m-n}$ has a kernel equal to $\mathcal{H}_{n}^{\bot},$ so 
							$$\theta_{R|\mathcal{H}_{n},m-n}X_{n-1}=\theta_{R|\mathcal{H}_{n},m-n}\left(X_{n-1}\ominus\mathcal{H}_{n}^{\bot}\right).$$ We know that there is a $r_{n,m-n}\in\mathbb{M}_{R|\mathcal{H}_{n}}$ with dense range in $\mathcal{H}_{n}$ such that $\theta_{R|\mathcal{H}_{n},m-n}r_{n,m-n}=R^{m-n}|_{\mathcal{H}_{n}}$ and by Corollary~\ref{fuckkkk}
							$$r_{n,m-n}\left(X_{n-1}\ominus\mathcal{H}_{n}^{\bot}\right)=X_{n-1}\ominus\mathcal{H}_{n}^{\bot}.$$
							From this we can deduce 
							$$\theta_{R|\mathcal{H}_{n},m-n}\left(X_{n-1}\ominus\mathcal{H}_{n}^{\bot}\right)=R^{m-n}\left(X_{n-1}\ominus\mathcal{H}_{n}^{\bot}\right)\subseteq X_{m-1}.$$ This gives the existence of $\Gamma_{m,n}.$
							Surjectivity follows from $\Phi_{m}\Theta_{m,n}=\Gamma_{m,n}\Phi_{n}$ and $\Phi_{m}\Theta_{m,n}$ is surjective. Uniqueness follows from the surjectivity of $\Phi_{n}.$ 
							\\
							
							The property~\eqref{omgds} follows from applying the commutative diagram to $$\theta_{R|\mathcal{H}_{n},m-n+j}\left(R|_{\mathcal{H}_{n}}\right)_{i}\theta_{R|\mathcal{H}_{n},m-n+j}\in \mathbb{M}_{R|\mathcal{H}_{n}}^{m-n}|R^{n}\mathcal{M_{E}}$$
							for $j\geq 0.$ The property~\eqref{hurray2} follows, as remarked, from~\eqref{omgds}.
						\end{proof}
						
						While the next result is not used in the proof of the main theorem, it showcases nicely, if $\mathcal{H}_{\E}=\mathcal{H},$ how the behavior of the operators $R_{k}$ restricted to $\mathcal{M_{E}}$ can be connected to their behavior on the whole space.
						
						\begin{cor}\label{halfline}
							If $\mathcal{H}_{\E}=\mathcal{H}$ and $R_{j}R_{k}x=R_{k}R_{j}x$ for all $x\in \mathcal{M_{E}}$ and $k,j\in\mathbb{N}.$
							Then $R$ is half-centered.
						\end{cor}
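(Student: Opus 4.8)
The plan is to reduce the global commutation statement to the individual summands of the decomposition $\mathcal{H}=\mathcal{H}_{\E}=\oplus_{m=0}^{\infty}V_{m}$ furnished by Theorem~\ref{space1}, and then to transport abelianness from $V_{0}=\mathcal{M_{E}}$ out to every $V_{m}$ via the surjective homomorphisms $\Gamma_{m,0}$ of Theorem~\ref{normspos}. The first observation is that each $V_{m}$ is invariant under $\mathbb{M}_{R}$ by Lemma~\ref{nolabel}, hence under every $R_{k}$. Consequently, commutation of the family $\{R_{k}\}$ on all of $\mathcal{H}$ is equivalent to commutation of the restricted families $\{R_{k}|V_{m}\}$ on each $V_{m}$ separately: for $x=\sum_{m}x_{m}$ with $x_{m}\in V_{m}$, invariance gives $R_{j}R_{k}x=\sum_{m}(R_{j}|V_{m})(R_{k}|V_{m})x_{m}$ and likewise with $j,k$ interchanged, the sums being orthogonal. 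So it suffices to prove that $\mathbb{M}_{R}|V_{m}$, the algebra generated by the operators $R_{k}|V_{m}$, is abelian for every $m$.

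First I would record that $\mathbb{M}_{R}|V_{0}=\mathbb{M}_{R}|\mathcal{M_{E}}$ is abelian. Since $V_{0}=\mathcal{M_{E}}$ is reducing for $\mathbb{M}_{R}$, restriction to $V_{0}$ is a weakly continuous $*$-homomorphism whose image is generated by the self-adjoint operators $R_{k}|\mathcal{M_{E}}$; the hypothesis that $R_{j}R_{k}x=R_{k}R_{j}x$ for all $x\in\mathcal{M_{E}}$ says exactly that these generators commute pairwise, so the algebra they generate is abelian. Next, for $m\geq 1$, I would invoke the diagram~\eqref{hurray}: the subalgebra $\mathbb{M}_{R}^{m}|V_{0}\subseteq\mathbb{M}_{R}|V_{0}$ is abelian, being a subalgebra of an abelian algebra, and by Theorem~\ref{normspos} there is a surjective homomorphism $\Gamma_{m,0}:\mathbb{M}_{R}^{m}|V_{0}\to\mathbb{M}_{R}|V_{m}$. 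A homomorphic image of an abelian algebra is abelian, so $\mathbb{M}_{R}|V_{m}$ is abelian and the operators $R_{k}|V_{m}$ commute. Combined with the reduction of the previous paragraph, this shows the $R_{k}$ commute on $\mathcal{H}$, i.e. $R$ is half-centered.

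I do not expect a genuine obstacle here: once the machinery of Section~3 is in place, the corollary is essentially a formal consequence, and the only point deserving care is the reduction itself, namely checking that summand-by-summand commutation really lifts to commutation on all of $\mathcal{H}$. This is clean precisely because the decomposition $\mathcal{H}=\oplus_{m}V_{m}$ is orthogonal and each $V_{m}$ is $R_{k}$-invariant; the genuine work — that $\mathbb{M}_{R}|V_{m}$ is a homomorphic image of a subalgebra of $\mathbb{M}_{R}|\mathcal{M_{E}}$ — has already been carried out in Theorem~\ref{normspos}, so the role of this proof is mainly to identify $\Gamma_{m,0}$ as the right tool and to note that abelianness is inherited by subalgebras and by homomorphic images.
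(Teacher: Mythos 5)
Your proof is correct and takes exactly the route the paper intends: the corollary is stated without proof immediately after Theorem~\ref{normspos} precisely because the argument you give — abelianness of $\mathbb{M}_{R}|\mathcal{M_{E}}$ (commuting self-adjoint generators) inherited by the subalgebras $\mathbb{M}_{R}^{m}|V_{0}$ and pushed onto each $\mathbb{M}_{R}|V_{m}$ by the surjections $\Gamma_{m,0}$, then assembled over the orthogonal, $\mathbb{M}_{R}$-invariant decomposition $\mathcal{H}=\oplus_{m=0}^{\infty}V_{m}$ of Theorem~\ref{space1} — is the intended derivation. Your care about the reduction step (invariance of each $V_{m}$ making summand-wise commutation suffice) is exactly the only point that needed checking, and it holds.
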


						With the help of Theorem~\ref{normspos} we can now express the spectrum of $\theta_{T,k}^{*}T_{j}\theta_{T,k}|V_{n}$ via the spectrum of $\mathbb{M}_{T}|\mathcal{M_{E}}.$ 
						
						\begin{prop}\label{loveu}
							Let $T$ be half-centered and if $\gamma$ is a point of the spectrum of $\mathbb{M}_{T}$ restricted $V_{n},$ then there is a point $\lambda $ in the spectrum of $\mathbb{M}_{T}$ restricted to $\mathcal{M_{E}}$ such that $$\gamma \left(\theta_{T,k}^{*}T_{j}\theta_{T,k}\right)=\lambda\left(\theta_{T,k+n}^{*}T_{j}\theta_{T,k+n}\right)$$ for all $j,k \in \mathbb{N}.$
						\end{prop}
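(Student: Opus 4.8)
The plan is to read the statement directly off the surjective homomorphism of Theorem~\ref{normspos}, translating that algebra map into a map of character spaces. Since $T$ is half-centered, the operators $T_k$ mutually commute, so $\mathbb{M}_T$ and all of its compressions $\mathbb{M}_T|V_n$ and $\mathbb{M}_T|\mathcal{M_{E}}$ are commutative $C^{*}$-algebras; a ``point of the spectrum'' is then a character (a nonzero $*$-homomorphism into $\mathbb{C}$), and evaluating such a character on a self-adjoint element returns the asserted real value. The one external input I would use is Theorem~\ref{normspos} specialized to $m=n$ and base index $0$, giving a surjective homomorphism $\Gamma_{n,0}\colon \mathbb{M}_T^{n}|\mathcal{M_{E}}\to \mathbb{M}_T|V_n$, together with the unital inclusion $\mathbb{M}_T^{n}|\mathcal{M_{E}}\hookrightarrow \mathbb{M}_T|\mathcal{M_{E}}$ of diagram~\eqref{hurray}.

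First I would pull $\gamma$ back along $\Gamma_{n,0}$: the composite $\tilde\gamma:=\gamma\circ\Gamma_{n,0}$ is a character of the commutative $C^{*}$-algebra $\mathbb{M}_T^{n}|\mathcal{M_{E}}$. Next I would extend $\tilde\gamma$ to a character $\lambda$ of the larger algebra $\mathbb{M}_T|\mathcal{M_{E}}$. This is legitimate because the inclusion $\mathbb{M}_T^{n}|\mathcal{M_{E}}\hookrightarrow \mathbb{M}_T|\mathcal{M_{E}}$ is a \emph{unital} embedding of commutative $C^{*}$-algebras: the unit is present because $\theta_{T,n}^{*}T_0\theta_{T,n}=\theta_{T,n}^{*}\theta_{T,n}=I$ lies in $\mathbb{M}_T^{n}$, and restricting to $\mathcal{M_{E}}$ yields the identity on $\mathcal{M_{E}}$ in both algebras. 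By Gelfand duality such an inclusion is dual to a continuous surjection of the corresponding spectra (equivalently, one may invoke the pure-state extension theorem, the pure states of a commutative $C^{*}$-algebra being exactly its characters), so $\tilde\gamma$ extends to some character $\lambda$ of $\mathbb{M}_T|\mathcal{M_{E}}$, that is a point of the spectrum of $\mathbb{M}_T|\mathcal{M_{E}}$, with $\lambda|_{\mathbb{M}_T^{n}|\mathcal{M_{E}}}=\tilde\gamma$.

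It then remains to verify the displayed identity, which is a direct diagram chase using the explicit action~\eqref{omgds2} of $\Gamma_{n,0}$. Reading that formula with the free index set to $k$ and the operator index set to $j$ gives $\Gamma_{n,0}\bigl(\theta_{T,k+n}^{*}T_j\theta_{T,k+n}|\mathcal{M_{E}}\bigr)=\theta_{T,k}^{*}T_j\theta_{T,k}|V_n$ for all $j,k\in\mathbb{N}$, where the left-hand operator genuinely lies in the domain algebra $\mathbb{M}_T^{n}|\mathcal{M_{E}}$. Applying $\gamma$ and using $\tilde\gamma=\gamma\circ\Gamma_{n,0}$ together with $\lambda|_{\mathbb{M}_T^{n}|\mathcal{M_{E}}}=\tilde\gamma$, I obtain
\begin{equation*}
\gamma\bigl(\theta_{T,k}^{*}T_j\theta_{T,k}\bigr)=\tilde\gamma\bigl(\theta_{T,k+n}^{*}T_j\theta_{T,k+n}\bigr)=\lambda\bigl(\theta_{T,k+n}^{*}T_j\theta_{T,k+n}\bigr),
\end{equation*}
which is exactly the asserted relation (recall $k+n=n+k$).

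I expect the main obstacle to be bookkeeping rather than conceptual: aligning the index conventions of~\eqref{omgds2} so that $\theta_{T,k+n}^{*}T_j\theta_{T,k+n}$ is precisely the $\Gamma_{n,0}$-preimage of $\theta_{T,k}^{*}T_j\theta_{T,k}$, and confirming that this preimage really sits inside the subalgebra $\mathbb{M}_T^{n}|\mathcal{M_{E}}$ (which is exactly what the computation in the proof of Theorem~\ref{normspos} guarantees). The only step requiring genuine $C^{*}$-theory, the extension of a character across the unital subalgebra inclusion, is standard and rests essentially on the commutativity supplied by half-centeredness.
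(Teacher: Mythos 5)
Your proposal is correct and takes essentially the same route the paper intends: Proposition~\ref{loveu} is presented there without a separate proof, as a direct consequence of Theorem~\ref{normspos}, and your argument---pulling $\gamma$ back along the surjection $\Gamma_{n,0}\colon\mathbb{M}_{T}^{n}|\mathcal{M_{E}}\to\mathbb{M}_{T}|V_{n}$ via~\eqref{omgds2} and then extending the resulting character across the unital inclusion $\mathbb{M}_{T}^{n}|\mathcal{M_{E}}\hookrightarrow\mathbb{M}_{T}|\mathcal{M_{E}}$ from diagram~\eqref{hurray}---supplies exactly the details the paper leaves implicit. Your index bookkeeping in~\eqref{omgds2} and the appeal to the standard character-extension (pure-state extension) fact for unital commutative $C^{*}$-inclusions are both handled correctly.
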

						
						Note that for every point $\gamma\in\sigma(\mathbb{M}_{T})$ and all $j,k\in\mathbb{N}$ $$\gamma\left(\theta_{T,k}^{*}T_{j}\theta_{T,k}\right)\gamma\left(T_{k}\right)=\gamma\left(T_{k}^{\frac{1}{2}}\theta_{T,k}^{*}T_{j}\theta_{T,k}T_{k}^{\frac{1}{2}}\right)=\gamma\left(T_{k+j}\right).$$ So, if $\gamma\left(T_{k}\right)\neq 0,$ then $$\gamma\left(\theta_{T,k}^{*}T_{j}\theta_{T,k}\right)=\frac{\gamma\left(T_{k+j}\right)}{\gamma\left(T_{k}\right)}.$$

						%\begin{prop}\label{halfhalf}
						%If $R$ is such that $\mathcal{H}_{1}=\mathcal{H}$ and $R_{j}R_{k}\mathcal{E}=R_{k}R_{j}\mathcal{E}$ for all $k,j\in\mathbb{N},$ then $R$ is half centered.
						%\end{prop}
						
						%\begin{proof}
						%We get from the assumptions that $\mathbb{M}_{R}|\mathcal{M_{E}}$ is commutative. Then Theorem~\ref{normspos} gives that also $\mathbb{M}_{R}|V_{k}$ is commutative for all $k\in\mathbb{N},$ since $\mathbb{M}_{R}^{k}$ is a sub-algebra of $\mathbb{M}_{R}.$ These subspaces span the whole space, so $\mathbb{M}_{R}$ must be commutative.
						%\end{proof}

					\section{Fundamentals for Half-Centered operators}
						
						Here we present some initial results that hold for all injective half-centered operators with $\dim\E=1$. 
						Much of the work in this section will aim towards showing that the operator $T_{k}$ has a simple form when restricted to $\mathcal{M_{E}}.$ We will see that there are real parameters $\tau_{k},\beta_{k}$ and a self adjoint operator $A\in\mathcal{B}(\mathcal{M_{E}})$ which is independent of $k,$ such that $T_{k}|_{\mathcal{M_{E}}}$ is given by the formula $$T_{k}|_{\mathcal{M_{E}}}=\tau_{k}I+\beta_{k}A$$ where $I$ is the identity on $\mathcal{M_{E}}.$
						This implies that there are $a,b,c\in \mathbb{R},$ not all zero, and $k,m\in\mathbb{N}^{+}$ such that 
						\begin{equation}\label{joddla}
							aI+b T_{k}+c T_{m}|_{\mathcal{M_{E}}}=0|_{\mathcal{M_{E}}}
						\end{equation}
						which can be seen as a weaker form of the main theorem and indeed if $\mathcal{M_{E}}=\mathcal{H},$ then~\eqref{joddla} directly implies it. However, we can not conclude from~\eqref{joddla} that the same identity must hold for the whole space (and generally it will not).
						The step from linear dependence in $\mathcal{M_{E}}$ to linear dependence in $\mathcal{H}$ is the main obstacle here and much of the theory in section $2$ was introduced as a way to deal with this.
						\\
						
						Since the subspace $\E$ is now one dimensional, we take $\E$ to mean a unit length vector that spans the space. To keep the notations simpler, we also write $P$ instead of $P_{\mathcal{H}_{1}}.$
						\\ 
						We recall the earlier result (Proposition~\ref{basbas}):
						\begin{center}
							\textit{If $T$ is half-centered then so is $T |_{\mathcal{H}_{1}}.$}\label{ioio}
						\end{center}
						
						This implies that $P T_{k}P T_{j}P=P T_{j}P T_{k}P$ for all $j,k\in \mathbb{N}.$ As $P_{\E}=I-P,$ we can deduce  $$PT_{k}P_{\mathcal{E}}T_{j}P=PT_{k}T_{j}P-P T_{k}P T_{j}P=$$ $$PT_{j}T_{k}P-P T_{j}P T_{k}P=PT_{j}P_{\mathcal{E}}T_{k}P$$
						so that 
						\begin{equation}\label{mayi}
							PT_{k}P_{\mathcal{E}}T_{j}P=PT_{j}P_{\mathcal{E}}T_{k}P.
						\end{equation}
						
						This equation leads to the following.
						
						\begin{prop}\label{1hemma}
							For every $x\in \mathcal{H}_{1}$ and $u\in\mathcal{M_{E}}$
							\begin{equation}\label{ggg}
								\left\langle x, T_{m}\mathcal{E}\right\rangle\left(T_{k}-\left\langle T_{k}\mathcal{E}, \mathcal{E}\right\rangle I\right)u=\left\langle x, T_{k}\mathcal{E}\right\rangle\left(T_{m}-\left\langle T_{m}\mathcal{E}, \mathcal{E}\right\rangle I\right)u.
							\end{equation}
						\end{prop}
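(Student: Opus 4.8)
The plan is to establish the identity~\eqref{ggg} first for the generating vector $u=\mathcal{E}$, reading it off directly from~\eqref{mayi}, and then to upgrade it to every $u\in\mathcal{M_{E}}$ by a commutation-and-density argument inside the abelian algebra $\mathbb{M}_T$.

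First I would use that, since $\dim\mathcal{E}=1$, the projection $P_{\mathcal{E}}=I-P$ is the rank-one map $P_{\mathcal{E}}z=\langle z,\mathcal{E}\rangle\mathcal{E}$. Applying the operator identity~\eqref{mayi} to a vector $x\in\mathcal{H}_1$, using $Px=x$ and the self-adjointness of $T_m$, the left-hand side collapses to
\[
PT_kP_{\mathcal{E}}T_mPx=\langle T_m x,\mathcal{E}\rangle\,PT_k\mathcal{E}=\langle x,T_m\mathcal{E}\rangle\,PT_k\mathcal{E},
\]
and since $PT_k\mathcal{E}=(I-P_{\mathcal{E}})T_k\mathcal{E}=\bigl(T_k-\langle T_k\mathcal{E},\mathcal{E}\rangle I\bigr)\mathcal{E}$, with the right-hand side of~\eqref{mayi} treated symmetrically, this is precisely~\eqref{ggg} in the special case $u=\mathcal{E}$. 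Equivalently, in rank-one notation~\eqref{mayi} reads $(PT_k\mathcal{E})\otimes(PT_m\mathcal{E})^{*}=(PT_m\mathcal{E})\otimes(PT_k\mathcal{E})^{*}$, and evaluating this against $x$ yields the same conclusion.

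The substantive step is to replace $\mathcal{E}$ by an arbitrary $u\in\mathcal{M_{E}}$. For fixed $x,k,m$ I would consider the operator
\[
G_x=\langle x,T_m\mathcal{E}\rangle\bigl(T_k-\langle T_k\mathcal{E},\mathcal{E}\rangle I\bigr)-\langle x,T_k\mathcal{E}\rangle\bigl(T_m-\langle T_m\mathcal{E},\mathcal{E}\rangle I\bigr),
\]
which is a scalar linear combination of $I$, $T_k$ and $T_m$, hence an element of the \emph{abelian} von Neumann algebra $\mathbb{M}_T$. The previous step says exactly $G_x\mathcal{E}=0$, so for every $a\in\mathbb{M}_T$ we get $G_x(a\mathcal{E})=aG_x\mathcal{E}=0$; since $\mathcal{M_{E}}=\overline{\mathbb{M}_T\mathcal{E}}$ and $G_x$ is bounded, it follows that $G_x u=0$ for all $u\in\mathcal{M_{E}}$, which is~\eqref{ggg}. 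I expect the only genuine obstacle to be this last passage from $\mathcal{E}$ to general $u$: the relation~\eqref{mayi} is intrinsically a statement about the single wandering vector $\mathcal{E}$, whereas the proposition asserts the same linear dependence among $I,T_k,T_m$ after applying them to any element of the cyclic subspace $\mathcal{M_{E}}$. The mechanism that makes this automatic is that the obstruction $G_x$ itself lies in the commutative algebra generated by the $T_k$, so it commutes through the generators of $\mathcal{M_{E}}$ and its vanishing on $\mathcal{E}$ propagates by density to all of $\overline{\mathbb{M}_T\mathcal{E}}$.
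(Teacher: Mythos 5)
Your proof is correct and takes essentially the same route as the paper's: both establish~\eqref{ggg} at $u=\E$ by evaluating the identity~\eqref{mayi} against the rank-one projection $P_{\E}$, and both extend to all $u\in\mathcal{M_{E}}$ by commuting the resulting combination of $I,T_{k},T_{m}$ (an element of the abelian algebra $\mathbb{M}_{T}$) past $\mathbb{M}_{T}$ and using density of $\mathbb{M}_{T}\E$ in $\mathcal{M_{E}}$. The only cosmetic difference is that you apply~\eqref{mayi} directly to $x\in\mathcal{H}_{1}$ via $Px=x$, which lets you skip the paper's final continuity step over the dense subset $T\mathcal{H}\subseteq\mathcal{H}_{1}$.
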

						\begin{proof}
							First we prove 
							\begin{equation}\label{hemma}
								\left\langle T y ,T_{m}\mathcal{E}\right\rangle\left(T_{k}-\left\langle T_{k}\mathcal{E}, \mathcal{E}\right\rangle I\right)\mathcal{E}=\left\langle T y,T_{k}\mathcal{E}\right\rangle\left(T_{m}-\left\langle T_{m}\mathcal{E}, \mathcal{E}\right\rangle I\right)\mathcal{E}
							\end{equation}
							for each $y\in \mathcal{H}.$
							Since $$PT_{m}\mathcal{E}=\left(T_{m}-\left\langle T_{m}\mathcal{E}, \mathcal{E}\right\rangle I\right)\mathcal{E},$$ we have
							$$PT_{m}P_{\mathcal{E}}T_{k}PT=\left(T_{m}-\left\langle T_{m}\mathcal{E}, \mathcal{E}\right\rangle I\right)P_{\mathcal{E}}T_{k}T.$$ By~\eqref{mayi}, this is the same as $$PT_{k}P_{\mathcal{E}}T_{m}PT=\left(T_{k}-\left\langle T_{k}\mathcal{E}, \mathcal{E}\right\rangle I\right)P_{\mathcal{E}}T_{m}T.$$
							So we have 
							$$\left(T_{m}-\left\langle T_{m}\mathcal{E}, \mathcal{E}\right\rangle I\right)P_{\mathcal{E}}T_{k} T y=\left(T_{k}-\left\langle T_{k}\mathcal{E}, \mathcal{E}\right\rangle I\right)P_{\mathcal{E}}T_{m} T y$$ for all $y\in\mathcal{H}.$
							The equation~\eqref{hemma} now follows from $$P_{\mathcal{E}}T_{m}T y=\left\langle T y, T_{m}\mathcal{E}\right\rangle \mathcal{E}.$$
							Since $\mathbb{M}_{T}$ is commutative, we have $$T_{n}\left\langle T y, T_{m}\mathcal{E}\right\rangle\left(T_{k}-\left\langle T_{k}\mathcal{E}, \mathcal{E}\right\rangle I\right)\mathcal{E}=\left\langle T y, T_{m}\mathcal{E}\right\rangle\left(T_{k}-\left\langle T_{k}\mathcal{E}, \mathcal{E}\right\rangle I\right)T_{n}\mathcal{E}$$ for every $T_{n}.$
							
							The only thing left to prove now is that holds for all $x\in \mathcal{H}_{1},$ but this follows from continuity.
						\end{proof}
						
						The following statement must be known, but since we could not find an exact reference for it, we include the proof for the sake of completeness. 
						\begin{lem}\label{itswrong}
							Let $\mathcal{A}$ be a commutative $C^{*}$-algebra of operators on a Hilbert space $\mathcal{K}$ with a cyclic vector $x\in \mathcal{K}.$
							Then given $a_{1},a_{2}\in \mathcal{A}$ and a point $\lambda$ in the spectrum of $\mathcal{A}$ there is a sequence of vectors $x_{l}\in \mathcal{K}$ such that $$a_{i}x_{l}-\lambda\left(a_{i}\right)x_{l}\rightarrow 0$$ as $l\rightarrow \infty$ for $i=1,2$ and $$\frac{\left\langle a_{i} x_{l},x\right\rangle}{\left\langle x_{l},x\right\rangle}\rightarrow \lambda\left(a_{i}\right)$$ as $l\rightarrow \infty.$
							
						\end{lem}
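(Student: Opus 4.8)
The plan is to reduce the statement to a concrete multiplication model and then build the $x_l$ by localizing near the character $\lambda$. First I would invoke the Gelfand--Naimark theorem together with the cyclic-vector hypothesis (the GNS construction, see \cite{muphy}): writing $\Omega$ for the spectrum of $\mathcal{A}$ and $\widehat a\in C_0(\Omega)$ for the Gelfand transform of $a\in\mathcal A$, the positive functional $a\mapsto\langle a x,x\rangle$ is represented by a finite regular Borel measure $\mu$ on $\Omega$ via $\langle a x,x\rangle=\int_\Omega\widehat a\,d\mu$, and cyclicity of $x$ yields a unitary $U:\mathcal K\to L^2(\Omega,\mu)$ intertwining the action of $\mathcal A$ with multiplication, $UaU^*=M_{\widehat a}$, and sending $x$ to the constant function $\mathbf 1$. (In the non-unital case one identifies $Ux=\mathbf 1$ using an approximate unit and dominated convergence, $\mu$ being finite.) Under $U$ the point $\lambda$ of the spectrum is a point $\omega_\lambda\in\Omega$ with $\lambda(a)=\widehat a(\omega_\lambda)$, and both conditions to be proved become statements about the functions $\widehat a_i$ near $\omega_\lambda$.

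The decisive observation is that $\operatorname{supp}\mu=\Omega$. Indeed, $\mathcal A$ is a concrete algebra of operators, so its representation on $\mathcal K$ is faithful; since $M_{\widehat a}=0$ on $L^2(\Omega,\mu)$ forces $\widehat a=0$ on $\operatorname{supp}\mu$, faithfulness of the multiplication representation is equivalent to $\operatorname{supp}\mu=\Omega$. Consequently every open neighborhood of $\omega_\lambda$ has strictly positive $\mu$-measure, which is exactly what makes localization at $\omega_\lambda$ possible.

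Then I would construct the vectors directly. Fixing $l$, I use continuity of $\widehat a_1,\widehat a_2$ at $\omega_\lambda$ to choose an open neighborhood $U_l\ni\omega_\lambda$ on which $|\widehat a_i-\lambda(a_i)|<1/l$ for $i=1,2$, and set $x_l:=U^{-1}\mathbf 1_{U_l}$, which is nonzero since $\mu(U_l)>0$. For the first assertion, $\|a_i x_l-\lambda(a_i)x_l\|^2=\int_{U_l}|\widehat a_i-\widehat a_i(\omega_\lambda)|^2\,d\mu\le l^{-2}\mu(U_l)=l^{-2}\|x_l\|^2$, so $a_i x_l-\lambda(a_i)x_l\to 0$. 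For the second, $\langle a_i x_l,x\rangle=\int_{U_l}\widehat a_i\,d\mu$ while $\langle x_l,x\rangle=\mu(U_l)\ne 0$, so the quotient is the $\mu$-average of $\widehat a_i$ over $U_l$, which lies within $1/l$ of $\lambda(a_i)$ and hence converges to $\lambda(a_i)$.

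The only genuinely delicate point is the support identification $\operatorname{supp}\mu=\Omega$: without it a neighborhood of $\omega_\lambda$ could be $\mu$-null and the whole construction would collapse. This is precisely where the hypothesis that $\mathcal A$ is a concrete (hence faithfully represented) algebra enters, so I would state faithfulness and the equivalence ``faithful multiplication representation $\iff$ full support'' explicitly. The remaining bookkeeping, in particular the non-unital identification of $x$ with $\mathbf 1$, is routine once one knows $\mu$ is finite.
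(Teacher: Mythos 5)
Your proposal is correct and takes essentially the same approach as the paper: represent $\mathcal{A}$ via the cyclic vector as multiplication operators on $L^{2}$ of the Gelfand spectrum, note that the induced measure has full support, and localize at the character $\lambda$ over shrinking neighborhoods. The only cosmetic differences are that you use indicator functions $\mathbf{1}_{U_{l}}$ where the paper uses continuous bump functions $g_{\epsilon}$ supported in the analogous neighborhood $W_{\epsilon}$, and that you make the full-support claim explicit via faithfulness of the concrete representation, a point the paper asserts only in passing.
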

						
						\begin{proof}
							For simplicity, we write $\hat{a}$ for the Gelfand transform of $a\in\mathcal{A}.$
							As $x$ is a cyclic vector for $\mathcal{A},$ there is an isometric representation $u:\mathcal{K}\rightarrow L^{2}(X,\mu_{x}),$ where $X$ is the Gelfand spectrum of $\mathcal{A}$ and $\mu_{x}$ is the Borel measure on $X$ induced by the positive linear functional on $C(X)$ given by
							$$\hat{a}\mapsto \left\langle a x,x \right\rangle .$$
							Let $\beta_{\epsilon}\left[ \hat{a}_{i}(\lambda)\right] $ denote the open ball in $\mathbb{C}$ centered on $\hat{a}_{i}(\lambda)$ and with radius $\epsilon.$ Now define $$W_{\epsilon}=\hat{a}_{1}^{-1}(\beta_{\epsilon}\left[ \hat{a}_{1}(\lambda)\right] )\cap \hat{a}_{2}^{-1}(\beta_{\epsilon}\left[ \hat{a}_{2}(\lambda)\right] )$$ i.e the set in $X$ such that both $\hat{a}_{1}$ and $\hat{a}_{2}$ has distance less that $\epsilon$ from their value at $\lambda.$ Since $\hat{a}_{1}$ and $\hat{a}_{2}$ are both continuous, $W_{\epsilon}$ is an open set and thus there is a non-constant positive continuous function $g_{\epsilon},$ that is zero on $W_{\epsilon}^{c}.$ Since $\mu_{x}$ is finite and has support all of $X$ (due to the fact that $x$ is cyclic), we can further assume that $\int_{X}\left| g_{\epsilon}(z)\right|^{2}d \mu_{x}(z)=1 $ and as $g_{\epsilon}$ is positive, we have $0<\int_{X} g_{\epsilon}(z)d \mu_{x}(z)<\infty.$

							Now we see that  $$\int_{X}\left|\hat{a}_{i}(\lambda) g_{\epsilon}(z)-\hat{a}_{i}(z)g_{\epsilon}(z)\right|^{2}d \mu_{x}(z)=$$
							$$\int_{W_{\epsilon}}\left|(\hat{a}_{i}(\lambda) -\hat{a}_{i}(z))\right|^{2}\left| g_{\epsilon}(z)\right|^{2}d \mu_{x}(z)< \epsilon^{2}$$ for $1\leq i\leq 2$ and thus $\hat{a}_{i}g_{\epsilon}- \hat{a}_{i}(\lambda) g_{\epsilon}\rightarrow 0$ in $L^{2}(X,\mu_{x})$ as $\epsilon \rightarrow 0.$ Moreover $$\left| \frac{\int_{X} \hat{a}_{i}(z)g_{\epsilon}(z)d \mu_{x}(z)}{\int_{X} g_{\epsilon}(z)d \mu_{x}(z)}-\hat{a}_{i}(\lambda)\right|= \left| \frac{\int_{X} \hat{a}_{i}(z)g_{\epsilon}(z)-\hat{a}_{i}(\lambda)g_{\epsilon}(z)d \mu_{x}(z)}{\int_{X} g_{\epsilon}(z)d \mu_{x}(z)}\right| \leq $$
							$$\frac{\int_{W_{\epsilon}} \left| \hat{a}_{i}(z)-\hat{a}_{i}(\lambda)\right| g_{\epsilon}(z) d \mu_{x}(z)}{\int_{W_{\epsilon}} g_{\epsilon}(z)d \mu_{x}(z)} < \epsilon \cdot \frac{\int_{W_{\epsilon}} g_{\epsilon}(z)d \mu_{x}(z)}{\int_{W_{\epsilon}} g_{\epsilon}(z)d \mu_{x}(z)}=\epsilon$$ for $1\leq i\leq 2.$ Taking $x_{l} = u^{-1} g_{\frac{1}{l}},$ we obtain the statement.
						\end{proof}
						
						\begin{cor}\label{opopo}
							Given two points $\lambda,\mu$ of the spectrum of $\mathbb{M}_{T}$ restricted to $\mathcal{M_{E}}$ and $m_{1},m_{2}\in\mathbb{N},$ then there are two sequences of unit vectors $x_{l},y_{l}\in\mathcal{M_{E}}$ such that
							\begin{center} 
								$\frac{\left\langle T_{m_{i}} x_{l},\mathcal{E}\right\rangle}{\left\langle x_{l},\mathcal{E}\right\rangle}\rightarrow \lambda\left(T_{m_{i}}\right)$ and
								$\frac{\left\langle T_{m_{i}} y_{l},\mathcal{E}\right\rangle}{\left\langle y_{l},\mathcal{E}\right\rangle}\rightarrow \mu\left(T_{m_{i}}\right)$
							\end{center}
							as $l\rightarrow \infty$ for $ i=1, 2.$
						\end{cor}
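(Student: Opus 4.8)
The plan is to obtain Corollary~\ref{opopo} as a direct specialization of Lemma~\ref{itswrong}, applied to the commutative $C^{*}$-algebra $\mathbb{M}_{T}|\mathcal{M_{E}}$ acting on the Hilbert space $\mathcal{M_{E}}$, with cyclic vector $\E$ and with $a_{1}=T_{m_{1}}|\mathcal{M_{E}}$, $a_{2}=T_{m_{2}}|\mathcal{M_{E}}$.

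First I would verify the hypotheses. Since $T$ is half-centered, the generators $T_{k}$ of $\mathbb{M}_{T}$ mutually commute, so $\mathbb{M}_{T}$, and hence its restriction $\mathbb{M}_{T}|\mathcal{M_{E}}$, is commutative; and $\mathcal{M_{E}}$ is invariant under $\mathbb{M}_{T}$ (by definition, and cf.\ Lemma~\ref{nolabel} for $V_{0}=\mathcal{M_{E}}$), so $\mathbb{M}_{T}|\mathcal{M_{E}}$ is a genuine commutative $C^{*}$-algebra of operators on $\mathcal{M_{E}}$. The essential point is cyclicity of $\E$: by construction $\mathcal{M_{E}}$ is the closed linear span of $\{m\E:m\in\mathbb{M}_{T}\}$, and since $\E\in\mathcal{M_{E}}$ and $\mathcal{M_{E}}$ is invariant, one has $m\E=(m|\mathcal{M_{E}})\E$, so the orbit $\mathbb{M}_{T}|\mathcal{M_{E}}\cdot\E$ has closure exactly $\mathcal{M_{E}}$.

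Next I would feed the point $\lambda$ of the spectrum of $\mathbb{M}_{T}|\mathcal{M_{E}}$ into Lemma~\ref{itswrong} together with $a_{1}=T_{m_{1}}|\mathcal{M_{E}}$ and $a_{2}=T_{m_{2}}|\mathcal{M_{E}}$; its second conclusion yields vectors $x_{l}\in\mathcal{M_{E}}$ with $\langle x_{l},\E\rangle\neq 0$ and $\frac{\langle T_{m_{i}}x_{l},\E\rangle}{\langle x_{l},\E\rangle}\to\lambda(T_{m_{i}})$ for $i=1,2$. Here I use that for $x_{l},\E\in\mathcal{M_{E}}$ one has $\langle (T_{m_{i}}|\mathcal{M_{E}})x_{l},\E\rangle=\langle P_{\mathcal{M_{E}}}T_{m_{i}}x_{l},\E\rangle=\langle T_{m_{i}}x_{l},\E\rangle$, so these ratios agree with those in the statement. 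Repeating the argument with the point $\mu$ produces a second sequence $y_{l}$ with the analogous limits.

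Finally, replacing $x_{l}$ by $x_{l}/\|x_{l}\|$ and $y_{l}$ by $y_{l}/\|y_{l}\|$ makes them unit vectors, and since the displayed ratios are invariant under scaling the limits are unchanged. I expect no real obstacle: the only points needing care are the cyclicity of $\E$ (handled above) and the fact that the denominators $\langle x_{l},\E\rangle$, $\langle y_{l},\E\rangle$ supplied by the lemma are nonzero, which follows from the strict positivity of $\int_{X}g_{\epsilon}\,d\mu_{x}$ in the proof of Lemma~\ref{itswrong}.
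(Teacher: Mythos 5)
Your proposal is correct and takes essentially the same route as the paper: the corollary is stated there without a separate proof precisely because it is the immediate specialization of Lemma~\ref{itswrong} to the commutative algebra $\mathbb{M}_{T}|\mathcal{M_{E}}$ acting on $\mathcal{M_{E}}$ with cyclic vector $\E$ and $a_{i}=T_{m_{i}}|\mathcal{M_{E}}$, applied once for $\lambda$ and once for $\mu$. Your supplementary verifications (cyclicity of $\E$ from $\mathcal{M_{E}}=\overline{\mathbb{M}_{T}\E}$, agreement of the ratios since $\E\in\mathcal{M_{E}}$ is invariant, nonvanishing denominators via $0<\int_{X}g_{\epsilon}\,d\mu_{x}$, and normalization to unit vectors) are exactly the routine checks the paper leaves implicit.
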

						
						Now, let $(\lambda,\mu),$ $m_{1},m_{2}\in\mathbb{N}$ and $x_{l},y_{l}\in\mathcal{M_{E}}$ be as in corollary~\ref{opopo}.
						Consider the new sequence
						$$v_{l}=\frac{x_{l}}{\left\langle x_{l},\mathcal{E} \right\rangle} - \frac{y_{l}}{\left\langle y_{l},\mathcal{E} \right\rangle}. $$ 
						Then $v_{l}\bot \mathcal{E}$ for all $l\in\mathbb{N}$ so that $v_{l}\in \mathcal{H}_{1}.$ 
						Moreover, for $i=1,2$
						
						$$\left\langle v_{l}, T_{m_{i}}\mathcal{E}\right\rangle\rightarrow\lambda\left(T_{m_{i}}\right)-\mu\left(T_{m_{i}}\right)$$ as $l\rightarrow \infty.$
						\\
						
						If we apply Proposition~\ref{1hemma} with the sequence $v_{l}$ in the place of $x$ and $k=m_{1},m=m_{2}$ and let $l\rightarrow \infty,$ then we get for every $u\in \mathcal{M_{E}}$
						
						\begin{equation}\label{maineq}
							\left(\lambda\left(T_{m}\right)-\mu\left(T_{m}\right) \right)\left(T_{k}-\left\langle T_{k}\mathcal{E}, \mathcal{E}\right\rangle I\right)u=\left(\lambda\left(T_{k}\right)-\mu\left(T_{k}\right) \right)\left(T_{m}-\left\langle T_{m}\mathcal{E}, \mathcal{E}\right\rangle I\right)u.
						\end{equation}
						
						We can draw some conclusions from this formula.
						
						\begin{prop}\label{nicenice}
							Let $\lambda,\mu \in \sigma(\mathbb{M}_{T}|\mathcal{M_{E}})$ and $\lambda\neq \mu.$ Then $\lambda\left(T_{m}\right)=\mu\left(T_{m}\right)$ for some $m\in\mathbb{N}$ if and only if $$T_{m}\mathcal{E}=\left\langle T_{m}\mathcal{E},\mathcal{E}\right\rangle \mathcal{E}$$ 
							i.e $\mathcal{E}$ is an eigenvector for $T_{m}.$
						\end{prop}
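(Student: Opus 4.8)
The engine for both implications is the identity~\eqref{maineq}, which I would apply with the given index $m$ together with a second, carefully chosen index. To lighten notation write $c_k=\langle T_k\mathcal{E},\mathcal{E}\rangle$, so that~\eqref{maineq} reads $(\lambda(T_m)-\mu(T_m))(T_k-c_k I)u=(\lambda(T_k)-\mu(T_k))(T_m-c_m I)u$ for every $u\in\mathcal{M_E}$. The plan is to read this identity in opposite ways for the two directions; beyond it, the only structural facts needed are that $\mathbb{M}_T|\mathcal{M_E}$ is commutative and that $\mathcal{E}$ is a cyclic vector for it.

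For the forward direction I would argue as follows. Assume $\lambda(T_m)=\mu(T_m)$ while $\lambda\neq\mu$. Because the operators $\{T_k\}_{k\in\mathbb{N}}$ generate $\mathbb{M}_T$ as a von Neumann algebra, the restrictions $T_k|\mathcal{M_E}$ generate $\mathbb{M}_T|\mathcal{M_E}$, and two distinct characters of a commutative algebra must disagree on some generator; hence there is an index $k$ with $\lambda(T_k)\neq\mu(T_k)$. Feeding this $k$ and the given $m$ into~\eqref{maineq}, the coefficient $\lambda(T_m)-\mu(T_m)$ on the left vanishes, so $(\lambda(T_k)-\mu(T_k))(T_m-c_m I)u=0$ for all $u\in\mathcal{M_E}$. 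Since the remaining scalar is nonzero this forces $(T_m-c_m I)u=0$ throughout $\mathcal{M_E}$; taking $u=\mathcal{E}$ gives $T_m\mathcal{E}=c_m\mathcal{E}=\langle T_m\mathcal{E},\mathcal{E}\rangle\mathcal{E}$, which is exactly the eigenvector condition.

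For the converse I would avoid~\eqref{maineq} entirely and instead exploit cyclicity. Recall that $\mathcal{M_E}=\overline{\mathbb{M}_T\mathcal{E}}$, so $\mathcal{E}$ is a cyclic vector for the commutative algebra $\mathbb{M}_T|\mathcal{M_E}$. If $T_m\mathcal{E}=c_m\mathcal{E}$, then for any $a\in\mathbb{M}_T$ commutativity gives $T_m(a\mathcal{E})=a(T_m\mathcal{E})=c_m(a\mathcal{E})$; since the vectors $a\mathcal{E}$ are dense in $\mathcal{M_E}$ this shows $T_m|\mathcal{M_E}=c_m I$. A scalar operator takes the same value under every character, so $\lambda(T_m)=\mu(T_m)=c_m$ for the given pair (indeed for all pairs). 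Equivalently one may phrase this through the Gelfand representation, where the eigenvector condition says that the continuous function $\widehat{T_m}$ equals $c_m$ on the full-support spectral measure of $\mathcal{E}$, hence everywhere.

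I expect no serious obstacle here, since the substantive work was already spent in deriving~\eqref{maineq} from Proposition~\ref{1hemma} and Corollary~\ref{opopo}. The only point requiring a moment's care is the separation step in the forward direction---that distinct points of $\sigma(\mathbb{M}_T|\mathcal{M_E})$ are distinguished by some $T_k$---which is just the standard fact that the generators of a commutative von Neumann algebra separate its characters. It is worth noting that each direction in fact yields the stronger conclusion that $T_m$ restricts to the scalar $c_m I$ on all of $\mathcal{M_E}$, and not merely that the line $\mathcal{E}$ is invariant.
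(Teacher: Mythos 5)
Your proof is correct and follows essentially the same route as the paper: the forward direction is exactly the paper's argument, applying~\eqref{maineq} with an index $k$ on which the distinct characters $\lambda,\mu$ disagree so that the vanishing left-hand side forces $(T_{m}-\langle T_{m}\mathcal{E},\mathcal{E}\rangle I)\mathcal{E}=0$. The converse, which the paper dismisses as trivial, you spell out correctly via cyclicity of $\mathcal{E}$ and commutativity (showing $T_{m}|_{\mathcal{M_{E}}}$ is scalar), and your explicit separation step for choosing $k$ is a detail the paper's proof uses implicitly.
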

						
						\begin{proof}
							If $k$ is such that $\lambda\left(T_{k}\right)\neq \mu\left(T_{k}\right)$ and $m$ such that $\lambda\left(T_{m}\right)= \mu\left(T_{m}\right).$ Then the left-hand side of~\eqref{maineq} is zero and therefore so is the right-hand side, but since $\lambda\left(T_{k}\right)\neq \mu\left(T_{k}\right).$ This means that 
							$$\left(T_{m}-\left\langle T_{m}\mathcal{E}, \mathcal{E}\right\rangle I\right)\mathcal{E}=0.$$
							The other direction is trivial.
						\end{proof}

						If $\dim \mathcal{M_{E}}\geq 2$ then there must be at least two different point in the spectrum of $\mathbb{M}_{T}$ restricted to $\mathcal{M_{E}},$ this makes it possible to do the following definition,
						
						\begin{defn}
							Let $\dim\mathcal{M_{E}}\geq 2$ and let $(\lambda,\mu)$ be two different points in $ \sigma (\mathbb{M}_{T}|\mathcal{M_{E}}).$ For every $k\in\mathbb{N},$ let 
							
							\begin{equation}\label{bee}
								\beta_{k}:=\lambda\left(T_{k}\right)-\mu\left(T_{k}\right).
							\end{equation}
							
						\end{defn}
						
						\begin{rem}
							Clearly $\beta_{0}=0.$ We note also that if $(\lambda',\mu')$ is another cuple of points in $\sigma (\mathbb{M}_{T}|\mathcal{M_{E}})$ then by Lemma~\ref{hemma1} below we have  $\lambda\left(T_{k}\right)-\mu\left(T_{k}\right)=c(\lambda'\left(T_{k}\right)-\mu'\left(T_{k}\right))$ for a nonzero constant $c\in\mathbb{R}$ and every $k\in \mathbb{N},$ so the sequence $\left\lbrace \beta_{k} \right\rbrace  $ is defined up to a multiplicative constant by a couple of different points in the spectrum $\sigma (\mathbb{M}_{T}|\mathcal{M_{E}}).$
						\end{rem}

						\begin{defn}
							We let
							\begin{equation}\label{tau}
								\tau_{k}:=\left\langle T_{k}\mathcal{E}, \mathcal{E}\right\rangle
							\end{equation} 
							for all $k\in\mathbb{N}.$ 
						\end{defn}

						\begin{lem}\label{hemma1}
							If $\lambda$ is in the spectrum of $\mathbb{M}_{T}|\mathcal{M_{E}}$ then $$\lambda\left(T_{k}\right)=\tau_{k}+A_{\lambda} \beta_{k}$$ for some constant $A_{\lambda}\in \mathbb{R}$ only depending on the $\lambda.$
						\end{lem}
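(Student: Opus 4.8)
The plan is to read~\eqref{maineq} as an identity inside the commutative algebra $\mathbb{M}_{T}|\mathcal{M_{E}}$ and then evaluate it at the character $\lambda.$ First I would fix the pair of spectral points that was used in~\eqref{bee} to define $\left\lbrace\beta_{k}\right\rbrace.$ For this pair the two coefficients occurring in~\eqref{maineq} are exactly $\beta_{m}$ and $\beta_{k},$ so~\eqref{maineq} reads
\begin{equation*}
	\beta_{m}\left(T_{k}-\tau_{k}I\right)u=\beta_{k}\left(T_{m}-\tau_{m}I\right)u
\end{equation*}
for every $u\in\mathcal{M_{E}}$ and all $k,m\in\mathbb{N}.$ Since this holds for all $u\in\mathcal{M_{E}},$ it is an operator identity between elements of $\mathbb{M}_{T}|\mathcal{M_{E}}.$

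Next I would apply the character $\lambda$ to this identity. Using that $\lambda$ is linear and multiplicative with $\lambda(I)=1,$ this yields
\begin{equation*}
	\beta_{m}\left(\lambda\left(T_{k}\right)-\tau_{k}\right)=\beta_{k}\left(\lambda\left(T_{m}\right)-\tau_{m}\right)
\end{equation*}
for all $k,m\in\mathbb{N}.$ Because the two points defining $\left\lbrace\beta_{k}\right\rbrace$ are distinct, they cannot agree on every $T_{m}$ (otherwise they would coincide as characters), so there is some $m_{0}$ with $\beta_{m_{0}}\neq 0.$ I would then set
\begin{equation*}
	A_{\lambda}:=\frac{\lambda\left(T_{m_{0}}\right)-\tau_{m_{0}}}{\beta_{m_{0}}},
\end{equation*}
which is real because $T_{m_{0}}$ is self-adjoint and $\tau_{m_{0}},\beta_{m_{0}}\in\mathbb{R}.$

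Finally I would verify $\lambda(T_{k})=\tau_{k}+A_{\lambda}\beta_{k}$ for every $k,$ splitting into two cases. For $k$ with $\beta_{k}\neq 0,$ taking $m=m_{0}$ in the displayed relation gives $\left(\lambda(T_{k})-\tau_{k}\right)/\beta_{k}=A_{\lambda},$ which simultaneously shows the quotient is the same for all such $k$ and produces the claimed formula. For $k$ with $\beta_{k}=0,$ the same relation gives $\beta_{m_{0}}\left(\lambda(T_{k})-\tau_{k}\right)=0,$ and $\beta_{m_{0}}\neq 0$ forces $\lambda(T_{k})=\tau_{k},$ again matching the formula since $A_{\lambda}\beta_{k}=0.$ In particular $A_{\lambda}$ depends only on $\lambda.$

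I expect the only genuinely delicate points, as opposed to bookkeeping, to be the two structural observations that make the evaluation legitimate: that~\eqref{maineq} may be promoted to an identity inside $\mathbb{M}_{T}|\mathcal{M_{E}}$ so that a character can be applied to it, and that distinctness of the defining pair forces some $\beta_{m_{0}}\neq 0$ — without the latter the normalizing constant $A_{\lambda}$ could not even be formed. The remaining steps are the routine separation of the two index cases.
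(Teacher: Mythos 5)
Your proof is correct and at its core follows the same skeleton as the paper's: derive the scalar relation $\beta_{m}\left(\lambda\left(T_{k}\right)-\tau_{k}\right)=\beta_{k}\left(\lambda\left(T_{m}\right)-\tau_{m}\right)$ (the paper's~\eqref{mainn}) from~\eqref{maineq}, pick $m_{0}$ with $\beta_{m_{0}}\neq 0,$ set $A_{\lambda}=\left(\lambda\left(T_{m_{0}}\right)-\tau_{m_{0}}\right)/\beta_{m_{0}},$ and split into the cases $\beta_{k}\neq 0$ and $\beta_{k}=0.$ Where you genuinely diverge is in how~\eqref{mainn} is obtained: the paper proceeds analytically, substituting into~\eqref{main3q} the approximate-eigenvector sequences $x_{j}$ supplied by Lemma~\ref{itswrong}, pairing against the cyclic vector $\E$ and passing to the limit; you instead note that~\eqref{maineq}, holding for every $u\in\mathcal{M_{E}},$ is an identity in the unital commutative algebra $\mathbb{M}_{T}|\mathcal{M_{E}}$ and evaluate the character $\lambda$ on it directly. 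This is legitimate ($\mathcal{M_{E}}$ is $\mathbb{M}_{T}$-invariant, so $\left(T_{k}-\tau_{k}I\right)|_{\mathcal{M_{E}}}\in\mathbb{M}_{T}|\mathcal{M_{E}},$ and $\lambda\left(I\right)=1$) and it is cleaner: the sequence bookkeeping disappears, and your $\beta_{k}=0$ case falls out of the same scalar identity, so you also avoid the paper's appeal to Proposition~\ref{nicenice}. What the paper's route buys in exchange is uniformity: Lemma~\ref{itswrong} is the workhorse already used to produce~\eqref{maineq} itself, so the author reuses one tool throughout rather than switching to Gelfand evaluation.

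One caveat, precisely at the step you flagged as delicate. Your justification that some $\beta_{m_{0}}\neq 0$ --- ``otherwise the two points would coincide as characters'' --- is not valid verbatim: $\mathbb{M}_{T}|\mathcal{M_{E}}$ is a von Neumann algebra, the operators $T_{k}|_{\mathcal{M_{E}}}$ generate it only in the weak topology, and two distinct points of its spectrum can agree on all the $T_{k}$'s (agreement of characters on a generating set propagates by norm continuity to the norm-closed $C^{*}$-subalgebra they generate, not to its weak closure). The paper's proof asserts the same existence of $m$ with $\beta_{m}\neq 0$ from $\dim\mathcal{M_{E}}\geq 2$ without justification, so this imprecision is shared; the honest argument is that since $\E$ is cyclic for $\mathbb{M}_{T}|\mathcal{M_{E}},$ if every $T_{m}|_{\mathcal{M_{E}}}$ were a scalar then $\mathcal{M_{E}}=\mathbb{C}\E,$ so $\dim\mathcal{M_{E}}\geq 2$ forces some non-scalar $T_{m}|_{\mathcal{M_{E}}},$ and the pair $\left(\lambda,\mu\right)$ in the definition~\eqref{bee} must be understood as chosen to separate such a $T_{m}.$ This reading is needed in any case, since for a degenerate pair with $\beta\equiv 0$ the lemma's conclusion would force every $T_{k}|_{\mathcal{M_{E}}}$ to be scalar, contradicting $\dim\mathcal{M_{E}}\geq 2.$ With that reading in place (and note it is automatic in the finite-dimensional situation $\dim\mathcal{M_{E}}=2$ that dominates the later sections), your argument is complete.
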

						
						\begin{proof}
							With our new notations~\eqref{maineq} can now be rewritten as
							
							\begin{equation}\label{main3q}
								\beta_{k}\left(T_{m}-\tau_{m}\right)u=\beta_{m}\left(T_{k}-\tau_{k}\right)u.
							\end{equation}
							
							By Lemma~\ref{itswrong} we can find a sequence $\left\lbrace x_{j}\right\rbrace \in\mathcal{M_{E}}$ such $\frac{\left\langle T_{k}x_{j}, \mathcal{E}\right\rangle}{\left\langle x_{j}, \mathcal{E}\right\rangle} \rightarrow \lambda\left(T_{k}\right)$ and $\frac{\left\langle T_{m}x_{j}, \mathcal{E}\right\rangle}{\left\langle x_{j}, \mathcal{E}\right\rangle} \rightarrow \lambda\left(T_{m}\right).$
							Substituting $v$ with $\frac{x_{j}}{\left\langle x_{j}, \mathcal{E}\right\rangle}$ in~\eqref{main3q}, then taking the scalar product with $\mathcal{E}$ on both sides and letting $j\rightarrow \infty,$ we get
							\begin{equation}\label{mainn}
								\beta_{k}\left(\lambda\left(T_{m}\right)-\tau_{m}\right)=\beta_{m}\left(\lambda\left(T_{k}\right)-\tau_{k}\right)
							\end{equation}
							If $\dim \mathcal{M_{E}}\geq 2$ then there must be at least one $m\in\mathbb{N}$ such that $\beta_{m}\neq 0$ and if we take
							$$A_{\lambda}=\frac{\left(\lambda\left(T_{m}\right)-\tau_{m}\right)}{\beta_{m}}$$
							then we see from~\eqref{mainn} that $A_{\lambda}$ is independent of the chosen $k\in\mathbb{N}$ as long as $\beta_{k}\neq 0.$
							So we have $$\lambda\left(T_{k}\right)=\tau_{k}+\frac{\lambda\left(T_{k}\right)-\tau_{k}}{\beta_{k}}\beta_{k}=\tau_{k}+A_{\lambda} \beta_{k}$$ when $\beta_{k}\neq 0$ and when $\beta_{j}=0$ we have from Proposition~\ref{nicenice} that $$\lambda\left(T_{j}\right)=\tau_{j}=\tau_{j}+A_{\lambda} \beta_{j}$$ so that the fomula is valid in this case also.
						\end{proof}
						
						%\begin{lem}\label{analyt}
						%	Both $\tau_{k}$ and $\beta_{k}$ are $O\left(\left\|T\right\|^{2k}\right)$ as $k\rightarrow \infty.$
						%\end{lem}
						
						%\begin{proof}
						%	We have $$\tau_{k}=\left\langle T^{k}\mathcal{E},T^{k}\mathcal{E}\right\rangle\leq \left\|T^{k}\right\|^{2}\leq \left\|T\right\|^{2k}$$
						%	and $$\left|\beta_{k}\right|=\left|\lambda\left(T_{k}\right)-\mu\left(T_{k}\right)\right|\leq \lambda\left(T_{k}\right)+\mu\left(T_{k}\right)\leq 2\left\|T^{k}\right\|^{2}\leq 2\left\|T\right\|^{2k}.$$ 
						%\end{proof}
						
						The results of this subsection can be summarized as follows:
						
						\begin{thm}\label{bt1}
							If $T\in \mathcal{B}\left(\mathcal{H}\right)$ is half-centered and injective with $\dim \left(T\mathcal{H}\right)^{\bot}=1,$ then there are self adjoint operators $A,C \in \mathcal{B}(\mathcal{M_{E}}),$ such that for every $k\in\mathbb{N}$
							\begin{equation}\label{raw}
								T_{k}|_{\mathcal{M_{E}}}=\tau_{k}I+\beta_{k}A.
							\end{equation}
							\begin{equation}\label{wwraw}
								PT_{k}P|_{\mathcal{M_{E}}}=\tau_{k}P+\beta_{k}C.
							\end{equation}
							where $C=P A P.$
						\end{thm}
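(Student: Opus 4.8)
The plan is to read both identities directly off Lemma~\ref{hemma1}, using that $\mathbb{M}_{T}|_{\mathcal{M_{E}}}$ is a commutative $C^{*}$-algebra, so that a self-adjoint element is determined by its values on the spectrum and vanishes as soon as its Gelfand transform does. First I would dispose of the degenerate case $\dim\mathcal{M_{E}}=1$: here $\mathcal{M_{E}}=\E$ is invariant under every $T_{k}$, so $T_{k}\E=\tau_{k}\E$, and one may take $A=C=0$ and $\beta_{k}=0$, whereupon both~\eqref{raw} and~\eqref{wwraw} hold trivially (note $P\E=0$). So I assume $\dim\mathcal{M_{E}}\geq 2$, which (as noted in the proof of Lemma~\ref{hemma1}) guarantees some $m\in\mathbb{N}$ with $\beta_{m}\neq 0$.

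Next I would define the candidate operator
\begin{equation*}
A:=\frac{1}{\beta_{m}}\bigl(T_{m}|_{\mathcal{M_{E}}}-\tau_{m}I\bigr),
\end{equation*}
which is self-adjoint, since $T_{m}$ is positive and $\tau_{m},\beta_{m}\in\mathbb{R}$, and which lies in $\mathbb{M}_{T}|_{\mathcal{M_{E}}}$ because $I=T_{0}\in\mathbb{M}_{T}$. The key computation is that for every point $\lambda$ of the spectrum of $\mathbb{M}_{T}|_{\mathcal{M_{E}}}$, Lemma~\ref{hemma1} gives $\lambda(A)=\beta_{m}^{-1}(\lambda(T_{m})-\tau_{m})=A_{\lambda}$, and therefore
\begin{equation*}
\lambda\bigl(T_{k}|_{\mathcal{M_{E}}}-\tau_{k}I-\beta_{k}A\bigr)=\lambda(T_{k})-\tau_{k}-\beta_{k}A_{\lambda}=0
\end{equation*}
for all $k$, again by Lemma~\ref{hemma1}. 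As $T_{k}|_{\mathcal{M_{E}}}-\tau_{k}I-\beta_{k}A$ is a self-adjoint element of the commutative $C^{*}$-algebra $\mathbb{M}_{T}|_{\mathcal{M_{E}}}$ whose Gelfand transform vanishes identically, it is $0$; this is precisely~\eqref{raw}.

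To obtain~\eqref{wwraw} I would compress~\eqref{raw} by $P=P_{\mathcal{H}_{1}}$. Recall from the remark that $\mathcal{M_{E}}$ is invariant under $P$ (since $\E\subseteq\mathcal{M_{E}}$), so $P\mathcal{M_{E}}\subseteq\mathcal{M_{E}}$. Hence for $x\in\mathcal{M_{E}}$ the vector $Px$ again lies in $\mathcal{M_{E}}$, and applying~\eqref{raw} to it gives $T_{k}Px=\tau_{k}Px+\beta_{k}APx$; projecting once more and using $P^{2}=P$ yields $PT_{k}Px=\tau_{k}Px+\beta_{k}PAPx$. Setting $C:=PAP$, which is self-adjoint because $A$ and $P$ are, this reads $PT_{k}P|_{\mathcal{M_{E}}}=\tau_{k}P+\beta_{k}C$, i.e.~\eqref{wwraw}.

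I do not expect a serious obstacle here, as the substantive work was already done in Lemma~\ref{hemma1}; the only points requiring care are the two bookkeeping facts just used: that vanishing on every character forces the operator to be zero (injectivity of the Gelfand transform on the commutative algebra $\mathbb{M}_{T}|_{\mathcal{M_{E}}}$), and that the $P$-invariance of $\mathcal{M_{E}}$ keeps all the compressions in the last step inside $\mathcal{M_{E}}$ so that~\eqref{raw} may legitimately be applied to $Px$.
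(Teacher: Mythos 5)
Your proposal is correct and takes essentially the same route as the paper, which states Theorem~\ref{bt1} as a direct summary of Lemma~\ref{hemma1} without further argument: your explicit choice $A=\beta_{m}^{-1}\left(T_{m}|_{\mathcal{M_{E}}}-\tau_{m}I\right)$, the injectivity of the Gelfand transform on the commutative algebra $\mathbb{M}_{T}|\mathcal{M_{E}}$, and the compression by $P$ (legitimate since $\E\subseteq\mathcal{M_{E}}$ makes $\mathcal{M_{E}}$ invariant under $P$, as the paper remarks) are exactly the bookkeeping the paper leaves implicit. Your separate handling of the degenerate case $\dim\mathcal{M_{E}}=1$, where $\beta_{k}$ is otherwise undefined, is a sensible addition and introduces no gap.
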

						
						While $T$ is assumed to be injective, we can not rule out the possibility that $0\notin \spec \mathbb{M}_{T},$ in fact we can not even rule out $0\notin \spec \mathbb{M}_{T}|_{\mathcal{M_{E}}}.$ In the end of section $5$, we will see that if $\bigvee_{k=0}^{\infty}R^{k}\mathcal{M_{E}}=\mathcal{H},$ then actually $0\notin \spec \mathbb{M}_{T}|_{\mathcal{M_{E}}},$ but in general this may not be the case. However, the property $0\in \spec \mathbb{M}_{T}|_{\mathcal{M_{E}}}$ does give quite strong implications regarding the structure of $T$ and we must take these into account in the next section when we add the condition $\bigvee_{k=0}^{\infty}R^{k}\mathcal{M_{E}}=\mathcal{H},$ even though we end up showing the non-existence of such points.

						\begin{lem}\label{colio}
							If $\gamma(T_{k})=0$ for some $\gamma\in\sigma(\mathbb{M}_{T})$ and $k\in\mathbb{N},$ then $\gamma\left(T_{k+j}\right)=0$ for all $j\in\mathbb{N}.$
						\end{lem}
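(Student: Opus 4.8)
The plan is to read off the conclusion directly from the factorization identity for characters recorded in the note immediately preceding the lemma. Since $T$ is half-centered, the operators $T_{k}$ mutually commute and $\mathbb{M}_{T}$ is a commutative von Neumann algebra, so a point $\gamma\in\sigma(\mathbb{M}_{T})$ is a multiplicative linear functional on $\mathbb{M}_{T}$. The three operators $T_{k}^{\frac{1}{2}}$, $\theta_{T,k}^{*}T_{j}\theta_{T,k}$ and $T_{k+j}$ all lie in $\mathbb{M}_{T}$: the first because $\mathbb{M}_{T}$ is closed under continuous functional calculus on its positive elements, and the second by the definition of the sub-algebra $\mathbb{M}_{T}^{k}\subseteq\mathbb{M}_{T}$ together with Proposition~\ref{pust}. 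Hence $\gamma$ may be applied multiplicatively across any product of them.

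First I would recall the computation underlying the identity. By Proposition~\ref{key} we have $\theta_{T,k}T_{k}^{\frac{1}{2}}=T^{k}$, so that $T_{k}^{\frac{1}{2}}\theta_{T,k}^{*}=T^{*k}$, and therefore
$$T_{k}^{\frac{1}{2}}\theta_{T,k}^{*}T_{j}\theta_{T,k}T_{k}^{\frac{1}{2}}=T^{*k}T_{j}T^{k}=T^{*(k+j)}T^{k+j}=T_{k+j}.$$
Applying $\gamma$, using multiplicativity and $\gamma\!\left(T_{k}^{\frac{1}{2}}\right)^{2}=\gamma(T_{k})$, gives the relation
$$\gamma(T_{k+j})=\gamma(T_{k})\,\gamma\!\left(\theta_{T,k}^{*}T_{j}\theta_{T,k}\right)\qquad\text{for all }j,k\in\mathbb{N},$$
which is exactly the identity stated after Proposition~\ref{loveu}.

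The lemma is then an immediate consequence: if $\gamma(T_{k})=0$, then for every $j\in\mathbb{N}$ the right-hand side equals $0\cdot\gamma\!\left(\theta_{T,k}^{*}T_{j}\theta_{T,k}\right)=0$, the scalar $\gamma\!\left(\theta_{T,k}^{*}T_{j}\theta_{T,k}\right)$ being finite since it is bounded by $\left\|\theta_{T,k}^{*}T_{j}\theta_{T,k}\right\|$. Hence $\gamma(T_{k+j})=0$ for all $j$, as claimed.

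There is essentially no serious obstacle here; the entire content has already been packaged into the factorization identity, and the statement is a one-line consequence of the fact that a vanishing factor annihilates a product under a multiplicative functional. The only point deserving care is to confirm that every operator appearing in the identity genuinely lies in the commutative algebra $\mathbb{M}_{T}$, so that $\gamma$ is multiplicative on them, and this is secured by Proposition~\ref{pust} and the definition of the sub-algebras $\mathbb{M}_{T}^{k}$.
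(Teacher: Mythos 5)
Your proof is correct and is essentially identical to the paper's: both rest on the factorization $\gamma(T_{k+j})=\gamma\left(T_{k}^{\frac{1}{2}}\theta_{T,k}^{*}T_{j}\theta_{T,k}T_{k}^{\frac{1}{2}}\right)=\gamma\left(T_{k}\right)\gamma\left(\theta_{T,k}^{*}T_{j}\theta_{T,k}\right)$ recorded after Proposition~\ref{loveu}, which the paper's one-line proof invokes directly. You merely spell out the supporting details (Proposition~\ref{key} for $\theta_{T,k}T_{k}^{\frac{1}{2}}=T^{k}$, and membership of all factors in the commutative algebra $\mathbb{M}_{T}$ so that $\gamma$ is multiplicative), which the paper leaves implicit.
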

						
						\begin{proof}
							We have $$0=\gamma\left(T_{k}\right)\gamma\left(\theta_{T,k}^{*}T_{j}\theta_{T,k}\right)=\gamma\left((T_{k}^{\frac{1}{2}}\theta_{T,k}^{*})T_{j}(\theta_{T,k}T_{k}^{\frac{1}{2}})\right)=\gamma\left(T_{k+j}\right).$$
						\end{proof}
						
						\begin{prop}\label{queen}
							If $0\in\sigma(T_{k}|\mathcal{M_{E}})$ for some $k\in\mathbb{N},$ then 
							$$\beta_{k+j}=\frac{\tau_{j+k}}{\tau_{k}}\beta_{k}$$ and
							$$\theta_{T,k}^{*}T_{j}\theta_{T,k}|_{\mathcal{M_{E}}}=\frac{\tau_{j+k}}{\tau_{k}}I|_{\mathcal{M_{E}}}$$
							for all $j\in\mathbb{N}.$
						\end{prop}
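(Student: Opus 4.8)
The plan is to pin down, for one distinguished spectral point, the value of the parameter $A_\gamma$ from Lemma~\ref{hemma1}, transport the vanishing $\gamma(T_k)=0$ along the sequence via Lemma~\ref{colio}, and finally convert the resulting relation $T_{k+j}|_{\mathcal{M_{E}}}=\frac{\tau_{k+j}}{\tau_{k}}T_{k}|_{\mathcal{M_{E}}}$ into the statement about $\theta_{T,k}^{*}T_{j}\theta_{T,k}$ by invoking injectivity. I would note at the outset that $\tau_{k}=\langle T_{k}\E,\E\rangle=\|T^{k}\E\|^{2}>0$ for all $k$, since $T$ (hence $T^{k}$) is injective and $\E\neq 0$; in particular the hypothesis forces $\dim\mathcal{M_{E}}\geq 2$, so the $\beta_{k}$ are defined (if $\dim\mathcal{M_{E}}=1$ then $T_{k}|_{\mathcal{M_{E}}}$ is the positive scalar $\tau_{k}$ and $0\notin\sigma(T_{k}|_{\mathcal{M_{E}}})$, so the statement is vacuous).

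Since $\mathbb{M}_{T}|\mathcal{M_{E}}$ is a commutative unital $C^{*}$-algebra and $0\in\sigma(T_{k}|_{\mathcal{M_{E}}})$, the positive element $T_{k}|_{\mathcal{M_{E}}}$ is not invertible, so some character $\gamma_{0}\in\sigma(\mathbb{M}_{T}|\mathcal{M_{E}})$ satisfies $\gamma_{0}(T_{k})=0$. By Lemma~\ref{hemma1}, $0=\gamma_{0}(T_{k})=\tau_{k}+A_{\gamma_{0}}\beta_{k}$, and since $\tau_{k}>0$ this forces $\beta_{k}\neq 0$ and $A_{\gamma_{0}}=-\tau_{k}/\beta_{k}$. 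Composing with the surjective unital homomorphism $\mathbb{M}_{T}\to\mathbb{M}_{T}|\mathcal{M_{E}}$, I may regard $\gamma_{0}$ as a point of $\sigma(\mathbb{M}_{T})$ with $\gamma_{0}(T_{k})=0$, so Lemma~\ref{colio} yields $\gamma_{0}(T_{k+j})=0$ for every $j$. Feeding this back into Lemma~\ref{hemma1} gives $0=\tau_{k+j}+A_{\gamma_{0}}\beta_{k+j}=\tau_{k+j}-\frac{\tau_{k}}{\beta_{k}}\beta_{k+j}$, i.e. $\beta_{k+j}=\frac{\tau_{k+j}}{\tau_{k}}\beta_{k}$, which is the first formula.

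For the second formula I would set $B=\theta_{T,k}^{*}T_{j}\theta_{T,k}|_{\mathcal{M_{E}}}$; this lies in the commutative algebra $\mathbb{M}_{T}|\mathcal{M_{E}}$, because $\theta_{T,k}^{*}T_{j}\theta_{T,k}\in\mathbb{M}_{T}$ by Proposition~\ref{argg} and $\mathcal{M_{E}}=V_{0}$ is $\mathbb{M}_{T}$-invariant by Lemma~\ref{nolabel}. Using $\theta_{T,k}T_{k}^{\frac{1}{2}}=T^{k}$ from Proposition~\ref{key}, the identity $T_{k}^{\frac{1}{2}}\theta_{T,k}^{*}T_{j}\theta_{T,k}T_{k}^{\frac{1}{2}}=T^{*k}T_{j}T^{k}=T_{k+j}$ holds on $\mathcal{H}$; restricting to the invariant subspace $\mathcal{M_{E}}$ and using that all factors commute, it becomes $(T_{k}|_{\mathcal{M_{E}}})\,B=T_{k+j}|_{\mathcal{M_{E}}}$. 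By Theorem~\ref{bt1} together with the first formula, $T_{k+j}|_{\mathcal{M_{E}}}=\tau_{k+j}I+\beta_{k+j}A=\frac{\tau_{k+j}}{\tau_{k}}(\tau_{k}I+\beta_{k}A)=\frac{\tau_{k+j}}{\tau_{k}}T_{k}|_{\mathcal{M_{E}}}$, so $(T_{k}|_{\mathcal{M_{E}}})(B-\frac{\tau_{k+j}}{\tau_{k}}I)=0$.

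The crux, and the step I expect to require the most care, is the final cancellation. Although $0\in\sigma(T_{k}|_{\mathcal{M_{E}}})$, the operator $T_{k}|_{\mathcal{M_{E}}}$ is injective: for $w\in\mathcal{M_{E}}$, $T_{k}w=T^{*k}T^{k}w=0$ gives $\|T^{k}w\|=0$, hence $w=0$ since $T$ is injective, so $0$ sits in the continuous rather than the point spectrum. Therefore $(T_{k}|_{\mathcal{M_{E}}})D=0$ with $D=B-\frac{\tau_{k+j}}{\tau_{k}}I$ forces $\operatorname{ran}D\subseteq\ker(T_{k}|_{\mathcal{M_{E}}})=\{0\}$, i.e. $D=0$ and $\theta_{T,k}^{*}T_{j}\theta_{T,k}|_{\mathcal{M_{E}}}=\frac{\tau_{k+j}}{\tau_{k}}I|_{\mathcal{M_{E}}}$. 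Thus the injectivity assumption, which both guarantees $\tau_{k}>0$ and kills $\ker(T_{k}|_{\mathcal{M_{E}}})$, is exactly what makes the two halves of the statement go through.
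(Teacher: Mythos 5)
Your proof is correct and follows essentially the same route as the paper: a character $\gamma_{0}$ with $\gamma_{0}(T_{k})=0$ combined with Lemma~\ref{colio} and the affine formula of Lemma~\ref{hemma1} yields $\beta_{k+j}=\frac{\tau_{k+j}}{\tau_{k}}\beta_{k}$, and then cancelling $T_{k}|_{\mathcal{M_{E}}}$ against $(T_{k}|_{\mathcal{M_{E}}})\bigl(\theta_{T,k}^{*}T_{j}\theta_{T,k}-\frac{\tau_{k+j}}{\tau_{k}}I\bigr)|_{\mathcal{M_{E}}}=0$ gives the second identity. The only cosmetic difference is that the paper cancels via the dense range of $T_{k}$ on $\mathcal{M_{E}}$ while you cancel via its injectivity, which for a positive operator is the same fact; your added care about pulling $\gamma_{0}$ back to $\sigma(\mathbb{M}_{T})$ before applying Lemma~\ref{colio}, and about $\dim\mathcal{M_{E}}\geq 2$, makes explicit two points the paper leaves implicit.
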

						
						\begin{proof}
							It follows from Theorem~\ref{bt1} that if $0\in\sigma(T_{k}|\mathcal{M_{E}}),$ then there is $\lambda\in \sigma(\mathbb{M}_{T}|\mathcal{M_{E}})$ such that $0=\lambda(T_{k})=\tau_{k}+\beta_{k}A_{\lambda}$ for some $A_{\lambda}\in \mathbb{R}.$ By Lemma~\ref{colio} we have $\tau_{k+j} + \beta_{k+j}A_{\lambda}=0.$ Since $\tau_{j}\neq 0$ for all $j\in\mathbb{N}$ we must have $\tau_{k+j}=-\beta_{k+j}A_{\lambda}\neq 0$ for all $j\in\mathbb{N}.$ Hence
							\begin{equation}\label{trilby}
								\tau_{k+j} + \beta_{k+j}A_{\lambda}=\tau_{k+j} + \frac{\tau_{k+j}}{\tau_{k}}\beta_{k}A_{\lambda}
							\end{equation}
							giving $\beta_{k+j}=\frac{\tau_{k+j}}{\tau_{k}}\beta_{k}.$ Furthermore, the formula~\eqref{trilby} shows that $$\left(\frac{\tau_{k+j}}{\tau_{k}}I\right)T_{k}|_{\mathcal{M_{E}}}=T_{k+j }|_{\mathcal{M_{E}}}.$$ As also $(\theta_{T,k}^{*}T_{j}\theta_{T,k})T_{k}|_{\mathcal{M_{E}}}=T_{k+j }|_{\mathcal{M_{E}}}$ and the range of $T_{k}$ is dense in $\mathcal{M_{E}},$ we must have $$\theta_{T,k}^{*}T_{j}\theta_{T,k}|_{\mathcal{M_{E}}}=\frac{\tau_{j+k}}{\tau_{k}}I|_{\mathcal{M_{E}}}.$$
						\end{proof}

\section{Structure properties of injective half-centered operators}
						The aim of this section is to establish structure results for injective half-centered operators that satisfy the main assumptions: $\dim \E=1$ and $\mathcal{H}_{\E}=\mathcal{H}.$
						\\
						
						As it was mentioned after the statement of the main theorem, if $\dim\mathcal{M_{E}}= 1$ then $T$ is centered and moreover if $\bigvee_{k=0}^{\infty}T^{k}\E=\mathcal{H},$ then $T$ is a weighted shift. Hence in what follows, we assume that $\dim\mathcal{M_{E}}\geq 2.$
						\\
						
						First we discuss the spectrum of $\mathbb{M}_{T|\mathcal{H}_{1}} |\mathcal{M_{E}}\ominus \E.$ 
						
						\begin{prop}\label{total}
							If $\dim \mathcal{M_{E}}\geq 3$ then the spectrum of $\mathbb{M}_{T|\mathcal{H}_{1}} |\mathcal{M_{E}}\ominus\mathcal{E}$ contains at least two points.
						\end{prop}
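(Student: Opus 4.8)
The plan is to identify the restricted algebra $\mathbb{M}_{T|\mathcal{H}_{1}}|(\mathcal{M_{E}}\ominus\E)$ as the von Neumann algebra generated by a single self-adjoint operator, and then to show that this operator cannot be a scalar once $\dim\mathcal{M_{E}}\geq 3$. Throughout I write $W=\mathcal{M_{E}}\ominus\E$ and recall from the setup preceding Theorem~\ref{bt1} that $\E$ now denotes a unit vector and $P=P_{\mathcal{H}_{1}}$; since $\E=\mathcal{H}_{1}^{\bot}$, the restriction $P|_{\mathcal{M_{E}}}$ is exactly the projection of $\mathcal{M_{E}}$ onto $W$, so $W\subseteq\mathcal{H}_{1}$ and $\dim W=\dim\mathcal{M_{E}}-1$.

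First I would check that $W$ is a reducing subspace for $\mathbb{M}_{T|\mathcal{H}_{1}}$. Indeed $\mathcal{M_{E}}$ is invariant under both $P$ and every $T_{k}$, hence under each generator $(T|_{\mathcal{H}_{1}})_{k}=PT_{k}P$; and for $w\in W$ one has $Pw=w$, so $(T|_{\mathcal{H}_{1}})_{k}w=PT_{k}w\in\mathcal{M_{E}}\cap\mathcal{H}_{1}=W$. As the generators are self-adjoint, $W$ reduces the algebra and restriction to $W$ is a weakly continuous $*$-homomorphism. By Theorem~\ref{bt1}, $PT_{k}P|_{\mathcal{M_{E}}}=\tau_{k}P+\beta_{k}C$ with $C=PAP$, so restricting to $W$ gives $(T|_{\mathcal{H}_{1}})_{k}|_{W}=\tau_{k}I_{W}+\beta_{k}(C|_{W})$, where $B:=C|_{W}=P_{W}A|_{W}$ is the compression of $A$ to $W$. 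Since $\dim\mathcal{M_{E}}\geq 2$ forces $\beta_{m}\neq 0$ for some $m$, the operator $B$ itself lies in the restricted algebra, and therefore $\mathbb{M}_{T|\mathcal{H}_{1}}|W$ is precisely the von Neumann algebra generated by the single self-adjoint operator $B$. A commutative unital $C^{*}$-algebra has a one-point Gelfand spectrum exactly when it equals $\mathbb{C}I_{W}$, so the statement reduces to proving that $B$ is not a scalar multiple of $I_{W}$.

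The key point is that $\E$ is a cyclic vector for $A$ on $\mathcal{M_{E}}$: by definition $\mathcal{M_{E}}$ is the smallest closed subspace containing $\E$ and invariant under all $T_{k}$, and because $T_{k}|_{\mathcal{M_{E}}}=\tau_{k}I+\beta_{k}A$ this equals the smallest closed $A$-invariant subspace containing $\E$, i.e. the closed linear span of $\{A^{n}\E:n\geq 0\}$. Now suppose, for contradiction, that $B=cI_{W}$ for some $c\in\mathbb{R}$. Writing $A$ in block form with respect to $\mathcal{M_{E}}=\mathbb{C}\E\oplus W$ and using self-adjointness, one gets $A\E=a\E+v$ and $Aw=\langle w,v\rangle\E+cw$ for $w\in W$, where $a=\langle A\E,\E\rangle\in\mathbb{R}$ and $v=P_{W}A\E\in W$; in particular $Av=\|v\|^{2}\E+cv$. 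A one-line induction then shows $A^{n}\E\in\operatorname{span}\{\E,v\}$ for every $n$, so $\mathcal{M_{E}}=\overline{\operatorname{span}}\{A^{n}\E:n\geq 0\}\subseteq\operatorname{span}\{\E,v\}$ has dimension at most $2$, contradicting $\dim\mathcal{M_{E}}\geq 3$. Hence $B$ is not scalar, $\sigma(B)$ has at least two points, and the spectrum of $\mathbb{M}_{T|\mathcal{H}_{1}}|(\mathcal{M_{E}}\ominus\E)$ contains at least two points.

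I expect the main obstacle to be the bookkeeping of the first step, namely confirming that $W$ genuinely reduces $\mathbb{M}_{T|\mathcal{H}_{1}}$ and that the restricted algebra is generated by $B=C|_{W}$ alone, so that its spectrum is governed by $\sigma(B)$ rather than by something larger. Once Theorem~\ref{bt1} has been invoked to put $PT_{k}P|_{\mathcal{M_{E}}}$ into the form $\tau_{k}P+\beta_{k}C$, the remaining cyclicity-and-compression argument is short, since it only exploits the block structure of the self-adjoint operator $A$ together with the fact that $\E$ is cyclic.
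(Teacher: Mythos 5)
Your proof is correct, but it takes a genuinely different route from the paper's. The paper fixes $k$ with $PT_{k}\mathcal{E}\neq 0$ and proves the stronger cyclicity statement $\mathbb{M}_{T|\mathcal{H}_{1}}PT_{k}\mathcal{E}=\mathcal{M_{E}}\ominus\mathcal{E}$: from Theorem~\ref{bt1} all the vectors $PT_{j}\mathcal{E}=\beta_{j}PA\mathcal{E}$ are parallel, and then one checks that $(\mathbb{M}_{T|\mathcal{H}_{1}}PT_{k}\mathcal{E})\oplus\mathcal{E}$ is invariant under every $T_{j}$, so minimality of $\mathcal{M_{E}}$ forces equality; a commutative algebra with a cyclic vector on a space of dimension at least two cannot have a one-point spectrum. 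You instead argue contrapositively: since some $\beta_{m}\neq 0$, the compression $B=P_{W}A|_{W}$ lies in the restricted algebra, a one-point Gelfand spectrum would force $B=cI_{W}$, and the block computation $A\mathcal{E}=a\mathcal{E}+v$, $Aw=\left\langle w,v\right\rangle\mathcal{E}+cw$ then makes $\operatorname{span}\{\mathcal{E},v\}$ an $A$-invariant subspace containing $\mathcal{E}$; combined with the cyclicity of $\mathcal{E}$ for $A$ on $\mathcal{M_{E}}$ --- which you correctly extract from $T_{k}|_{\mathcal{M_{E}}}=\tau_{k}I+\beta_{k}A$ and the minimality defining $\mathcal{M_{E}}$ --- this gives $\dim\mathcal{M_{E}}\leq 2$, a contradiction. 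The two arguments are cousins (your $v=P_{W}A\mathcal{E}$ is exactly the paper's cyclic vector $PT_{k}\mathcal{E}$ up to the scalar $\beta_{k}$), but the content differs: the paper's route yields the extra fact that $v$ is cyclic for $\mathbb{M}_{T|\mathcal{H}_{1}}|\mathcal{M_{E}}\ominus\mathcal{E}$, so in the finite-dimensional case the spectrum has exactly $\dim\mathcal{M_{E}}-1$ points, whereas your route is more elementary --- a $2\times 2$ block structure plus a one-line induction --- and delivers precisely the two-point conclusion that is actually used later (in Proposition~\ref{movie}). One cosmetic remark: your assertion that $\mathbb{M}_{T|\mathcal{H}_{1}}|W$ is \emph{precisely} the von Neumann algebra generated by $B$ tacitly uses that restriction to a reducing subspace is a normal homomorphism with von Neumann image, but your argument only needs the easy direction (that $B$ belongs to the restricted algebra, so a one-point spectrum forces $B$ scalar), so nothing is at stake.
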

						
						\begin{proof}
							As before, we denote by $P$ the orthogonal projection onto $\mathcal{H}_{1}=\overline{T\mathcal{H}}.$ To prove the the statement it is enough to see that if $\dim \mathcal{M_{E}}\geq 2$ and $PT_{k}\E\neq 0$ (such $k$ exists, otherwise $\dim \mathcal{M_{E}}=1$), then
							
							\begin{equation}\label{croshaw}
								\mathbb{M}_{T|\mathcal{H}_{1}}PT_{k}\E=\mathcal{M_{E}}\ominus \E.
							\end{equation}
							Since if $PT_{k}\E\in \mathcal{M_{E}}\ominus\mathcal{E}$ is a cyclic vector for $\mathbb{M}_{T|\mathcal{H}_{1}} |\mathcal{M_{E}}\ominus\mathcal{E},$ then the number of points in $\sigma(\mathbb{M}_{T|\mathcal{H}_{1}} |\mathcal{M_{E}}\ominus\mathcal{E})$ is equal to $\dim\mathcal{M_{E}}\ominus \E$ and by assumption, this number is larger than two.
							\\
							
							Let $A$ be the operator from Theorem~\ref{bt1}, then $PT_{k}\E=\beta_{k}P A\E$ so for any $j,k\in\mathbb{N}$ the two vectors
							$PT_{k}\E$ and $PT_{j}\E$ differ only by a constant multiple. Hence $PT_{j}\E\in \mathbb{M}_{T|\mathcal{H}_{1}}PT_{k}\E$ for any $j\in\mathbb{N}.$
							\\
							
							The space $\mathbb{M}_{T|\mathcal{H}_{1}}PT_{k}\E$ is of course a subspace of $\mathcal{M_{E}},$ so if we can prove that $(\mathbb{M}_{T|\mathcal{H}_{1}}PT_{k}\E)\oplus\E$ is invariant for every $T_{j},$ then since $\mathcal{M_{E}}$ is the smallest closed subspace containing $\E$ that is invariant under $\mathbb{M}_{T},$ this would imply $(\mathbb{M}_{T|\mathcal{H}_{1}}PT_{k}\E)\oplus\E=\mathcal{M_{E}}$ and therefore $\mathbb{M}_{T|\mathcal{H}_{1}}PT_{k}\E=\mathcal{M_{E}}\ominus \E.$ 
							\\
							
							So take any $x+c \E\in (\mathbb{M}_{T|\mathcal{H}_{1}}PT_{k}\E)\oplus\E$
							with $c\in\mathbb{C}$ and $x\in \mathbb{M}_{T|\mathcal{H}_{1}}PT_{k}\E.$ Then since $P+P_{\E}=I$ we have 
							$$T_{j}x+c T_{j}\E=(P+P_{\E})(T_{j}x+c T_{j}\E)=P T_{j}P x+P_{\E}T_{j}x+c P T_{j}\E+cP_{\E}T_{j}\E.$$
							As $PT_{j}P\in \mathbb{M}_{T|\mathcal{H}_{1}}$ and $P_{\E}T_{j}\E=\left\langle T_{j}\E,\E \right\rangle \E =\tau_{j}\E,$ we obtain
							
							$$(P T_{j}P x+c P T_{j}\E)+\left(\left\langle T_{j}x,\E\right\rangle +c\tau_{j}\right)\E\in\mathbb{M}_{T|\mathcal{H}_{1}}PT_{k}\E\oplus\E.$$
						\end{proof}
								
\subsection{Relating $\mathbb{M}_{T}|\mathcal{M_{E}}$ to $\mathbb{M}_{T|\mathcal{H}_{1}}|\mathcal{M_{E}}\ominus\mathcal{E};$ the discrete case}
	
	The purpose of the next two subsections is to show that when $\dim \mathcal{M_{E}} \geq 2,$ then there is a relation between the spectrum of $\mathbb{M}_{T}|\mathcal{M_{E}}$ and that of $\mathbb{M}_{T|\mathcal{H}_{1}} |\mathcal{M_{E}}\ominus\mathcal{E}.$ 
		\\
							
							To see where this relation comes from, assume for a moment that $\mathbb{M}_{T}|\mathcal{M_{E}}$ has an orthonormal basis of eigenvectors $x_{i}\in\mathcal{M_{E}}.$ Since $\bigvee_{k=0}^{\infty}T^{k}\mathcal{M_{E}}=\mathcal{H}$ and $\dim \E=1,$ we can find an eigenvector $x_{k}$ and a smallest integer $m\geq 1$ such that $T^{m}x_{k}$ is not orthogonal to $\mathcal{M_{E}}$ but $T^{j}x_{k}\bot \mathcal{M_{E}}$ for $1\leq j\leq m-1$ (if this set of $j$'s is non-empty!). Such $x_{k}$ and $m$ must exist; in fact the converse would imply $T\mathcal{H}\bot \mathcal{M_{E}}$ and hence $\mathcal{M_{E}}\subseteq \E,$ contradicting $\dim \mathcal{M_{E}}\geq 2$. 
							\\
							
							Now fix such $x_{k}$ and $m.$ From Proposition~\ref{jups} we get $T^{j}x_{k}\in V_{j}$ for $1\leq j\leq m-1.$ Moreover, $T^{j}x_{k}$ an eigenvector for $\mathbb{M}_{T}|V_{j}.$ This is due to the following calculation: given $l\in \mathbb{N}$ 
							$$T_{l}T^{j}x_{k}=P_{\mathcal{H}_{j}}T_{l}P_{\mathcal{H}_{j}}T^{j}x_{k}=$$
							$$\left(\theta_{T,j}\theta_{T,j}^{*}\right)T_{l}\left(\theta_{T,j}\theta_{T,j}^{*}\right)T^{j}x_{k}=\theta_{T,j}\left(\theta_{T,j}^{*}T_{l}\theta_{T,j}\right)\theta_{T,j}^{*}T^{j}x_{k}=$$
							$$\theta_{T,j}\left(\theta_{T,j}^{*}T_{l}\theta_{T,j}\right)T_{j}^{\frac{1}{2}}x_{k}=\theta_{T,j}T_{j}^{\frac{1}{2}}\left(\theta_{T,j}^{*}T_{l}\theta_{T,j}\right)x_{k}=\lambda\left(\theta_{T,j}^{*}T_{l}\theta_{T,j}\right)T^{j}x_{k}$$ where $\lambda \in \sigma (\mathbb{M_{T}})$ is the eigenvalue corresponding to $x_{k}.$ 
							\\
							
							Next we observe that $T^{m}x_{k}$ can not be an eigenvector for $\mathbb{M}_{T}.$
							In fact, assuming contrary that $b T^{m}x_{k}=\gamma(b)T^{m}x_{k}$ for all $b\in \mathbb{M}_{T},$ where $\gamma \in\sigma(\mathbb{M}_{T}).$ But then as $T^{m}x_{k}\bot \E,$ we obtain 
							$$\left\langle T^{m}x_{k},b\mathcal{E} \right\rangle=\left\langle b T^{m}x_{k},\mathcal{E} \right\rangle=\gamma\left(b\right)\left\langle  T^{m}x_{k},\mathcal{E} \right\rangle=0$$
							as $T^{m}x_{k} \bot \E$ and hence $T^{m}x_{k}\bot \mathcal{M_{\E}}.$
							Therefore $T^{m}x_{k}$ can not be an eigenvector for $\mathbb{M}_{T}$, since it is orthogonal to $\mathcal{E}$ but not to $\mathcal{M_{E}}.$ 
							\\
							
							However, $T^{m}x_{k}$ must be an eigenvector for $\mathbb{M}_{T|\mathcal{H}_{1}}$ since $$PT_{l}PT^{m}x_{k}=\left(\theta_{T}\theta_{T}^{*}\right)T_{l}\left(\theta_{T}\theta_{T}^{*}\right)T^{m}x_{k}=\theta_{T}\left(\theta_{T}^{*}T_{l}\theta_{T}\right)T_{1}^{\frac{1}{2}}T^{m-1}x_{k}=$$ $$\gamma_{m-1}\left(\theta_{T}^{*}T_{l}\theta_{T}\right)T^{m}x_{k}=\lambda\left(\theta_{T,m}^{*}T_{l}\theta_{T,m}\right)T^{m}x_{k}$$ where $\gamma_{m-1}\in \sigma (\mathbb{M}_{T}|V_{m-1})$ is the eigenvalue corresponding to $T^{m-1}x_{k}\in V_{k-1}$ (the last equality follows from Proposition~\ref{loveu}). If we project $T^{m}x_{k}$ onto $\mathcal{M_{E}},$ then this will still be an eigenvector, since the projection commutes with $\mathbb{M}_{T|\mathcal{H}_{1}}$. 
							\\
							
							From this we see that for one of the points $\gamma$ in the spectrum of $\mathbb{M}_{T|\mathcal{H}_{1}} |\mathcal{M_{E}}\ominus\mathcal{E}$ there is $\lambda$ in the spectrum of $\mathbb{M}_{T} |\mathcal{M_{E}}$ is such that 
							\begin{equation}\label{mila1}
								\gamma\left(PT_{l}P\right)=\lambda\left(\theta_{T,m}^{*}T_{l}\theta_{T,m}\right)
							\end{equation}
							for all $l\in\mathbb{N}.$ If we multiply both sides of~\eqref{mila1} with $\lambda\left(T_{m}\right)$ and use
							
							$\lambda\left(T_{m}\right)\lambda\left(\theta_{T,m}^{*}T_{l}\theta_{T,m}\right)=\lambda\left(T_{m+l}\right),$ we get the equality
							\begin{equation}\label{aase}
								\lambda\left(T_{m}\right)\gamma\left(PT_{l}P\right)=\lambda\left(T_{m+l}\right)
							\end{equation}
							which is valid for all $l\in\mathbb{N}.$
							This shows how it is possible to express some points in the spectrum of $\mathbb{M}_{T|\mathcal{H}_{1}} |\mathcal{M_{E}}\ominus \E$ in terms of the spectrum of $\mathbb{M}_{T}|\mathcal{M_{E}}.$
							pAxa
\subsection{Relating $\mathbb{M}_{T}|\mathcal{M_{E}}$ to $\mathbb{M}_{T|\mathcal{H}_{1}} |\mathcal{M_{E}}\ominus\mathcal{E};$ the general case}
								
								A similar reasoning as used to derive~\eqref{mila1} can be generalized to work even in the general case, but due to the possible lack of eigenvectors, the proof of Proposition~\ref{aloha} uses the above arguments in a "reversed" way. However, this approach has a disadvantage of making less clear what the central idea is. This is why we included the discrete case as motivation.
								\\
								
								First we need an easy result.
								
								\begin{lem}\label{ludde}
									There is an isomorphism
									$$\Psi:\mathbb{M}_{T|\mathcal{H}_{1}} |\mathcal{M_{E}}\ominus \E \rightarrow \mathbb{M}_{T}^{1} |\theta_{T}^{*}\mathcal{M_{E}}$$
									induced by $$b\in \mathbb{M}_{T|\mathcal{H}_{1}} |\mathcal{M_{E}}\mapsto \theta_{T}^{*} b \theta_{T}|\theta_{T}^{*}\mathcal{M_{E}}.$$
								\end{lem}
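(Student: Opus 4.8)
The plan is to realize $\Psi$ as conjugation by an explicit unitary and then to verify it on the generators of the two algebras, so that essentially everything reduces to the two defining identities $\theta_{T}^{*}\theta_{T}=I$ and $\theta_{T}\theta_{T}^{*}=P$ of the isometric part $\theta_{T}$ (here $P=P_{\mathcal{H}_{1}}$, and $\theta_{T}$ is a genuine isometry since $T$ is injective). First I would record the geometry of the relevant subspaces. Because $\dim\E=1$ and $\E=\mathcal{H}_{1}^{\bot}$, we have $\mathcal{M_{E}}\ominus\E=\mathcal{M_{E}}\cap\mathcal{H}_{1}=P\mathcal{M_{E}}$, and since $\theta_{T}^{*}=\theta_{T}^{*}P$ this also gives $\theta_{T}^{*}\mathcal{M_{E}}=\theta_{T}^{*}(\mathcal{M_{E}}\ominus\E)$. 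From $\theta_{T}^{*}\theta_{T}=I$ and $\theta_{T}\theta_{T}^{*}=P$ one checks directly that $\theta_{T}$ maps $\theta_{T}^{*}\mathcal{M_{E}}$ isometrically \emph{onto} $\mathcal{M_{E}}\ominus\E$ with $\theta_{T}^{*}|_{\mathcal{M_{E}}\ominus\E}$ as its inverse; thus $U:=\theta_{T}|_{\theta_{T}^{*}\mathcal{M_{E}}}$ is a unitary $\theta_{T}^{*}\mathcal{M_{E}}\to\mathcal{M_{E}}\ominus\E$ with $U^{*}=\theta_{T}^{*}|_{\mathcal{M_{E}}\ominus\E}$.

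Next I would justify that the two restricted algebras are really generated by the restricted generators. Recall that $\mathcal{M_{E}}$ is invariant under $\mathbb{M}_{T}$ and, since $\E\subseteq\mathcal{M_{E}}$, also under $P=I-P_{\E}$; hence it is invariant under each $(T|_{\mathcal{H}_{1}})_{k}=PT_{k}P$. Consequently $\mathcal{M_{E}}\ominus\E=P\mathcal{M_{E}}$ is a reducing subspace for the self-adjoint algebra $\mathbb{M}_{T|\mathcal{H}_{1}}$, and a parallel computation (using $\theta_{T}^{*}T_{j}\theta_{T}\cdot\theta_{T}^{*}m=\theta_{T}^{*}PT_{j}Pm$ for $m\in\mathcal{M_{E}}$) shows $\theta_{T}^{*}\mathcal{M_{E}}$ is reducing for $\mathbb{M}_{T}^{1}$. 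Therefore the two restriction maps are normal $*$-homomorphisms, and $\mathbb{M}_{T|\mathcal{H}_{1}}|(\mathcal{M_{E}}\ominus\E)$ and $\mathbb{M}_{T}^{1}|\theta_{T}^{*}\mathcal{M_{E}}$ are the von Neumann algebras generated by the restricted generators $(T|_{\mathcal{H}_{1}})_{j}|_{\mathcal{M_{E}}\ominus\E}$ and $\theta_{T}^{*}T_{j}\theta_{T}|_{\theta_{T}^{*}\mathcal{M_{E}}}$ respectively.

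The computational heart is then a single identity on generators. For $m'\in\mathcal{M_{E}}\ominus\E\subseteq\mathcal{H}_{1}$ I would compute, using $\theta_{T}\theta_{T}^{*}=P$, $Pm'=m'$, and the $T_{j}$-invariance of $\mathcal{M_{E}}$, that $U(\theta_{T}^{*}T_{j}\theta_{T})U^{*}m'=\theta_{T}\theta_{T}^{*}T_{j}\theta_{T}\theta_{T}^{*}m'=PT_{j}m'=PT_{j}Pm'=(T|_{\mathcal{H}_{1}})_{j}m'$. Equivalently, the map $b\mapsto\theta_{T}^{*}b\theta_{T}|_{\theta_{T}^{*}\mathcal{M_{E}}}=U^{*}\bigl(b|_{\mathcal{M_{E}}\ominus\E}\bigr)U$ carries each restricted generator of $\mathbb{M}_{T|\mathcal{H}_{1}}|(\mathcal{M_{E}}\ominus\E)$ to the corresponding restricted generator of $\mathbb{M}_{T}^{1}|\theta_{T}^{*}\mathcal{M_{E}}$ (this is the restriction of the global isomorphism $\mathbb{M}_{T|\mathcal{H}_{1}}\to\mathbb{M}_{T}^{1}$ of Proposition~\ref{pust}). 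The same identity shows the value depends only on $b|_{\mathcal{M_{E}}\ominus\E}$, so $\Psi$ is well defined on the restricted algebra.

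Finally I would assemble the pieces: $\mathrm{Ad}(U^{*}):\mathcal{B}(\mathcal{M_{E}}\ominus\E)\to\mathcal{B}(\theta_{T}^{*}\mathcal{M_{E}})$ is a normal $*$-isomorphism, hence restricts to a $*$-isomorphism of $\mathbb{M}_{T|\mathcal{H}_{1}}|(\mathcal{M_{E}}\ominus\E)$ onto its image; by weak continuity and the generator computation that image is exactly $\mathbb{M}_{T}^{1}|\theta_{T}^{*}\mathcal{M_{E}}$. Injectivity is free, as conjugation by a unitary is injective on all of $\mathcal{B}(\mathcal{M_{E}}\ominus\E)$. This map is precisely $\Psi$, which proves the lemma. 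I expect the only delicate point to be the bookkeeping of the second paragraph: confirming that the ambient invariance of $\mathcal{M_{E}}$ really yields reducing subspaces, so that the restricted algebras are generated by the restricted generators and checking on generators suffices; the unitary identity itself is then routine.
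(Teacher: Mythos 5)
Your proof is correct, and since the paper states Lemma~\ref{ludde} without proof (offering it as ``an easy result''), your argument --- transporting the global isomorphism $b\mapsto\theta_{T}^{*}b\theta_{T}$ of Proposition~\ref{pust} through the unitary $\theta_{T}\colon\theta_{T}^{*}\mathcal{M_{E}}\to\mathcal{M_{E}}\ominus\E$ after checking that $\mathcal{M_{E}}\ominus\E$ and $\theta_{T}^{*}\mathcal{M_{E}}$ reduce the respective algebras --- is exactly the verification the author leaves to the reader. One cosmetic remark: the hypothesis $\dim\E=1$ is not needed in your first paragraph, since $\E\subseteq\mathcal{M_{E}}$ alone gives $\mathcal{M_{E}}\ominus\E=P\mathcal{M_{E}}$ and $\theta_{T}^{*}\mathcal{M_{E}}=\theta_{T}^{*}(\mathcal{M_{E}}\ominus\E)$.
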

								
								We can now proceed to prove the generalization of the result in the last subsection to the case when we may not have any non-trivial eigenvectors of $\mathbb{M}_{T}.$
								
								\begin{prop}\label{aloha}
									If $\bigvee_{k=0}^{\infty}T^{k}\mathcal{M_{E}}=\mathcal{H},$ then there is a dense subset of the spectrum of $\mathbb{M}_{T|\mathcal{H}_{1}} |\mathcal{M_{E}}\ominus \E$ such that for every $\gamma\in M$ there is a point $\lambda$ in the spectrum of $\mathbb{M}_{T}|\mathcal{M_{E}}$ and an integer $m\in\mathbb{N}$ such that $$\gamma\left(PT_{k}P\right)=\lambda\left(\theta_{T,m}^{*}T_{k}\theta_{T,m}\right)$$ for all $k\in\mathbb{N}.$
								\end{prop}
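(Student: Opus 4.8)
The plan is to push the whole question down onto the pieces $V_m$ of the decomposition $\mathcal{H}=\bigoplus_{k\ge0}V_k$ from Theorem~\ref{space1} and to run the eigenvector computation of the previous subsection backwards. First I would record the geometry: since $\dim\E=1$ and $\E\subseteq\mathcal{M_E}=V_0$, each $V_m$ with $m\ge1$ is orthogonal to $\E$, hence lies in $\mathcal{H}_1$, and $\mathcal{M_E}\ominus\E=P\mathcal{M_E}$ also lies in $\mathcal{H}_1$; all of these are invariant under $\mathbb{M}_{T|\mathcal{H}_1}$ and $\mathcal{H}_1=(\mathcal{M_E}\ominus\E)\oplus\bigoplus_{m\ge1}V_m$. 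By Proposition~\ref{fukth}, $\mathbb{M}_{T|\mathcal{H}_1}|V_m=\mathbb{M}_T|V_m$ and $T_k|V_m=PT_kP|V_m$ for $m\ge1$, so every algebra in sight is a restriction of the single commutative algebra $\mathbb{M}_{T|\mathcal{H}_1}$ to an invariant subspace and its spectrum sits inside $\sigma(\mathbb{M}_{T|\mathcal{H}_1})$. It is also convenient to note, via Theorem~\ref{bt1} and~\eqref{wwraw}, that $\mathbb{M}_T|\mathcal{M_E}$ is generated by the single self-adjoint operator $A$ and $\mathbb{M}_{T|\mathcal{H}_1}|\mathcal{M_E}\ominus\E$ by $C=PAP$, so points $\lambda,\gamma$ of their spectra are nothing but scalars $A_\lambda\in\sigma(A)$ and $c\in\sigma(C|\mathcal{M_E}\ominus\E)$ with $\lambda(T_k)=\tau_k+\beta_kA_\lambda$ and $\gamma(PT_kP)=\tau_k+\beta_kc$.

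Next I would reduce the asserted relation to a statement about the $V_m$ alone. Fix $m\ge1$ and suppose $\gamma$, regarded as a point of $\sigma(\mathbb{M}_{T|\mathcal{H}_1})$, also lies in $\sigma(\mathbb{M}_{T|\mathcal{H}_1}|V_m)=\sigma(\mathbb{M}_T|V_m)$, realized by $\delta$ with $\delta(T_k|V_m)=\gamma(PT_kP)$. By Theorem~\ref{normspos} the surjection $\Gamma_{m,0}\colon\mathbb{M}_T^m|\mathcal{M_E}\to\mathbb{M}_T|V_m$ carries $\theta_{T,m}^*T_k\theta_{T,m}|\mathcal{M_E}$ to $T_k|V_m$, so $\delta\circ\Gamma_{m,0}$ is a character of $\mathbb{M}_T^m|\mathcal{M_E}$ taking the value $\gamma(PT_kP)$ on $\theta_{T,m}^*T_k\theta_{T,m}$. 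Since $\mathbb{M}_T^m|\mathcal{M_E}$ is a unital subalgebra of the commutative $\mathbb{M}_T|\mathcal{M_E}$, the restriction map $\sigma(\mathbb{M}_T|\mathcal{M_E})\to\sigma(\mathbb{M}_T^m|\mathcal{M_E})$ is onto, so $\delta\circ\Gamma_{m,0}$ extends to some $\lambda\in\sigma(\mathbb{M}_T|\mathcal{M_E})$, and then $\gamma(PT_kP)=\lambda(\theta_{T,m}^*T_k\theta_{T,m})$ for all $k$. Hence it suffices to produce a dense set $M$ of $\gamma\in\sigma(\mathbb{M}_{T|\mathcal{H}_1}|\mathcal{M_E}\ominus\E)$ each of which additionally lies in $\sigma(\mathbb{M}_T|V_m)$ for some $m\ge1$.

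For the density I would reverse the discrete computation using approximate eigenvectors. When $\dim\mathcal{M_E}=2$ the space $\mathcal{M_E}\ominus\E$ is one-dimensional and $\sigma(\mathbb{M}_{T|\mathcal{H}_1}|\mathcal{M_E}\ominus\E)$ is a single point, so I would assume $\dim\mathcal{M_E}\ge3$ and invoke Proposition~\ref{total} to obtain a cyclic vector for $\mathbb{M}_{T|\mathcal{H}_1}|\mathcal{M_E}\ominus\E$. Then Lemma~\ref{itswrong}, applied to $\mathbb{M}_{T|\mathcal{H}_1}|\mathcal{M_E}\ominus\E$ with this cyclic vector, supplies, for a dense set of $\gamma$, unit approximate eigenvectors $w_l\in\mathcal{M_E}\ominus\E$ with $PT_kP\,w_l-\gamma(PT_kP)w_l\to0$ for each fixed $k$; by the single-generator remark this just says $Cw_l-c\,w_l\to0$. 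The forward construction of the previous subsection sent a $\lambda$-eigenvector in $\mathcal{M_E}$, pushed up by the least power $T^m$ escaping $\mathcal{M_E}$ and projected back, to exactly such a $\gamma$, with the matching encoded in~\eqref{aase}; read backwards, I would transport the datum of $w_l$ through the powers $T^m$ and the projections $P_{V_m}$, using Propositions~\ref{jups} and~\ref{isisis} and the intertwining of Proposition~\ref{loveu}, so as to locate $\gamma$ on some $\sigma(\mathbb{M}_T|V_m)$.

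The main obstacle is precisely this last transport. Lacking genuine eigenvectors, the power $m$ one is forced to use may depend on $l$, and a limit of points of $\bigcup_{m\ge1}\sigma(\mathbb{M}_T|V_m)$ need only lie in the closure of that union rather than in it; moreover vectors of $\mathcal{M_E}\ominus\E$ whose $T$-orbit stays inside $X_{m-1}$ (as happens for eigenvectors of $T$ lying in $\mathcal{M_E}$) threaten to carry spectral mass that no $V_m$ sees. It is here that the hypothesis $\bigvee_jT^j\mathcal{M_E}=\mathcal{H}$ has to do the essential work: it is what forces the compression $C=PAP$ to have enough of its spectrum genuinely attained through the dynamics, letting one extract from the closure an honestly dense set $M$ of attained points and so obtain the exact equality rather than a mere approximation. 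Making this rigorous — the continuous replacement for the clean pushforward available in the discrete case — is the technical heart of the proposition.
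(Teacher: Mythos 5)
Your first two steps are sound and essentially reproduce the endgame of the paper's argument: restricting characters to $V_m$, pulling back through $\Gamma_{m,0}$ (equivalently, invoking Proposition~\ref{loveu}), and extending from the subalgebra $\mathbb{M}_{T}^{m}|\mathcal{M_{E}}$ to a character $\lambda$ of $\mathbb{M}_{T}|\mathcal{M_{E}}$ are all legitimate. But the proposition \emph{is} the density claim, and you do not prove it — your last paragraph concedes that the transport of approximate eigenvectors through the powers of $T$ is left open. The obstructions you name there are real and fatal to that route: with $w_{l}$ only approximate eigenvectors, the index $m$ may drift with $l$, and a limit of points of $\bigcup_{m}\sigma(\mathbb{M}_{T}|V_{m})$ lands only in the closure, which yields an approximate identity $\gamma(PT_{k}P)\approx\lambda(\theta_{T,m}^{*}T_{k}\theta_{T,m})$ rather than the exact one asserted. (Two smaller wrinkles in the same direction: Lemma~\ref{itswrong} handles only two operators $a_{1},a_{2}$ at a time, so a single sequence $w_{l}$ working for all $k$ needs an extra diagonal argument; and your density step invokes Proposition~\ref{total}, hence $\dim\mathcal{M_{E}}\geq 3$, while the proposition is needed — and proved in the paper — already for $\dim\mathcal{M_{E}}=2$, where it feeds the case analysis $|\mathcal{F}|=1$ versus $|\mathcal{F}|\geq 2$.)

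The paper's proof avoids approximate eigenvectors entirely, and the trick you are missing is to move the algebra rather than the vectors. By Lemma~\ref{ludde}, $b\mapsto\theta_{T}^{*}b\theta_{T}$ is an isomorphism $\mathbb{M}_{T|\mathcal{H}_{1}}|\mathcal{M_{E}}\ominus\E\cong\mathbb{M}_{T}^{1}|\theta_{T}^{*}\mathcal{M_{E}}$. Since each $P_{V_{k}}$ commutes with $\mathbb{M}_{T}$, one gets for every $k$ a homomorphism $s_{k}$ from $\mathbb{M}_{T|\mathcal{H}_{1}}|\mathcal{M_{E}}\ominus\E$ onto $\mathbb{M}_{T}^{1}|P_{V_{k}}\theta_{T}^{*}\mathcal{M_{E}}$, and the hypothesis $\bigvee_{k}T^{k}\mathcal{M_{E}}=\mathcal{H}$ enters at exactly one point: it gives $\sum_{k}P_{V_{k}}=I$ (Theorem~\ref{space1}), so $a=0$ if and only if $s_{k}(a)=0$ for all $k$, i.e.\ the family $\{s_{k}\}$ is jointly injective. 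For a commutative $C^{*}$-algebra this forces $\bigcup_{k}\mathrm{range}(s_{k}^{*})$ to be dense in the spectrum: a nonzero continuous function vanishing on the closure of this union would correspond to a nonzero element annihilated by every $s_{k}$. This produces the dense set $M$ algebraically, for all $m$ at once, with exact (not approximate) equalities; each $\gamma=s_{k}^{*}(\mu)$ is then matched, by extending $\mu$ to some $\mu_{k}\in\sigma(\mathbb{M}_{T}|V_{k})$ and applying Proposition~\ref{loveu}, with a $\lambda\in\sigma(\mathbb{M}_{T}|\mathcal{M_{E}})$ and $m=k+1$ — precisely the reduction you had already set up.
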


								\begin{proof}
									Consider the subspace $\theta_{T}^{*}\mathcal{M_{E}}.$ Since $\dim\mathcal{M_{E}}\geq 2$ and $\dim\ker \theta_{T}^{*}=1,$ this subspace is nonzero.
									By Theorem~\ref{space1} the projections $P_{V_{k}}$ adds up to the identity, so there exists a $m\in\mathbb{N}$ such that $P_{V_{m}}\theta_{T}^{*}\mathcal{M_{E}}\neq 0.$
									Since the projection $P_{V_{k}}$ commutes with $\mathbb{M}_{T},$ we have a homomorphism $$s_{k}:\mathbb{M}_{T|\mathcal{H}_{1}} |\mathcal{M_{E}}\ominus\mathcal{E}\rightarrow \mathbb{M}_{T}^{1}|P_{V_{k}}\theta_{T}^{*}\mathcal{M_{E}}$$ that is defined as the composition $$\mathbb{M}_{T|\mathcal{H}_{1}} |\mathcal{M_{E}}\ominus\mathcal{E}\stackrel{\Psi}{\rightarrow}  \mathbb{M}_{T}^{1} |\theta_{T}^{*}\mathcal{M_{E}}\rightarrow \mathbb{M}_{T}^{1} |P_{V_{k}}\theta_{T}^{*}\mathcal{M_{E}}$$ where $\Psi$ is the isomorphism from Lemma~\ref{ludde} and the second arrow is the restriction. The homomorphism $s_{k}$ induce an injective continuous map $$s_{k}^{*}:\sigma(\mathbb{M}_{T}^{1} |P_{V_{k}}\theta_{T}^{*}\mathcal{M_{E}})\rightarrow\sigma( \mathbb{M}_{T|\mathcal{H}_{1}} |\mathcal{M_{E}}\ominus\mathcal{E}).$$ As $\sum P_{V_{k}}=I$ and $\Psi$ is an isomorphism, given $a\in\mathbb{M}_{T|\mathcal{H}_{1}} |\mathcal{M_{E}}\ominus\mathcal{E} ,$ we have $a=0$ iff $s_{k}\left(a\right)=0$ for all $k\in\mathbb{N}.$ So the union of the ranges of all $s_{k}^{*}$ must be dense in $\sigma( \mathbb{M}_{T|\mathcal{H}_{1}} |\mathcal{M_{E}}\ominus\mathcal{E}).$
									\\
									
									If $\mu\in\sigma(\mathbb{M}_{T}^{1} |P_{V_{k}}\theta_{T}^{*}\mathcal{M_{E}})$ then there is a $\mu_{k}\in\sigma(\mathbb{M}_{T} |V_{k})$ such that $$\mu\left(\theta_{T}^{*}T_{j}\theta_{T}\right)=\mu_{k}\left(\theta_{T}^{*}T_{j}\theta_{T}\right)$$ and so from Proposition~\ref{loveu} there is a $\lambda\in \mathbb{M}_{T}|\mathcal{M_{E}}$ such that $$\mu\left(\theta_{T}^{*}T_{j}\theta_{T}\right)=\mu_{k}\left(\theta_{T}^{*}T_{j}\theta_{T}\right)=\lambda\left(\theta_{T,k+1}T_{j}\theta_{T,k+1}\right).$$
									Taking $\gamma=s^{*}_{k}(\mu),$ we have $\gamma\left(PT_{j}P\right)=\mu\left(\theta_{T}^{*}T_{j}\theta_{T}\right)$ and so $$\gamma\left(PT_{j}P\right)=\mu\left(\theta_{T}^{*}T_{j}\theta_{T}\right)=\lambda\left(\theta_{T,k+1}T_{j}\theta_{T,k+1}\right)$$ for all $j\in \mathbb{N}.$
									Now take $m=k+1.$
								\end{proof}
								
								Proposition~\ref{aloha} motivates the following definition:
								
								\begin{defn}
									Let $\mathcal{F}$ be the set of all triples $\left(\lambda,\gamma,m\right)$ consisting of $$\lambda\in \sigma( \mathbb{M}_{T} |\mathcal{M_{E}})$$ $$\gamma\in \sigma( \mathbb{M}_{T|\mathcal{H}_{1}} |\mathcal{M_{E}}\ominus\mathcal{E})$$ and a $m\in\mathbb{N}^{+}$ such that
									$$\gamma\left(PT_{k}P\right)=\lambda\left(\theta_{T,m}^{*}T_{k}\theta_{T,m}\right)$$ for all $k\in\mathbb{N}.$
									We say that the triples $\left(\lambda,\gamma,m\right)$ and $\left(\lambda',\gamma',m'\right)$ are equal if $\lambda\neq\lambda'$ or $\gamma\neq\gamma'$ or $m\neq m'.$ 
									
								\end{defn}
								
								Recall from Theorem~\ref{bt1} that if $\lambda\in\sigma( \mathbb{M}_{T} |\mathcal{M_{E}})$ and $\gamma\in \sigma( \mathbb{M}_{T|\mathcal{H}_{1}} |\mathcal{M_{E}}\ominus\mathcal{E})$ then 
								$$\lambda(T_{k})=\tau_{k}+\beta_{k}A_{\lambda}$$
								$$\gamma(P T_{k}P)=\tau_{k}+\beta_{k}C_{\gamma}$$
								for some $A_{\lambda},C_{\gamma}\in\mathbb{R}.$
								\\
								
								Next proposition shows how every triple $(\lambda,\gamma,m)\in\mathcal{F}$ gives rise to a relation between the $\tau_{k}$'s and  $\beta_{k}$'s.
								
								\begin{prop}\label{olov}\label{ericagis}
									For any triple $\left(\lambda,\gamma,m\right)\in\mathcal{F}$ and every $k\in\mathbb{N}$ we have 
									\begin{equation}\label{oleg}
										\lambda\left(T_{m}\right)\gamma\left(PT_{k}P\right)=\lambda\left(T_{m+k}\right).
									\end{equation}
									
									Moreover, if $\lambda\left(T_{m}\right)=\tau_{m}+A_{\lambda}\beta_{m}$ and $\gamma\left(PT_{m}P\right)=\tau_{m}+C_{\gamma}\beta_{m}$ then for all $k\in\mathbb{N}$
									\begin{equation}\label{mars}
										\tau_{k}-\frac{\tau_{m+k}}{\lambda\left(T_{m}\right)}=\frac{A_{\lambda}\beta_{m+k}}{\lambda\left(T_{m}\right)}-C_{\gamma}\beta_{k}
									\end{equation}
									when $\lambda\left(T_{m}\right)\neq 0$ and 
									
									\begin{equation}\label{30smars}
										\tau_{k}-\frac{\tau_{m+k}}{\tau_{m}}=-C_{\gamma}\beta_{k}
									\end{equation}
									when $\lambda\left(T_{k}\right)=0.$
									
								\end{prop}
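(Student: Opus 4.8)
The plan is to derive all three displayed identities from the single defining relation of a triple, namely $\gamma\left(PT_{k}P\right)=\lambda\left(\theta_{T,m}^{*}T_{k}\theta_{T,m}\right)$ for all $k$, combined with the parametrizations supplied by Theorem~\ref{bt1} and the multiplicativity of a spectral point. First I would prove~\eqref{oleg}. By Lemma~\ref{labann} and Proposition~\ref{key} we have $\theta_{T,m}T_{m}^{\frac{1}{2}}=T^{m}$, hence $T^{*m}=T_{m}^{\frac{1}{2}}\theta_{T,m}^{*}$ and therefore $T_{m}^{\frac{1}{2}}\left(\theta_{T,m}^{*}T_{k}\theta_{T,m}\right)T_{m}^{\frac{1}{2}}=T^{*m}T_{k}T^{m}=T_{m+k}$; this is precisely the identity recorded in the remark following Proposition~\ref{loveu}. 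Since $\lambda$ is multiplicative as a character, applying it gives $\lambda\left(T_{m}\right)\lambda\left(\theta_{T,m}^{*}T_{k}\theta_{T,m}\right)=\lambda\left(T_{m+k}\right)$, and substituting the defining relation $\lambda\left(\theta_{T,m}^{*}T_{k}\theta_{T,m}\right)=\gamma\left(PT_{k}P\right)$ yields $\lambda\left(T_{m}\right)\gamma\left(PT_{k}P\right)=\lambda\left(T_{m+k}\right)$, which is~\eqref{oleg}.

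For~\eqref{mars} I would feed the expansions from Theorem~\ref{bt1} into~\eqref{oleg}. Writing $\lambda\left(T_{m+k}\right)=\tau_{m+k}+A_{\lambda}\beta_{m+k}$ from~\eqref{raw} and $\gamma\left(PT_{k}P\right)=\tau_{k}+C_{\gamma}\beta_{k}$ from~\eqref{wwraw}, equation~\eqref{oleg} becomes $\lambda\left(T_{m}\right)\left(\tau_{k}+C_{\gamma}\beta_{k}\right)=\tau_{m+k}+A_{\lambda}\beta_{m+k}$. In the case $\lambda\left(T_{m}\right)\neq 0$ one divides by $\lambda\left(T_{m}\right)$ and moves the relevant terms across, producing exactly~\eqref{mars}. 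This step is a pure substitution.

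The complementary case $\lambda\left(T_{m}\right)=0$, giving~\eqref{30smars}, is where the degeneracy machinery enters. The condition $\lambda\left(T_{m}\right)=0$ says precisely that $0\in\sigma\left(T_{m}|\mathcal{M_{E}}\right)$, so Proposition~\ref{queen} applies with $n=m$ and gives $\theta_{T,m}^{*}T_{k}\theta_{T,m}|_{\mathcal{M_{E}}}=\frac{\tau_{m+k}}{\tau_{m}}I|_{\mathcal{M_{E}}}$ for all $k$. Evaluating $\lambda$ on this operator and using the defining relation once more yields $\gamma\left(PT_{k}P\right)=\lambda\left(\theta_{T,m}^{*}T_{k}\theta_{T,m}\right)=\frac{\tau_{m+k}}{\tau_{m}}$; comparing with $\gamma\left(PT_{k}P\right)=\tau_{k}+C_{\gamma}\beta_{k}$ and rearranging gives~\eqref{30smars}. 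Here one uses $\tau_{m}\neq 0$, which holds for every $m$ by injectivity of $T$.

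I expect the only genuine subtlety to be the bookkeeping of spectral points rather than any hard estimate. One must check that $\lambda$, a priori a character of $\mathbb{M}_{T}|\mathcal{M_{E}}$, may legitimately be applied to $\theta_{T,m}^{*}T_{k}\theta_{T,m}$ and to $T_{m+k}$: both lie in $\mathbb{M}_{T}$, the former inside the subalgebra $\mathbb{M}_{T}^{m}\subseteq\mathbb{M}_{T}$ by Proposition~\ref{pust}, and multiplicativity survives restriction to the $\mathbb{M}_{T}$-invariant subspace $\mathcal{M_{E}}$. Once this is verified, everything reduces to the substitutions above.
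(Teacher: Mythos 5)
Your proposal is correct and follows essentially the same route as the paper: the identity $T_{m}^{\frac{1}{2}}\left(\theta_{T,m}^{*}T_{k}\theta_{T,m}\right)T_{m}^{\frac{1}{2}}=T_{m+k}$ (recorded in the remark after Proposition~\ref{loveu}) combined with multiplicativity of $\lambda$ gives~\eqref{oleg}, substitution of the expansions from Theorem~\ref{bt1} gives~\eqref{mars}, and the degenerate case is handled by Proposition~\ref{queen} exactly as the paper does, with the correct reading of the statement's typo ``$\lambda\left(T_{k}\right)=0$'' as $\lambda\left(T_{m}\right)=0$. Your added checks --- that $\tau_{m}=\left\|T^{m}\mathcal{E}\right\|^{2}\neq 0$ by injectivity, and that $\lambda$ may be evaluated on $\theta_{T,m}^{*}T_{k}\theta_{T,m}\in\mathbb{M}_{T}$ restricted to the invariant subspace $\mathcal{M_{E}}$ --- are details the paper leaves implicit, not deviations.
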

								
								\begin{proof}
									We have $\gamma\left(PT_{k}P\right)=\lambda\left(\theta_{T,m}^{*}T_{k}\theta_{T,m}\right),$
									so $$\lambda\left(T_{m}\right)\gamma\left(PT_{k}P\right)=\lambda\left(T_{m}\right)\lambda\left(\theta_{T,m}^{*}T_{k}\theta_{T,m}\right)=\lambda\left(T_{m+k}\right)$$ proving the first part.
									If $\lambda\left(T_{m}\right)\neq 0,$ then
									$$\lambda\left(T_{m}\right)\tau_{k}+\lambda\left(T_{m}\right)C_{\gamma}\beta_{k}=\lambda\left(T_{m}\right)\gamma\left(PT_{k}P\right)=\lambda\left(T_{m+k}\right)$$
									by~\eqref{oleg}.
									As $\lambda\left(T_{m+k}\right)=\tau_{m+k}+A_{\lambda}\beta_{m+k},$ we obtain \eqref{mars}.
									When $\lambda\left(T_{m}\right)= 0,$ we get the formula from Propositions~\ref{loveu} and~\ref{queen}.
								\end{proof}
								
\section{Main theorem: The case $|\mathcal{F}|\geq 2$}
								The aim of this section is to show that when $\mathcal{F}$ has at least two elements, then $T$ satisfies equation~\eqref{gabe} in the main theorem. 
								\\
								
								Let $\left\lbrace \tau_{k}\right\rbrace $ and $\left\lbrace \beta_{k}\right\rbrace$
								be the sequences of real numbers associated to $T$ that are defined by~\eqref{tau} and~\eqref{bee}.
								Let
								
								$$\tau\left(z\right)=\sum_{j=0}^{\infty}\tau_{j}z^{j}$$ 
								and
								
								$$B\left(z\right)=\sum_{j=0}^{\infty}\beta_{j}z^{j}.$$ 
								be formal powerseries associated to $\left\lbrace \tau_{k}\right\rbrace $ and $\left\lbrace \beta_{k}\right\rbrace.$
								\\
								
								Let $S^{*}$ be the backwards shift operator, defined on power series as $$S^{*}:\sum_{k=0}^{\infty}a_{k}z^{k}\mapsto\sum_{k=0}^{\infty}a_{k+1}z^{k}$$ and pick $\left(\lambda,\gamma,m\right)\in\mathcal{F}.$ Then~\eqref{mars} and~\eqref{30smars} can be rewritten as follows:
								
								\begin{equation}\label{evil}
									\left( I-\frac{S^{*m}}{\lambda\left(T_{m}\right)} \right)\tau\left(z\right)=-\left(C_{\gamma} I-\frac{A_{\lambda} S^{*m}}{\lambda\left(T_{m}\right)}\right)B\left(z\right).
								\end{equation} 
								when $\lambda\left(T_{m}\right)\neq 0$ and
								
								\begin{equation}\label{evil1}
									\left( I-\frac{S^{*m}}{\tau_{m}} \right)\tau\left(z\right)=-C_{\gamma}B\left(z\right)
								\end{equation}
								otherwise. 
								\\
								
								Taking another triple $(\mu,\omega,n)\in \mathcal{F}$ we obtain similar equalities with $(\lambda, \gamma, m)$ replaced by $(\mu, \omega, n)$
								\\

								Letting
								\\
								\begin{center}
									$P_{1}(z) = \begin{cases}
									1-\frac{z^{m}}{\lambda(T_{m})}          & \lambda(T_{m}) \neq 0\\
									1-\frac{z^{m}}{\tau_{m}} & \text{otherwise}
									\end{cases},$
									$P_{2}(z) =\begin{cases}
									C_{\gamma}-\frac{A_{\lambda}z^{m}}{\lambda(T_{m})}          & \lambda(T_{m}) \neq 0\\
									C_{\gamma} & \text{otherwise}
									\end{cases},$ 
								\end{center}
								\begin{center}

									$Q_{1}(z) = \begin{cases}
									1-\frac{z^{n}}{\mu(T_{n})}          & \mu(T_{n}) \neq 0\\
									1-\frac{z^{n}}{\tau_{n}} & \text{otherwise}
									\end{cases}$
									$Q_{2}(z) = \begin{cases}
									C_{\omega}-\frac{A_{\mu}z^{n}}{\mu(T_{n})}           & \mu(T_{m}) \neq 0\\
									C_{\omega} & \text{otherwise}
									\end{cases}.$
								\end{center}
								We have 
								
								\begin{equation}\label{bobby}
									\begin{split}
										P_{1}(S^{*})\tau(z)=-P_{2}(S^{*})\beta(z) \\
										Q_{1}(S^{*})\tau(z)=-Q_{2}(S^{*})\beta(z).
									\end{split}
								\end{equation}
								Now let $P(z)=P_{1}(z)Q_{2}(z)-P_{2}(z)Q_{1}(z).$
								
								\begin{lem}
									We have 
									\begin{equation*}
										\begin{split}
											P(S^{*})\tau(z)=0\\
											P(S^{*})\beta(z)=0
										\end{split}
									\end{equation*}
								\end{lem}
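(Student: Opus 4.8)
The plan is to read the pair of identities in~\eqref{bobby} as a single linear system over the commutative ring of polynomials in the backward shift $S^*$. Collecting the two relations $P_1(S^*)\tau(z)+P_2(S^*)\beta(z)=0$ and $Q_1(S^*)\tau(z)+Q_2(S^*)\beta(z)=0$ into matrix form gives
$$
\begin{pmatrix} P_1(S^*) & P_2(S^*) \\ Q_1(S^*) & Q_2(S^*) \end{pmatrix}
\begin{pmatrix} \tau(z) \\ \beta(z) \end{pmatrix}=0.
$$
The essential observation is that every entry is a polynomial in the one operator $S^*$, so the four entries commute with one another; this is exactly what makes the cofactor manipulation legitimate. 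I would also record at the outset that, since $z\mapsto P(z)$ is the combination $P_1Q_2-P_2Q_1$ and substitution of a single operator is a ring homomorphism $\mathbb{R}[z]\to\mathbb{R}[S^*]$, we have $P(S^*)=P_1(S^*)Q_2(S^*)-P_2(S^*)Q_1(S^*)$, again a polynomial in $S^*$.

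To obtain $P(S^*)\tau(z)=0$ I would eliminate $\beta(z)$: multiply the first identity on the left by $Q_2(S^*)$ and the second by $P_2(S^*)$, yielding $Q_2(S^*)P_1(S^*)\tau=-Q_2(S^*)P_2(S^*)\beta$ and $P_2(S^*)Q_1(S^*)\tau=-P_2(S^*)Q_2(S^*)\beta$. Because the factors commute the two right-hand sides agree, and subtracting leaves $\bigl(P_1(S^*)Q_2(S^*)-P_2(S^*)Q_1(S^*)\bigr)\tau(z)=P(S^*)\tau(z)=0$. The identity $P(S^*)\beta(z)=0$ follows by the symmetric elimination of $\tau(z)$, multiplying the first identity by $Q_1(S^*)$ and the second by $P_1(S^*)$ and subtracting. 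Conceptually both are the single step of left-multiplying the displayed system by the adjugate matrix $\bigl(\begin{smallmatrix} Q_2(S^*) & -P_2(S^*)\\ -Q_1(S^*) & P_1(S^*)\end{smallmatrix}\bigr)$ and invoking $\operatorname{adj}(M)M=\det(M)I=P(S^*)I$, which is valid precisely because the entries of $M$ commute.

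There is no genuinely hard step: once one notes that all the operators in sight are polynomials in the single operator $S^*$ and hence commute, the conclusion is the standard Cramer/cofactor elimination. The only point requiring a little care is that the whole computation lives at the level of formal power series acted on by $S^*$ and uses no division, so the argument never needs $P(S^*)$, the $P_i(S^*)$ or the $Q_i(S^*)$ to be invertible; in particular the cases $\lambda(T_m)=0$ and $\mu(T_n)=0$ in the definitions of $P_1,P_2,Q_1,Q_2$ are handled uniformly and require no separate treatment.
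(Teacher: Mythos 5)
Your proof is correct and is essentially the paper's own argument: the paper likewise cross-multiplies the two identities in~\eqref{bobby} by the appropriate polynomials in $S^{*}$ and uses the fact that polynomials in a single operator commute to cancel one of $\tau(z)$, $\beta(z)$ and extract $P(S^{*})\tau(z)=0$ and $P(S^{*})\beta(z)=0$. Your adjugate-matrix framing and the explicit remark that no invertibility is needed (so the $\lambda(T_{m})=0$ and $\mu(T_{n})=0$ cases need no separate treatment) are just a cleaner packaging of the same elimination.
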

								
								\begin{proof}
									If follows from~\eqref{bobby} that 
									$$P_{1}(S^{*})Q_{1}(S^{*})\tau(z)=-P_{1}(S^{*})Q_{2}(S^{*})\beta(z)$$
									$$Q_{1}(S^{*})P_{1}(S^{*})\tau(z)=-Q_{1}(S^{*})P_{2}(S^{*})\beta(z)$$
									giving the first equality $P(S^{*})\beta(z)=0.$
									A similar calculation, gives 
									\\ $P(S^{*})\tau(z)=0.$
								\end{proof}
								
								Our next goal is to show that if $\mathcal{F}$ contains at least two triples we can choose $(\lambda,\gamma,m)$ and $(\mu,\omega,n)$ such that $P(z)$ is not identically zero.
								\\
								
								Since we will always work with only two triples at the time, we can without any resulting confusion denote the polynomial corresponding to
								\\ $\left(\lambda_,\gamma,m\right),\left(\mu,\omega,n\right)$ by $P\left(z\right).$
								\begin{lem}\label{terrible}
									If $\dim\mathcal{M_{E}}= 2$ then for every triple $\left(\lambda,\gamma,m\right)\in\mathcal{F}$ we have $A_{\lambda}\neq C_{\gamma}.$ 
								\end{lem}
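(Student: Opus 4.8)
The plan is to exploit the fully explicit two-dimensional picture provided by Theorem~\ref{bt1}. Since $\dim\mathcal{M_E}=2$ and $\dim\mathcal{E}=1$, I would fix an orthonormal basis $\{\mathcal{E},f\}$ of $\mathcal{M_E}$ in which $f$ spans the one-dimensional subspace $\mathcal{M_E}\ominus\mathcal{E}$, and write the self-adjoint operator $A$ of Theorem~\ref{bt1} as a Hermitian $2\times2$ matrix in this basis. Because $\mathcal{E}=\mathcal{H}_1^{\bot}$, the vector $f$ lies in $\mathcal{H}_1$, so on the line $\mathbb{C}f$ the compression $PT_kP$ acts as the scalar $\langle PT_kPf,f\rangle=\langle T_kf,f\rangle=\tau_k+\beta_k\langle Af,f\rangle$; comparing with $\gamma(PT_kP)=\tau_k+\beta_kC_\gamma$ gives $C_\gamma=\langle Af,f\rangle$, the lower diagonal entry of $A$. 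On the other hand, Lemma~\ref{hemma1} gives $\lambda(T_k)=\tau_k+\beta_kA_\lambda$, and since $\beta_m\neq0$ for some $m$ we have $A\in\mathbb{M}_T|\mathcal{M_E}$, so $A_\lambda=\lambda(A)$ is a point of $\sigma(A)$, i.e. an eigenvalue of the matrix $A$.

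The heart of the argument is then an elementary fact about Hermitian $2\times2$ matrices: a diagonal entry coincides with an eigenvalue only when the off-diagonal entry vanishes. Indeed, if $A_\lambda=C_\gamma=\langle Af,f\rangle$, then $\langle Af,f\rangle$ is an eigenvalue of $A$, so $\det\!\big(A-\langle Af,f\rangle I\big)=-|\langle A\mathcal{E},f\rangle|^2=0$, forcing $\langle A\mathcal{E},f\rangle=0$. Thus $A\mathcal{E}\in\mathbb{C}\mathcal{E}$; that is, $\mathcal{E}$ is an eigenvector of $A$.

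To conclude, I would translate this back to the operators $T_k$. If $A\mathcal{E}=\alpha\mathcal{E}$ with $\alpha\in\mathbb{R}$, then by Theorem~\ref{bt1} $T_k\mathcal{E}=(\tau_k+\beta_k\alpha)\mathcal{E}\in\mathbb{C}\mathcal{E}$ for every $k$, so the line $\mathbb{C}\mathcal{E}$ is invariant under all $T_k$. As $\mathcal{M_E}$ is the smallest closed subspace containing $\mathcal{E}$ invariant under $\mathbb{M}_T$ (equivalently, by Proposition~\ref{surject}, $T$ would then be centered with $\mathcal{M_E}=\mathcal{E}$), this forces $\mathcal{M_E}=\mathbb{C}\mathcal{E}$, contradicting $\dim\mathcal{M_E}=2$. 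Hence $A_\lambda\neq C_\gamma$ for every triple $(\lambda,\gamma,m)\in\mathcal{F}$. I expect the only genuinely non-formal step to be setting up this dictionary---identifying $C_\gamma$ with the diagonal entry $\langle Af,f\rangle$ and $A_\lambda$ with an eigenvalue of $A$---after which the linear-algebra fact and the resulting contradiction are immediate.
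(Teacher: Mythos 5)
Your proof is correct, but it runs along a genuinely different track than the paper's. The paper argues directly with the resolvent: for $k$ with $\beta_{k}\neq 0$ it studies $H\left(z\right)=\left\langle \left(T_{k}-z\right)^{-1}\mathcal{E},\mathcal{E}\right\rangle$, uses $H'>0$ on $\mathbb{R}$ to locate a zero $\chi$ strictly between the two poles (the eigenvalues of $T_{k}|\mathcal{M_{E}}$), and observes that $\left(T_{k}-\chi\right)^{-1}\mathcal{E}$ is then a nonzero vector of $\mathcal{M_{E}}\ominus\mathcal{E}$ on which $PT_{k}P$ acts as $\chi$; hence $\tau_{k}+C_{\gamma}\beta_{k}=\chi$ is \emph{strictly between} the two values $\tau_{k}+A_{\lambda}\beta_{k}$, which is a quantitative interlacing statement from which the lemma is immediate. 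You instead prove the contrapositive by pure linear algebra: writing $A$ as a Hermitian $2\times 2$ matrix in the basis $\left\{\mathcal{E},f\right\}$, you correctly identify $C_{\gamma}=\left\langle Af,f\right\rangle$ (legitimate since $\dim\left(\mathcal{M_{E}}\ominus\mathcal{E}\right)=1$ and $Pf=f$) and $A_{\lambda}$ as an eigenvalue of $A$ (legitimate since some $\beta_{m}\neq 0$ gives $A=\beta_{m}^{-1}\left(T_{m}-\tau_{m}I\right)|_{\mathcal{M_{E}}}\in\mathbb{M}_{T}|\mathcal{M_{E}}$), then use the fact that a diagonal entry of a Hermitian $2\times 2$ matrix is an eigenvalue only if the off-diagonal entry vanishes, and finally kill the degenerate case by the minimality of $\mathcal{M_{E}}$: if $A\mathcal{E}\in\mathbb{C}\mathcal{E}$ then every $T_{k}$ leaves $\mathbb{C}\mathcal{E}$ invariant by~\eqref{raw}, forcing $\mathcal{M_{E}}=\mathbb{C}\mathcal{E}$, contradicting $\dim\mathcal{M_{E}}=2$. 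The trade-off: the paper's resolvent argument yields the sharper interlacing information ($C_{\gamma}$ lies strictly between the two possible $A_{\lambda}$'s), whereas your argument is more elementary and self-contained, replacing the analytic step with the structural fact that $\mathcal{M_{E}}$ is the smallest $\mathbb{M}_{T}$-invariant subspace containing $\mathcal{E}$; both proofs, incidentally, establish the stronger claim that $A_{\lambda}\neq C_{\gamma}$ for \emph{every} $\lambda\in\sigma\left(\mathbb{M}_{T}|\mathcal{M_{E}}\right)$, with the integer $m$ of the triple playing no role.
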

								
								\begin{proof} 
									Choose $k\in\mathbb{N}$ such that $\beta_{k}\neq 0.$ Then the spectrum of $T_{k}|\mathcal{M_{E}}$ consists of two points, since from~\eqref{raw} we find that in this case, $I$ and $T_{k}$ are generators for $\mathbb{M}_{T}|\mathcal{M_{E}}$. Now consider the function $H\left(z\right)=\left\langle (T_{k}-z)^{-1}\mathcal{E},\mathcal{E}\right\rangle.$ This is a rational function with single poles at the eigenvalues of $T_{k}.$ Since for real $z$ $$H'\left(z\right)=\left\langle (T_{k}-z)^{-2}\mathcal{E},\mathcal{E}\right\rangle=\left\langle (T_{k}-z)^{-1}\mathcal{E},(T_{k}-z)^{-1}\mathcal{E}\right\rangle>0$$ we see that $H\left(z\right)$ has a zero $\chi$ between its two poles. As $\chi$ is not in the spectrum of $T_{k}$ we must have $(T_{k}-\chi)^{-1}\mathcal{E}\neq 0$ and then from $\left\langle (T_{k}-\chi)^{-1}\mathcal{E},\mathcal{E}\right\rangle=0$ we get $(T_{k}-\chi)^{-1}\mathcal{E}\bot \mathcal{E}.$ Now we can calculate $$PT_{k}P(T_{k}-\chi)^{-1}\mathcal{E}=P T_{k}(T_{k}-\chi)^{-1}\mathcal{E}=$$ $$P(T_{k}-\chi+\chi)(T_{k}-\chi)^{-1}\mathcal{E}=\chi(T_{k}-\chi)^{-1}\mathcal{E}.$$ Hence $\chi$ is in the spectrum of $PT_{k}P|\mathcal{M_{E}}\ominus \E $ so $$\chi=\tau_{k}+C_{\gamma}\beta_{k}.$$ But $\chi$ is not in the spectrum of $T_{k}|\mathcal{M_{E}}$ and therefore $\chi\neq\tau_{k}+\beta_{k}A_{\lambda}.$ As $\beta_{k}\neq 0,$ we get $C_{\gamma}\neq A_{\lambda}.$
								\end{proof}
								
								If $\dim\mathcal{M_{E}}= 2,$ there is only one element in $\sigma( \mathbb{M}_{T|\mathcal{H}_{1}} |\mathcal{M_{E}}\ominus\mathcal{E}).$ Hence for two triples $\left(\lambda,\gamma,m\right),\left(\mu,\omega,n\right)\in\mathcal{F}$ we must have $\gamma=\omega.$ 
								
								\begin{lem}\label{labam}
									Let $\dim\mathcal{M_{E}}=2.$ If there are two different triples $\left(\lambda,\gamma,m\right),\left(\mu,\gamma,n\right)\in\mathcal{F},$ then the polynomial $P(z)$ is not constantly zero. However, we have $P(0)=0.$
								\end{lem}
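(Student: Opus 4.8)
The plan is to multiply out $P(z)=P_{1}(z)Q_{2}(z)-P_{2}(z)Q_{1}(z)$ and read off its coefficients. The first thing I would establish is that the degenerate branches in the piecewise definitions of $P_{1},P_{2},Q_{1},Q_{2}$ cannot occur here. Since $\dim\mathcal{M_{E}}=2$ is finite and $T$ is injective, for every $0\neq v\in\mathcal{M_{E}}$ we have $\langle T_{k}v,v\rangle=\|T^{k}v\|^{2}>0$, so each $T_{k}|\mathcal{M_{E}}$ is a positive-definite operator on the two-dimensional space $\mathcal{M_{E}}$. Hence every character value $\lambda(T_{m})$ and $\mu(T_{n})$, being an eigenvalue of such an operator, is strictly positive; in particular $\lambda(T_{m})\neq 0$ and $\mu(T_{n})\neq 0$, so all four polynomials are given by their first formulas. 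Moreover, as noted just before the lemma, $\dim(\mathcal{M_{E}}\ominus\E)=1$ forces $\gamma=\omega$, so $C_{\omega}=C_{\gamma}$.

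Writing $a=\lambda(T_{m})$, $b=\mu(T_{n})$ and expanding, the constant terms $P_{1}(0)Q_{2}(0)=C_{\gamma}$ and $P_{2}(0)Q_{1}(0)=C_{\gamma}$ cancel, giving $P(0)=0$, which is the second claim. The surviving terms yield
\[
P(z)=\frac{A_{\lambda}-C_{\gamma}}{a}\,z^{m}+\frac{C_{\gamma}-A_{\mu}}{b}\,z^{n}+\frac{A_{\mu}-A_{\lambda}}{ab}\,z^{m+n}.
\]

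To show $P\not\equiv 0$ I would split on whether $A_{\lambda}=A_{\mu}$. If $A_{\lambda}\neq A_{\mu}$, the coefficient $\tfrac{A_{\mu}-A_{\lambda}}{ab}$ of the top monomial $z^{m+n}$ is nonzero, and since $m+n$ strictly exceeds both $m$ and $n$ this monomial is never cancelled, so $P\not\equiv 0$. If instead $A_{\lambda}=A_{\mu}$, then by Lemma~\ref{hemma1} we have $\lambda(T_{k})=\tau_{k}+A_{\lambda}\beta_{k}=\mu(T_{k})$ for every $k$, so $\lambda=\mu$; as the two triples are distinct and $\gamma=\omega$, this can only happen with $m\neq n$. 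In that case $P(z)=(A_{\lambda}-C_{\gamma})\big(\tfrac{z^{m}}{a}-\tfrac{z^{n}}{b}\big)$, which is nonzero because $m\neq n$ and $A_{\lambda}\neq C_{\gamma}$ by Lemma~\ref{terrible}.

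The one genuinely new step, and the one I would take most care over, is the positivity observation ruling out the degenerate branches; after that the result is a direct coefficient computation, using Lemma~\ref{terrible} for $A_{\lambda}\neq C_{\gamma}$ and the distinctness of the triples for $A_{\lambda}\neq A_{\mu}$ in the one remaining case.
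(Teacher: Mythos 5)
Your proof is correct and follows essentially the same route as the paper's: both rule out the degenerate branches via the identity $\left\langle T_{k}u,u\right\rangle=\left\|T^{k}u\right\|^{2}$ together with injectivity and $\dim\mathcal{M_{E}}<\infty$, then expand $P(z)$, read off $P(0)=0$ from the cancelling constant terms, and use Lemma~\ref{terrible} plus the distinctness of the triples to find a surviving coefficient. The only cosmetic difference is that the paper argues by contradiction ($P\equiv 0$ forces $m=n$ and $A_{\lambda}=A_{\mu}$, hence equal triples), while you run the same coefficient computation directly with a case split on $A_{\lambda}=A_{\mu}$, invoking Lemma~\ref{hemma1} to pass from $A_{\lambda}=A_{\mu}$ to $\lambda=\mu$ — a step the paper uses implicitly.
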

								
								\begin{proof}
									First, note that since $\dim\mathcal{M_{E}}=2$ and $T$ is injective we can not have $\lambda(T_{k})=0$ for any $\lambda\in\sigma(\mathbb{M}_{T}|\mathcal{M_{E}})$ and $k\in\mathbb{N},$ since otherwise we will have a nonzero $u\in\mathcal{M_{E}}$ such that $T_{k}u=0$ and hence $0=\left\langle T_{k}u,u \right\rangle =\left\|T^{k}u  \right\| .$ Let $\left(\lambda,\gamma,m\right),\left(\mu,\gamma,n\right)\in\mathcal{F},$ then $\lambda(T_{m})\neq 0$ and $\mu(T_{n})\neq 0.$ The corresponding polynomial $P(z)$ is then of the form 
									\begin{equation}\label{rere}
										P\left(z\right)=\left(C_{\gamma}-\frac{A_{\lambda}z^{m} }{\lambda\left(T_{m}\right)} \right)\left( 1-\frac{z^{n}}{\mu\left(T_{n}\right)} \right)-\left(C_{\gamma}-\frac{A_{\mu}z^{n}}{\mu\left(T_{n}\right)} \right)\left( 1-\frac{z^{m}}{\lambda\left(T_{m}\right)} \right).
									\end{equation}
									Assume on the contrary that $P(z)\equiv 0.$ By expanding the right-hand side of~\eqref{rere} and use $A_{\lambda}\neq C_{\gamma}$ and $A_{\mu}\neq C_{\gamma},$ we easily see that $P(z)\equiv 0$ implies $m=n$ and $A_{\lambda}=A_{\mu}$ and hence the triples are equal. The second claim follows from that the constant term in the right-hand side of~\eqref{rere} vanishes.
								\end{proof}
								
								\begin{prop}\label{arshole}\label{movie}
									If the set $\mathcal{F}$ has more than two elements, then there are two triples $\left(\lambda_,\gamma,k\right),\left(\mu,\omega,m\right)\in\mathcal{F}$ such that the polynomial $P(z)$ is not the zero polynomial.
									Moreover, if $\dim\mathcal{M}\geq 3,$ then there are different triples such that $P\left(0\right)\neq 0$.
								\end{prop}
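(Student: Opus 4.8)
The plan is to split the argument according to whether $\dim\mathcal{M_{E}}=2$ or $\dim\mathcal{M_{E}}\geq 3$, since the spectrum $\sigma(\mathbb{M}_{T|\mathcal{H}_{1}}|\mathcal{M_{E}}\ominus\mathcal{E})$ behaves very differently in these two regimes. In both cases the goal is to exhibit two triples whose polynomial $P(z)=P_{1}(z)Q_{2}(z)-P_{2}(z)Q_{1}(z)$ is nonzero, and in the second case to get the stronger conclusion $P(0)\neq 0$.

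First I would dispose of the case $\dim\mathcal{M_{E}}=2$. Here $\mathcal{M_{E}}\ominus\mathcal{E}$ is one-dimensional, so $\sigma(\mathbb{M}_{T|\mathcal{H}_{1}}|\mathcal{M_{E}}\ominus\mathcal{E})$ consists of a single point $\gamma$, and hence every triple in $\mathcal{F}$ carries this same $\gamma$. Since by hypothesis $\mathcal{F}$ has at least two elements, I can pick two \emph{distinct} triples $\left(\lambda,\gamma,m\right),\left(\mu,\gamma,n\right)\in\mathcal{F}$, and then Lemma~\ref{labam} gives at once that the corresponding $P(z)$ is not constantly zero. Note that in this regime Lemma~\ref{labam} also records $P(0)=0$, which is precisely why the ``moreover'' clause must be restricted to $\dim\mathcal{M_{E}}\geq 3$.

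The substantive case is $\dim\mathcal{M_{E}}\geq 3$, where I would prove both assertions simultaneously by producing two triples with $P(0)\neq 0$. By Proposition~\ref{total} the spectrum $\sigma(\mathbb{M}_{T|\mathcal{H}_{1}}|\mathcal{M_{E}}\ominus\mathcal{E})$ contains at least two points, and by Proposition~\ref{aloha} the set of $\gamma$ arising from triples is dense in this compact Hausdorff spectrum; since a dense subset of a space with at least two points necessarily has at least two points, I can choose $\left(\lambda,\gamma,m\right),\left(\mu,\omega,n\right)\in\mathcal{F}$ with $\gamma\neq\omega$. The key step is the constant term of $P$: inspecting the piecewise definitions of $P_{1},P_{2},Q_{1},Q_{2}$ and using $m,n\geq 1$ so that the $z^{m},z^{n}$ terms do not contribute at $z=0$, one finds $P_{1}(0)=Q_{1}(0)=1$, $P_{2}(0)=C_{\gamma}$ and $Q_{2}(0)=C_{\omega}$ in every branch (whether or not $\lambda(T_{m})$ or $\mu(T_{n})$ vanishes), so that $P(0)=C_{\omega}-C_{\gamma}$. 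It remains to see $C_{\gamma}\neq C_{\omega}$: by Theorem~\ref{bt1} we have $\gamma(PT_{k}P)=\tau_{k}+\beta_{k}C_{\gamma}$ and $\omega(PT_{k}P)=\tau_{k}+\beta_{k}C_{\omega}$ for all $k$, and since $\dim\mathcal{M_{E}}\geq 2$ forces $\beta_{k}\neq 0$ for some $k$, the equality $C_{\gamma}=C_{\omega}$ would make $\gamma$ and $\omega$ agree on all $PT_{k}P$. As these operators generate $\mathbb{M}_{T|\mathcal{H}_{1}}$, the characters $\gamma$ and $\omega$ would then coincide, contradicting $\gamma\neq\omega$. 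Hence $P(0)=C_{\omega}-C_{\gamma}\neq 0$, giving both $P(z)\not\equiv 0$ and the ``moreover'' statement.

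The main obstacle I anticipate is the bookkeeping in the $\dim\mathcal{M_{E}}\geq 3$ case, on two fronts. One must verify that $P(0)=C_{\omega}-C_{\gamma}$ emerges \emph{uniformly} across all four branches of the piecewise definitions, including the degenerate ones where $\lambda(T_{m})=0$ or $\mu(T_{n})=0$ (here the cancellation is clean only because the constant terms of $P_{1},Q_{1}$ are $1$ and those of $P_{2},Q_{2}$ are $C_{\gamma},C_{\omega}$ in all cases). And one must ensure that Proposition~\ref{aloha} genuinely yields two \emph{distinct spectral points} $\gamma\neq\omega$ rather than merely distinct triples sharing a common $\gamma$; this is exactly where Proposition~\ref{total} is needed to guarantee that the spectrum has at least two points in the first place.
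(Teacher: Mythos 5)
Your proof is correct and takes essentially the same route as the paper: the case $\dim\mathcal{M_{E}}=2$ is disposed of by Lemma~\ref{labam}, and for $\dim\mathcal{M_{E}}\geq 3$ the paper likewise combines Proposition~\ref{total} with the density statement of Proposition~\ref{aloha} to obtain $\gamma\neq\omega$, and concludes from the constant term $P(0)=\pm\left(C_{\gamma}-C_{\omega}\right)\neq 0.$ Your explicit checks --- that $\gamma\neq\omega$ forces $C_{\gamma}\neq C_{\omega}$ via Theorem~\ref{bt1} and multiplicativity on the generators $PT_{k}P,$ and that the constant term comes out the same in all branches of the piecewise definitions --- simply fill in what the paper dismisses as an ``easy calculation.''
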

								\begin{proof}
									
									We already know that if $\dim \mathcal{M_{E}}=2$ and there are two different triples, then the polynomial $P(z)$ is not constantly zero.
									If $\dim\mathcal{M_{E}}\geq 3$ then by Lemma~\ref{total} the algebra generated by the $P T_{j} P$'s restricted to $\mathcal{M_{E}}$ must have a spectrum consisting of at least $2$ different points. Proposition~\ref{aloha} now gives that there are $\gamma,\omega\in M$ with $\gamma\neq \omega$ and thus also with $C_{\gamma}\neq C_{\omega}.$ An easy calculation gives that the constant term of $P(z)$ is   $C_{\gamma}-C_{\omega}$ and hence $P\left(z\right)\neq 0.$ 
								\end{proof}
								
								Now we can prove the main result of this section:
								\begin{thm}\label{mainmain}
									If $\mathcal{F}$ has at least two elements, then there are constants $a,b,c,d\in\mathbb{R},$ not all zero and integers $n,m\in\mathbb{N}^{+}$, such that 
									\begin{equation}\label{notinsert}
										a I+b T_{n}+c T_{m}+d T_{n+m}=0.
									\end{equation}
									In particular, if $\dim \mathcal{M_{E}}\geq 3$ then we can choose~\eqref{notinsert} such that $a\neq 0.$
								\end{thm}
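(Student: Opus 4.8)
The plan is to convert the two formal power-series identities $P(S^{*})\tau(z)=0$ and $P(S^{*})B(z)=0$ from the preceding lemma into the operator equation~\eqref{notinsert}, first on $\mathcal{M_{E}}$ and then on all of $\mathcal{H}$. First I would fix a pair of triples in $\mathcal{F}$ for which $P(z)$ is not the zero polynomial; such a pair exists by Lemma~\ref{labam} when $\dim\mathcal{M_{E}}=2$ and by Proposition~\ref{movie} when $\dim\mathcal{M_{E}}\geq 3$, the latter moreover giving $P(0)\neq 0$. The only exponents that can occur in $P=P_{1}Q_{2}-P_{2}Q_{1}$ are $0,m,n$ and $m+n$, so I write $P(z)=p_{0}+p_{n}z^{n}+p_{m}z^{m}+p_{m+n}z^{m+n}$. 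Reading off the coefficient of $z^{k}$ in the two identities (recall $(S^{*e}f)_{k}=f_{k+e}$) then gives, for every $k\in\mathbb{N}$,
$$p_{0}\tau_{k}+p_{n}\tau_{k+n}+p_{m}\tau_{k+m}+p_{m+n}\tau_{k+m+n}=0,\qquad p_{0}\beta_{k}+p_{n}\beta_{k+n}+p_{m}\beta_{k+m}+p_{m+n}\beta_{k+m+n}=0.$$

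Set $W:=aI+bT_{n}+cT_{m}+dT_{n+m}$ with $a=p_{0}$, $b=p_{n}$, $c=p_{m}$, $d=p_{m+n}$; the goal is $W=0$. Combining the two displayed identities with $\lambda(T_{k})=\tau_{k}+A_{\lambda}\beta_{k}$ from Lemma~\ref{hemma1} yields, for every $\lambda\in\sigma(\mathbb{M}_{T}|\mathcal{M_{E}})$ and every $k$,
$$p_{0}\lambda(T_{k})+p_{n}\lambda(T_{k+n})+p_{m}\lambda(T_{k+m})+p_{m+n}\lambda(T_{k+m+n})=0.$$
Since $\mathcal{H}=\oplus_{j}V_{j}$ by Theorem~\ref{space1} and each $V_{j}$ is invariant under $\mathbb{M}_{T}$, it suffices to prove $W|V_{j}=0$ for every $j$; and as $W|V_{j}$ belongs to the commutative $C^{*}$-algebra $\mathbb{M}_{T}|V_{j}$, it is enough to verify $\gamma(W)=0$ for each $\gamma\in\sigma(\mathbb{M}_{T}|V_{j})$.

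The spectral transport of the earlier sections now does the work. Given $\gamma\in\sigma(\mathbb{M}_{T}|V_{j})$, Proposition~\ref{loveu} provides $\lambda\in\sigma(\mathbb{M}_{T}|\mathcal{M_{E}})$ with $\gamma(T_{i})=\lambda(\theta_{T,j}^{*}T_{i}\theta_{T,j})$ for all $i$. If $\lambda(T_{j})\neq 0$ then $\gamma(T_{i})=\lambda(T_{j+i})/\lambda(T_{j})$, so $\gamma(W)$ equals $\lambda(T_{j})^{-1}$ times the left-hand side of the character identity at $k=j$, which is $0$. If $\lambda(T_{j})=0$ then $0\in\sigma(T_{j}|\mathcal{M_{E}})$, and Proposition~\ref{queen} forces $\theta_{T,j}^{*}T_{i}\theta_{T,j}|_{\mathcal{M_{E}}}=(\tau_{i+j}/\tau_{j})I$, whence $\gamma(T_{i})=\tau_{i+j}/\tau_{j}$ and $\gamma(W)=\tau_{j}^{-1}(p_{0}\tau_{j}+p_{n}\tau_{j+n}+p_{m}\tau_{j+m}+p_{m+n}\tau_{j+m+n})=0$ by the $\tau$-identity, using $\tau_{j}=\|T^{j}\mathcal{E}\|^{2}>0$. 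In both cases $\gamma(W)=0$, so $W|V_{j}=0$ for every $j$ and hence $W=0$. The constants are not all zero precisely because $P\not\equiv 0$, and when $\dim\mathcal{M_{E}}\geq 3$ the choice $P(0)\neq 0$ gives $a=p_{0}=P(0)\neq 0$.

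I expect the main obstacle to be exactly this passage from linear dependence on $\mathcal{M_{E}}$ to linear dependence on $\mathcal{H}$, flagged at the start of Section~4: one single algebraic relation holding on $\mathcal{M_{E}}$ must be shown to propagate to every spectral value on every $V_{j}$. The delicate branch is $\lambda(T_{j})=0$, where the naive quotient $\lambda(T_{j+i})/\lambda(T_{j})$ is unavailable; there one genuinely needs the rigidity of Proposition~\ref{queen} — the collapse of $\theta_{T,j}^{*}T_{i}\theta_{T,j}$ to a scalar on $\mathcal{M_{E}}$ — to recover the same identity, and it is precisely to make this argument available that the machinery of Section~3 (the homomorphisms $\Gamma_{m,n}$ and the decomposition $\oplus_{j}V_{j}$) was assembled.
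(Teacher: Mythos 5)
Your proposal is correct, and its skeleton matches the paper's: extract the four-term recurrences for $\tau_{k}$ and $\beta_{k}$ from $P(S^{*})\tau(z)=P(S^{*})B(z)=0$ with $P\not\equiv 0$ (Lemma~\ref{labam} when $\dim\mathcal{M_{E}}=2$, Proposition~\ref{movie} with $P(0)\neq 0$ when $\dim\mathcal{M_{E}}\geq 3$), transfer the resulting relation from $\mathcal{M_{E}}$ to each $V_{j}$, and sum over the decomposition $\mathcal{H}=\oplus_{j}V_{j}$ of Theorem~\ref{space1}. Where you genuinely diverge is the transfer step. The paper works at the operator level: from Theorem~\ref{bt1} it first gets $aT_{k}+bT_{n+k}+cT_{m+k}+dT_{n+m+k}|_{\mathcal{M_{E}}}=0$ for every $k$, strips off the $T_{k}$ by writing this as $T_{k}^{\frac{1}{2}}\left(aI+b\,\theta_{T,k}^{*}T_{n}\theta_{T,k}+c\,\theta_{T,k}^{*}T_{m}\theta_{T,k}+d\,\theta_{T,k}^{*}T_{n+m}\theta_{T,k}\right)T_{k}^{\frac{1}{2}}|_{\mathcal{M_{E}}}$ and using that $T_{k}^{\frac{1}{2}}$ has dense range, and then pushes the identity through the surjective homomorphism $\Gamma_{k,0}$ of Theorem~\ref{normspos} to land on $V_{k}$. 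You instead dualize: you check $\gamma(W)=0$ character by character on $\sigma(\mathbb{M}_{T}|V_{j})$, pulling each $\gamma$ back to a $\lambda\in\sigma(\mathbb{M}_{T}|\mathcal{M_{E}})$ via Proposition~\ref{loveu} --- which is itself a corollary of Theorem~\ref{normspos}, so the underlying machinery is the same. The trade-off is that your route requires dividing by $\lambda(T_{j})$, hence the case split at $\lambda(T_{j})=0$, which you close correctly with Proposition~\ref{queen} and the observation $\tau_{j}=\left\|T^{j}\E\right\|^{2}>0$; the paper's sandwich argument involves no division, so that degenerate case never surfaces in its proof of the theorem --- it was absorbed once and for all in the definitions of $P_{1},Q_{1}$ via Proposition~\ref{ericagis}. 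Both arguments treat the non-vanishing of the coefficients and the claim $a\neq 0$ for $\dim\mathcal{M_{E}}\geq 3$ identically, so your version is a valid, slightly more case-laden but spectrally more explicit rendering of the same proof.
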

								
								\begin{proof}
									If $\mathcal{F}$ has at least two elements, it follows from Proposition~\ref{movie} that there exists $\left(\lambda_,\gamma,k\right)$ and $\left(\mu,\omega,m\right)$ in $\mathcal{F}$ such that the corresponding polynomial $P(z)$ is of the form $a+b z^{n}+ c z^{m}+d z^{n+m},$ where $a,b,c,d\in\mathbb{R}$ are not all zero and $n,m\in\mathbb{N}^{+}.$
									As $P\left(S^{*}\right)B\left(z\right)=0$ and $P\left(S^{*}\right)\tau\left(z\right)=0,$ we obtain that for all $k\in\mathbb{N}$
									\begin{equation}\label{refs}
										a \tau_{k}+b \tau_{k+n}+c \tau_{k+m}+d \tau_{k+n+m}=0
									\end{equation}
									\begin{equation}\label{refs1}
										a \beta_{k}+b \beta_{k+n}+c \beta_{k+m}+d \beta_{k+n+m}=0.
									\end{equation}
									By Theorem~\ref{bt1}, these equations implies
									$$aT_{k}+bT_{n+k}+ c T_{m+k}+ d T_{n+m+k}|_{\mathcal{M_{E}}}=0|_{\mathcal{M_{E}}}$$
									for all $k\in\mathbb{N}.$ Now fix $k\in\mathbb{N}$ and consider $$aI+b(\theta_{T,k}^{*}T_{n}\theta_{T,k})+ c (\theta_{T,k}^{*}T_{m}\theta_{T,k})+ d (\theta_{T,k}^{*}T_{n+m+k}\theta_{T,k}).$$ Restricted to $\mathcal{M_{E}}$, we have 
									$$T_{k}^{\frac{1}{2}}\left(aI+b(\theta_{T,k}^{*}T_{n}\theta_{T,k})+ c (\theta_{T,k}^{*}T_{m}\theta_{T,k})+ d (\theta_{T,k}^{*}T_{n+m}\theta_{T,k}^{*})\right)T_{k}^{\frac{1}{2}}|_{\mathcal{M_{E}}}=$$
									$$aT_{k}+bT_{n+k}+ c T_{m+k}+ d T_{n+m+k}|_{\mathcal{M_{E}}}=0|_{\mathcal{M_{E}}}.$$
									Since $T_{k}^{\frac{1}{2}}$ has dense range, we must have $$a I+b(\theta_{T,k}^{*}T_{n}\theta_{T,k})+ c (\theta_{T,k}^{*}T_{m}\theta_{T,k})+ d( \theta_{T,k}^{*}T_{n+m}\theta_{T,k}^{*})|_{\mathcal{M_{E}}}=0|_{\mathcal{M_{E}}}.$$
									By Theorem~\ref{normspos}, this implies that
									$$aI+bT_{n}+ c T_{m}+ d T_{n+m}|_{V_{k}}=0|_{V_{k}}.$$
									As this is true for every $k\in\mathbb{N}$ and the subspaces $V_{k}$ spans $\mathcal{H}$, we have 
									$$aI+bT_{n}+ c T_{m}+ d T_{n+m}=0.$$
									If $\dim \mathcal{M_{E}}\geq 3 $ then by Proposition~\ref{movie} there are $\left(\lambda,\gamma,n\right),\left(\mu,\omega,m\right) \in\mathcal{F}$ such that $P(0)=a\neq 0.$
									
								\end{proof}
								
								\begin{cor}\label{xxxx}
									For all $k,j\in\mathbb{N},$ the restriction $T_{j}|V_{k}$ is invertible. If $\dim \mathcal{M_{E}}\geq 3,$ then $T_{j}$ is invertible for all $j\in\mathbb{N}$, or equivalently, $T$ has closed range. 
								\end{cor}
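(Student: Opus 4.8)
The plan is to reduce both assertions to a statement about the Gelfand spectrum of the commutative von Neumann algebra $\mathbb{M}_{T}$ and to feed in the algebraic relation produced by Theorem~\ref{mainmain}. Two standing observations drive everything. First, each $V_{k}$ is $\mathbb{M}_{T}$-invariant (Lemma~\ref{nolabel}) and the generators $T_{j}$ are self-adjoint, so $V_{k}$ in fact \emph{reduces} every $T_{j}$; hence $T_{j}=(T_{j}|V_{k})\oplus(T_{j}|V_{k}^{\bot})$ and invertibility of $T_{j}$ on $\mathcal{H}$ passes to $T_{j}|V_{k}$. Second, since each $T_{j}$ is a positive element of the unital $C^{*}$-algebra $\mathbb{M}_{T}$, spectral permanence gives $\sigma(T_{j})=\{\gamma(T_{j}):\gamma\in\sigma(\mathbb{M}_{T})\}$, so $T_{j}$ is invertible exactly when $\gamma(T_{j})\neq 0$ for every character $\gamma$.

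I would first handle the case $\dim\mathcal{M_{E}}\geq 3$, which yields the closed-range statement and simultaneously the harder direction of the first claim. Here Theorem~\ref{mainmain} provides $aI+bT_{n}+cT_{m}+dT_{n+m}=0$ with $a\neq 0$ and $n,m\in\mathbb{N}^{+}$. Suppose $T_{1}$ were not invertible; then some $\gamma\in\sigma(\mathbb{M}_{T})$ has $\gamma(T_{1})=0$, and Lemma~\ref{colio} propagates this upward to $\gamma(T_{i})=0$ for all $i\geq 1$. Evaluating the relation at $\gamma$ and using $n,m\geq 1$ kills the last three terms and forces $a=0$, a contradiction. Thus $T_{1}=T^{*}T$ is invertible, i.e.\ $T$ is bounded below and has closed range; consequently each $T^{j}$ is bounded below, $T_{j}=(T^{j})^{*}T^{j}$ is invertible on $\mathcal{H}$, and $T_{j}|V_{k}$ is invertible by the reduction remark.

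It then remains to prove the first claim when $\dim\mathcal{M_{E}}=2$. In that situation Corollary~\ref{ockey} gives $\dim V_{k}\leq \dim V_{0}=\dim\mathcal{M_{E}}=2$ for all $k$, so every $V_{k}$ is finite-dimensional. As $T$ is injective, so is each $T_{j}=T^{*j}T^{j}$, and since $V_{k}$ reduces $T_{j}$ the restriction $T_{j}|V_{k}$ is injective; an injective endomorphism of a finite-dimensional space is invertible, settling this case. The main obstacle — and the reason the two cases must be separated — is precisely the possibility of an infinite-dimensional $\mathcal{M_{E}}$: there $0$ could a priori belong to $\sigma(T_{j}|\mathcal{M_{E}})$ without being an eigenvalue, so the bare injectivity argument breaks down and one genuinely needs the nonzero constant term $a$ (available only once $\dim\mathcal{M_{E}}\geq 3$) together with the upward propagation of Lemma~\ref{colio} to exclude it. This is exactly the assertion $0\notin\sigma(\mathbb{M}_{T}|\mathcal{M_{E}})$ flagged after Theorem~\ref{bt1}, now obtained under the density hypothesis $\bigvee_{k}T^{k}\mathcal{M_{E}}=\mathcal{H}$.
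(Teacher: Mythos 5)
Your proof is correct, and its skeleton matches the paper's: the same case split ($\dim\mathcal{M_{E}}\leq 2$ handled by finite-dimensionality via Corollary~\ref{ockey}, with injectivity of $T_{j}$ and the reduction of each $V_{k}$; $\dim\mathcal{M_{E}}\geq 3$ handled through Theorem~\ref{mainmain} with $a\neq 0$). Where you genuinely diverge is in how the relation $aI+bT_{n}+cT_{m}+dT_{n+m}=0$ is exploited. The paper is constructive: after normalizing $a=1$ it exhibits an explicit inverse of $T_{n}$, namely $-bI-c\,\theta_{T,n}^{*}T_{m-n}\theta_{T,n}-d\,\theta_{T,n}^{*}T_{m}\theta_{T,n}$ (legitimate because $\theta_{T,n}^{*}T_{j}\theta_{T,n}\in\mathbb{M}_{T}$ by Proposition~\ref{argg} and $T_{n}\left(\theta_{T,n}^{*}T_{j}\theta_{T,n}\right)=T_{n+j}$ by commutativity), and then passes from $T_{n}$ to $T_{1}$ via $T_{n}=T_{1}(\theta_{T}^{*}T_{n-1}\theta_{T})$. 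You instead argue by contradiction at the level of the Gelfand spectrum: spectral permanence reduces invertibility of $T_{1}$ to nonvanishing of all characters, and Lemma~\ref{colio} propagates a zero of $\gamma(T_{1})$ upward so that evaluating the relation at $\gamma$ forces $a=0$. The two mechanisms are close cousins---Lemma~\ref{colio} is exactly the character shadow of the identity $T_{k+j}=T_{k}^{\frac{1}{2}}\left(\theta_{T,k}^{*}T_{j}\theta_{T,k}\right)T_{k}^{\frac{1}{2}}$ that powers the paper's inverse---but yours is softer and hits $T_{1}$ directly without an intermediate step, while the paper's buys an explicit formula for $T_{n}^{-1}$ as an element of $\mathbb{M}_{T}$. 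Two cosmetic points: your appeal to Theorem~\ref{mainmain} needs $|\mathcal{F}|\geq 2$, which is automatic when $\dim\mathcal{M_{E}}\geq 3$ by Proposition~\ref{movie} and is in any case the section's standing hypothesis; and your residual case should formally read $\dim\mathcal{M_{E}}\leq 2$ rather than $=2$, though the finite-dimensional argument covers $\dim\mathcal{M_{E}}=1$ verbatim, so nothing is lost.
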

								\begin{proof}
									Since $T$ is injective, every $T_{j}$ has dense range. If $\dim \mathcal{M_{E}}\leq 2$ then $\dim V_{k}\leq 2$ for all $k\in\mathbb{N},$ so $T_{j}|V_{k}$ must be invertible. 
									\\
									
									When $\dim \mathcal{M_{E}}\geq 3,$ it follows from Theorem~\ref{mainmain} that there are $b,c,d\in\mathbb{R}$ and $m,n\in\mathbb{N}^{+}$ such that $$I+bT_{n}+cT_{m}+dT_{n+m}=0. $$ (we can divide~\eqref{notinsert} by $a\neq 0$). If, say, $n\leq m$ then consider $$-bI-c (\theta_{T,n}^{*}T_{m-n}\theta_{T,n})-d(\theta_{T,n}^{*}T_{m}\theta_{T,n}).$$ This is an inverse of $T_{n}$ since 
									$$T_{n}\left(-b I-c (\theta_{T,n}^{*}T_{m-n}\theta_{T,n})-d (\theta_{T,n}^{*}T_{m}\theta_{T,n})\right)=-b T_{n}-c T_{m}-d T_{n+m}=I.$$ But if $T_{n}$ is invertible, then so is $T_{1}$, since $T_{n}=T_{1}(\theta_{T}^{*}T_{n-1}\theta_{T}).$ 
								\end{proof}

							\subsection{Main theorem: The case $|\mathcal{F}|=1$}
								
								The final case to consider is when there is only one triple in $\mathcal{F}.$ 
								\\
								
								Take $J$ to be a weighted shift on $\ell^{2}$ with the standard basis $\left\{e_{k};k\in\mathbb{N}\right\}.$ Now for some $n\in\mathbb{N}$ and $a\in\mathbb{C}$ consider $$L=J+a (e_{0}\otimes e_{n}^{*})$$ (as in Example~\ref{llll}, $e_{0}\otimes e_{n}^{*}$ is the operator $x\mapsto \left\langle x,e_{n} \right\rangle e_{0}$). With respect to the standard basis, this infinite matrix will look as follows
								\begin{equation}\label{dia0}
									L=\left[ \begin{matrix}
										a & 0 & 0 & 0 & ... \\
										a_{0} & 0 & 0 & 0 & ... \\
										0 & a_{1} & 0 & 0 & ... \\
										0 & 0 & a_{2} & 0 & ... \\
										... & ... & ... & ... & ... \\
									\end{matrix} \right]
								\end{equation}
								when $n=0$ and 
								\begin{equation}\label{dian}
									L=\left[ \begin{matrix}
										0 & ... & a & 0 & ... \\
										a_{0} & ... & 0 & 0 & ... \\
										... & ... & ... & ... & ... \\
										0 & ... & a_{n} & 0 & ... \\
										... & ... & ... & ... & ... \\
									\end{matrix} \right]
								\end{equation}
								for a general $n.$
								\begin{lem}\label{mila}
									The operator $L$ is half-centered and for every $k\in\mathbb{N}$ we have that $L_{k}$ is diagonal with respect to the standard basis $\left\{e_{k};k\in\mathbb{N}\right\}.$
								\end{lem}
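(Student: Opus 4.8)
The plan is to recognize $L$ as a special case of the weighted composition operators treated in Proposition~\ref{exempl}, so that both assertions follow at once. First I would realize $\ell^2$ as $L^2(\mathbb{N},\mu)$ with $\mu$ the counting measure, exactly as in the discussion following Example~\ref{llll}. There it is observed that with $\psi_n:\mathbb{N}\to\mathbb{N}$ given by $\psi_n(k)=k-1$ for $k\geq 1$ and $\psi_n(0)=n$, and with $\xi(k)=a_{k-1}$ for $k\geq 1$, $\xi(0)=a$, the operator $f\mapsto \xi(x)f(\psi_n(x))$ coincides with $L=J+a(e_0\otimes e_n^*)$. I would verify the hypotheses of Proposition~\ref{exempl}: the composition map $f\mapsto f\circ\psi_n$ sends $(f(0),f(1),\dots)$ to $(f(n),f(0),f(1),\dots)$ and hence is bounded (indeed $\|f\circ\psi_n\|^2=|f(n)|^2+\|f\|^2\leq 2\|f\|^2$), while $\xi\in\ell^\infty(\mathbb{N})=L^\infty(\mathbb{N},\mu)$ because $J$ is a bounded weighted shift (so $\sup_k|a_k|<\infty$) and $a\in\mathbb{C}$.

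Having placed $L$ in the form~\eqref{wqqw}, Proposition~\ref{exempl} immediately gives that $L$ is half-centered, which is the first claim. For the second claim I would invoke the stronger conclusion established inside the proof of that proposition, namely $L_k=L^{*k}L^k\in L^\infty(\mathbb{N},\mu)$ for every $k\in\mathbb{N}$. Since $\mu$ is the counting measure, $L^\infty(\mathbb{N},\mu)=\ell^\infty(\mathbb{N})$ acts on $\ell^2(\mathbb{N})$ by multiplication, and a multiplication operator $M_\varphi$ satisfies $M_\varphi e_j=\varphi(j)e_j$, i.e. it is precisely a diagonal operator in the standard basis $\left\{e_k\right\}$. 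Hence each $L_k$ is diagonal, completing the proof.

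I do not expect a serious obstacle here, as the lemma is essentially a specialization of earlier results; the only point requiring care is the bookkeeping identifying $L$ with the weighted composition operator and checking boundedness of $f\mapsto f\circ\psi_n$ together with $\xi\in\ell^\infty(\mathbb{N})$. A purely computational alternative, writing out $L^k e_j$ and showing $\langle L^k e_i, L^k e_j\rangle=0$ for $i\neq j$, is possible but awkward: the rank-one perturbation causes $L^k e_j$ to split into several ``threads'' passing through the $n$-th coordinate, so the bookkeeping becomes unpleasant. I would therefore avoid the direct calculation and rely on the operator-algebraic argument above.
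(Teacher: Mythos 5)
Your proposal is correct and is exactly the paper's argument: the paper's proof of Lemma~\ref{mila} is a one-line reference to the remark after Example~\ref{llll} identifying $L$ with a weighted composition operator on $L^{2}(\mathbb{N},\mu)$ and to Proposition~\ref{exempl}, whose proof yields $L_{k}\in L^{\infty}(\mathbb{N},\mu)$ and hence diagonality. You have merely filled in the routine verifications (boundedness of $f\mapsto f\circ\psi_{n}$ and $\xi\in\ell^{\infty}$) that the paper leaves implicit.
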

								\begin{proof}
									As it was mention after Example~\ref{llll}, this is a corollary of Proposition~\ref{exempl}.
								\end{proof}
								
								In this section, we prove the following:
								\begin{thm}\label{semidia}
									If $\mathcal{F}$ has only one triple $\left(\lambda,\gamma,n\right),$ then there is an orthonormal basis $\left\{x_{k};k\in\mathbb{N}\right\}$ of $\mathcal{H},$ a wighted shift $J$ on this basis and $a\in\mathbb{C}$ such that $$T=J+a (x_{0}\otimes x_{n}^{*}).$$
								\end{thm}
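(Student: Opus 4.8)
The plan is to read the structure of $T$ off the two eigenvector orbits of $\mathbb{M}_T|\mathcal{M_{E}}$, tracked through the orthogonal decomposition $\mathcal H=\bigoplus_k V_k$ of Theorem~\ref{space1}.

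\emph{Reduction and the eigenbasis.} First I would show $\dim\mathcal{M_{E}}=2$. If $\dim\mathcal{M_{E}}\geq 3$, then by Proposition~\ref{total} the spectrum of $\mathbb{M}_{T|\mathcal{H}_1}|\mathcal{M_{E}}\ominus\mathcal{E}$ has at least two points; since this spectrum is Hausdorff and Proposition~\ref{aloha} attaches a triple to each point of a dense subset of it, $\mathcal F$ would contain two triples with distinct $\gamma$. Hence $|\mathcal F|=1$ forces $\dim\mathcal{M_{E}}=2$, so $\mathbb{M}_T|\mathcal{M_{E}}$ is generated by $I$ and the self-adjoint $A$ of Theorem~\ref{bt1} and admits an orthonormal eigenbasis $u_1,u_2$ with eigenvalues $\lambda_1\neq\lambda_2$ (equal eigenvalues would make $A$ scalar, i.e. $\mathcal{M_{E}}=\mathcal{E}$). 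Now the discrete analysis of Subsection 5.1 applies verbatim, as a finite-dimensional $\mathcal{M_{E}}$ guarantees an eigenbasis: an eigenvector $u_i$ with smallest $r\geq 1$ satisfying $T^{r}u_i\not\perp\mathcal{M_{E}}$ yields a triple $(\lambda_i,\gamma,r)$, whereas an eigenvector with $T^m u_i\perp\mathcal{M_{E}}$ for all $m\geq 1$ yields none. Two returning eigenvectors would give two distinct triples (as $\lambda_1\neq\lambda_2$), contradicting $|\mathcal F|=1$; and they cannot both fail to return, since then $T\mathcal H\perp\mathcal{M_{E}}$, i.e. $\mathcal{M_{E}}\subseteq\mathcal{E}$. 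So exactly one, say $u_1$, returns, at a first index $r\geq 1$; set $n=r-1$.

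\emph{The skeleton of the basis.} By Proposition~\ref{jups} and induction $T^ju_1\in V_j$ for $0\leq j\leq n$ and $T^ju_2\in V_j$ for every $j$. The two lineages stay orthogonal, since
\[
\langle T^ju_1,T^ju_2\rangle=\langle u_1,T_ju_2\rangle=\overline{\lambda_2(T_j)}\,\langle u_1,u_2\rangle=0 .
\]
Normalizing, I would put $x_j=T^ju_1/\|T^ju_1\|$ for $0\leq j\leq n$ (so $x_0=u_1$) and $x_{n+1+j}=T^ju_2/\|T^ju_2\|$ for $j\geq 0$ (so $x_{n+1}=u_2$). For $0\leq j\leq n$ these are two orthonormal vectors of $V_j$, forcing $\dim V_j=2$.

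\emph{Totality of the return (the main obstacle).} The heart of the proof is to show the return is total: $T^{r}u_1\in\mathcal{M_{E}}$ and $\dim V_j=1$ for $j\geq r$. Theorem~\ref{space1} localizes $T^{r}u_1\in V_r\oplus(X_{r-1}\ominus TX_{r-2})$, and since $T^j\mathcal{M_{E}}\subseteq V_j\perp\mathcal{M_{E}}$ for $1\leq j\leq r-1$ one has $\mathcal{M_{E}}\perp TX_{r-2}$; the count $\dim(X_{r-1}\ominus TX_{r-2})=\dim V_{r-1}=2=\dim\mathcal{M_{E}}$ then identifies $X_{r-1}\ominus TX_{r-2}=\mathcal{M_{E}}$, so $T^{r}u_1=w+m$ with $w\in V_r$, $m\in\mathcal{M_{E}}$. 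The cross term is controlled by $\langle T^{r}u_1,T^{r}u_2\rangle=\langle u_1,T_ru_2\rangle=0$, giving $w\perp T^{r}u_2$. The delicate point—where $|\mathcal F|=1$ is used decisively—is to exclude a surviving component $w\neq 0$: such a $w$ is a $\gamma$-eigenvector of $\mathbb{M}_{T|\mathcal{H}_1}|V_r=\mathbb{M}_T|V_r$ (Proposition~\ref{fukth}) of the $\lambda_1$-lineage, and following its forward orbit through the connecting homomorphisms $\Gamma_{m,\cdot}$ of Theorem~\ref{normspos}, together with Proposition~\ref{loveu} and the triple mechanism of Proposition~\ref{aloha}, would manufacture a second triple (same $\lambda_1,\gamma$ but a different index), contradicting $|\mathcal F|=1$. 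Hence $w=0$ and $T^{r}u_1\in\mathcal{M_{E}}$; from $X_r=X_{r-1}+\mathbb{C}\,T^{r}u_2$ we get $\dim V_r=1$, and Corollary~\ref{ockey} with Proposition~\ref{e1} propagate $\dim V_j=1$ for all $j\geq r$.

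\emph{Reading off the operator.} At this stage $\{x_k:k\in\mathbb{N}\}$ is an orthonormal basis of $\mathcal H=\bigoplus_kV_k$. For $k\neq n$ the orbit relations give $Tx_k=a_kx_{k+1}$ with $a_k>0$ the ratio of consecutive norms along the relevant ($u_1$- or $u_2$-) orbit, while $T^{r}u_1=\alpha u_1+\beta u_2\in\mathcal{M_{E}}$ gives $Tx_n=T^{r}u_1/\|T^nu_1\|=a_nx_{n+1}+a\,x_0$ with $a_n=\beta/\|T^nu_1\|$ and $a=\alpha/\|T^nu_1\|$. Thus $T=J+a\,(x_0\otimes x_n^{*})$ for the weighted shift $J:x_k\mapsto a_kx_{k+1}$, as claimed. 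The degenerate possibility $\beta=0$ (a vanishing weight, in which case the $u_1$-part splits off as a finite cyclic block) is easily checked to still fit the stated form and can be treated separately.
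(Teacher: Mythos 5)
Most of your proposal runs parallel to the paper and is sound: the reduction to $\dim\mathcal{M_{E}}=2$, the eigenbasis $u_{1},u_{2}$ with distinct eigenvalues, the exactly-one-returning-lineage dichotomy, the orthogonality of the two orbits, and the identification $X_{r-1}\ominus TX_{r-2}=\mathcal{M_{E}}$ via the dimension count (a nice packaging of Theorem~\ref{space1}) are all correct, as is the final reading-off of the shift-plus-rank-one form. The genuine gap is exactly at the step you flag as the heart: excluding a residual component $w\in V_{r}$ of $T^{r}u_{1}$. A second triple \emph{cannot} be manufactured from the forward orbit of $w$. Note first that the return of $u_{1}$ at time $r$ itself produces a triple with index $r$ (the subsection-5.1 mechanism), so uniqueness forces $r=n$. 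Triples arise, per Proposition~\ref{aloha}, only from indices $k$ with $P_{V_{k}}\theta_{T}^{*}\mathcal{M_{E}}\neq 0$; since $\theta_{T}^{*}\mathcal{M_{E}}$ is one-dimensional ($\theta_{T}^{*}$ kills $\E$) and distinct such $k$ yield triples with distinct indices, $\left|\mathcal{F}\right|=1$ forces $\theta_{T}^{*}\mathcal{M_{E}}\subseteq V_{n-1}$, i.e. $TV_{k}\perp\mathcal{M_{E}}$ for every $k\neq n-1$. Your hypothetical $w$ lies in $V_{n}$, so $\left\langle Tw,\mathcal{M_{E}}\right\rangle=\left\langle w,T^{*}\mathcal{M_{E}}\right\rangle=0$, and by Proposition~\ref{jups} and induction $T^{s}w\in V_{n+s}$ for all $s$, perpetually orthogonal to $\mathcal{M_{E}}$: the orbit never returns, so no relation $\gamma\left(PT_{k}P\right)=\lambda\left(\theta_{T,m}^{*}T_{k}\theta_{T,m}\right)$ and no new triple is ever generated. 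The hypothesis $\left|\mathcal{F}\right|=1$ that you want to contradict is precisely what guarantees your contradiction machine has nothing to feed on.

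What actually closes this step in the paper is a finite-dimensional kernel/cokernel count, not triple-counting: from $T^{*}\mathcal{M_{E}}\subseteq V_{n-1}$ one gets $TV_{n+j}=V_{n+j+1}$ for all $j\geq 0$ (Propositions~\ref{isisis} and~\ref{jups}), hence $\oplus_{m=n+2}^{\infty}V_{m}$ is $T$-invariant and the finite block $\oplus_{m=0}^{n+1}V_{m}$ is $T^{*}$-invariant; on this block $T^{*}$ has one-dimensional kernel $\E$ while its range is orthogonal to $V_{n+1}$, forcing $\dim V_{n+1}\leq 1$, and Proposition~\ref{e1} gives $\dim V_{n+1}=1$; injectivity together with $TV_{n}=V_{n+1}$ then yields $\dim V_{n}=1$, so $V_{n}=\left\langle T^{n}u_{2}\right\rangle$ and your $w$, being orthogonal to $T^{n}u_{2}$, must vanish. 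You would need to import this Fredholm-type argument (or an equivalent) to repair the proof. A harmless bookkeeping remark: since the return time equals the triple's index, your convention $n=r-1$ places the rank-one perturbation one slot below the triple's $n$; the paper's own computation ($Tx_{n-1}=a_{n}x_{n}+ax_{0}$) literally lands in the same place despite its displayed formula, so this off-by-one is inherited from the paper rather than an error of yours.
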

								
								There is an orthonormal basis $v,w$ of $\in\mathcal{M_{E}}$ consisting of common eigenvectors for all the $T_{j}'$s restricted to this space. Let, say, $w$ be an eigenvector corresponding to $\lambda.$ As there is only one triple, we must have $T^{k}v\in V_{k}$ for all $k\in\mathbb{N},$ otherwise the reasoning used in subsection 4.1 would yield another different triple. Furthermore:
								
								\begin{lem}
									If there is only one triple $\left(\lambda,\gamma,n\right)$ in $\mathcal{F},$ then $V_{n+j}=\left\langle T^{n+j}v \right\rangle $ for all $j\in\mathbb{N}.$
								\end{lem}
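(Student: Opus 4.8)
My plan is to determine the dimensions of the $V_k$ and to locate each $T^{k}v$ inside them. Since $|\mathcal{F}|=1$, Proposition~\ref{movie} forbids $\dim\mathcal{M_E}\geq 3$, so $\dim\mathcal{M_E}=2$ and we may write $\mathcal{M_E}=\langle v,w\rangle$ with $v\perp w$, where $w$ is the eigenvector attached to $\lambda$ and $n$ is the least integer with $T^{n}w\not\perp\mathcal{M_E}$. First I would establish the low-level picture: for $1\le k\le n-1$ the minimality of $n$ gives $T^{k}w\perp\mathcal{M_E}$, so an induction using Proposition~\ref{jups} yields $T^{k}w\in V_k$; together with the already recorded fact $T^{k}v\in V_k$, the injectivity of $T$, and the bound $\dim V_k\le\dim V_0=2$ from Corollary~\ref{ockey}, this shows $V_k=\langle T^{k}v,T^{k}w\rangle=T^{k}\mathcal{M_E}$ is two–dimensional for every $k\le n-1$. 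In particular $TX_{n-2}=\bigvee_{j=1}^{n-1}T^{j}\mathcal{M_E}=\bigoplus_{j=1}^{n-1}V_j$, hence $X_{n-1}\ominus TX_{n-2}=V_0=\mathcal{M_E}$.

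The statement then reduces to the single claim $\dim V_n=1$. Once this is in hand, Corollary~\ref{ockey} (the sequence $\dim V_k$ is non–increasing) and Proposition~\ref{e1} ($V_k\neq\{0\}$) force $\dim V_{n+j}=1$ for all $j$, and since $0\neq T^{n+j}v\in V_{n+j}$ we conclude $V_{n+j}=\langle T^{n+j}v\rangle$, which is the lemma. To study $V_n$ I would use $V_n=P_{V_n}TV_{n-1}$ (Proposition~\ref{isisis}); combined with $V_{n-1}=\langle T^{n-1}v,T^{n-1}w\rangle$ and Theorem~\ref{space1} together with $X_{n-1}\ominus TX_{n-2}=\mathcal{M_E}$, this writes $V_n=\langle T^{n}v,u\rangle$ with $u:=P_{V_n}T^{n}w$ and $T^{n}w=u+z$, where $z=P_{\mathcal{M_E}}T^{n}w\neq0$ by the minimality of $n$. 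Thus everything comes down to showing $u=0$, equivalently $T^{n}w\in\mathcal{M_E}$.

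The heart of the proof — and the step I expect to be the main obstacle — is to extract from $|\mathcal{F}|=1$ the location of a single vector. Let $z_{0}$ span the one–dimensional space $\mathcal{M_E}\ominus\E$ and put $\eta=\theta_{T}^{*}z_{0}$, so that $\theta_{T}^{*}\mathcal{M_E}=\langle\eta\rangle$ (because $\theta_{T}^{*}\E=0$). Reading the construction of Proposition~\ref{aloha} backwards, the triples in $\mathcal{F}$ should be in bijection with the integers $k$ for which $P_{V_k}\eta\neq0$, the triple attached to such a $k$ having index $k+1$; making this bijection precise is the delicate point, and it forces $P_{V_k}\eta=0$ for all $k\neq n-1$, i.e. $\eta\in V_{n-1}$. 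Now for $x\in V_{n-1}$ one has $\langle\theta_{T}x,z_{0}\rangle=\langle x,\eta\rangle$, so $\theta_{T}x\perp\mathcal{M_E}$ exactly when $x\perp\eta$ (recall $\theta_{T}x\in\mathcal{H}_1\perp\E$). Splitting $V_{n-1}=\langle\eta\rangle\oplus(V_{n-1}\cap\eta^{\perp})$ and applying Corollary~\ref{jugs} to the second summand gives $\theta_{T}(V_{n-1}\cap\eta^{\perp})\subseteq V_n$, while $\theta_{T}\eta=z_{0}\in\mathcal{M_E}$. Since $T_{1}^{1/2}$ is invertible on the $\mathbb{M}_{T}$–invariant space $V_{n-1}$ (Corollary~\ref{xxxx}) we have $TV_{n-1}=\theta_{T}T_{1}^{1/2}V_{n-1}=\theta_{T}V_{n-1}=\langle z_{0}\rangle\oplus W$, with $W:=\theta_{T}(V_{n-1}\cap\eta^{\perp})\subseteq V_n$ one–dimensional; applying $P_{V_n}$ and using $z_{0}\in V_0\perp V_n$ yields $V_n=P_{V_n}TV_{n-1}=W$. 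Hence $\dim V_n=1$, so $V_n=\langle T^{n}v\rangle$, and the induction above completes the proof.
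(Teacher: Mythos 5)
Your argument is correct, and after the shared opening move it finishes by a genuinely different mechanism than the paper's proof. Both proofs rest on the same key extraction from $\left|\mathcal{F}\right|=1$, read off from the proof of Proposition~\ref{aloha}: that $\dim\mathcal{M_{E}}=2$ and $\theta_{T}^{*}\mathcal{M_{E}}$ (equivalently $T^{*}\mathcal{M_{E}}$, since $T^{*}=T_{1}^{\frac{1}{2}}\theta_{T}^{*}$ and $V_{n-1}$ is $\mathbb{M}_{T}$-invariant) lies in $V_{n-1}$ --- and the paper treats this point with exactly the same brevity you do, so your flagged ``delicate point'' is not a gap relative to the paper's own standard. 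In fact no bijection is needed, only the one direction that Proposition~\ref{aloha}'s proof already supplies: each $k$ with $P_{V_{k}}\theta_{T}^{*}\mathcal{M_{E}}\neq 0$ yields a triple of index $k+1$, distinct $k$'s yield triples with distinct indices, and $\sum_{k}P_{V_{k}}=I$ on $\mathcal{H},$ so uniqueness of the triple forces the single such $k$ to be $n-1$. From there the routes diverge: the paper climbs one level up, showing $TV_{n+j}=V_{n+j+1}$, deducing that $\oplus_{m=0}^{n+1}V_{m}$ is $T^{*}$-invariant, and running a kernel/cokernel dimension count for $T^{*}$ on that finite-dimensional block (kernel $=\E$ is one-dimensional, while $V_{n+1}$ is orthogonal to the range) to get $\dim V_{n+1}=1$ and then $\dim V_{n}=1$ by injectivity; you instead stay at level $n$ and compute $V_{n}=P_{V_{n}}TV_{n-1}$ directly, via $T=\theta_{T}T_{1}^{\frac{1}{2}}$, the splitting $V_{n-1}=\left\langle \eta\right\rangle \oplus\left(V_{n-1}\cap\eta^{\bot}\right)$ with $\eta=\theta_{T}^{*}z_{0}$, the identity $\theta_{T}\eta=z_{0}\in V_{0}\bot V_{n}$, and Corollary~\ref{jugs} on the complement. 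Your version is shorter and makes transparent exactly where the dimension drops from $2$ to $1$ (the line $\left\langle z_{0}\right\rangle$ is thrown back into $\mathcal{M_{E}}$), whereas the paper's Fredholm-style count produces structural by-products ($TV_{n+j}=V_{n+j+1}$, $T^{*}V_{n+1}=V_{n}$) that prefigure the shift picture assembled in Theorem~\ref{semidia}. Two small bookkeeping remarks: your citation of Corollary~\ref{xxxx} is out of context (it sits in the $\left|\mathcal{F}\right|\geq 2$ section), but the fact you actually need --- an injective positive operator on a finite-dimensional invariant subspace is invertible --- is immediate; and the intermediate claim that ``everything comes down to $u=0$'' is slightly overstated, since $\dim V_{n}=1$ would equally follow if $u$ were a nonzero multiple of $T^{n}v$, but this is harmless because your final computation establishes $\dim V_{n}=1$ outright, after which Corollary~\ref{ockey}, Proposition~\ref{e1} and $T^{n+j}v\in V_{n+j}$ finish the lemma just as in the paper.
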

								
								\begin{proof}
									The reasoning used in the proof of Proposition~\ref{aloha} shows that the only way we could end up with only one triple $\left(\lambda,\gamma,n\right)$ is if $\dim \mathcal{M_{E}}=2$ and $T^{*}\mathcal{M_{E}}$ is a subspace of $V_{n-1}.$ This in turn gives $T V_{k}\bot \mathcal{M_{E}}$ for $k\neq n-1.$ 
									Hence $T V_{n+j}=V_{n+j+1}$ for all $j\in\mathbb{N}$ by Proposition~\ref{isisis} and Proposition~\ref{jups}. This shows that the subspace $\oplus_{m=n+2}^{\infty}V_{m}$ is $T$-invariant and so $\oplus_{m=0}^{n+1}V_{m}$ is $T^{*}$-invariant. Now $T^{*}V_{n+1}=T^{*}T V_{n}=V_{n}$ and $$\left\langle T^{*} V_{j}, V_{n+1} \right\rangle=\left\langle  V_{j},T V_{n+1} \right\rangle=\left\langle  V_{j}, V_{n+2} \right\rangle=0$$ for $j\neq n+2,$ so $V_{n+1}\bot T^{*}\oplus_{m=0}^{n+1}V_{m}.$ But $T^{*}$ restricted to $\oplus_{m=0}^{n+1}V_{m}$ still has just $\mathcal{E}$ as its kernel and since the space $\oplus_{m=0}^{n+1}V_{m}$ has finite dimension, the dimension of the kernel must be equal that of the cokernel. So $\dim V_{n+1}=1$ and since $V_{n+1}=T V_{n}$ this must also be true for $V_{n}.$ Since $T^{n+j}v\in V_{n+j},$ the whole space must be spanned by this vector.
								\end{proof}
								
								\begin{prop}
									We have $T^{n}w\in \mathcal{M_{E}}.$
								\end{prop}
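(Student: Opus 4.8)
The plan is to locate $T^{n}w$ inside the orthogonal decomposition $\mathcal{H}=\bigoplus_{m=0}^{\infty}V_{m}$ and then show that every component of $T^{n}w$ outside $V_{0}=\mathcal{M_{E}}$ vanishes. From the setup preceding this proposition we have $\dim\mathcal{M_{E}}=2$ with orthonormal eigenbasis $v,w$ of $\mathbb{M}_{T}|\mathcal{M_{E}}$, that $T^{k}v\in V_{k}$ for every $k$, and that $TV_{k}\bot\mathcal{M_{E}}$ whenever $k\neq n-1$. First I would use the last fact together with Proposition~\ref{jups} to propagate $w$: since $w\in V_{0}$ and $TV_{j}\bot\mathcal{M_{E}}$ for $0\leq j\leq n-2$, induction gives $T^{j}w\in V_{j}$ for $1\leq j\leq n-1$; in particular $T^{n-1}w\in V_{n-1}$. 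By injectivity $T^{m}v$ and $T^{m}w$ are linearly independent, so with Corollary~\ref{ockey} ($\dim V_{m}\leq 2$) we get $V_{m}=\left\langle T^{m}v,T^{m}w\right\rangle$ for $1\leq m\leq n-1$, while $V_{n}=\left\langle T^{n}v\right\rangle$ by the preceding lemma.

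Next I would pin down $T^{n}w$. Applying Theorem~\ref{space1} with $k=n-1$ gives $T^{n}w=T(T^{n-1}w)\in TV_{n-1}\subseteq V_{n}\oplus(X_{n-1}\ominus TX_{n-2})\subseteq\bigoplus_{m=0}^{n}V_{m}$, so it suffices to prove $T^{n}w\bot V_{m}$ for every $1\leq m\leq n$. The key computation is that for $u\in\{v,w\}$ and $1\leq m\leq n$, using $T^{*m}T^{m}=T_{m}$ and the self-adjointness of $T_{m}$,
$$\left\langle T^{n}w,T^{m}u\right\rangle=\left\langle T_{m}T^{n-m}w,u\right\rangle=\left\langle T^{n-m}w,T_{m}u\right\rangle.$$
Since $u$ is an eigenvector of $T_{m}|\mathcal{M_{E}}$ with a real eigenvalue, the right side is a scalar multiple of $\left\langle T^{n-m}w,u\right\rangle$. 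For $1\leq m\leq n-1$ we have $1\leq n-m\leq n-1$, so $T^{n-m}w\in V_{n-m}$ is orthogonal to $u\in V_{0}$ and the inner product vanishes; for $m=n$ the factor is $\left\langle w,u\right\rangle$, which is $0$ when $u=v$. Given the bases of the $V_{m}$ recorded above, this yields $T^{n}w\bot V_{m}$ for all $1\leq m\leq n$.

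Combining the two steps forces $T^{n}w\in V_{0}=\mathcal{M_{E}}$ (and $T^{n}w\neq 0$ by injectivity, though only membership is claimed). I expect the main obstacle to be the containment step: one must be sure that Theorem~\ref{space1} confines $TV_{n-1}$ to $\bigoplus_{m=0}^{n}V_{m}$ with nothing escaping into higher $V_{m}$, after which the eigenvalue-and-adjoint computation cleanly removes every intermediate component. The delicate case is $m=n$, where orthogonality rests entirely on $w\bot v$ rather than on a vanishing $V_{n-m}$ component.
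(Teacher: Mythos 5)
Your proof is correct and is essentially the paper's own argument: the same adjoint computation $\left\langle T^{n}w,T^{m}u\right\rangle=\left\langle T^{n-m}w,T_{m}u\right\rangle$ with $u\in\{v,w\}$, the eigenvector property on $\mathcal{M_{E}}$, the spanning facts $V_{m}=\left\langle T^{m}v,T^{m}w\right\rangle$ for $1\leq m\leq n-1$ and $V_{n}=\left\langle T^{n}v\right\rangle$, and orthogonality to $V_{m}$ for $m\geq n+1$. The only cosmetic difference is that you invoke Theorem~3.17 to confine $T^{n}w$ to $\oplus_{m=0}^{n}V_{m}$, where the paper gets this for free from $T^{n}w\in T^{n}\mathcal{M_{E}}\subseteq X_{n}$.
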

								
							\begin{proof}
							If $m\geq n+1,$ then as $T^{n}w\in X_{n},$ we have $T^{n}w\bot V_{m}$ by the definition of $V_{m}$. Also $V_{n}=\left<T^{n}v\right>$ and so $$\left\langle T^{n}v,T^{n}w\right\rangle=\left\langle T_{n}v,w\right\rangle=\lambda(T_{n})\left\langle v,w\right\rangle=0.$$ The same argument shows that $T^{n}w\bot V_{m}$ for $ 1\leq m\leq n-1$ since 
									$$\left\langle T^{m}w,T^{n}w\right\rangle=\left\langle T_{m}w,T^{n-m}w\right\rangle=0$$
									$$\left\langle T^{m}v,T^{n}w\right\rangle=\left\langle T_{m}e_{0},T^{n-m}w\right\rangle=0$$ 
									and these vectors span $V_{m}$ for $ 1\leq m\leq n-1.$ So $T^{n}w\in \left(\oplus_{k=1}^{\infty}V_{k}\right)^{\bot}=\mathcal{M_{E}}.$
								\end{proof}
								
								\begin{cor}
									We have $\mathcal{M_{E}}\ominus \mathcal{E}=\left<T^{n}w\right>$ and $T^{*}\mathcal{M_{E}}=\left< T^{n-1}w \right>.$
								\end{cor}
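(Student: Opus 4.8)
The plan is to establish the two equalities separately, leaning on the two structural facts already in hand: that $\dim\mathcal{M_{E}}=2$ while $\dim\mathcal{E}=1$ (so $\mathcal{M_{E}}\ominus\mathcal{E}$ is one-dimensional), and that $T^{n}w\in\mathcal{M_{E}}$ by the preceding proposition. I would also first record that $T^{j}w\in V_{j}$ for $0\le j\le n-1$: since $T^{j}w\bot\mathcal{M_{E}}$ for $1\le j\le n-1$ by the defining property of $n$, this follows by a short induction using Proposition~\ref{jups} (and is vacuous but still correct when $n=1$, giving $w\in V_{0}$).

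For the equality $\mathcal{M_{E}}\ominus\mathcal{E}=\langle T^{n}w\rangle$, the first step is to check $T^{n}w\bot\mathcal{E}$. Writing $\mathcal{E}=\langle e\rangle$ with $T^{*}e=0$, this is immediate from $\langle T^{n}w,e\rangle=\langle T^{n-1}w,T^{*}e\rangle=0$ (valid since $n\ge 1$). Together with $T^{n}w\in\mathcal{M_{E}}$ this places $T^{n}w$ in $\mathcal{M_{E}}\ominus\mathcal{E}$, and injectivity of $T$ gives $T^{n}w\ne 0$. Since $\mathcal{M_{E}}\ominus\mathcal{E}$ is one-dimensional, the two subspaces must coincide.

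For $T^{*}\mathcal{M_{E}}=\langle T^{n-1}w\rangle$, I would use $\{e,T^{n}w\}$ as an orthogonal basis of $\mathcal{M_{E}}$, which is legitimate by the first part. As $T^{*}e=0$, this reduces the claim to $T^{*}\mathcal{M_{E}}=\langle T^{*}T^{n}w\rangle=\langle T_{1}T^{n-1}w\rangle$, so it suffices to show $T_{1}T^{n-1}w$ is a nonzero multiple of $T^{n-1}w$. The main point is that $T^{n-1}w\in V_{n-1}$ is a common eigenvector of $\mathbb{M}_{T}|V_{n-1}$ — this is exactly the eigenvector computation from the discrete-case subsection, which applies here because the commuting self-adjoint family $\{T_{j}|\mathcal{M_{E}}\}$ has the common eigenbasis $\{v,w\}$. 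Hence $T_{1}\in\mathbb{M}_{T}$ fixes the line $\langle T^{n-1}w\rangle$, and the eigenvalue is nonzero since $\langle T_{1}T^{n-1}w,T^{n-1}w\rangle=\|T^{n}w\|^{2}>0$. This yields $T^{*}\mathcal{M_{E}}=\langle T^{n-1}w\rangle$.

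The only genuinely non-formal step is this last identification of $T^{*}T^{n}w$ with a multiple of $T^{n-1}w$; everything else is bookkeeping with the one-dimensionality of $\mathcal{M_{E}}\ominus\mathcal{E}$ and the vanishing of $T^{*}$ on $\mathcal{E}$. I would be careful to invoke the eigenvector computation correctly, since it is precisely what forces $T_{1}$ to preserve the relevant line.
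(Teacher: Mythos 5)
Your proof is correct and follows essentially the same route as the paper's: one-dimensionality of $\mathcal{M_{E}}\ominus\mathcal{E}$ together with $T^{n}w\in\mathcal{M_{E}}$, $T^{n}w\bot\mathcal{E}$ and injectivity for the first equality, and $T^{*}\mathcal{M_{E}}=\left\langle T_{1}T^{n-1}w\right\rangle=\left\langle T^{n-1}w\right\rangle$ via the discrete-case eigenvector computation for the second. You in fact supply details the paper leaves implicit (the adjoint computation showing $T^{n}w\bot\mathcal{E}$, the inductive use of Proposition~\ref{jups} to place $T^{j}w\in V_{j}$, and the check that the eigenvalue of $T_{1}$ on $\left\langle T^{n-1}w\right\rangle$ is nonzero since $\left\langle T_{1}T^{n-1}w,T^{n-1}w\right\rangle=\left\|T^{n}w\right\|^{2}>0$).
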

								\begin{proof}
									$T^{n}w\in\mathcal{M_{E}}$ is orthogonal to $\mathcal{E}$ and since $\dim \mathcal{M_{E}}=2,$ the subspace $\mathcal{M_{E}}\ominus \mathcal{E}$ must be generated by $T^{n}w.$ The second claim now follows from $$T^{*}\mathcal{M_{E}}=T^{*}\mathcal{M_{E}}\ominus \mathcal{E}=\left<T_{1} T^{n-1}w \right>= \left< T^{n-1}w \right>$$ since $T^{n-1}w$ is an eigenvector for $\mathbb{M}_{T}$ by the introduction to subsection 4.1.
								\end{proof}
								
								With the help of these result we can now proceed to prove Theorem~\ref{semidia}:
								
								\begin{proof}
									For $0\leq k\leq n-1$ take $$x_{k}=\frac{T^{k}w}{\left\|T^{k}w\right\|}$$ and when $n\leq k $ take $$x_{k}=\frac{T^{k-n}v}{\left\|T^{k-n}v\right\|}.$$ Thus $\left\{x_{k};k\in\mathbb{N}\right\}$ is an orthonormal basis for the Hilbert space $\mathcal{H}$ and by the results above, there is constants $a_{k},a\in \mathbb{C}$ such that
									$$T x_{k}=a_{k}x_{k+1}$$
									when $0\leq k\leq n-2$ or $n \leq j$ and $$T x_{n-1}=a_{n}x_{n}+a x_{0}$$ (since $T^{n}w$ was in $\mathcal{M_{E}}$ generated by $w=x_{0},v=x_{n}$). If we now take $J$ to be the shift $$J x_{k}=a_{k} x_{k+1}$$ then $$T=J+a (x_{0}\otimes x_{n}^{*}).$$
								\end{proof}
								We can now complete the proof of the main theorem, which we state again for references sake.
								\begin{mthm}
									Let $T$ be an injective half-centered operator on $\mathcal H$ such that $\bigvee_{k=0}^{\infty}T^{k}\mathcal{M_{E}}=\mathcal{H}$ and $\dim \left(T\mathcal{H}\right)^{\bot}=1.$ Then there are two not mutually exclusive possibilities.
									\renewcommand{\labelenumi}{\arabic{enumi}}
									\begin{enumerate}
										\item 
										There is an orthonormal basis $\left\{x_{k}:k\in\mathbb{N}\right\}$ of common eigenvectors for the operators $\left\{T_{k}\right\}_{k\in\mathbb{N}}$ such that with respect to this basis, $T$ is either a weighted shift or there is a weighted shift $J$ such that
										\begin{equation}\label{ebag1}
											T=J+a (x_{0}\otimes x_{n}^{*})
										\end{equation}
										for a $n\in\mathbb{N}$ and $a\in\mathbb{C}.$

										\item
										There are constants $a,b,c,d\in \mathbb{R},$ not all zero and $k,n\in \mathbb{N}^{+}$ such that 
										\begin{equation}\label{gabe1}
											aI+bT^{*k}T^{k} +c T^{*n}T^{n}+ d T^{*k+n}T^{k+n}=0.
										\end{equation}
										
									\end{enumerate}
									Moreover, if $\dim\mathcal{M_{E}}\geq 3$ then~\eqref{gabe1} holds with $a\neq 0$ and the range of $T$ is closed.
								\end{mthm}
								
								\begin{proof}
									When $\dim \mathcal{M_{E}}=1,$ we refer to the remarks given after the statement of the main theorem in section $2.$ When $\dim\mathcal{M_{E}}\geq 2,$ it follows from Propositions~\ref{total} and~\ref{aloha} that $\left|\mathcal{F} \right|\geq 1$ and hence we can split the argument into the cases $\left|\mathcal{F} \right|= 1$ and $\left|\mathcal{F} \right|\geq 2.$ When $\left|\mathcal{F} \right|= 1,$ we get from Theorem~\ref{semidia} that this corresponds to the second part of case \emph{1} above. When $\left|\mathcal{F} \right|\geq 2,$ we get~\eqref{gabe1} from Theorem~\ref{mainmain}. Finally, when $\dim\mathcal{M_{E}}\geq 3,$ the claim follows from Theorem~\eqref{mainmain} and Corollary~\ref{xxxx}.
								\end{proof}
							
							{}
\end{document}